\documentclass [11 pt, letterpaper, twoside, openright]{book}
\usepackage{amsfonts,amssymb,amscd,xy}
\usepackage[leqno]{amsmath}
\usepackage{mathrsfs}
\usepackage{amssymb}

\usepackage{amsmath}

\usepackage{hyperref}


\usepackage{amsxtra}

\usepackage{amsthm}
\usepackage{tabularx}
\usepackage{epsfig,amssymb}
\usepackage{bm} 
\usepackage{verbatim} 

\usepackage{hyperref}

\usepackage{verbatim} 

\textwidth=16 cm
\oddsidemargin=-1cm
\evensidemargin=-1cm
\makeatletter

\DeclareMathAlphabet{\mathcalligra}{T1}{calligra}{m}{n}

\numberwithin{equation}{section}

\newcommand{\isoto}{\overset{\!\sim}{\lra}}

\def\Aut{{\mathrm{Aut}}}
\def\uaut{\underline{\Aut}}

\newcommand{\lra}{\longrightarrow}

\def\isom{{\rm Isom}}

\def\hex{\underline{\rm Hex}}

\newcommand{\bj}{{\rm Ob}}
\newcommand{\cart}{{\rm Cart}}

\newcommand{\uinn}{\underline{{\rm Int}}}

\newcommand{\inn}{{\rm int}}

\newcommand{\aut}{{\rm Aut}}

\def\uisom{\underline{\rm Isom}}

\def\uhom{\underline{\rm Hom}}

\def\be{\kern -.1em}
\def\le{\kern 0.03em}
\def\lbe{\kern -.025em}

\newcommand{\Hom}{ \mathrm{Hom}}

\def\e{\kern 0.08em}

\newcommand{\ccc}{{  \lower0.09ex\hbox{{\text{\large$\circ$}}}}}
\newcommand{\dd}{-\!\!\!}
\newcommand{\gir}{{\ \dd\dd\dd\ \ccc\ }}

\newcommand{\ggto}[1]{\overset{#1}{\gir}}

\newtheorem{lemma}{Lemma}[section]
\newtheorem{theorem}[lemma]{Theorem}
\newtheorem{proposition-definition}[lemma]{Proposition-Definition}
\newtheorem{corollary}[lemma]{Corollary}
\newtheorem{proposition}[lemma]{Proposition}
\theoremstyle{definition}
\newtheorem{definition}[lemma]{Definition}

\theoremstyle{remark}
\newtheorem{remark}[lemma]{Remark}
\newtheorem{remarks}[lemma]{Remarks}
\newtheorem{example}[lemma]{Example}
\newtheorem{examples}[lemma]{Examples}

\begin{document}

\input xy     
\xyoption{all}

\title{Cohomology with values in a sheaf of crossed groups over a site}

\author{Raymond Debremaeker}
\date{}

\maketitle

\topmargin -1cm

\smallskip

\chapter*{Introduction}

Giraud's cohomology \cite{14} has two shortcomings.

\begin{enumerate}
\item[(1)] The $H^{2}$ is not functorial. Indeed, given a morphism of sheaves of groups $u\colon A\to B$ one only gets, in general, a {\it relation}
\[
H^{2}(A)\ggto{u^{(2)}} H^{2}(B).
\]
\item[(2)] A short exact sequence of sheaves of groups
\[
1\to A\overset{\!u}{\to}B\overset{\!v}{\to}C\to 1
\]
does not induce an exact cohomology sequence. Indeed, Giraud's $H^{2}(A)$ is not sufficiently large to contain all the obstructions to lifting a $C$-torsor to $B$. This is the reason why Giraud was forced to introduce a new set $O(v)$ which contains all the obstructions.
\end{enumerate}

We have set ourselves the goal of eliminating these two shortcomings in order to obtain a truly functorial cohomology theory. We have been mainly inspired by Dedecker \cite{4}, who has overcome the difficulties that arise in connection with the non-abelian 2-cohomology of a topological space 
by choosing a suitable category of coefficients. In the first chapter we introduce a number of concepts: sites, toposes, stacks on a site, objects with group actions, torsors.
In chapter \ref{cg} we show how Giraud arrives at the 2-cohomology groups with values in a sheaf of groups using gerbes and how a band is associated to a gerbe. In addition, we indicate where
the defects of his theory lie and in what special cases they vanish. In chapter \ref{bun} the obstructions to lifting a torsor are subjected to a new study. This discussion gives rise to the concept of an $(A,\Pi)$-gerbe. We prove a functoriality theorem to the effect that, using an $(f, q)$-morphism $(A,\Pi)\to (A^{\prime}, \Pi)$, we can construct an $(A^{\prime}, \Pi)$-gerbe out of an $(A,\Pi)$-gerbe. The new $H^{2}$ is defined in Chapter \ref{four}, where it is proven that it is functorial and that to a short exact sequence of sheaves of crossed groups on a site there corresponds an exact cohomology sequence.  Finally, in Chapter \ref{rel} the new $H^{2}$ is compared to the $H^{2}$ of Giraud, which is denoted by $H^{2}_{g}$ in that Chapter.

If $P$ is a presheaf on $E/S$, we denote by $P(f)$ the value of $P$ at the {\em object} $f\colon T\to S$ of $E/S$.  The result of the action of an element $a$ on an element $p$ is denoted by $p*a$ or $a*p$ depending on whether the action occurs along the right or left side.

I would like to thank my supervisor Prof. A. Warrinnier for giving me the opportunity to carry out this work. His constructive criticism has motivated me to complete this work despite the difficult conditions under which it had to be carried out. Finally, I would like to thank Prof. L. De Greve because he has given me the possibility to stay at this University.

\bigskip
\bigskip

\hskip 12cm R. Debremaeker

\tableofcontents

\chapter{Stacks on a site}\label{ss}

\section{Sites, toposes, stacks on a site}

\subsection{Sites}

\begin{definition} Let $C$ be a category. A {\em sieve $\mathcal R$ of the category $C$} is a subclass $\mathcal R$ of ${\rm Ob}(C)$ such that, for every arrow $m\colon x\to y$ in $C$ whose target $y$ belongs to $\mathcal R$, the source $x$ also belongs to $\mathcal R$.
Sometimes we will identify $\mathcal R$ with the full subcategory of $C$ whose class of objects is $\mathcal R$.
\end{definition}

Let $x$ be an object of $C$.  In what follows, the sieves of the category $C/x$ will be called {\em the sieves of $x$}.

\begin{definition} Let $\mathcal R$ be a sieve of $x$ and let $f\colon y\to x$ be a morphism in $C$. The class $\mathcal R^{\e f}$ of all those arrows $m\colon z\to y$ in $C$ such that
the composition $f\circ m\colon z\to x$ is in $\mathcal R$ is a sieve of $y$. We call it the inverse image of $\mathcal R$ under $f$.	
\end{definition}

Using the concept of sieve of an object we can now define the concept of topology on a category.

\begin{definition} We have a topology $J$ on a category $C$ if to every object $x$ of $C$ there corresponds a class $J(x)$ of sieves of $x$ such that the following three conditions hold:
\begin{enumerate}
\item[(i)] {\it Stability under base change}. For every object $x$ of $C$, every sieve
$\mathcal R\in J (x)$, every morphism $f\colon y\to x$ in $C$, the sieve $\mathcal R^{f}$ belongs to $J(y)$.
\item[(ii)] {\it Local character}.  Let $\mathcal R$ and $\mathcal R^{\prime}$ be two sieves of $x$.  If $\mathcal R\in J(x)$ and if, for every $f\colon y\to x$ in $\mathcal R$, the sieve $(\mathcal R^{\prime}\e)^{f}$ belongs to $J(y)$, then $\mathcal R^{\prime}$ belongs to $J(x)$.
\item[(iii)] For every object $x\in C$, $C/x$ belongs to $J(x)$.
\end{enumerate}	
\end{definition}

The sieves belonging to the set $J(x)$ are called {\it covering sieves of $x$} or also {\it refinements of $x$}.

\begin{definition} A category $C$ equipped with a topology is called a site. The category $C$ is called the underlying category of the site.	
\end{definition}

Let $x$ be an object of $C$. A family of morphisms $m_{i}\colon x_{i}\to x$, $i\in J$, in $C$ is called a {\it covering} if the sieve generated by this family is a covering sieve. By a family of topological generators of the site $C$ we mean a set $G$ of objects from $C$ such that every object of $C$ is the target of a covering family of morphisms of $C$ whose sources belong to $G$. To be able to give a correct definition of terms such as category of sets, presheaves of sets on a site, topos of sheafs of sets on a site, we need the concept of {\it universe} \cite[I\e]{26}. In this paragraph we will denote a universe by the letter $U$ or $V$.

\begin{definition}	Let $U$ be  a universe.  By a {\it $U$-site $C$} we mean a site $C$ such that:
\begin{enumerate}
\item[\rm (i)] the underlying category $C$ is a $U$-category, i.e., for every pair of objects $(x, y)$ in $C$ the set $\Hom(x, y)$ is isomorphic to an element of $U$, and
\item[\rm (ii)] there exists a family of topological generators $x_{i}$, $i\in J$, where $J\in U$.  We call $(x_{i}), i\in J$, a $U$-small (or also small) family of generators.
\end{enumerate}
\end{definition}

\subsection{Toposes}

\begin{definition} A {\it $U$-topos $E$} is a category $E$ for which there exists a site $C\in U$ such that $E$ is equivalent to the category $\widetilde{C}$ of $U$-sheaves of sets on $C$.
\end{definition}

\begin{remark} When we have a $U$-topos $E$ we will always consider it endowed with its {\it canonical topology}. Thus considered $E$ is then a $U$-site.
\end{remark}

\smallskip

Now we observe that the category $\widetilde{C}$ of sheaves on a site $C\in U$ has the following four properties:
\begin{enumerate}
\item[(a)] finite projective limits exist in $\widetilde{C}$
\item[(b)] direct sums indexed by an element of $U$ exist in  $\widetilde{C}$ and are disjoint and universal
\item[(c)] equivalence relations are universally effective
\item[(d)] $\widetilde{C}$ contains a family of generators indexed by an element of $U$
\end{enumerate}

The following theorem states that a $U$-topos is nothing but a $U$-category having the above four properties.

\begin{theorem} {\rm (Giraud)} Let $E$ be a $U$-category. The following statements are equivalent:
\begin{enumerate}
\item[\rm (i)] $E$ is a $U$-topos.
\item[\rm (ii)] $E$ has properties (a), (b), (c) and (d) above.
\item[\rm (iii)] Every $U$-sheaf on $E$ relative to the canonical topology is representable and $E$ has a small family of generators.
\item[\rm (iv)] There exist a $U$-site $C$ such that $E$ is equivalent to  the category $\widetilde{C}$ of $U$-sheaves on $C$.
\end{enumerate}
\end{theorem}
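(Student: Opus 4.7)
The plan is to establish the cyclic chain of implications
$(\mathrm{i})\Rightarrow(\mathrm{ii})\Rightarrow(\mathrm{iii})\Rightarrow(\mathrm{iv})\Rightarrow(\mathrm{i})$, where $(\mathrm{i})\Leftrightarrow(\mathrm{iv})$ is essentially the definition of a $U$-topos, so the real content lies in $(\mathrm{i})\Rightarrow(\mathrm{ii})\Rightarrow(\mathrm{iii})\Rightarrow(\mathrm{iv})$.

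For $(\mathrm{i})\Rightarrow(\mathrm{ii})$ I would invoke the verification already recorded just before the theorem: the category $\widetilde{C}$ of $U$-sheaves of sets on a site $C\in U$ enjoys properties (a)--(d). Since (a)--(d) are stable under equivalence of categories and $(\mathrm{i})$ furnishes an equivalence $E\simeq\widetilde{C}$, the category $E$ inherits them. This step is bookkeeping and should take only a few lines.

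The decisive step is $(\mathrm{ii})\Rightarrow(\mathrm{iii})$. Equip $E$ with its canonical topology $J_{\mathrm{can}}$, i.e.\ the finest topology for which all representable presheaves are sheaves. Let $(x_i)_{i\in I}$, with $I\in U$, be the generating family provided by (d); the task is to show that every $U$-sheaf $F$ on $(E,J_{\mathrm{can}})$ is representable. The strategy is to write $F$ as a small colimit of representables and then to use the Giraud-type exactness properties (a), (b), (c) to realize that colimit inside $E$. More precisely, consider the small category of elements
\[
\int F \;=\; \bigl\{(i,s)\mid i\in I,\ s\in F(x_i)\bigr\},
\]
and form the presheaf colimit $\varinjlim_{(i,s)}\, h_{x_i}$, where $h_{x_i}=\Hom_{E}(-,x_i)$. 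Because $(x_i)$ is a generating family, the canonical map from this colimit to $F$ is an epimorphism of presheaves, and its kernel pair can again be covered by representables. Using that in $E$ small direct sums exist and are disjoint and universal (b), that equivalence relations are universally effective (c), and that finite projective limits exist (a), one constructs the coequalizer
\[
\coprod_{(j,t)\to(i,s)} x_{j}\ \rightrightarrows\ \coprod_{(i,s)} x_{i}\ \longrightarrow\ X
\]
inside $E$, and verifies that $h_{X}\to F$ is a bijection on sections over every generator $x_i$; the sheaf condition for the canonical topology then promotes this to an isomorphism $h_X\cong F$. This is the step I expect to be the main obstacle, because the ``disjoint and universal'' clause of (b) and the effectivity of (c) must be used very carefully to transport the presheaf colimit into an actual object of~$E$ and to see that the resulting representable is a $J_{\mathrm{can}}$-sheaf in the required sense.

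For $(\mathrm{iii})\Rightarrow(\mathrm{iv})$ I would take a small generating family $(x_i)_{i\in I}$ and let $C$ be the full subcategory of $E$ spanned by the $x_i$, with the topology induced by restricting $J_{\mathrm{can}}$ on $E$. Then $C$ is a $U$-site. Restriction along the inclusion $C\hookrightarrow E$ gives a functor $E\to\widetilde{C}$, and from the representability hypothesis in (iii) together with the universal property of the small generators one checks that this functor is fully faithful and essentially surjective, hence an equivalence. Finally $(\mathrm{iv})\Rightarrow(\mathrm{i})$ is immediate from the definition preceding the theorem, closing the cycle.
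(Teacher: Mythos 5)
The paper does not prove this theorem at all: it is stated as a known result of Giraud and immediately followed by examples, the intended reference being \cite{14} / SGA~4. So there is no in-paper argument to compare yours against; I can only assess your sketch on its own terms. Your overall skeleton $(\mathrm{i})\Rightarrow(\mathrm{ii})\Rightarrow(\mathrm{iii})\Rightarrow(\mathrm{iv})\Rightarrow(\mathrm{i})$ is exactly the standard organization, and the easy legs ($(\mathrm{i})\Rightarrow(\mathrm{ii})$ by invariance of (a)--(d) under equivalence, $(\mathrm{iv})\Rightarrow(\mathrm{i})$ by definition, $(\mathrm{iii})\Rightarrow(\mathrm{iv})$ via the full subcategory on the generators with the induced topology) are fine as described.

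The genuine gap is in $(\mathrm{ii})\Rightarrow(\mathrm{iii})$, and it is not merely a matter of ``careful use'' of (b) and (c). The parallel pair
\[
\coprod_{(j,t)\to(i,s)} x_{j}\ \rightrightarrows\ \coprod_{(i,s)} x_{i}
\]
is \emph{not} an equivalence relation on $\coprod_{(i,s)} x_i$ (it is not even a monomorphism into the square), so axiom (c) --- which only asserts that \emph{equivalence relations} are universally effective --- gives you no coequalizer for it. To make this work you must either (1) first prove that $E$ admits arbitrary small colimits, which requires constructing the equivalence relation generated by a relation, hence images and small unions of subobjects, none of which is literally among (a)--(d); or (2) follow Giraud and take the \emph{kernel pair} $R$ of the comparison map $\coprod h_{x_i}\to F$ in the presheaf category, then show that $R$ itself is covered by generators and descends to an equivalence relation on $\coprod x_i$ \emph{in} $E$ --- and here one must also justify that the coproduct of the $x_i$ taken in $E$ has the correct universal property after sheafification, which is where ``disjoint and universal'' in (b) actually enters. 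Finally, even granted the object $X$, the claim that $h_X\cong F$ needs the intermediate facts that every epimorphism in $E$ is effective and that $E$ is balanced, which are themselves consequences of (a)--(d) requiring proof. None of this is fatal --- it is the content of Giraud's actual argument --- but as written your middle step asserts the conclusion of that argument rather than supplying it.
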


\begin{examples}\indent
\begin{enumerate}
\item {\it Topos associated to a topological space.}

Let $X$ be a small topological space and let ${\rm Ouv}(X)$ be the category of open subsets of $X$ with the canonical topology. The topos of $U$-sheaves on the $U$-site ${\rm Ouv}(X)$ is denoted by
\[
{\rm Top}(X).
\]
Here we can observe that the $U$-sheaves ${\rm Ouv}(X)$ are, in fact, nothing else than the sheaves of sets on the topological space $X$ in the sense of Godement.

According to \cite[pp.~110-111]{15}, a sheaf of sets on $X$ is the same thing as an {\it \'etale space $\widetilde{\mathcal F}$ over $X$.} In fact, to a given \'etale space $X^{\prime}$ over $X$, one can associate its sheaf of sections. Conversely, Godement shows that given a sheaf $\mathcal F$ on $X$ one  can construct an  \'etale space $\widetilde{\mathcal F}$ over $X$ such that the sheaf of sections of $\widetilde{\mathcal F}$ is up to isomorphism equal to the original sheaf $\mathcal F$ \cite[Theoreme 1.2.1, p. 111]{15}. This shows that the topos ${\rm Top}(X)$ is equivalent to the category of \'etale spaces over $X$.

\item {\it The topos $U$-{\rm Ens} of $U$-sets.}

Let $U$-Ens be the category of sets belonging to $U$. When we take $X$ to be a topological space {\it reduced to one point}, then we have the following functor
\[
{\rm Top}(X)\to \text{$U$-{\rm Ens}},\, F\mapsto F(X).
\]
This functor is an equivalence of categories, whence $U$-Ens is a $U$-topos.

\item {\it The topos $\widehat{C}$.}

Let $C$ be a $U$-small category, i.e., ${\rm Ob}(C)$ and ${\rm Mor}(C)$ are isomorphic to elements of $U$. Then one can consider the category of sheaves on $C$ with respect to the chaotic topology. This shows that $\widehat{C}$ is a $U$-topos.
\end{enumerate}
\end{examples}

\begin{definition} Let $E$ and $E^{\prime}$ be two $U$-toposes. By a {\it morphism of toposes from $E$ to $E^{\prime}$} we understand an ordered $3$-tuple $(u_{*},u^{*}, \varphi\e)$, where $u_{*}\colon E\to E^{\prime}$ and $u^{*}\colon E^{\prime}\to E$ are functors and $\varphi\colon \Hom_{E}(u^{*}(-),-)\isoto \Hom_{E^{\prime}}(-,u_{*}(-))$ is an adjunction isomorphism . Moreover, we require that $u_{*}$ commute with finite projective limits.

$u_{*}$ is called the {\it direct image functor} and $u^{*}$ is called the {\it inverse image functor} of the morphism of toposes $u$.
\end{definition}

\begin{example} Let $C$ be a small site, i.e., ${\rm Ob}(C)$ and ${\rm Mor}(C)$ are isomorphic to elements of $U$.  To $C$ one can associate two toposes $\widehat{C}$ and $\widetilde{C}$. Between the two we have the well-known associated sheaf functor
\[
a\colon \widehat{C}\to\widetilde{C}
\]	
Since $a$ commutes with finite projective limits and the  inclusion functor $i: \widetilde{C} \to \widehat{C}$ has a right adjoint,  we can interpret $a$ as the inverse image functor of a morphism of toposes $p\colon \widetilde{C} \to \widehat{C}$. Thus we have $p^{*}=a$ and $p_{*}=i$.
\end{example}

\subsection{Stacks on a site}

\begin{definition} Let $E$ be a site. An {\it $E$-stack} $F$ is a fibered $E$-category $F$ such that, for every $S\in{\rm Ob}(E)$ and every refinement $\mathcal{R}$ of $S$, the restriction functor
\[
\underline{\rm Cart}_{\e E}(E/S, F\e)  \to \underline{\rm Cart}_{\e E}(\mathcal{R}, F\e)
\]
is an {\it equivalence}. If this restriction functor is only {\it fully faithful}, then we say that $F$ is an {\it $E$-prestack}.
\end{definition}

\begin{remarks} \indent
\begin{enumerate}
\item In the previous definition $E/S$ is regarded as a fibered $E$-category via the source functor
\[
E/S\to E,\, (T\to S)\mapsto T
\]
Further, we interpret the covering sieve $\mathcal{R}$ as a full subcategory of $E/S$. Thus $\mathcal{R}$ is a fibered $E$-category via the following composition of functors
\[
\mathcal{R}\to E/S\to E.
\]
\item By $\underline{\rm Cart}_{\e E}(E/S, F)$ we mean the {\it category} of cartesian $E$-functors from $E/S$ to $F$. The {\it set} of cartesian $E$-functors from $E/S$ to $F$ will be denoted by ${\rm Cart}_{E}(E/S, F\e)$.
\end{enumerate}
\end{remarks}

\subsubsection{Practical meaning of {\it prestack} and {\it stack}}

Let $F$ be a prestack on $E$, $\mathcal R$ a refinement of an object $S$ of $E$, $\lambda, \mu$ two cartesian $E$-functors from $E/S$ to $F$.

We set $x=\lambda(S,1_{S})$, $y=\mu(S,1_{S})$,
\[
x_{i}=\lambda(S_{i}, s_{i}), y_{i}=\mu(S_{i}, s_{i})
\]
for every $s_{i}\colon S_{i}\to S$ in $\mathcal R$. Assume that coherent family $\{m_{i}\colon x_{i}\to y_{i}\}$, $i\in J$, of $S_{i}$-morphisms is given. Then the restriction functor $\underline{\rm Cart}_{\e E}(E/S, F)  \to \underline{\rm Cart}_{\e E}(\mathcal{R}, F)$ is fully faithful if
there exists {\it one, and only one,} $S$-morphism $m\colon x\to y$ in $F$ such that $m\circ\lambda(s_{i})=\mu(s_{i})\circ m_{i}$ for all $i\in J$. We say that {\it morphisms in $F$ can be glued together}.

\begin{proposition} Let $E$ be a site and $F$ a fibered $E$-category. In order for $F$ to be a prestack, it is necessary and sufficient that, for every $S\in{\rm Ob}(E)$ and every pair of objects $(x, y)$ in $F_{S}$, the presheaf $\uhom_{\e S}(x,y)$ be a sheaf on $E/S$.
\end{proposition}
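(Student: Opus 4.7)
The plan is to show that both conditions encode the same concrete statement: given $x,y\in F_S$ and a covering sieve $\mathcal R$ of any $(T,f)\in E/S$, a compatible family of morphisms between the pullbacks of $x$ and $y$ along the maps in $\mathcal R$ extends uniquely to a morphism between their pullbacks along $f$ itself. The key bridge is that every object $z\in F_T$ determines a canonical cartesian $E$-functor $\lambda_z\colon E/T\to F$, $(u\colon U\to T)\mapsto u^{*}z$, and that every cartesian $E$-functor from $E/T$ to $F$ is canonically isomorphic to one of this form via evaluation at $(T,1_T)$. Natural transformations $\lambda_z\to\lambda_{z'}$ are in bijection with $\Hom_T(z,z')$.

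For $(\Rightarrow)$, assume $F$ is a prestack. Fix $(T,f)\in E/S$ and a covering sieve $\mathcal R$ of $T$; by the base-change axiom this is also a covering sieve of $(T,f)$ in $E/S$. A matching family in $\uhom_{\e S}(x,y)$ over $\mathcal R$ unravels to a coherent family $\{m_i\colon (ft_i)^{*}x\to (ft_i)^{*}y\}_{t_i\in\mathcal R}$, and, through the canonical functors $\lambda_{f^{*}x},\lambda_{f^{*}y}\colon E/T\to F$, this is precisely a natural transformation between their restrictions to $\mathcal R$. The fully faithful hypothesis, applied at $T$, supplies a unique lift to a natural transformation $\lambda_{f^{*}x}\to\lambda_{f^{*}y}$; its value at $(T,1_T)$ is the required unique amalgamation $f^{*}x\to f^{*}y$.

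For $(\Leftarrow)$, conversely assume each $\uhom_{\e S}(x,y)$ is a sheaf. Given cartesian $E$-functors $\lambda,\mu\colon E/S\to F$, a covering sieve $\mathcal R$ of $S$, and a natural transformation $\varphi\colon \lambda|_{\mathcal R}\to\mu|_{\mathcal R}$, set $x=\lambda(S,1_S)$ and $y=\mu(S,1_S)$. After replacing $\lambda,\mu$ by the canonically isomorphic $\lambda_x,\lambda_y$, the transformation $\varphi$ becomes a matching family of $\uhom_{\e S}(x,y)$ over $\mathcal R$; the sheaf hypothesis yields a unique morphism $x\to y$, and from it (by the cartesian property) a unique natural transformation $\lambda\to\mu$ extending $\varphi$. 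This establishes both faithfulness and fullness of the restriction functor, as required.

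The main obstacle I anticipate is not any single deep step but the careful bookkeeping tying together the four viewpoints in play: sections of the presheaf $\uhom_{\e S}(x,y)$ on $E/S$, morphisms in the fibres $F_T$, natural transformations of cartesian $E$-functors, and coherent families indexed by a sieve. One must verify that these identifications are natural with respect to further restriction, so that ``matching family'' on the presheaf side genuinely matches ``restriction of a natural transformation'' on the stack side. A minor but necessary preliminary is the observation that the covering sieves of $(T,f)$ in the induced topology on $E/S$ coincide with the covering sieves of $T$ in $E$, ensuring the two formulations of the sheaf condition agree.
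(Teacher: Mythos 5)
Your proof is correct. The paper states this proposition without giving a proof (it is taken from Giraud), and the informal discussion immediately preceding it --- identifying a coherent family $\{m_i\colon x_i\to y_i\}$ over a refinement with the data that must glue to a unique $S$-morphism $m\colon x\to y$ --- is precisely the dictionary between natural transformations of cartesian functors and sections of $\uhom_{\e S}(x,y)$ that you make explicit, so your argument is the intended one.
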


\begin{remark} The {\it set} of $S$-morphisms from $x$ to $y$ will we denoted by $\Hom_{S}(x,y)$.
\end{remark}

If $F$ is a {\it stack} on $E$, then it is certainly a prestack and morphisms can be glued together in $F$. Let $x_{i}\in {\rm Ob}(F_{S_{i}})$, $s_{i}\colon S_{i}\to S\in\mathcal R$ be a {\it coherent} family of objects of $F$, in other words, it consists of a cartesian $E$-functor  $\lambda\colon\mathcal R\to F$ such that $\lambda(S_{i},s_{i})=x_{i}$ for every $s_{i}\colon S_{i}\to S\in\mathcal R$. Since the functor $\underline{\rm Cart}_{\e E}(E/S, F)  \to \underline{\rm Cart}_{\e E}(\mathcal{R}, F)$ is an equivalence, there exists a cartesian $E$-functor $\lambda^{*}\colon E/S\to F$ such that the restriction
$\lambda^{*}\vert_{\e\mathcal R}$ is isomorphic to $\lambda$. This means that there exists an object $x\in F_{S}$ which is locally isomorphic to the given objects $x_{i}$. More concisely,  we say that {\it the objects in $F$ can be glued together}. We conclude that an $E$-stack is a fibered $E$-category in which morphisms and objects can be glued together.

\begin{examples} In the examples that follow, the notation is that of Giraud \cite[II]{14}.
\begin{enumerate}
	
\item ${\rm Fl}(E)$.

Let $E$ be a category in which {\it finite fibered products} exist. We define a category ${\rm Fl}(E)$ over $E$. If $S\in{\rm Ob}(E)$, then we take as fiber category ${\rm Fl}(E)_{S}$ the category of objects over $S$. Thus we have ${\rm Fl}(E)_{S}=E/S$. Let $f\colon T\to S$ be any morphism in $E$, $x\colon X\to S$ an object of ${\rm Fl}(E)_{S}$ and $y\colon
Y\to T$ an object in ${\rm Fl}(E)_{T}$. As $f$-morphisms from $y\colon Y\to T$ to $x\colon X\to S$ we take all those morphisms $m\colon Y\to X$ in $E$ which satisfy $x\circ m=f\circ y$. Since finite fibered products exist in $E$, ${\rm Fl}(E)$ is a fibered $E$-category.

{\it If $E$ is a topos, then ${\rm Fl}(E)$ is a stack} \cite[II, 3.4.8.2]{14}.

\item {\it The split stack of sheaves of sets on $E$: ${\rm FAISCIN}(E)$.}

Let $E$ be a site. Then we have a contravariant functor
\[
{\rm FAISCIN}(E)\colon E^{\e\rm op}\to ({\rm Cat})
\]
which associates to every $S\in{\rm Ob}(E)$ the category $\widetilde{E/S}$ and to every arrow $f\colon T\to S$ the functor $\widetilde{E/S}\to\widetilde{E/T}$ which is obtained by composition with the functor $E/f\colon E/T\to E/S$.

Since projective limits indexed by a category that is an element of $U$ exist in $U$-Ens, we conclude from \cite[II, 3.4.4]{14} that ${\rm FAISCIN}(E)$ is a {\it stack} on $E$. Finally, note that there exists an equivalence of categories.
\[
\widetilde{E}\isoto\varprojlim({\rm FAISCIN}(E)/E\e)
\]
Here $\varprojlim({\rm FAISCIN}(E)/E)$ is the {\it category of cartesian sections} of ${\rm FAISCIN}(E)$.

\item {\it The split stack of sheaves of groups on $E$: ${\rm FAGRSC}(E)$.}

Let $E$ be a site. We write $U$-Gr for the category of groups belonging to $U$ and ${\rm Fagr}(E)$ for the category of sheaves on $E$ with values in $U$-Gr.

We define a split $E$-category by means of the contravariant functor
\[
{\rm FAGRSC}(E)\colon E^{\e\rm op}\to ({\rm Cat})
\]
which associates to every $S\in{\rm Ob}(E)$ the category ${\rm Fagr}(E/S)$ and to every arrow $f\colon T\to S$ the functor ${\rm Fagr}(E/S\e)\to{\rm Fagr}(E/T\e)$ which is obtained by composition with the functor $E/f\colon E/T\to E/S$. According to \cite[II, 3.4.4]{14}, ${\rm FAGRSC}(E)$ is a stack. Also here we have an equivalence of categories
\[
{\rm Fagr}{E}\isoto\varprojlim({\rm FAGRSC}(E)/E\e).
\]
\end{enumerate}
\end{examples}

\section{Objects with group actions. Torsors}

From now on we will work in a fixed universe $U$. This will not be mentioned again. When we speak of a site, a topos, a presheaf, a sheaf, we actually mean a $U$-site, a $U$-topos, a $U$-presheaf, a $U$-sheaf.

\subsection{Objects with group actions in a category}

Let $E$ be a category, $G$ a group object in $\widehat{E}$, i.e., a presheaf with values in the category of groups, $P$ any object in $\widehat{E}$.

\begin{definition} We say that $P$ is a {\it left} (respectively, {\it right}) {\it $G$-object} if, for every object $S\in E$, the set $P(S)$ is endowed with a structure of left (respectively, right) $G(S)$-set in such a way that, for every arrow $f\colon T\to S$ in $E$, the map $P(f)\colon P (S)\to P (T)$ is compatible with the group homomorphism $G(f)\colon G(S)\to G (T)$. Equivalently, there exists a morphism
\[
\mu\colon G\times P\to P \quad \text{(respectively, $\mu\colon P\times G\to P\e$)}
\]
such that, for every $S\in\bj(E)$, the set $P(S)$ is endowed with a structure of $G(S)$-set. 
\end{definition}

The previous definition enables us to define the concept of an object with a group action in any category.

\begin{definition} Let $E$ be a category and $A$ an object in $E$. We say that $A$ is a {\it group object} in $E$ if, for every object $X$ in $E$, a group law on the set $A(X)=\Hom(X,A)$ is given such that, for every arrow $f\colon Y\to X$ in $E$, the map $A(f)\colon A(X)\to A(Y)$ is a morphism of groups. Thus, to say that $A$ is a group object in the category $E$ is equivalent to saying that $A$ is an object in $E$ for which $h_{A}$ is a sheaf of groups on $E$. If $E$ has a final object $e$ and finite products exist in $E$, then the above is equivalent to giving morphisms
\[
\mu\colon A\times A\to A, i\colon A\to A, \nu\colon e\to A
\]
such that, if $\phi\colon A\to e$ is the final morphism of $A$, then the following diagrams commute:
\[
\xymatrix{A\ar[ddrr]_{1_{\lbe A}}\ar[r]^{(1_{A},\phi)}\ar[d]_{(\phi,1_{A})}& A\times e\ar[r]^{1_{A}\times\nu}& A\times A \ar[dd]^{\mu}\\
e\times A\ar[d]_{\nu\times 1_{A}}&&\\
A\times A\ar[rr]^{\mu}&& A,}
\]
\[
\xymatrix{A\ar[dd]_{(1_{A},i)}\ar[dr]^{\phi}\ar[rr]^{(i,1_{A})}&& A\times A \ar[dd]^{\mu}\\
& e\ar[dr]^{\nu}&\\
A\times A\ar[rr]_{\mu}&& A}
\]
and
\[
\xymatrix{A\times A\times A\ar[d]_{\mu\times 1_{A}}\ar[rr]^{1_{A}\times \mu}&& A\times A\ar[d]^{\mu}\\
A\times A\ar[rr]_{\mu}&& A}
\]
\end{definition}

\begin{definition} Let $A$ be a group object and $S$ an arbitrary object in a  category $E$. 
We will say that $S$ is a {\it left} (respectively, {\it right}) {\it $A$-object} if $h_{S}$ is a left (respectively, right) $h_{A}$-object in $\widehat{E}$. When finite products and a final object $e$ exist in $E$, then we can say that a structure of left $A$-object on an object $S$ of $E$ is equivalent to giving a morphism in $E$
\[
\lambda\colon A\times S\to S
\]
such that the following diagrams commute
\[
\xymatrix{A\times A\times S\ar[d]_{1_{A}\times\lambda}\ar[rr]^{\tau\times 1_{S}}&& A\times S\ar[d]^{\lambda}\\
A\times S\ar[rr]_{\lambda}&& S}
\]
and
\[
\xymatrix{e\times S\ar[drr]_{{\rm pr}_{2}}\ar[rr]^{(\nu, 1_{S})}&& A\times S\ar[d]^{\lambda}\\
&& S.}
\]
Here $\tau\colon A\times A\to A$ is the morphism that defines the group structure on $A$ and $\nu\colon e\to A$ is the unit section of $A$. We say that $S$ is an object with left group action.

Notation: $(S, A,\lambda)$ or, more briefly, $(S, A)$.

Similar considerations apply to objects endowed with right group actions.
\end{definition}

The previous definition shows that a functor that commutes with finite projective limits transforms an object with a group action into an object with a group action.

\begin{example} Let $E$ be a site, $G$ a sheaf of groups on $E$. Thus $G$ is a group object in $\widetilde{E}$. Let $P$ be a sheaf of sets on $E$, so that $P\in{\rm Ob}(\widetilde{E})$. A left action of $G$ on $P$ is equivalent to giving, for each object $S$ of $E$, a left action of the group $G(S)$ on the set $P(S)$ such that, for every arrow $f\colon T\to S$ in $E$, the map $P(f)\colon P(S)\to P(T)$ is compatible with the group homomorphism $G(f)\colon G(S)\to G(T)$.

This is also the same as giving a morphism of sheaves of groups
\[
\mu\colon G\to\aut(P).
\]	
\end{example}

\begin{remark} In the following, when we discuss objects with group actions without mentioning left or right, then a {\it right} action must be assumed (as in Giraud \cite{14}).
\end{remark}

\subsubsection{Notation}

The category of objects with group actions in $E$ is denoted by
\[
{\rm Oper}(E)
\]
If $G$ is a group in the category $E$, then a right (or left) {\it $G$-object of $E$} is an object with a right (or left) group action where the acting group is equal to $G$. The category of right (respectively, left) $G$-objects in $E$ will de denoted by
\[
{\rm Oper}(E; G)\quad (\text{respectively, } {\rm Oper}(E; G^{\e\rm o}))
\]

\subsubsection{The fibered $E$-category ${\rm Oper}(E)$.}\label{216}

We assume here that finite fiber products exist in $E$. Then we define an $E$-category ${\rm Oper}(E)$ as follows:

\begin{itemize}
\item the objects with projection $S\in{\rm Ob}(E)$ are the objects with group actions in $E/S$. The morphisms with projection $1_{S}$ are those morphisms in $E/S$ which are compatible with the structures on both ends. Thus we have ${\rm Oper}(E)_{S}={\rm Oper}(E/S)$.

\item the morphisms with projection $f\colon T\to S$ from an object $(P^{\e\prime},G^{\e\prime})\in {\rm Oper}(E)_{T}$ to an object $(P, G)\in {\rm Oper}(E)_{S}$ are all those $f$-morphisms from $(P^{\e\prime},G^{\e\prime})$ to $(P, G)$ that respect the structures.

\end{itemize}

Let $(P, G)$ be an object of ${\rm Oper}(E)_{S}$ and $f\colon T\to S$ an arrow in $E$.
Since finite fiber products exist in E, we can form the following fiber products:
\[
\xymatrix{P^{\e T}=P\times_{S}T\ar[d]\ar[r]& P\ar[d]\\
T\ar[r]^{f}& S}
\]
and
\[
\xymatrix{G^{\e T}=G\times_{S}T\ar[d]\ar[r]& G\ar[d]\\
T\ar[r]^{f}& S}
\]
Then $(P^{\e T}, G^{\e T})\in {\rm Oper}(E)_{T}$ is an inverse $f$-image of $(P, G)$.
It follows, in addition, that a composition of two cartesian morphisms is cartesian.

The $E$-category ${\rm Oper}(E)$ is thus {\it fibered}.

When $E$ is a site, the fibered $E$-category ${\rm Oper}(E)$ is not, in general, a stack on $E$. But if $E$ is a {\it topos}, then ${\rm Oper}(E)$ is a {\it stack on $E$} \cite[III, 1.1.7]{14}.

\begin{remark} Let $G$ be a group object in $E$. Similarly to the previous construction, we can define a fibered $E$-category ${\rm Oper}(E;G)$ whose fiber over $S$ is equal to the category
${\rm Oper}(E/S;G\times S)$ of objects in $E/S$ endowed with a $(G\times S)$-action.
\end{remark}

\subsection{Torsors in a topos}

\begin{definition}\label{dt} Let $E$ be a topos and $S$ an object in $E$.  A {\it torsor in $E$ over $S$} is an $S$-object of $E$ with group action $(P, G, m)$ such that:
\begin{enumerate}
\item[\rm (i)] the morphism $P\to S$ is an epimorphism, and
\item[\rm (ii)] the morphism $u\colon P\times_{S}G\to P\times_{S}P, (p,g)\mapsto (p,m(p,g)),$ is an isomorphism.
\end{enumerate}
\end{definition}

What do these conditions mean?

Condition (i): according to \cite[III, 1.4.1.1]{14}, the requirement means that $P\to S$ is an epimorphism and there exists an epimorphic family $\{S_{i}\to S\}$, $i\in J$, such that $\Hom_{\e S}(S_{i},P)\neq\emptyset$ for all $i\in J$.

Now we know that, in regard to the canonical topology of a topos, a family is epimorphic if, and only if, it is covering. Thus condition (i) states that {\it $P$ has a section locally.}

Condition (ii) expresses the fact that $G$ acts {\it freely and transitively} on $P$. When only this condition is satisfied, we say that $P$ is a {\it pseudo-torsor.}

By a {\it torsor of $E$} we mean an $e$-torsor, where $e$ is the final object of the topos $E$. In this sense, we can say that an $S$-torsor of $E$ is the same thing as a torsor of $E/S$. If we want to indicate the operating group, then we will speak of a $G$-torsor on $S$, where $G$ represents the operating group.

\begin{proposition} The $G$-object $G_{d}$ is a torsor.
\end{proposition}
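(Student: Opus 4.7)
The plan is to verify the two defining conditions of Definition \ref{dt} for the data $(P, G, m) = (G_d, G, \mu)$, with $S = e$ the final object of $E$ and $\mu\colon G \times G \to G$ the group multiplication viewed as a right action of $G$ on itself by translation.

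For condition (i), the unit section $\nu\colon e \to G$ that is part of the group-object structure is by definition a section of the structure morphism $G \to e$. Hence $G \to e$ is a split epimorphism, in particular epimorphic. In the equivalent formulation recalled immediately after Definition \ref{dt}, the trivial covering family $\{1_{e}\colon e \to e\}$ is already lifted by $\nu \in \Hom(e, G)$, so the local-section requirement is satisfied automatically.

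For condition (ii), the morphism $u\colon G \times G \to G \times G$, $u = (\mathrm{pr}_{1}, \mu)$, admits as inverse the morphism $v$ which on $T$-valued points sends $(g, h)$ to $(g, i(g) \cdot h)$, where $i\colon G \to G$ is the inversion morphism of the group object. Categorically, $v$ is the evident composition built from $i$, $\mu$, a diagonal, and projections, all of which are available because $E$, being a topos, has finite limits. To check that $u \circ v = v \circ u = 1_{G \times G}$ it suffices by Yoneda to check on $T$-valued points for each $T \in \mathrm{Ob}(E)$; there the required equalities become the elementary identities $g \cdot (i(g) \cdot h) = h$ and $i(g) \cdot (g \cdot h) = h$ in the set-theoretic group $G(T)$, which are immediate from the inverse and associativity axioms.

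The only thing to watch is bookkeeping: the pointwise verifications lift to equalities of morphisms in $E$ precisely because the group-object axioms are the diagrammatic incarnation of the set-theoretic group axioms, via the commuting diagrams given in the definition of a group object. There is no deeper obstacle; the content of the proposition is essentially that $G_{d}$ is the ``trivial'' model of a $G$-torsor which every $G$-torsor is forced to look like locally.
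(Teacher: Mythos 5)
Your proof is correct and follows essentially the same route as the paper: condition (i) is obtained from the unit section (the paper phrases this as $\Hom_{S}(X,G_{d})$ being a group, hence nonempty, for every $X\to S$, which is the same observation applied over a general base $S$ rather than the final object), and condition (ii) is checked sectionwise/on $T$-valued points, where it reduces to the fact that right translation is a free transitive action of a group on itself. Your explicit construction of the inverse of $(\mathrm{pr}_{1},\mu)$ via the inversion morphism is just a more detailed spelling-out of the paper's ``argument by argument'' remark.
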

\begin{proof} By $G_{d}$ we mean the right $G$-object obtained by letting $G$ act on itself via right translations. The morphism $G_{\le d}\to S$ is an epimorphism since, for every $X\to S$ in $E/S$, $\Hom_{S}(X, G_{\le d})$ is a group and therefore has at least one element. Note that the set of all arrows of $E$ with target $S$ is a covering sieve of $S$ for the canonical topology of the topos, whence it is an epimorphic family.

Also, condition (ii) is fulfilled since argument by argument we get the operation of a group on itself via right translations and this is a free transitive action.
\end{proof}

\begin{definition} The $G$-torsor $G_{d}$ is called the {\it trivial torsor}.
\end{definition}

\begin{proposition}\label{t1} Being a torsor is a local property.
\end{proposition}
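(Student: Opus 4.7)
The plan is to unwind Definition \ref{dt} and verify each of the two defining conditions separately, using the hypothesis that there exists a covering family $\{S_i\to S\}_{i\in J}$ such that each pullback $(P_i,G_i)=(P\times_S S_i,\,G\times_S S_i)$, equipped with the induced action, is a torsor over $S_i$. The main underlying fact I will lean on is that, with respect to the canonical topology of a topos, covering families are exactly the epimorphic families, and that both ``being an epimorphism'' and ``being an isomorphism'' can be tested after pulling back along a covering.

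For condition (i), I would argue that each $P_i\to S_i$ is an epimorphism by the torsor hypothesis, and that the family $\{S_i\to S\}$ is epimorphic because it is covering. Composing the two epimorphic families yields an epimorphic family $\{P_i\to S\}_{i\in J}$. Since each of these composed morphisms factors as $P_i\to P\to S$ through the fixed map $P\to S$, a standard categorical fact (post-composing an epimorphic family through a common arrow forces that arrow to be an epimorphism) gives that $P\to S$ is an epimorphism.

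For condition (ii), consider the candidate isomorphism
\[
u\colon P\times_S G\longrightarrow P\times_S P,\qquad (p,g)\mapsto (p,\,m(p,g)).
\]
Pulling back along $S_i\to S$ and using that $-\times_S S_i$ preserves fiber products, I identify the pullback $u\times_S S_i$ with the analogous map
\[
u_i\colon P_i\times_{S_i}G_i\longrightarrow P_i\times_{S_i}P_i
\]
associated to the $(P_i,G_i)$-action. By hypothesis each $u_i$ is an isomorphism. Since $\{S_i\to S\}$ is a covering and isomorphisms in a topos are local (a morphism that becomes an isomorphism after pullback to every member of a covering is already an isomorphism, by the gluing of morphisms in the stack $\mathrm{Fl}(E)$), $u$ itself is an isomorphism.

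The only step that requires care is invoking these ``local character'' principles: namely, that in a topos, covering families are epimorphic, and that being an isomorphism can be checked on a cover. Both are standard consequences of the Giraud axioms recalled in the previous section (in particular of properties (a), (c), (d)), so no real obstacle remains once they are cited; the rest is routine manipulation of fiber products.
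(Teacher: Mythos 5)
Your proposal is correct and follows essentially the same route as the paper: the epimorphism condition is handled via the fact that covering families are epimorphic in the canonical topology, and the isomorphism condition is checked after pullback to a cover using that $\mathrm{Fl}(E)$ is a stack on the topos $E$. You simply spell out the epimorphic-family composition argument that the paper compresses into the remark that the condition ``$P$ has a section locally'' is manifestly local.
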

\begin{proof} We need to show that $P$ is a $G$-torsor as soon as this is so locally.

The condition that $P\to S$ be an epimorphism is certainly local since this expresses the fact that $P$ has a section locally. That $P\times_{S}G\to P\times_{S}P$ is an isomorphism is also a local condition since ${\rm Fl}(E)$ is an $E$-stack because $E$ is a topos.
\end{proof}

\begin{proposition}\label{t2} The property of being a torsor is stable under base changes.
\end{proposition}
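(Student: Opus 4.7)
My plan is to verify the two defining conditions of Definition \ref{dt} for the pullback torsor. Suppose $(P,G,m)$ is a $G$-torsor over $S$, and let $f\colon S'\to S$ be any morphism in the topos $E$. Form the base changes
\[
P' = P\times_S S',\qquad G' = G\times_S S',
\]
and let $m'\colon P'\times_{S'}G'\to P'$ be the action obtained from $m$ by base change (note that $G'$ is a group object over $S'$ and $P'$ an object with $G'$-action because the pullback functor $E/S\to E/S'$ commutes with finite fibered products, and hence sends group objects to group objects and objects with group action to objects with group action, as noted just after the definition of objects with group actions). I claim $(P',G',m')$ is a torsor over $S'$.

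For condition (i), I need $P'\to S'$ to be an epimorphism. This is a standard property of any topos: since $E$ has the four properties (a)--(d) of Giraud's theorem, colimits are universal, so in particular epimorphisms are stable under pullback. Thus the pullback of $P\to S$ along $f$, which is $P'\to S'$, is again an epimorphism.

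For condition (ii), the key observation is that the natural maps
\[
P'\times_{S'}G' \;\cong\; (P\times_S G)\times_S S',\qquad P'\times_{S'}P' \;\cong\; (P\times_S P)\times_S S'
\]
identify the morphism $u'\colon P'\times_{S'}G'\to P'\times_{S'}P'$ with the base change of $u\colon P\times_S G\to P\times_S P$ along $f\colon S'\to S$. Since $u$ is an isomorphism by hypothesis, and the pullback of an isomorphism is an isomorphism in any category, $u'$ is an isomorphism as well.

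The entire argument is therefore essentially formal, and the only conceptual input is the universality of epimorphisms in a topos (used for condition (i)); the compatibility of fibered products needed to identify $u'$ with the base change of $u$ is routine. Alternatively, one could deduce the result from Proposition \ref{t1}: pick a covering $\{S_i\to S\}$ over which $P$ trivializes; the base-changed family $\{S_i\times_S S'\to S'\}$ is again a covering (coverings are stable under base change in a site), and $P'$ is trivial over each $S_i\times_S S'$, hence a torsor locally, hence a torsor. Either route works; I prefer the direct one since it avoids invoking local triviality.
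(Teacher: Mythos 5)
Your proof is correct and follows essentially the same route as the paper: condition (i) via the universality of epimorphisms in a topos, and condition (ii) by identifying $u'$ with the base change of the isomorphism $u$, which the paper phrases as the ``functorial nature of the base change operations.'' The alternative argument via local triviality is a fine remark but not needed.
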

\begin{proof} The final morphism $P\to S$ remains an epimophism after base change since in a topos the epimorphisms are universal.
The isomorphism $P\times_{S}G\to P\times_{S}P$ remains so after a base change because of the functorial nature of the base change operations.
\end{proof}

\subsubsection{The stack of torsors on a topos $E$}

As we have seen in \ref{216}, the $E$-category ${\rm OPER}(E)$ is a stack on $E$.
We consider now the full subcategory ${\rm TORS}(E)$ whose objects with projection $S\in{\rm Ob}(E)$ are the $S$-torsors of $E$. By Propositions \ref{t1} and \ref{t2}, we conclude that ${\rm TORS}(E)$ is an $E$-stack. We call ${\rm TORS}(E)$ the {\it stack of torsors on $E$}. If $G$ is a group of $E$ then we can, in a similar fashion starting with ${\rm OPER}(E;G)$, define the stack of $G$-torsors. We denote it by ${\rm TORS}(E;G)$.

The {\it category} of torsors of $E$ is denoted by ${\rm Tors}(E)$.

\begin{proposition} \label{ltor} Let $S$ be an object of a topos $E$ and let $G$ be a group of $E$.
\begin{enumerate}
\item[\rm (i)] In order for a pseudo-torsor $P$ under $G$ over $S$ to be $G$-isomorphic to $G_{d}$, it is necessary and sufficient that $P$ have a section.
\item[\rm (ii)] In order for a $G$-object $P$ to be a $G$-torsor, it is necessary and sufficient that $P$ be locally isomorphic to the trivial torsor $G_{d}$.
\end{enumerate}
\end{proposition}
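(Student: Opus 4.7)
The plan is to prove (i) first, and then to derive (ii) formally from (i), Propositions \ref{t1} and \ref{t2}, and the characterization of ``$P \to S$ is an epimorphism'' recorded immediately after Definition \ref{dt}.

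The necessity half of (i) is essentially free: the trivial torsor $G_{d}$ carries the canonical section $S \to G_{d}$ induced by the unit section $\nu$ of $G$, so a $G$-isomorphism $P \isoto G_{d}$ transports this section to a section of $\pi \colon P \to S$. For sufficiency, assume $s \colon S \to P$ is a section of $\pi$. I would define
\[
\phi \colon G_{d} \longrightarrow P, \qquad \phi = m \circ (s \times_{S} 1_{G}),
\]
where $m \colon P \times_{S} G \to P$ is the right action and $G_{d}$ is viewed as an $S$-object in the evident way; in the element-theoretic form used in Definition \ref{dt}, one has $\phi(g) = s \cdot g$, and right $G$-equivariance is immediate. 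For the inverse I would use the pseudo-torsor hypothesis, namely that $u \colon P \times_{S} G \isoto P \times_{S} P$, $(p, g) \mapsto (p, m(p, g))$, is an isomorphism. Set
\[
\psi \colon P \xrightarrow{(s \circ \pi,\, 1_{P})} P \times_{S} P \xrightarrow{u^{-1}} P \times_{S} G \xrightarrow{\mathrm{pr}_{2}} G_{d}.
\]
A brief diagram chase with the definition of $u$ then yields $\phi \circ \psi = 1_{P}$ and $\psi \circ \phi = 1_{G_{d}}$.

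For the ``only if'' direction of (ii): if $P$ is a $G$-torsor over $S$, the remark following Definition \ref{dt} supplies a covering sieve $\{s_{i} \colon S_{i} \to S\}$ with $\Hom_{S}(S_{i}, P) \neq \emptyset$. By Proposition \ref{t2} every pullback of $P$ along $s_{i}$ remains a torsor, in particular a pseudo-torsor, and admits a section, so by part (i) it is $G$-isomorphic to the trivial torsor over $S_{i}$. Conversely, $G_{d}$ itself is a torsor (by the proposition stated just above), and being a torsor is local by Proposition \ref{t1}, so any $G$-object that is locally $G$-isomorphic to $G_{d}$ is locally a torsor, hence a torsor.

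The main obstacle is the sufficiency half of (i): the description of $\phi$ is immediate, but verifying that $\psi$ is its two-sided inverse requires carefully unwinding $u$, and it is the only point at which the full strength of the pseudo-torsor hypothesis is invoked. This verification is cleanest if one reasons with generalized elements, which is legitimate in any topos.
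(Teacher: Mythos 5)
Your proposal is correct and follows essentially the same path as the paper: part (ii) is derived from part (i) together with Propositions \ref{t1} and \ref{t2} and the local-section reading of the epimorphism condition, exactly as in the text. The only difference is in part (i), where the paper cites Giraud's canonical bijection $\Hom_{G}(G_{d},P)\isoto P(S)$ and the fact that a $G$-morphism from $G_{d}$ to a pseudo-torsor is automatically an isomorphism, while you construct the morphism $\phi(g)=s\cdot g$ and exhibit its two-sided inverse explicitly via $u^{-1}$ — a correct and self-contained substitute for that citation.
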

\begin{proof}\indent
\begin{enumerate}
\item[\rm (i)] According to \cite[III, 1.2.7 (i bis)]{14}, we have a canonical bijection
\[
\Hom_{G}(G_{d},P)\isoto P(S)=\Hom_{S}(S,P)
\]
Here $\Hom_{S}(S,P)$ is the set of sections of $P$. Since $P$ is a pseudo-torsor, every $G$-morphism $G_{d}\to P$ is a $G$-isomorphism.  Thus (i) is proved.

\item[\rm (ii)] Let $P$ be a $G$-torsor.  Then $P$ is a $G$-torsor locally and certainly also a pseudo-torsor.  Since $P\to S$ is an epimorphism, $P$ has a section locally and so, by (i), $P$ is locally $G$-isomorphic to the trivial $G$-torsor $G_{d}$. Conversely, assume that $P$ is locally isomorphic to $G_{d}$. Then $P$ is a $G$-torsor locally. By Proposition \ref{t1}, we conclude that $P$ is a $G$-torsor.	

\end{enumerate}
\end{proof}

\begin{proposition}\label{228} In the stack ${\rm TORS}(E;G)$, the following properties hold.
\begin{enumerate}
\item[\rm (i)] Any two $G$-torsors are locally $G$-isomorphic.
\item[\rm (ii)] Every $G$-morphism of $G$-torsors is an isomorphism.
\end{enumerate}
\end{proposition}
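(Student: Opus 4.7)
My plan is to reduce both assertions to Proposition \ref{ltor}, which characterises a $G$-torsor as a $G$-object that is locally $G$-isomorphic to the trivial torsor $G_{d}$. The arguments are then purely formal manipulations of covering sieves together with the explicit description of $G$-equivariant endomorphisms of $G_{d}$.

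For (i), I would take two $G$-torsors $P$ and $Q$ over the same base $S$, apply Proposition \ref{ltor}(ii) to each to obtain coverings $\{S_{i}\to S\}$ and $\{T_{j}\to S\}$ trivialising $P$ and $Q$ respectively, and then refine to the common family $\{S_{i}\times_{S}T_{j}\to S\}$, which is again covering by stability under base change and the local character axiom of the topology. On each piece both $P$ and $Q$ become $G$-isomorphic to $G_{d}$, and hence $G$-isomorphic to one another by composing the two local trivialisations.

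For (ii), the plan has two ingredients. First, ``being an isomorphism'' is a local property on the topos $E$: since $\mathrm{Fl}(E)$ is an $E$-stack (as was already invoked in the proof of Proposition \ref{t1}), a morphism whose pullback along a covering is invertible is itself invertible. Second, I would compute $\Hom_{G}(G_{d},G_{d})$ via the bijection used in the proof of Proposition \ref{ltor}(i), which identifies this set with $G_{d}(S)=G(S)$; the $G$-morphism attached to a section $g$ is left translation by $g$, manifestly an isomorphism with inverse translation by $g^{-1}$. Pulling a given $G$-morphism $u\colon P\to Q$ back to a covering over which both torsors trivialise, $u$ becomes such a translation, hence is locally invertible, hence invertible.

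The only real obstacle is the appeal to locality of ``being an isomorphism''; the remainder is bookkeeping with coverings. If one wished to avoid that appeal, part (ii) could alternatively be proved by using the defining isomorphism $P\times_{S}G\isoto P\times_{S}P$ to show directly that a $G$-equivariant map between pseudo-torsors with locally existing sections is forced to be bijective in the topos, but the route through Proposition \ref{ltor} stays closest to the surrounding text.
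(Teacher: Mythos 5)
Your proposal is correct and follows essentially the same route as the paper: part (i) by local trivialisation of both torsors via Proposition \ref{ltor}(ii) on a common refinement, and part (ii) by the locality of ``being an isomorphism'' (the paper invokes the stack property of ${\rm TORS}(E;G)$ directly to glue the local inverses, where you invoke ${\rm Fl}(E)$ as in Proposition \ref{t1} --- the same mechanism) combined with the observation that every $G$-morphism $G_{d}\to G_{d}$ is a left translation, hence invertible. Your explicit identification $\Hom_{G}(G_{d},G_{d})\cong G(S)$ merely spells out what the paper leaves as ``every morphism $G_{d}\to G_{d}$ is an isomorphism.''
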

\begin{proof} Assertion (i) is clear when one notes that a $G$-torsor is locally isomorphic to the trivial torsor $G_{d}$. To prove (ii), we note that, since ${\rm TORS}(E;G)$ is a stack, it is enough to prove that every $G$-morphism is locally an isomorphism. But our two $G$-torsors locally reduce to the trivial $G$-torsor $G_{d}$ and every morphism $G_{d}\to G_{d}$ is an isomorphism. 
\end{proof}

\begin{remark} Later, we will express (i) and (ii)  more succinctly by saying that ${\rm TORS}(E;G)$ is a {\it gerbe} on $E$.
\end{remark}

\begin{definition} Let $P$ be an $A$-torsor of the topos $E$. Then we can consider the following sheaf of groups
\[
{\rm ad}(P)=\uaut_{\le A}(P\e)	
\]
Since $E$ is a topos, the indicated sheaf determines a group object of $E$ that we denote by the same symbol. We call this group object the {\it adjoint group of the $A$-torsor $P$}.
\end{definition}

\subsection{Torsors on a site}

We will show now how the concept of {\it torsor on a site} can be defined using the concept of {\it torsor on a topos} of Definition \ref{dt}.

\begin{definition} Let $E$ be a site.
\begin{enumerate}
\item[\rm (i)] A torsor on $E$ is a torsor of the topos $\widetilde{E}$.
\item[\rm (ii)] Let $S$ be an object of $E$. An $S$-torsor on $E$ is an $\varepsilon(S)$-torsor of the topos $\widetilde{E}$. Here $\varepsilon\colon E\to \widetilde{E}$ is the functor that maps an object $S\in E$ to the object $a(h_{S})\in \widetilde{E}$.
\item[\rm (iii)] Let $G$ be a sheaf of groups on $E$ or, in other words, a group object of $\widetilde{E}$. A $G$-torsor on $E$ is a $G$-torsor of the topos $\widetilde{E}$.
\end{enumerate}
\end{definition}

When we say that $P$ is a $G$-torsor on the site $E$, we mean that
\begin{itemize}
\item $G$ is a sheaf of groups on $E$.
\item $P$ is a sheaf of sets on $E$.
\item $G$ acts freely and transitively on $P$.
\item The final morphism $P\to e$ is an epimorphism, i.e., $P$ locally has a section.
\end{itemize}

\begin{definition} We write ${\rm Tors}(E)={\rm Tors}(\widetilde{E})$ and call ${\rm Tors}(E)$ the {\it category of torsors on $E$.} Further, we set ${\rm TORS}(E)={\rm TORS}(\widetilde{E})\times_{\widetilde{E}}E$ and we call ${\rm TORS}(E)$ the {\it stack of torsors on $E$}.

If $G$ is a sheaf of groups on $E$, then we define similarly
${\rm Tors}(E;G)={\rm Tors}(\widetilde{E};G)$ and ${\rm TORS}(E;G)={\rm TORS}(\widetilde{E};G)\times_{\widetilde{E}}E$.
\end{definition}

The relation between the category and the stack of torsors on the site $E$ is indicated by the following equivalences of categories \cite[III, 1.7.1.3]{14}:
\[
{\rm Tors}(E)\overset{\!\approx}{\lra}\varprojlim({\rm TORS}(E)/E\e)
\]
\[
{\rm Tors}(E;G)\overset{\!\approx}{\lra}\varprojlim({\rm TORS}(E;G)/E\e)
\]
Finally, we can introduce two {\it split} $E$-stacks, namely the split $E$-stack ${\rm TORSC}(E)$  of torsors on $E$ and the split $E$-stack ${\rm TORSC}(E;G)$ of $G$-torsors on $E$. 

${\rm TORSC}(E)$ is determined by the contravariant functor from $E$ to ${\rm Cat}$ which maps an object $S\in E$ to the category ${\rm Tors}(E/S)$.

${\rm TORSC}(E;G)$ is the split $E$-stack defined by the contravariant functor $S\mapsto {\rm Tors}(E/S, G^{S}\,)$, where $ G^{S}$ is the restriction of $G$ to the site $E/S$.

According to \cite[II, 3.4.8]{14}, we have {\it $E$-equivalences of stacks}:
\[
{\rm TORS}(E)\isoto{\rm TORSC}(E)
\]
\[
{\rm TORS}(E;G)\isoto{\rm TORSC}(E;G)
\]

\section{Contracted product of torsors}
\subsection{Contracted product}
Let $G$ be a group object of the topos $T$, $P$ a {\it right} $G$-object and $Q$ a {\it left} $G$-object of $T$. Then we define a diagonal action of $G$ on $P\times Q$
\[
d\colon (\le P\times Q\le)\times G\to P\times Q, \quad(p,q,g)\mapsto (p\cdot g, g^{-1}\cdot q).
\]
Since we work in a topos $T$, we can consider the cokernel of the following pair of morphisms
\[
(\le P\times Q\le)\times G\,{{{} {\rm pr}_{\lbe 1}\atop \longrightarrow}\atop{\longrightarrow \atop \!d {}}}\,  P\times Q
\]
This cokernel is none other than the quotient of $P\times Q$ by the equivalence relation defined by the diagonal action of $G$ on $P\times Q$

\begin{definition} The {\it contracted product $P\be {{{} G\atop \bigwedge}\atop }\lbe Q$} of $P$ with $Q$ over $G$ is the quotient object of $P\times Q$ by the diagonal action of the group $G$.
\end{definition}
 
\begin{proposition} Let $A$ be a group object of $T$ which acts on $P$ on the left in a compatibe way with $G$, i.e., via $G$-automorphisms. Then $A$ also acts on $P\be {{{} G\atop \bigwedge}\atop }\lbe Q$ on the left.
\end{proposition}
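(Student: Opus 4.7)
The key observation is that in a topos $T$ the endofunctor $A\times(-)$ is a left adjoint (to the exponential $(-)^{A}$) and therefore commutes with colimits. In particular, $A\times(P\wedge^{G}Q)$ is itself the coequalizer of the pair of morphisms $A\times P\times Q\times G\rightrightarrows A\times P\times Q$ induced by letting $G$ act trivially on the factor $A$ and diagonally on $P\times Q$.

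First I would introduce the morphism $\widetilde\lambda\colon A\times P\times Q\to P\times Q$ defined on generalized elements by $(a,p,q)\mapsto(a*p,\,q)$. The crucial step is to show that the composite
\[
A\times P\times Q\;\stackrel{\widetilde\lambda}{\lra}\;P\times Q\;\lra\;P\wedge^{G}Q
\]
coequalizes the pair above. Applying $\widetilde\lambda$ to $(a,\,p*g,\,g^{-1}*q)$ yields $(a*(p*g),\,g^{-1}*q)$, while acting diagonally by $g$ on $\widetilde\lambda(a,p,q)=(a*p,q)$ produces $((a*p)*g,\,g^{-1}*q)$; by the hypothesis that $A$ acts on $P$ through $G$-automorphisms, these two expressions coincide, and in any case both lie in the diagonal $G$-orbit of $(a*p,q)$. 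Hence the two composites to $P\wedge^{G}Q$ agree.

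By the universal property of the coequalizer, the composite descends to a well-defined morphism $\lambda^{A}\colon A\times(P\wedge^{G}Q)\to P\wedge^{G}Q$. It remains to verify that $\lambda^{A}$ satisfies the unit and associativity axioms for a left $A$-action. Each axiom is expressed by a commutative diagram in $T$, and I would check it by pre-composing with the epimorphism $P\times Q\to P\wedge^{G}Q$ (respectively $A\times A\times P\times Q\to A\times A\times(P\wedge^{G}Q)$, which is again an epimorphism because $A\times A\times(-)$ preserves the quotient), thereby reducing each diagram to the known one expressing the $A$-action on $P$.

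The only genuine obstacle is the descent step, which is precisely a repackaging of the compatibility hypothesis $a*(p*g)=(a*p)*g$; the remaining verifications are formal consequences of the universal property of the coequalizer together with the fact that $A\times(-)$, and more generally $A^{\times n}\times(-)$, preserves it in the topos $T$.
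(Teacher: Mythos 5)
Your proposal is correct and follows essentially the same route as the paper: transfer the $A$-action to $P\times Q$ via the first factor, observe that the hypothesis $a*(p*g)=(a*p)*g$ makes it compatible with the diagonal $G$-action, and descend to the quotient. You merely make explicit the point the paper leaves implicit, namely that $A\times(-)$ preserves the coequalizer in a topos, which is what justifies both the descent and the verification of the action axioms.
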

\begin{proof} Since $A$ acts on $P$ compatibly with the right action of $G$, one can also transfer the action of $A$ to the product $P\times Q$ in such a way that this action is compatible with the equivalence relation defined by the diagonal action. We therefore obtain an action of $A$ on the quotient $P\be {{{} G\atop \bigwedge}\atop }\lbe Q$.
\end{proof}

\begin{remark} A similar result holds with regard to the second factor $Q$.
\end{remark}

\begin{proposition}\label{pgg} For each $G$-object $P$ of the topos $T$, we have an isomorphism
\[
P\isoto P\be {{{} G\atop \bigwedge}\atop }\lbe G.
\]	
This isomorphism is compatible with the structures of left $\uaut_{\le G}(P)$-object and right $G$-objects on both sides.
\end{proposition}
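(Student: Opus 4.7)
The plan is to construct the map $\iota\colon P\to P\wedge^G G$ and its inverse $\bar m\colon P\wedge^G G\to P$ explicitly, then verify both the inverse property and the equivariance. I will work throughout with generalized elements (equivalently, in the internal language of the topos $T$); all the identities below are then genuine morphism identities because every object and arrow in sight is characterized by a universal property (fibered product, cokernel defining the contracted product, induced actions).

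Let $m\colon P\times G\to P$ denote the right $G$-action, and let $\nu\colon e\to G$ be the unit section. Define $\iota$ as the composition of $(1_P,\nu)\colon P\to P\times G$ with the quotient map $P\times G\to P\wedge^G G$, so on generalized elements $p\mapsto [p,e]$. The diagonal action of $G$ on $P\times G$ (with $G$ acting on itself by left translation) is $(p,g)\cdot h=(p\cdot h,\,h^{-1}g)$, and the computation
\[
m(p\cdot h,\,h^{-1}g)=(p\cdot h)(h^{-1}g)=p\cdot g=m(p,g)
\]
shows $m$ is invariant. By the universal property of the cokernel defining the contracted product, $m$ descends to $\bar m\colon P\wedge^G G\to P$. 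Then $\bar m\circ \iota$ sends $p\mapsto [p,e]\mapsto p$, and $\iota\circ\bar m$ sends $[p,g]\mapsto p\cdot g\mapsto [p\cdot g,e]=[p,g]$, the last equality being the relation $(p\cdot g,\,g^{-1}\cdot g)\sim (p,g)$ forced by the diagonal action. Hence $\iota$ and $\bar m$ are mutually inverse.

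For compatibility with the actions, I invoke the preceding proposition and its remark: the left $\uaut_G(P)$-action on $P\wedge^G G$ is induced through the first factor (since $\uaut_G(P)$ commutes with the right $G$-action on $P$), and the right $G$-action on $P\wedge^G G$ is induced through the second factor via right translation of $G$ on itself (which commutes with left translation). The identities
\[
\iota(\phi\cdot p)=[\phi\cdot p,e]=\phi\cdot[p,e]=\phi\cdot\iota(p),\quad \iota(p\cdot g)=[p\cdot g,e]=[p,\,e\cdot g]=[p,e]\cdot g=\iota(p)\cdot g
\]
then complete the verification. The only real obstacle is justifying that element-wise computations suffice to define and compare morphisms in the topos; this follows from the universal-property characterization of every construction involved, so there is no essential difficulty beyond routine bookkeeping.
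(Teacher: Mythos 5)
Your proof is correct and follows essentially the same route as the paper: the map is the unit section followed by the quotient projection, the inverse is the descent of the action morphism $m$ through the coequalizer, and the two composites are checked to be identities. Your use of generalized elements is a legitimate rephrasing of the paper's diagrammatic argument (checking $\iota\circ\bar m=\mathrm{id}$ on classes $[p,g]$ amounts to precomposing with the epimorphism $q$), and you in fact spell out the equivariance verification that the paper only asserts.
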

\begin{proof} In this proposition $G$ is regarded as a left $G$-object via the action of $G$ on itself by left translations.

We will show that following composition is an isomorphism
\[
P\overset{\!{\rm Id}_{P}\times e}{\lra} P\times G\overset{\!q}{\lra} P\be {{{} G\atop \bigwedge}\atop }\lbe G.
\]
Here $e\colon P\to G$ is the unit morphism and $q\colon P\times G\to P\be {{{} G\atop \bigwedge}\atop }\lbe G$ is the projection of the quotient object.
If $m\colon P\times G\to P$ is the morphism that defines the right action of $G$ on $P$ then $m\circ {\rm pr}_{\lbe 1}=m\circ d$. Thus there exists a unique morphism $n\colon P\be {{{} G\atop \bigwedge}\atop }\lbe G\to P$ such that $n\circ q = m$. If we set $n^{\prime}= q\circ ({\rm Id}_{P}\times e)$, then we can verify that $n\circ n^{\e\prime}={\rm Id}_{P}$ and $n^{\e\prime}\circ n=
{\rm Id}_{P\be {{{} G\atop \bigwedge}\atop }\lbe G}$. Thus $q\circ (\e{\rm Id}_{P}\times e)$ is an isomorphism.
\end{proof}

\begin{proposition} Let $u\colon G\to G^{\e\prime}$ be a morphism of groups of $T$, $m\colon P\to P^{\e\prime}$ a $u$-morphism of objects with right actions and $n\colon Q\to Q^{\e\prime}$ a $u$-morphism of objects with left actions. Then the product morphism
\[
m\times n\colon P\times Q\to P^{\e\prime}\times Q^{\e\prime}
\]
determines by passing to the quotient a morphism
\[
m\wedge n\colon P\be {{{} G\atop \bigwedge}\atop }\lbe Q\to P^{\e\prime} {{{} G^{\e\prime}\atop \bigwedge}\atop }\lbe Q^{\e\prime}
\]
\end{proposition}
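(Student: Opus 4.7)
The plan is to check that the morphism $m\times n$ respects the diagonal equivalence relations on both sides and hence descends to the quotients. Since $P\be {{{} G\atop \bigwedge}\atop }\lbe Q$ is defined as the cokernel of the pair $({\rm pr}_{\lbe 1}, d_{G})$ acting on $(P\times Q)\times G$, and similarly $P^{\e\prime}\be {{{} G^{\e\prime}\atop \bigwedge}\atop }\lbe Q^{\e\prime}$ is the cokernel of $({\rm pr}_{\lbe 1}, d_{G^{\prime}})$, it suffices by the universal property to show that $q^{\prime}\circ(m\times n)$ coequalizes the pair $({\rm pr}_{\lbe 1}, d_{G})$, where $q^{\prime}\colon P^{\e\prime}\times Q^{\e\prime}\to P^{\e\prime}\be {{{} G^{\e\prime}\atop \bigwedge}\atop }\lbe Q^{\e\prime}$ denotes the canonical projection.

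First I would verify the key compatibility square
\[
\xymatrix{(P\times Q)\times G\ar[rr]^{(m\times n)\times u}\ar[d]_{d_{G}}&& (P^{\e\prime}\times Q^{\e\prime})\times G^{\e\prime}\ar[d]^{d_{G^{\prime}}}\\
P\times Q\ar[rr]^{m\times n}&& P^{\e\prime}\times Q^{\e\prime}}
\]
using the defining properties of $u$-morphisms: on points, $(m\times n)\circ d_{G}(p,q,g) = (m(p\cdot g), n(g^{-1}\cdot q)) = (m(p)\cdot u(g), u(g)^{-1}\cdot n(q)) = d_{G^{\prime}}\circ((m\times n)\times u)(p,q,g)$. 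The analogous square with ${\rm pr}_{\lbe 1}$ in place of $d$ is obvious. Composing with $q^{\prime}$ on the bottom right and using that $q^{\prime}\circ{\rm pr}_{\lbe 1}=q^{\prime}\circ d_{G^{\prime}}$ then yields
\[
q^{\prime}\circ(m\times n)\circ{\rm pr}_{\lbe 1}=q^{\prime}\circ(m\times n)\circ d_{G}.
\]

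Finally, applying the universal property of the coequalizer $q\colon P\times Q\to P\be {{{} G\atop \bigwedge}\atop }\lbe Q$ produces a unique morphism
\[
m\wedge n\colon P\be {{{} G\atop \bigwedge}\atop }\lbe Q\to P^{\e\prime}\be {{{} G^{\e\prime}\atop \bigwedge}\atop }\lbe Q^{\e\prime}
\]
such that $(m\wedge n)\circ q = q^{\prime}\circ(m\times n)$, as required. The only substantive point is the compatibility of $m\times n$ with the diagonal actions; the rest is a direct appeal to the universal property available because we are working in a topos.
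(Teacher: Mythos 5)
Your proof is correct and follows essentially the same route as the paper: verify that $m\times n$ intertwines the pairs $(\mathrm{pr}_1,d)$ and $(\mathrm{pr}_1,d^{\prime})$ using that $m$ and $n$ are $u$-morphisms, conclude that $q^{\prime}\circ(m\times n)$ coequalizes $(\mathrm{pr}_1,d)$, and invoke the universal property of the cokernel. The explicit pointwise check of the compatibility square is a welcome addition that the paper leaves implicit.
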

\begin{proof} We consider the following diagram
\[
\xymatrix{(\le P\times Q\e)\times G\hspace{-.1cm} \ar[d]_{(m\times n\le)\times u}&\hspace{-.5cm} {{{} {\rm pr}_{\lbe 1}\atop \xrightarrow{\makebox[1.5cm]{}}}\atop{\xrightarrow{\makebox[1.5cm]{}}\atop \!d {}}}\hspace{.3cm}& \hspace{-1.4cm}\ar[d]\hspace{1cm} P\times Q\ar[d]_{m\times n}\ar[r]^{\hspace{-.4cm}q} & P {{{} G\atop \bigwedge}\atop } Q\\
(\le P^{\e\prime}\times Q^{\e\prime}\e)\times G^{\e\prime}\hspace{-.1cm} &\hspace{-.5cm} {{{} {\rm pr}_{\lbe 1}\atop \xrightarrow{\makebox[1.5cm]{}}}\atop{\xrightarrow{\makebox[1.5cm]{}}\atop \!d^{\e\prime} {}}}\,& \hspace{-.2cm} P^{\e\prime}\times Q^{\e\prime}\ar[r]^{\hspace{-.25cm}q^{\e\prime}}& P^{\e\prime} {{{} G^{\le\prime}\atop \bigwedge}\atop }\lbe Q^{\e\prime}.}
\]
We have $(m\times n\le)\circ {\rm pr}_{\lbe 1}={\rm pr}_{\lbe 1}\circ((m\times n\le)\times u\le)$. Because of the way the diagonal action is defined and because of the fact that $m$ and $n$ are $u$-morphisms, we also have $(m\times n\le)\circ d=d^{\,\prime}\circ((m\times n\le)\times u\le)$. It follows that $(q^{\e\prime}\circ(m\times n))\circ {\rm pr}_{1}=(q^{\e\prime}\circ(m\times n))\circ d$. Thus we have a unique morphism $m\wedge n\colon P\be {{{} G\atop \bigwedge}\atop }\lbe Q\to P^{\e\prime} {{{} G^{\e\prime}\atop \bigwedge}\atop }\lbe Q^{\e\prime}$ such that $(m\wedge n\le)\circ q=q^{\e\prime}\circ(m\times n\le)$.
\end{proof}

\begin{proposition} {\rm (Associativity of the contracted product)}\label{assoc}
Let $P$ be a right $G$-object, $Q$ a left $G$-object on which $H$ acts on the right compatibly with the action of $G$ and $R$ a left $H$-object. Then we have a canonical isomorphism
\[
\big(\e P\be {{{} G\atop \bigwedge}\atop }\lbe Q\big){{{} H\atop \bigwedge}\atop }\lbe R\isoto P\be {{{} G\atop \bigwedge}\atop }\big(\e Q\be {{{} H\atop \bigwedge}\atop }\lbe R\big)
\]
\end{proposition}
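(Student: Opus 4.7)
The plan is to realize both sides of the asserted isomorphism as quotients of the triple product $P\times Q\times R$ by one and the same group action, so that the universal property of the coequalizer forces a canonical isomorphism between them.

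First I would equip $P\times Q\times R$ with a right action of the product group $G\times H$ via
\[
(p,q,r)\cdot(g,h) \;=\; (p\cdot g,\ g^{-1}\cdot q\cdot h,\ h^{-1}\cdot r).
\]
This formula is unambiguous because the right $H$-action on $Q$ commutes with the left $G$-action, which is precisely the compatibility hypothesis. The restriction to $G\times\{e\}$ is the diagonal $G$-action on $P\times Q$ (with $R$ fixed), and the restriction to $\{e\}\times H$ is the diagonal $H$-action on $Q\times R$ (with $P$ fixed); the two sub-actions commute.

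Next I would verify that the composite
\[
P\times Q\times R \;\longrightarrow\; (P\wedge^{G}Q)\times R \;\longrightarrow\; (P\wedge^{G}Q)\wedge^{H}R
\]
is a coequalizer of this combined $G\times H$-action. For the first arrow, the quotient $P\times Q\to P\wedge^{G}Q$ is a coequalizer of the diagonal $G$-action by definition, and since colimits in the topos $T$ are universal (equivalently, effective epimorphisms are stable under pullback), taking its product with the identity of $R$ yields again a coequalizer. The preceding proposition supplies the descended $H$-action on $(P\wedge^{G}Q)\times R$, and passing to its $H$-quotient then identifies exactly the $G\times H$-orbits. The symmetric argument, contracting first on the last two factors and then on the first two, exhibits $P\wedge^{G}(Q\wedge^{H}R)$ as a coequalizer of precisely the same $G\times H$-action.

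By the uniqueness clause of the universal property there is then a unique isomorphism between the two targets, determined on generalised elements by $(p\wedge^{G}q)\wedge^{H}r \mapsto p\wedge^{G}(q\wedge^{H}r)$, and compatible with the residual actions. The main potential obstacle is not the combinatorics of the contracted product but the justification that $(P\wedge^{G}Q)\times R$ is really the quotient of $P\times Q\times R$ by the $G$-action on the first two factors (and symmetrically for $P\times(Q\wedge^{H}R)$); this rests squarely on the universality of colimits in the topos $T$. Once that step is granted, the remainder is a routine ``Fubini'' comparison of iterated coequalizers and requires no further calculation.
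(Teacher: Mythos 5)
Your proof is correct and follows essentially the same route as the paper: both arguments identify the two iterated contracted products with the single quotient of $P\times Q\times R$ by the combined diagonal actions of $G$ and $H$, and then invoke the canonical associativity isomorphism of the triple product. The only difference is that you explicitly justify the step the paper leaves implicit --- that $(P\wedge^{G}Q)\times R$ is again a coequalizer because $-\times R$ preserves colimits in a topos --- which is a welcome tightening rather than a change of method.
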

\begin{proof} By applying twice the definition of the contracted product, we find that $\big(\e P\be {{{} G\atop \bigwedge}\atop }\lbe Q\big){{{} H\atop \bigwedge}\atop }\lbe R$ is the quotient of $\big(\e P\be\times  Q\big)
\times R$ by the equivalence relation induced by the diagonal actions of $G$ and $H$.

$P\be {{{} G\atop \bigwedge}\atop }\big(\e Q\be {{{} H\atop \bigwedge}\atop }\lbe R\big)$ is nothing but the quotient of $P\times  (Q \times R)$ by the equivalence relation induced by the diagonal actions of $G$ and $H$. We now have a canonical isomorphism
\[
\big(\e P\be\times  Q\big)
\times R\isoto P\times  (Q \times R)
\]
which is compatible with the two equivalence relations.  It thus follows that this isomorphism induces an isomorphism of the quotient objects:
\[
\big(\e P\be {{{} G\atop \bigwedge}\atop }\lbe Q\big){{{} H\atop \bigwedge}\atop }\lbe R\isoto P\be {{{} G\atop \bigwedge}\atop }\big(\e Q\be {{{} H\atop \bigwedge}\atop }\lbe R\big).
\]
\end{proof}

\begin{proposition} The contracted product $P\be {{{} G\atop \bigwedge}\atop }\lbe Q$ is stable under bases changes.
\end{proposition}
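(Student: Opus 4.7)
The plan is to realize the contracted product as a coequalizer and then invoke two structural features of a topos: that base change commutes with finite products, and that base change commutes with colimits (in particular coequalizers). Concretely, $P\be {{{} G\atop \bigwedge}\atop }\lbe Q$ is defined in the excerpt as the coequalizer of
\[
(\le P\times Q\le)\times G\;{{{} {\rm pr}_{\lbe 1}\atop \longrightarrow}\atop{\longrightarrow \atop \!d {}}}\; P\times Q,
\]
so I will show that applying any base change functor $f^{*}$ to this diagram produces the analogous coequalizer diagram for the pulled-back data $(f^{*}P, f^{*}G, f^{*}Q)$.

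First I would fix the base change data. For a morphism $f\colon S^{\prime}\to S$ in the topos $T$, or more generally for a morphism of toposes $u\colon T^{\prime}\to T$ with inverse image functor $u^{*}$, I would use that the pullback (respectively inverse image) functor commutes with finite limits; this is automatic for $u^{*}$ by the definition of a morphism of toposes, and for $f^{*}$ it follows because $f^{*}$ is right adjoint to $f_{!}$ (composition with $f$) in $T$. Consequently $f^{*}$ carries the product $(P\times Q)\times G$ to $(f^{*}P\times f^{*}Q)\times f^{*}G$, and sends the first projection to the first projection. Because the diagonal action morphism $d$ is assembled from the right action $P\times G\to P$, the inverse $G\to G$, and the left action $G\times Q\to Q$, functoriality of $f^{*}$ on these morphisms together with its compatibility with products shows that $f^{*}d$ is exactly the diagonal action of $f^{*}G$ on $f^{*}P\times f^{*}Q$.

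Next, I would apply that $f^{*}$ (respectively $u^{*}$) has a right adjoint, namely $f_{*}$ (respectively $u_{*}$), hence preserves all colimits, and in particular coequalizers. Applying $f^{*}$ to the defining coequalizer diagram for $P\be {{{} G\atop \bigwedge}\atop }\lbe Q$ therefore yields the coequalizer diagram which by definition computes $f^{*}P\be {{{} f^{*}G\atop \bigwedge}\atop }\lbe f^{*}Q$. Uniqueness of coequalizers up to unique isomorphism then gives the canonical isomorphism
\[
f^{*}\bigl(\e P\be {{{} G\atop \bigwedge}\atop }\lbe Q\bigr)\isoto f^{*}P\be {{{} f^{*}G\atop \bigwedge}\atop }\lbe f^{*}Q.
\]

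The substantive ingredient is the universality of colimits in a topos, which is what makes coequalizers stable under pullback; the rest is bookkeeping to verify that the two parallel arrows transport correctly under $f^{*}$. I expect no real obstacle beyond making sure the $u$-morphism language used in the previous proposition is invoked correctly to identify $f^{*}d$ with the diagonal action after pullback.
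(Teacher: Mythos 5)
Your proposal is correct and follows the same route as the paper: exhibit the contracted product as the coequalizer (cokernel) of the pair $({\rm pr}_{1}, d)$ and invoke the universality of inductive limits in a topos to conclude that this coequalizer is preserved under base change. The paper leaves implicit the bookkeeping you spell out (that $f^{*}$ carries the parallel pair to the corresponding parallel pair for the pulled-back data), but the substance is identical.
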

\begin{proof} The contracted product $P\be {{{} G\atop \bigwedge}\atop }\lbe Q$ is a cokernel of the pair of morphisms $
(\le P\times Q\le)\times G\,{{{} {\rm pr}_{\lbe 1}\atop \longrightarrow}\atop{\longrightarrow \atop \!d {}}}\,  P\times Q$
and is thus an inductive limit. In a topos inductive limits are universal, so a base change of a contracted product is a contracted product.
\end{proof}

\subsection{Extension of the structural group}

Let $u\colon F\to G$ be a morphism of group objects of the topos $T$ and let $P$ be an $F$-object. The operation of extension of the structural group consists in regarding (via $u$) the right $F$-object $P$ as a right $G$-object $^{u}\be P$ constructed in such a way that this operation is left adjoint to the operation of restriction of the structural group.

How does this work?

The group $G$ acts on $G_{d}$ on the left via left translations, so $F$ also acts on $G_{d}$ on the left via $u\colon F\to G$. Therefore we can form the contracted product of $P$ with $G_{d}$. Because of the way $G$ acts on $G_{d}$ from the right, we see that $P\be {{{} F\atop \bigwedge}\atop }\lbe G_{d}$ is a {\it right $G$-object}.

\begin{definition} \label{321} We set $^{u}\be P=P\be {{{} F\atop \bigwedge}\atop }\lbe G_{d}$ and say that $^{u}\be P$ is derived from $P$ by extending its structural group $F$ to $G$.

The composition of $({\rm Id}_{P}, e)\colon P\to P\times G_{d}$ with the projection $P\times G_{d}\to P\be {{{} F\atop \bigwedge}\atop }\lbe G_{d}$ yields a $u$-morphism denoted as follows
\[
Pu\colon P\to ^{u}\!\!\be P.
\]
For every $G$-object $Q$, the map 
\[
\Hom_{G}(^{u}\be P, Q)\to \Hom_{F}(P, Q^{u}),\, n\mapsto n\circ Pu,
\]
is a {\it bijection}. Here $Q^{u}$ is the $F$-object obtained from the $G$-object $Q$ by restriction of the structural group.

This shows that the operation of extending the structural group $P\to ^{u}\be P$ is {\it left adjoint} to the operation of restricting the structural group $Q\to Q^{u}$. The adjunction morphism is $Pu\colon P\to (^{u}\be P)^{u}$.
\end{definition}

\begin{proposition} The operation of extending the structural group sends torsors to torsors.
\end{proposition}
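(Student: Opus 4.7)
My plan is to reduce to the trivial torsor via locality and then identify the extension of the trivial torsor with the trivial $G$-torsor.

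First I would apply Proposition~\ref{t1}: being a torsor is a local property, so it suffices to show that $^{u}\be P$ is a $G$-torsor locally. By Proposition~\ref{ltor}(ii), $P$ is locally $F$-isomorphic to the trivial $F$-torsor $F_d$. Combining this with the preceding proposition on stability of the contracted product under base change, together with the functoriality of contracted products in the first argument (a special case of the $m\wedge n$ construction, taking $n = 1_{G_d}$ and $u = \mathrm{Id}_G$), the local $F$-isomorphism $P \cong F_d$ induces a local $G$-isomorphism $^{u}\be P \cong {}^{u}\be F_d = F_d \be {{{} F\atop \bigwedge}\atop }\lbe G_d$. The task reduces to showing that this last object is $G$-isomorphic to the trivial $G$-torsor $G_d$.

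For this trivial case I would exhibit the isomorphism explicitly, in close analogy with Proposition~\ref{pgg}. The morphism $F_d \times G_d \to G_d$, $(f,g) \mapsto u(f)\cdot g$, is compatible with the diagonal $F$-action, since $(ff', u(f')^{-1} g) \mapsto u(ff')\, u(f')^{-1} g = u(f) g$, and therefore descends to a $G$-equivariant morphism $\beta\colon F_d \be {{{} F\atop \bigwedge}\atop }\lbe G_d \to G_d$. A two-sided inverse is supplied, just as in the proof of Proposition~\ref{pgg}, by the composite $G_d \to F_d \times G_d \to F_d \be {{{} F\atop \bigwedge}\atop }\lbe G_d$, $g \mapsto [e, g]$; the fact that the composites are the respective identities rests on the identification $(f, g) \sim (e, u(f)g)$ inside the quotient, coming from the diagonal action of $f^{-1}$.

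The step I expect to require the most care is not the explicit computation but the bookkeeping: confirming that the local $F$-isomorphism $P \cong F_d$ really yields a local \emph{$G$}-isomorphism of extensions, rather than only a set-theoretic bijection. This is settled by the functoriality of $m \wedge n$ applied with $u = \mathrm{Id}_G$ and $n = 1_{G_d}$, as noted above. Once that point is in place, the explicit formula for $\beta$ identifies the extension with the trivial $G$-torsor locally, and a final appeal to Proposition~\ref{t1} delivers the conclusion.
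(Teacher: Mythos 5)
Your proof is correct and follows essentially the same route as the paper: reduce by locality to the case of the trivial $F$-torsor and identify $F_{d}\be {{{} F\atop \bigwedge}\atop }\lbe G_{d}$ with $G_{d}$. The only difference is that you verify that last identification by an explicit computation, where the paper simply cites Proposition~\ref{pgg}.
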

\begin{proof} If $P$ is an $F$-torsor and $u\colon F\to G$ is a morphism of group objects of $T$, then we must show that $^{u}\be P=P\be {{{} F\atop \bigwedge}\atop }\lbe G_{d}$ is a $G$-torsor. According to Proposition \ref{ltor}(ii), it suffices to show that $^{u}\be P$ is locally isomorphic to the trivial torsor $G_{d}$. We know that $P$ is locally isomorphic to the trivial $F$-torsor $F_{d}$. By Proposition \ref{pgg}, $F_{d}\be {{{} F\atop \bigwedge}\atop }\lbe G_{d}$ is $G$-isomorphic to $G_{d}$. Thus locally we have a $G$-isomorphism $^{u}\be P=P\be {{{} F\atop \bigwedge}\atop }\lbe G_{d}\isoto 
G_{d}$ whence $^{u}\be P$ is a $G$-torsor.	
\end{proof}

\subsection{Contracted products in the topos $\widetilde{E}$ of sheaves of sets on a site $E$}

Let $E$ be a site, $G$ a sheaf of groups on $E$, $P$ a right $G$-object and $Q$ a left $G$-object of $\widetilde{E}$.

$P\times Q$ is the sheaf product on $E$, i.e., it associates with each object $S\in E$ the set $P(S)\times Q(S)$.

The diagonal action of $G$ on $P\times Q$ is such that, for each object $S\in E$, we have a diagonal action of the group $G(S)$ on the set $(P \times Q)(S)= P(S)\times Q(S)$.  We can denote it as follows:
\[
d(S)\colon (P(S)\times Q(S))\times G(S)\to P(S)\times Q(S),\, ((p,q),g)\mapsto (pg,g^{-1}q).
\]
The contracted product $P\be {{{} G\atop \bigwedge}\atop }\lbe Q$  is the quotient sheaf of $P\times Q$ by the equivalence relation $\mathcal R_{d}$ induced by the diagonal action $d$.

To define the quotient sheaf we first consider the quotient presheaf $K$, i.e., the contravariant functor from $E$ to the category of sets ${\rm Ens}$ which assigns, to every object $S$ of $E$, the quotient set $P(S)\times Q(S)/\mathcal R_{d(S)}$. The projection from $P\times Q$ to $K$
\[
\mu\colon P\times Q\to K
\]
is an {\it epimorphism} of presheaves of sets on $E$. If $(p, q)\in (P \times Q)(S)$, then $\mu(p, q)=[(p, q)]$ is the class of $(p, q)$ modulo $\mathcal R_{d(S)}$.

Then we consider the {\it sheaf associated with $K$} and this is then the quotient sheaf of $P\times Q$ modulo $\mathcal R_{d}$. Thus we have $P\be {{{} F\atop \bigwedge}\atop }\lbe Q=a(K)$. In this way we obtain a {\it bicovering} morphism of presheaves \cite[II]{26}
\[
\lambda\colon K\to a(K)=P\be {{{} F\atop \bigwedge}\atop }\lbe Q
\]
What do we mean when we say that $\lambda$ is {\it bicovering}?

Let $x$ be an element of $(P\be {{{} G\atop \bigwedge}\atop }\lbe Q)(S)$. Then there exists a refinement $V$ of $S$ such that for every $s_{i}\colon S_{i}\to S\in V$ there exists an element $x_{i}\in K(S_{i})$ such that $\lambda_{S_{i}}(x_{i})=x^{\e s_{i}}$. If $x_{i}^{\e\prime}\in K(S_{i})$ is also an element with $\lambda_{S_{i}}(x_{i}^{\e\prime})=x^{\e s_{i}}$, then  $x_{i}$ and $x_{i}^{\e\prime}$ agree locally. Now we have $K(S_{i})=(P(S_{i})\times Q(S_{i}))/\mathcal R_{d(S_{i})}$ so that the element $x_{i}$ is in some equivalence class $[(p_{i},q_{i})]$. {\it In what follows we will state this briefly by saying that $x$ is locally determined by $(p_{i},q_{i})$ or even that $(p_{i},q_{i})$ is a local representative of $x$ relative to $\lambda,\mu$}.  From the foregoing it is clear what is the meaning of the expressions {\it locally determined by} and {\it local representative of} in the more general case in which one has a {\it covering} morphism $M\to N$ of presheaves on $E$.

\subsection{Twisting}
Let $E$ be a site, $C$ an $E$-stack and $G$ a sheaf of groups on $E$.

\begin{definition} Let $S$ be an object of $E$. By an {\it $S$-$G$-object with operators from $C$} we mean a pair $(x, u)$ with $x\in{\rm Ob}(C_{S})$ and $u\colon G^{S}\to\uaut_{\e S}(x)$ a morphism of sheaves of groups on $E/S$. If $(x, u)$ and $(y, v)$ are $S$-$G$-objects with operators from $C$, then we write $\Hom_{S,G}(x,y)$ for the set of all those $S$-morphisms $x\to y$ in $C$ which are compatible with the actions of $G$ on $x$ and $y$.
\end{definition}

We can now define an $E$-stack whose fiber over $S$ is none other than the category of $S$-$G$-objects with operators from $C$. This stack is called the {\it $E$-stack of $G$-objects with operators from $C$.}

\medskip

Notation: ${\rm OPER}(G; C)$ 

\medskip

Furthermore, we set
\[
{\rm Oper}(G; C)=\varprojlim({\rm OPER}(G; C)/E\e)
\]
and call it the {\it category} of $G$-objects with operators from $C$. 
To see now what an object with operators from $C$ in the stack $C$ really is, we have to consider what a cartesian section of ${\rm OPER}(G; C)$ represents.
So a cartesian section (thus in effect an object of ${\rm Oper}(G; C)$) can be interpreted as a pair $(s, u)$, where $s\colon E\to C$ is a cartesian section of $C$ and $u\colon G\to\aut(s)$ is a morphism of sheaves on groups on $E$.

\subsubsection{The functor ${\rm Tw}$}

Let $T$ be the full subcategory of ${\rm TORSC}(E; G)$ where, in each fiber,  we take as object the trivial torsor $G_{d}$.

We then have a functor
\[
s\colon T\times_{E}{\rm OPER}(G; C)\to C
\]
which assigns to an object $(G_{d}, (x,u))$ the object $x\in C_{S}$.

\begin{theorem} There exist a morphism of stacks
\begin{equation}\label{3.4.2}
{\rm Tw}\colon {\rm TORSC}(E; G)\times_{E}{\rm OPER}(G; C)\to C
\end{equation}
and an isomorphism of morphisms of stacks
\[
i\colon s\isoto {\rm Tw}\be\mid_{\e T\times_{E}\e{\rm OPER}(G;\e C)}.
\]
\end{theorem}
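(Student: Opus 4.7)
The plan is to construct ${\rm Tw}$ by trivializing the torsor locally and then gluing via the stack axiom on $C$. Concretely, given an object $(P, (x, u))$ of the source stack over $S \in E$, choose a refinement $\mathcal{R}$ of $S$ over which the $G^{S}$-torsor $P$ becomes trivial. For each $s_{i}\colon S_{i} \to S$ in $\mathcal{R}$ pick a section $p_{i}$ of $P|_{S_{i}}$; by Proposition \ref{ltor}(i) this produces a $G$-isomorphism $\alpha_{i}\colon G_{d}^{S_{i}} \isoto P|_{S_{i}}$. On a double intersection $S_{ij} = S_{i} \times_{S} S_{j}$ the two restricted trivializations differ by a unique element $g_{ij} \in G(S_{ij})$, and on triple intersections these satisfy the cocycle identity $g_{ik} = g_{ij}\e g_{jk}$. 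Applying the group homomorphism $u|_{S_{ij}}\colon G^{S_{ij}} \to \uaut_{S_{ij}}(x|_{S_{ij}})$ yields automorphisms $u(g_{ij})$ of the restrictions $x|_{S_{ij}}$ which inherit the cocycle identity, and therefore constitute descent data on the family $\{x|_{S_{i}}\}_{i}$ in $C$.

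Since $C$ is an $E$-stack, these descent data glue to an object of $C_{S}$, unique up to unique isomorphism. We define ${\rm Tw}(P, (x, u))$ to be this glued object. A morphism $(\varphi, \psi)\colon (P, (x, u)) \to (P^{\prime}, (x^{\prime}, u^{\prime}))$ over $S$ (respectively over an arrow $f\colon T \to S$) is treated the same way: the local components $\psi|_{S_{i}}$, transported through the trivializations of $P$ and $P^{\prime}$, are compatible with the respective cocycles and hence glue to a unique morphism of the twisted objects in $C$. Independence from the choice of refinement and trivializations is obtained by comparing any two choices on a common refinement and invoking the uniqueness half of the stack axiom. The cartesian character of ${\rm Tw}$, i.e.\ its compatibility with inverse images, follows because the inverse image of $P$ trivializes over the inverse image of $\mathcal{R}$ with pulled-back cocycle, and the stack axiom applied on the target site then identifies the two ways of glueing.

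For the isomorphism $i$: when $P = G_{d}$ the canonical unit section provides a global trivialization, so one may take $\mathcal{R}$ to consist of $1_{S}$ alone. There are then no nontrivial overlaps, the glued object coincides canonically with $x = s(G_{d}, (x,u))$, and the identification is visibly functorial in $(x, u)$ and compatible with base change. This supplies the required $i\colon s \isoto {\rm Tw}\be\mid_{\e T \times_{E} {\rm OPER}(G; C)}$. The main obstacle is not the underlying idea — which is the stack-theoretic incarnation of the contracted product $P \wedge^{G} x$, with $x$ replacing the second factor of an ordinary twist — but the bookkeeping needed to verify independence from choices, full 2-functoriality in $(P, (x,u))$, and base-change compatibility. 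The stack property of $C$ (both the glueing of objects and the unique glueing of morphisms) is what makes each of these verifications go through.
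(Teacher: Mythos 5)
Your construction is correct in substance, but it is a genuinely different route from the paper's. The paper does not build ${\rm Tw}$ by hand at all: it observes that the inclusion $T\to {\rm TORSC}(E;G)$ of the sub-prestack of trivial torsors is \emph{bicovering} (every torsor is locally trivial, and $G$-morphisms of trivial torsors are controlled by sections of $G$), hence so is $T\times_{E}{\rm OPER}(G;C)\to {\rm TORSC}(E;G)\times_{E}{\rm OPER}(G;C)$; the target is therefore the stack associated to the source, and the universal property of the associated stack gives an equivalence $\underline{\cart}_{\e E}({\rm TORSC}(E;G)\times_{E}{\rm OPER}(G;C),C)\isoto\underline{\cart}_{\e E}(T\times_{E}{\rm OPER}(G;C),C)$ because $C$ is a stack. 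The functor $s$ lives in the right-hand category, and ${\rm Tw}$ together with $i$ is simply its preimage under this equivalence. Your argument is the explicit descent-theoretic unwinding of exactly that equivalence: local trivializations, cocycles $g_{ij}$, transport through $u$, and gluing in $C$. What the abstract route buys is that all of the bookkeeping you defer to the end --- independence of the choice of trivializations, coherence of the gluing isomorphisms, functoriality in morphisms of the pair $(P,(x,u))$, and base-change compatibility --- is absorbed once and for all into the cited universal property, whereas in your version each of these is a separate verification; moreover, on a general site one must phrase your descent data relative to the covering sieve $\mathcal R$ itself rather than \v{C}ech-style over fibered products $S_i\times_S S_j$, which need not exist in $E$ (they exist only in $\widetilde{E}$), a wrinkle the associated-stack argument never has to confront. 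What your route buys is an explicit formula identifying ${\rm Tw}(P,X)$ with the contracted product $P\wedge^{G}x$ performed inside $C$, which makes the later comparison with twisted objects transparent. Both proofs are valid.
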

\begin{proof} The definition of $T$ shows that the inclusion functor
\[
T\to {\rm TORSC}(E; G)
\]
is bicovering.  Therefore the following inclusion functor is also bicovering
\[
T\times_{E}{\rm OPER}(G; C)\to {\rm TORSC}(E; G)\times_{E}{\rm OPER}(G; C).
\]
This makes it possible to consider ${\rm TORSC}(E; G)\times_{E}{\rm OPER}(G; C)$ as a stack associated to $T\times_{E}{\rm OPER}(G; C)$ \cite[II, 2.1.3]{14}.
By the definition of associated stack \cite[II, 2.1.1]{14}, we have an equivalence of categories
\[
\underline{\cart}_{\e E}({\rm TORSC}(E; G)\times_{E}{\rm OPER}(G; C),C)\isoto\underline{\cart}_{\e E}(T\times_{E}{\rm OPER}(G; C),C).
\]
Now $s$ is an object of $\underline{\cart}_{\e E}(T\times_{E}{\rm OPER}(G; C),C)$. Thus there exists an object ${\rm Tw}$ in  $\underline{\cart}_{\e E}({\rm TORSC}(E; G)\times_{E}{\rm OPER}(G; C),C)$ such that ${\rm Tw}\mid_{\e T\times_{E}{\rm OPER}(G; C)}\,\,\isoto\,\, s$.
\end{proof}

\subsubsection{Twisted objects}

Let $P$ be a $G$-torsor on $E/S$ and $X=(x, u)$ an $S$-$G$-object with operators from the $E$-stack $C$. 

\begin{theorem}\indent
\begin{enumerate}
\item[\rm (i)] There exist an object ${}^{P}\!x$ in $C_{S}$ and also a $G$-morphism
\[
m_{P}\colon P\to\uisom_{\e S}(x,{}^{P}\!x)
\]
\item[\rm (ii)] If $({}^{P}\!x,m_{P})$ is a pair as in {\rm (i)}, then there exists a {\rm unique} $S$-isomorphism
\[
j_{x}\colon {}^{P}\!x\isoto {\rm Tw}(P,X)
\]
such that the following composition
\[
P\underset{(1)}{\isoto}\uisom_{\e G}(G_{d},P)\underset{(2)}{\isoto}
\uisom_{\e S}(\e{\rm Tw}(G_{d},X),{\rm Tw}(P,X))\underset{(3)}{\isoto}\uisom_{\e S}(x,{}^{P}\!x)
\]
is equal to $m_{P}\colon P\to\uisom_{\e S}(x,{}^{P}\!x)$.
\end{enumerate}
Here the arrow {\rm (1)} is defined using {\rm \cite[III, 1.2.7(i)]{14}}, arrow {\rm (2)} is induced by the morphism of stacks ${\rm Tw}$ and arrow {\rm (3)} is obtained by composition with the isomorphisms $i_{x}\colon x\isoto {\rm Tw}(G_{d},X)$ \eqref{3.4.2} and $j_{x}\colon {}^{P}\!x\isoto {\rm Tw}(P,X)$.
\end{theorem}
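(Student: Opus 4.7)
\bigskip

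\noindent\textbf{Proof plan.} The plan is to take ${}^{P}\!x:={\rm Tw}(P,X)$ as the canonical witness for (i), and then to deduce (ii) by a local-to-global argument using local sections of the torsor $P$.

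For part (i), since $P$ is a $G$-torsor on $S$, the reference \cite[III, 1.2.7(i)]{14} gives a canonical $G$-equivariant isomorphism $P\isoto\uisom_{G}(G_{d},P)$. Because ${\rm Tw}$ is a morphism of stacks (hence sends isomorphisms to isomorphisms), it induces a morphism of sheaves $\uisom_{G}(G_{d},P)\to\uisom_{S}({\rm Tw}(G_{d},X),{\rm Tw}(P,X))$. Composing with the isomorphism $i_{x}\colon x\isoto{\rm Tw}(G_{d},X)$ (pre-composition on the first argument) yields a morphism $P\to\uisom_{S}(x,{\rm Tw}(P,X))$, which I take as $m_{P}$. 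The check that $m_{P}$ is $G$-equivariant reduces to the identity $m_{P}(p\cdot g)=m_{P}(p)\circ u(g)$, which follows from the compatibility of ${\rm Tw}$ with the group action on $G_{d}$ (built into the isomorphism $i\colon s\isoto{\rm Tw}\mid_{T\times_{E}{\rm OPER}(G;C)}$).

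For part (ii), suppose $({}^{P}\!x,m_{P})$ is any pair satisfying (i). Let $\widetilde{m}_{P}\colon P\to\uisom_{S}(x,{\rm Tw}(P,X))$ denote the morphism built in (i). The displayed composition is precisely $\widetilde{m}_{P}$ followed by post-composition with $j_{x}^{-1}$, so the required equation forces, at every local section $p_{i}\in P(S_{i})$ on a covering $\{s_{i}\colon S_{i}\to S\}$ trivializing $P$,
\[
j_{x}\mid_{S_{i}}=\widetilde{m}_{P}(p_{i})\circ m_{P}(p_{i})^{-1}.
\]
I then check independence of the choice of section: if $p_{i}'=p_{i}\cdot g$ with $g\in G(S_{i})$, the $G$-equivariance of both $m_{P}$ and $\widetilde{m}_{P}$ multiplies numerator and denominator on the right by $u(g)$, which cancels. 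Hence the local formulas agree on overlaps, and since $C$ is a stack the $j_{x}\mid_{S_{i}}$ glue to a unique global isomorphism $j_{x}\colon{}^{P}\!x\isoto{\rm Tw}(P,X)$. Uniqueness is automatic: the defining equation forces $j_{x}$ locally, and the sheaf property of $\uisom_{S}$ promotes this to global uniqueness.

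The main obstacle is the bookkeeping around arrow (2)---spelling out precisely how the morphism of stacks ${\rm Tw}$ acts on isomorphism sheaves and verifying that this action intertwines the right $G$-action on $P$ with the $u$-twisted action on $\uisom_{S}(x,{}^{P}\!x)$. Once this $G$-equivariance is in hand, the existence/uniqueness of $j_{x}$ is a clean application of Proposition \ref{ltor}(ii) (local triviality of torsors) together with the stack axioms for $C$.
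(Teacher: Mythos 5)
Your proposal is correct, and part (i) coincides with the paper's: both take ${}^{P}\!x={\rm Tw}(P,X)$ with $m_{P}$ the composition $(3')\circ(2)\circ(1)$, and both leave the $G$-equivariance of that composition as a routine check. For part (ii), however, you take a genuinely different route. The paper converts the two $G$-morphisms $m_{P}$ and $n_{P}=(3')\circ(2)\circ(1)$ into $\uaut_{\e S}(x)$-isomorphisms out of the contracted product $P\be {{{} G\atop \bigwedge}\atop }\lbe \uaut_{\e S}(x)$ (using that extension of the structural group is left adjoint to restriction, and that both targets are $\uaut_{\e S}(x)$-torsors), composes them to get an isomorphism $\uisom_{\e S}(x,{}^{P}\!x)\isoto\uisom_{\e S}(x,{\rm Tw}(P,X))$, and then invokes Giraud III, 2.2.6 to descend this to an $S$-isomorphism $j_{x}\colon{}^{P}\!x\isoto{\rm Tw}(P,X)$; the compatibility and uniqueness are then asserted as easy checks. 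You instead solve for $j_{x}$ directly: the required identity forces $j_{x}=n_{P}(p)\circ m_{P}(p)^{-1}$ at every local section $p$ of $P$, the $G$-equivariance of $m_{P}$ and $n_{P}$ makes this expression independent of $p$ (the $u(g)$ factors cancel), and the prestack property of $C$ glues the local candidates and gives uniqueness. Your argument is more elementary and self-contained (no appeal to the adjunction or to Giraud's descent lemma for $\uisom$-sheaves, only local triviality of $P$ and the sheaf property of $\uisom_{\e S}({}^{P}\!x,{\rm Tw}(P,X))$), and it makes the uniqueness statement transparent, whereas the paper's route is shorter on the page but buries both the compatibility condition and uniqueness in an unproved "not difficult to check." The one point you rightly flag as the real content in either approach is the $G$-equivariance of arrow (2), i.e., that ${\rm Tw}$ intertwines the right translation action on $\uisom_{\e G}(G_{d},P)$ with pre-composition by $u(g)$ on $\uisom_{\e S}(x,{\rm Tw}(P,X))$ via $i_{x}$; the paper is no more explicit about this than you are.
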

\begin{proof} 
(i) We set ${}^{P}\!x= {\rm Tw}(P, X)$ and take for $m_{P}$ the following composition
\[
P\underset{(1)}{\isoto}\uisom_{\e G}(G_{d},P)\underset{(2)}{\isoto}
\uisom_{\e S}(\e{\rm Tw}(G_{d},X),{\rm Tw}(P,X))\underset{(3^{\e\prime})}{\isoto}\uisom_{\e S}(x,{\rm Tw}(P, X)),
\]
where the arrow $(3^{\e\prime})$ is obtained by composition with $i_{x}\colon x\to {\rm Tw}(G_{d},X)$. Since this composition is a $G$-morphism, we obtain the required pair $({}^{P}\!x,m_{P})$.

(ii) Let $({}^{P}\!x,m_{P})$ be any pair that satisfies {\rm (i)}. The composition $(3^{\e\prime})\circ (2)\circ (1)$ determines a $G$-morphism
\[
n_{P}\colon P\to \uisom_{\e S}(x, {\rm Tw}(P,X))
\]
Since the operation of extending the structural group is left-adjoint to the operation of restricting the structural group and $P\be {{{} G\atop \bigwedge}\atop }\lbe \uaut_{\e S}(x)$ and $\uisom_{\e S}(x, {\rm Tw}(P,X))$ are
$\uaut_{\e S}(x)$-torsors, $n_{P}$ determines an $\uaut_{\e S}(x)$-isomorphism
\begin{equation}\label{1}
P\be {{{} G\atop \bigwedge}\atop }\lbe \uaut_{\e S}(x)\isoto \uisom_{\e S}(x, {\rm Tw}(P,X))
\end{equation}
Similarly, the given $G$-morphism $m_{P}\colon P\to\uisom_{\e S}(x,{}^{P}\!x)$ induces an $\uaut_{\e S}(x)$-isomorphism
\begin{equation}\label{2}
P\be {{{} G\atop \bigwedge}\atop }\lbe \uaut_{\e S}(x)\isoto \uisom_{S}(x,{}^{P}\!x).
\end{equation}
From \eqref{1} and \eqref{2} we obtain an isomorphism
\[
\uisom_{\e S}(x,{}^{P}\!x)\isoto\uisom_{\e S}(x, {\rm Tw}(P,X)).
\]
According to \cite[III, 2.2.6]{14}, we may conclude that there exists an $S$-isomorphism $j_{x}\colon {}^{P}\!x\isoto {\rm Tw}(P,X)$. It is not difficult to check that this isomorphism satisfies the prescribed condition from which its uniqueness follows.
\end{proof}

\begin{definition}
Let $P$ be a $G$-torsor on $E/S$ and $X=(x, u)$ an $S$-$G$-object with operators from a stack $C$. By a {\it twisted object of $X$ by $P$} we understand a pair $({}^{P}\!x,m_{P})$, where ${}^{P}\!x\in{\rm Ob}(C/S)$ and $m_{P}\colon P\to\uisom_{\e S}(x,{}^{P}\!x)$ is a $G$-morphism.
\end{definition}

\begin{remark} The $G$-morphism $m_{P}\colon P\to\uisom_{S}(x,{}^{P}\!x)$ implies that the
sheaf $\uisom_{\e S}(x,{}^{P}\!x)$ has a local section so that $x$ and ${}^{P}\!x$
are {\it locally isomorphic}. Thus the twisted object of $X= (x, u)$ by $P$ is locally isomorphic to $x$.
\end{remark}

\subsubsection{A few examples of twisting}

\begin{enumerate}
\item[(a)]  {\it Twist of a sheaf of sets}.

A sheaf of sets $X$ on a site $E$ on which a sheaf of groups $A$ acts on the left may be viewed as an object with operators from the $E$-stack  ${\rm FAISCIN}(E)$.

If $P$ is an $A$-torsor on $E$ then one can consider the twisted object of $X$ by $P$.

Now we have the following morphism of sheaves of sets on $E$
\[
\lambda\colon P\to\uisom(X,P\be {{{} A\atop \bigwedge}\atop }\lbe X)
\]
defined by the formula  $\lambda(p)(x)= (\e p, x)^{*}$, where $(\e p, x)^{*}$ is the
image of $(\e p, x)$ under $P\times X\to P\be {{{} A\atop \bigwedge}\atop }\lbe X$.
This morphism $\lambda$ is an $A$-morphism and therefore defines on $P\be {{{} A\atop \bigwedge}\atop }\lbe X$ a structure of {\it twisted object of $X$ by $P$}.

\item[(b)]  {\it Twist of a sheaf of groups.}

If we let a sheaf of groups $A$ act on itself on the left via {\it inner automorphisms}, then we can consider $A$ as an $A$-object with operators from the $A$-stack ${\rm FAGRSC}(E)$. It is therefore possible to twist $A$ by means of a an $A$-torsor $P$ on the site $E$. To find the twisted sheaf of groups we consider the following morphism of sheaves of sets
\[
\mu\colon P\to\uisom_{\e\rm GR}(A,{\rm ad}(P))
\]
defined by the formula $\mu(\e p)(a)(\e p\cdot a^{\prime})=p\cdot(aa^{\prime})$, where $p\in P(S), a, a^{\prime}\in A(S), S\in{\rm Ob}(E)$. This morphism $\mu$ is an $A$-morphism and therefore endows the object ${\rm ad}(\e P\le)$ with the structure of a twisted object of $A$ by $P$ (in the stack of sheaves of groups on $E$). 
\end{enumerate}

\begin{remark}	The role of twisted object of $A$ by $P$ is also played by $P\be {{{} A\atop \bigwedge}\atop }\lbe A$ when $A$ acts on the left on $A$ via {\it inner automorphisms}.

Indeed, we can certainly consider $A$ as a left $A$-object in the stack of sheaves of {\it sets} on $E$. But then we know that the twisted object of $A$ by $P$ is none other than the contracted product $P\be {{{} A\atop \bigwedge}\atop }\lbe A$ and that thanks to the $A$-morphism
\[
\lambda\colon P\to\uisom(A,P\be {{{} A\atop \bigwedge}\atop }\lbe A),\, \lambda(p)(a)=(p,a)^{*}.
\]
Now we know that, since $A$ acts on the left via inner automorphisms, we
have a group structrue on $P\be {{{} A\atop \bigwedge}\atop }\lbe A$ induced by that on $A$.  Therefore we may regard $P\be {{{} A\atop \bigwedge}\atop }\lbe A$ as an object in the stack of sheaves of {\it groups} on $E$. Now $\lambda(p)$ is a morphism of groups since
\[
\lambda(p)(a\cdot a^{\prime})=(p, a\cdot a^{\prime})^{*}=(p, a)^{*}\cdot(p, a^{\prime})^{*}=\lambda(p)(a)\cdot \lambda(p)(a^{\prime})
\]
The $A$-morphism $\lambda$ thus determines an $A$-morphism
\[
\lambda^{\prime}\colon P\to\uisom_{\e\rm GR}(A,P\be {{{} A\atop \bigwedge}\atop }\lbe A)
\]
and therefore endows the object $P\be {{{} A\atop \bigwedge}\atop }\lbe A$ with the structure of a twisted object of $A$ by $P$ in the stack of sheaves of groups on $E$. It thus follows that we have an isomorphism
\[
{\rm ad}(P)\isoto P\be {{{} A\atop \bigwedge}\atop }\lbe A.
\] 	
\end{remark}

\chapter{The cohomology of Giraud}\label{cg}

We will show how Giraud is led to define his 2-cohomology in terms of gerbes and how he comes to understand the band of a gerbe.
A number of constructions and properties of bands will be discussed. The $H^{2}$ will be defined and this will also allow us to indicate where the shortcomings of his cohomology theory lie.

\section{Cohomology in dimensions $0$ and $1$}

\subsection{Definitions of $H^{\le 0}$ and $H^{1}$}

Let $E$ be a site and  let $A$ be a sheaf of groups on $E$.

\subsubsection{$H^{0}$}

We set $H^{\le 0}(E; A) = \varprojlim A$.

Sometimes we will use the abbreviated notation $H^{\le 0}(A)$. If $e$ denotes the {\it final} sheaf on $E$ then we have a canonical isomorphism
\[
H^{\le 0}(E; A)\isoto \Hom(e,A)
\]

A given morphism of sheaves of groups $u\colon A\to B$ induces a map from $H^{\le 0}(E; A)$ to $H^{\le 0}(E; B)$ that will be denoted as follows
\[
u^{(0)}\colon H^{\le 0}(A)\to H^{\le 0}(B)
\]

This map $u^{(0)}$ is defined using the universal character of a projective limit.

\subsubsection{$H^{1}$}

We define $H^{1}(E; A)$ as the set of isomorphism classes of $A$-torsors on $E$.  This is a pointed set; the base point is the class that contains the trivial $A$-torsor $A_{d}$. Here, also, we will sometimes use the shortened notation $H^{1}(A)$. Using a morphism of sheaves of groups $u\colon A\to B$, we can construct a map
\[
u^{(1)}\colon H^{\le 1}(A)\to H^{\le 1}(B)
\]
using the operation of extension of the structural group via $u$. Thus, if $[P]$ is an element of $H^{1}(A)$, then $u^{(1)}([P])=[{}^{u}\be P]$, where ${}^{u}\be P=P\be {{{} A\atop \bigwedge}\atop }\lbe B_{d}$.

\subsection{The coboundary map}

\subsubsection{The inverse image of a section}

Let $E$ be a site, $A$ a sheaf of groups and $X$ a sheaf of sets on $E$. A right action $m\colon X \times  A \to X$ of A on $X$ determines an equivalence relation on $X$. The quotient object $X/A$ is the cokernel of the pair of maps
\[
X\times A\,{{{} {\rm pr}_{\lbe 1}\atop \longrightarrow}\atop{\longrightarrow \atop \! m {}}}\,  X.
\]
This follows from the fact that $(X\times A, ({\rm pr}_{\lbe 1}, m))$ represents the equivalence relation on $X$ and from the definition of quotient object of an object modulo an equivalence relation.

If we denote the cokernel of the pair of maps $({\rm pr}_{\lbe 1}, m)$ by $q\colon X\to X/A$, then $X/A$ is the sheaf associated with the presheaf
\[
S\mapsto F(S)=X(S)/A(S)
\]
and $q$ is the following composition
\[
X\overset{p}{\lra} F \overset{\varphi}{\lra} a(F)=X/A,
\]
where $p\colon X\to F$ is equal to the projection of the quotient sheaf and $\varphi\colon F\to a(F)$ is the canonical map of the presheaf to its associated sheaf.

\begin{definition} Let $s\colon e\to X/A$ be a {\it section} of $X/A$. 
Since the composition of $s$ with the final morphism $X/A\to e$ is equal to ${\rm Id}_{e}$, $s$ is a {\it monomorphism}.

The inverse image of $s$ under $q$ is obtained by performing the following pullback in $\widetilde{E}$
\[
\xymatrix{q^{-1}(s)\ar@{-->}[d]_{s^{\prime}}\ar@{-->}[r]^{q^{\prime}}& e\ar[d]^{s}\\
X\ar[r]^{q}& X/A}
\]	
\end{definition}

\begin{lemma} If $s\colon e\to X/A$ is a section of $X/A$, then the inverse image
$q^{-1}(s)$ is a {\rm homogeneous} $A$-space, i.e., the action of $A$ on $X$ is transitive.
\end{lemma}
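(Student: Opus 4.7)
The plan is to first observe that the right action of $A$ on $X$ restricts to an action on $q^{-1}(s)$, and then prove transitivity of this restricted action by unwinding the construction of $q$ as sheafification of the naive pointwise quotient.

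For the restriction of the action, I would argue at the level of $T$-valued points: if $x\colon T\to X$ satisfies $q\circ x=s$ (composed with the structural map $T\to e$) and $a\in A(T)$, then $q(x\cdot a)=q(x)$ because $q=\mathrm{coker}(\mathrm{pr}_1,m)$, so $x\cdot a$ again factors through the pullback $q^{-1}(s)$. This gives an action $q^{-1}(s)\times A\to q^{-1}(s)$ compatible with the inclusion $q^{-1}(s)\hookrightarrow X$.

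The main step is transitivity, interpreted in $\widetilde{E}$ as the assertion that the map $q^{-1}(s)\times A\to q^{-1}(s)\times q^{-1}(s)$, $(x,a)\mapsto (x,x\cdot a)$, is an epimorphism. I would take $T\in\mathrm{Ob}(E)$ and two sections $x_1,x_2\in q^{-1}(s)(T)$, so $q_T(x_1)=q_T(x_2)=s\vert_{T}$. Using the factorization $q=\varphi\circ p$ with $p\colon X\to F$ the presheaf quotient and $\varphi\colon F\to a(F)=X/A$ the canonical morphism to the associated sheaf, I would invoke that $\varphi$ is bicovering. The relevant half of bicoveringness says that two sections of $F$ with the same image in $a(F)$ agree locally; applied to $p_T(x_1)$ and $p_T(x_2)$, this furnishes a covering $\{T_i\to T\}$ such that $p_{T_i}(x_1\vert_{T_i})=p_{T_i}(x_2\vert_{T_i})$ in $F(T_i)=X(T_i)/A(T_i)$. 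By definition of $F$, that equality means there exist elements $a_i\in A(T_i)$ with $x_1\vert_{T_i}\cdot a_i=x_2\vert_{T_i}$, which is exactly what transitivity of the action (in the topos-theoretic sense) requires.

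The only real obstacle is articulating cleanly the passage from equality in the sheafification $X/A$ to local equality in the presheaf quotient $F$; once that is done, the rest is bookkeeping. A small additional verification, which I would include at the start to make the statement meaningful, is that $q^{-1}(s)$ is nonempty locally: since $q$ is an epimorphism of sheaves and $s$ is a section, the pullback $q^{-1}(s)\to e$ admits local sections, i.e., $q^{-1}(s)$ has local points over every sufficiently fine covering, so the transitivity conclusion is not vacuous.
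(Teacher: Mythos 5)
Your proof is correct, but it takes a genuinely different route from the paper's. The paper never chases sections: it invokes Giraud's characterization of a homogeneous space as an $A$-object whose quotient is the final object, checks that the two squares relating $q^{-1}(s)\times A\rightrightarrows q^{-1}(s)$ to $X\times A\rightrightarrows X$ are cartesian, and then uses the fact that inductive limits in the topos $\widetilde{E}$ are universal to conclude that the base change of the coequalizer diagram $X\times A\rightrightarrows X\to X/A$ along $s\colon e\to X/A$ is again a coequalizer diagram, whence $q^{-1}(s)/A\cong e$. That argument is purely diagrammatic, and local nonemptiness of $q^{-1}(s)$ comes for free because the coequalizer projection $q^{-1}(s)\to q^{-1}(s)/A\cong e$ is an epimorphism. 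You instead unwind the construction $q=\varphi\circ p$ through the pointwise quotient presheaf $F$ and use the local-injectivity half of the bicovering property of $\varphi\colon F\to a(F)$ to produce, locally, elements $a_i\in A(T_i)$ carrying $x_1$ to $x_2$; this proves that $q^{-1}(s)\times A\to q^{-1}(s)\times q^{-1}(s)$ is an epimorphism, and you supply the epimorphy of $q^{-1}(s)\to e$ separately by universality of epimorphisms. The only reconciliation you owe the reader is that your conclusion (action map epi plus $q^{-1}(s)\to e$ epi) implies the quotient-object criterion the paper cites: in a topos every epimorphism is effective, so $e=\mathrm{coker}\bigl(q^{-1}(s)\times q^{-1}(s)\rightrightarrows q^{-1}(s)\bigr)$, and since the image of the action map is the whole of $q^{-1}(s)\times q^{-1}(s)$, this coequalizer coincides with $q^{-1}(s)/A$. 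Your version is more concrete and makes visible exactly where locality enters (in the sheafification), at the cost of that small bookkeeping step; the paper's version is shorter once one is willing to quote universality of colimits.
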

\begin{proof} The morphism $m\colon X\times A\to X$ determined by the action of $A$ on $X$ induces a unique morphism
\[
m^{\prime}\colon q^{-1}(s)\times A\to q^{-1}(s)
\]
such that $q^{\prime}\circ  m^{\prime}=\emptyset$  (final morphism $q^{-1}(s)\times A\to e$) and $s^{\prime}\circ  m^{\prime}=m\circ(s^{\prime}\times 1_{A})$. This morphism $m^{\prime}$ defines an action of $A$ on $q^{-1}(s)$ 
	
$q^{-1}(s)$ will be a homogeneous $A$-space if we can show that the quotient object $q^{-1}(s)/A$ is isomorphic to the final object $e$ \cite[III, 3.1.1.2]{14}. From the preceding we know that $q^{-1}(s)/A$ is the cokernel of the following pair of morphisms
\[
q^{-1}(s)\times A\,{{{} {\rm pr}_{\lbe 1}\atop \longrightarrow}\atop{\longrightarrow \atop \! m^{\prime} {}}}\, q^{-1}(s).
\]
Now consider the following diagram
\begin{equation}\label{cart}
\xymatrix{	q^{-1}(s)\times A\ar[d]_{s^{\prime}\times 1_{A}} \hspace{-.1cm} &\hspace{-.5cm} {{{} {\rm pr}_{\lbe 1}\atop \xrightarrow{\makebox[1.5cm]{}}}\atop{\xrightarrow{\makebox[1.5cm]{}}\atop \!m^{\prime} {}}}\hspace{.3cm}& \hspace{-1.4cm}\hspace{1cm} q^{-1}(s)\ar[d]_{s^{\prime}}\ar[r]^{q^{\e\prime}} & e\ar[d]_{s}\\
X\times A\hspace{-.1cm} &\hspace{-.5cm} {{{} {\rm pr}_{\lbe 1}\atop \xrightarrow{\makebox[1.5cm]{}}}\atop{\xrightarrow{\makebox[1.5cm]{}}\atop \!m {}}}\,& \hspace{-.2cm} X\ar[r]^{\hspace{-.3cm}q}& X/A.}
\end{equation}

One verifies directly that the following pullback squares are in $\widetilde{E}$:
\[
\xymatrix{q^{-1}(s)\times A\ar[d]_{s^{\prime}\times 1_{A}}\ar[rr]^{{\rm pr}_{\lbe 1}}&& q^{-1}(s)\ar[d]_{s^{\prime}}\\
X\times A\ar[rr]^{{\rm pr}_{\lbe 1}}&& X}
\]
and
\[
\xymatrix{q^{-1}(s)\times A\ar[d]_{s^{\prime}\times 1_{A}}\ar[rr]^{m^{\prime}}&& q^{-1}(s)\ar[d]_{s^{\prime}}\\
X\times A\ar[rr]^{m}&& X.}
\]	
The right-hand square in \eqref{cart} is also a pullback.

It still means that the base change $e\overset{s}{\lra} X/A$ of the anti-cone
\[
\xymatrix{X\times A\ar[dr]\hspace{-.1cm} &\hspace{-.5cm} {{{} {\rm pr}_{\lbe 1}\atop \xrightarrow{\makebox[4.8cm]{}}}\atop{\xrightarrow{\makebox[4.8cm]{}}\atop \!m {}}}&   \hspace{-.1cm}X\ar[dl]^{q}\\
&\hspace{-.1cm} X/A &.}
\]
is
\[
\xymatrix{q^{-1}(s)\times A\ar[dr]\hspace{-.1cm} &\hspace{-.5cm} {{{} {\rm pr}_{\lbe 1}\atop \xrightarrow{\makebox[4.8cm]{}}}\atop{\xrightarrow{\makebox[4.8cm]{}}\atop \!m^{\prime} {}}}&   \hspace{-.1cm}q^{-1}(s)\ar[dl]^{q^{\prime}}\\
&\hspace{-.1cm} e &.}
\]
Since $(X/A, q)$ is a cokernel of $({\rm pr}_{\lbe 1}, m)$ and in the topos $\widetilde{E}$ inductive limits are universal, we conclude that $(e, q^{\e\prime}\e)$ is a cokernel of the pair $({\rm pr}_{\lbe 1}, m^{\e\prime}\e)$.  Since $q^{-1}(s)/A$ is also a cokernel of this pair, we obtain that $q^{-1}(s)/A$ is isomorphic to $e$. Thus $q^{-1}(s)$ is a homogeneous $A$-space.

\end{proof}

{\bf Special case}: if $A$ acts freely on $X$, then the inverse image under $q$ of a section $s\colon e\to X/A$ is an {\it $A$-torsor}.

\subsubsection{The coboundary map}

Let $1\to A\overset{u}{\lra} B\overset{v}{\lra} C\to 1$ be a short exact sequence of sheaves of groups on $E$. Then we know, in particular, that $C$ is isomorphic to the quotient sheaf $B/A$. Thus the inverse image under $v$ of a section $s$ of $C$ is an $A$-torsor since $A$ acts freely on $B$. Therefore we have a map
\[
d\colon H^{\le 0}(C)\to H^{\le 1}(A), \, s\mapsto d(s)=[v^{-1}(s)].
\]
Finally, we can verify that the following sequence of pointed sets is {\it exact}
\[
1\to H^{\le 0}(A)\overset{u^{(0)}}{\lra}H^{\le 0}(B)\overset{v^{(0)}}{\lra}H^{\le 0}(C)
\overset{d}{\lra}H^{\le 1}(A)  \overset{u^{(1)}}{\lra} H^{\le 1}(B)  \overset{v^{(1)}}{\lra}H^{\le 1}(C)
\]
For more details we refer to \cite[III]{14}.

\section{Bands on a site}

\subsection{Obstructions to the lifting of a torsor}

To determine how $H^{2}$ should be defined we proceed in the classical way.
We consider a short exact sequence of sheaves of groups on $E$
\[
1\to A\overset{u}{\lra} B\overset{v}{\lra} C\to 1
\]
and, in addition, also a $C$-torsor $P$. Now we look for the obstruction to lifting $P$ to $B$ via the epimorphism $v\colon B\to C$. Giraud, inspired by Grothendieck, finds  the obstructions in an {\it $E$-category $K(P)$}. For every object $S$ of $E$, the fiber of $K(P)$ over $S$ is nothing but the category of liftings of $P^{S}$ to $B$, i.e., the category whose objects are pairs $(Q, \lambda)$, where $Q$ is a $B$-torsor on $E/S$ and $\lambda$ is a $v$-morphism from $Q$ to $P^{S}$.
He further noted that $K(P)$ is a stack on $E$ because the liftings of $P$ localize and glue together. In addition, this stack has the following properties:
\begin{enumerate}
\item[(1)] There exists a refinement $\mathcal R$ of $E$ such that ${\rm Ob}(K(P)_{S})\neq\emptyset$ for every $S$ in $\mathcal R$, in other words, $K(P)$ is locally nonempty. This is so because locally there exists a $v$-morphism from $B_{d}$ to $P$.
\item[(2)] Every $S$-morphism is an isomorphism, in other words, the fibers are {\it groupoids}. This is clear when one notes that a $B$-morphism between $B$-torsors is an isomorphism.
\item[(3)] Any two objects from $K(P)_{S}$ are locally isomorphic. Indeed, we have seen that two $B$-torsors are locally isomorphic.
\end{enumerate}

All of this is expressed briefly by saying that $K(P)$ is a {\it gerbe} on $E$. A gerbe is called {\it trivial} if it has a section. It is then clear that $K(P)$ is trivial exactly when $P$ can be lifted to $B$. Thus one can use $K(P)$ as an obstruction to the lifting. The idea is that $K(P)$ should determine an element of $H^{2}$, but then the question arises of how these sheaves of groups are related to the gerbe $K(P)$. One can quite easily see that, for every object $(Q, \lambda)$ in $K(P)$, $\aut_{S}(Q, \lambda)$ is {\it locally} isomorphic to $A$ and that these local isomorphisms differ only by inner automorphisms of $A$.

It follows that these local isomorphisms will not glue in the stack of sheaves of groups on $E$, but they will do so in a new stack which is obtained from the stack of sheaves of groups by annihilating the inner automorphisms. Giraud calls this new stack the {\it stack of bands on $E$}. We see, therefore, that, in general, it is not possible to associate a sheaf of groups to a gerbe but a new object called a {\it band} on $E$ (lien sur $E$).

\subsection{Bands on a site}

\subsubsection{The $E$-stack ${\rm LIEN}(E)$.}

We leave behind the split $E$-stack of sheaves of groups on the site $E$. To begin with, we associate to $E$ a {\it split $E$-prestack ${\rm LI}(E)$}, in other words, we define a contravariant functor
\[
{\rm LI}(E)\colon E^{\e \circ}\to({\rm Cat})
\]
\begin{itemize}
\item effect on the {\it objects}.

If $S\in{\rm Ob}(E)$, then ${\rm LI}(E)(S)$ is the category with the same objects as ${\rm FAGRSC}(E)_{S}$. For every pair $F, G\in{\rm Ob}({\rm FAGRSC} (E)_{S})$, we define the set of $S$-morphisms ${\rm Hex}(F,G)$ as follows
\[
{\rm Hex}(F,G)=H^{0}(S,\hex(F,G))
\]
Here $\hex(F,G)$ is the sheaf associated to the following presheaf on $E/S$:
\[
T\overset{f}{\to} S\to G(T)\backslash\Hom(F^{\e T},G^{\e T})/F(T)
\]
where $G(T)$ and $F(T)$ act via inner automorphisms.

\item effect on the {\it morphisms}

Let $f\colon T\to S$ be an arrow of $E$. Then ${\rm LI}(E)(f)$ is the following functor
\[
{\rm LI}(E)(S)\to {\rm LI}(E)(T), F\mapsto F^{\e T}=\text{the restriction of $F$ to $E/T$}
\]
Thus we have a split $E$-prestack ${\rm LI}(E)$ and, in addition, a {\it morphism of split $E$-categories}
\begin{equation}\label{tag}
{\rm FAGRSC}(E)\to {\rm LI}(E).
\end{equation}

This morphism is the identity on the objects and further, sends every
pair of morphisms of sheaves of groups that differ from each other by an inner automorphism to the same morphism in ${\rm LI}(E)$. Thus the inner automorphisms are annihilated under this morphism. 
\end{itemize}      
	
Finally, we consider the {\it associated stack} of the split $E$-prestack ${\rm LI}(E)$, which we denote by
\[
{\rm LIEN}(E).
\]
We have seen in the definition of associated stack that we have a bicovering functor
\begin{equation}\label{tag2}
{\rm LI}(E)\to {\rm LIEN}(E).
\end{equation}
Since ${\rm LI}(E)$ is already a prestack, bicovering here means that this functor is {\it fully faithful and locally surjective on objects} \cite[II, 1.4.5]{14}. By composing \eqref{tag} and \eqref{tag2}, we obtain a morphism  of split $E$-stacks
\[
{\rm lien}(E)\colon {\rm FAGRSC}(E)\to{\rm LIEN}(E)
\]
 
\begin{definition}\indent
\begin{itemize}
\item The split $E$-stack ${\rm LIEN}(E)$ is called the stack of bands on $E$.
\item By a band on $E$ we mean a cartesian section of the stack of bands ${\rm LIEN}(E)$. Thus we define the category of bands on $E$ as follows.
\[
{\rm Lien}(E)=\varprojlim({\rm LIEN}(E)/E)
\]

\item By taking cartesian sections of the morphism of stacks 
${\rm lien}(E)\colon {\rm FAGRSC}(E)\to{\rm LIEN}(E)$ and recallling the equivalence ${\rm Fagr}(E)\isoto\varprojlim({\rm FAGRSC}(E)/E)$ we obtain a functor denoted by ${\rm lien}(E)$:
\[
{\rm lien}(E)\colon {\rm Fagr}(E)\to {\rm Lien}(E).
\]

\item For every object $S$ of $E$, by a band on $S$ we mean an object of the fiber category ${\rm LIEN}(E)_{S}$.

The category ${\rm LIEN}(E)_{S}$ is called the {\it category of bands on $S$.}

\end{itemize}
	
\end{definition}

{\bf{Some terminology}}

Let $f\colon T\to S$ be an arrow of $E$ and $L$ a band on $S$, i. e., $L\in{\rm Ob}({\rm LIEN}(E)_{S})$. By analogy with the case of sheaves of groups, the inverse image of $L$ under $f$ will be called the restriction of $L$ to $T$.

Notation: $L^{T}$ or also $L^{f}$.

Thus if $L$ is a band on $E$, in other words, a cartesian section of ${\rm LIEN}(E)$, then the value of $L$ at $S$, $S\in{\rm Ob}(E)$, is called the restriction of $L$ to $S$.

Notation: $L^{S}$ or $L(S)$.

$L^{S}$ can also be interpreted as the restriction of the section $L$ to $E/S$.

\subsection{Local properties of bands}

\subsubsection{Representability of bands and morphisms of bands}

Let $A$ be a sheaf of groups on a site $E$. Via the functor ${\rm lien}(E)$, we associate to $A$ a band on $E$ denoted by ${\rm lien}(A)$. We say that ${\rm lien}(A)$ is the band determined by $A$.

Let $L$ be a band on $E$.

By a {\it representative of $L$} we mean a pair $(A, a)$ where $A$ is a sheaf of groups on $E$ and $a\colon {\rm lien}(A)\isoto L$ is an isomorphism of bands. We say that a band $L$ {\it representable} on $E$ if it has a representative.

By a {\it representative of a morphism of bands $f\colon L\to M$} we mean a $5$-tuple $(A,a,B,b,g)$ where $(A,a)$ and $(B,b)$ are representatives of $L$ and $M$ and $g\colon B\to A$ is a morphism of sheaves of groups such that 
$b\circ{\rm lien}(g)= f \circ a$.

Since the functor ${\rm lien}(E)\colon {\rm FAGRSC}(E)\to{\rm LIEN}(E)$ is bicovering we know that it is locally surjective on objects and morphisms. This means that {\it every band is locally representable and this is also the case for every morphism of bands.}

\subsubsection{Abelian bands}

An abelian band is a band that is represented by a sheaf of abelian groups.

In this context, we recall:

\begin{enumerate}
\item[\rm (i)]  In order for a band to be represented by a sheaf of abelian groups, it is necessary and sufficient that this be so locally.
\item[\rm (ii)] The stack of abelian bands ${\rm LIAB}(E)$ is $E$-equivalent to the split stack of sheaves of abelian groups on $E$.  So abelian bands are practically the same thing as sheaves of abelian groups \cite[IV, 1.2.3]{14}.
\end{enumerate}

\subsubsection{Unit bands and unit morphisms}

The band represented by the trivial sheaf of groups is a unit object in ${\rm LIEN}(E)$, i.e., it is both an initial and a final object. This band is called {\it the unit band.}

Notation: 1

Thus if $L$ and $M$ are bands, we will denote by
\[
L\overset{1}{\to}M
\]
the {\it unit morphism}, i.e., the composition of the initial morphism $1\to M$ and the final morphism $L\to 1$

\subsubsection{Injective, surjective, central and normal morphisms of bands}

A morphism of bands $u\colon L\to M$ is called injective, surjective, central or normal if there exist a refinement $\mathcal R$ of $E$ and, for every $S\in{\rm Ob}(\mathcal R)$, a representative $u^{*}\colon A\to B$ of the restriction $u^{S}$ of $u$ to $S$ which is a monomorphism of sheaves of groups, or an epimorphism, or that the image of $A$ lies in the center of $B$ or is such that $u^{*}(A)$ is a normal subgroup of $B$.

These conditions are stable under localization and are also local in nature.

\subsection{Some additional concepts}

We will show that a number of concepts from the category of sheaves of groups can be transferred to the category of bands.

\subsubsection{Cokernel of a morphism of bands}

By a cokernel of a morphism of bands $u\colon L\to M$ we mean a difference cokernel of the pair
\[
L{{{} 1\atop \longrightarrow}\atop{\longrightarrow \atop \! u {}}}\, M.
\]
It is shown in \cite[IV, 1.3.1]{14} that the cokernel of a morphism of bands exists. To do this, one shows that the functor ${\rm lien}(E\e)\colon{\rm Fagr}(E\e)\to{\rm Lien}(E\e)$ transforms cokernels into cokernels and further notes that a cokernel exists as soon as this is so locally.

\subsubsection{Direct product of bands}

Finite direct products exist in the category of bands and also in the fibers of the stack of bands. It is shown that the functor ${\rm lien}(E)\colon{\rm Fagr}(E)\to{\rm Lien}(E)$ commutes with finite products.  From this and from the existence of finite products of sheaves of groups one can show the existence of finite products of bands.

Let $(L\times M\overset{\!{\rm pr}_{1}}{\lra}L, L\times M\overset{\!{\rm pr}_{2}}{\lra}M)$ be a direct product of bands. By the universal property of a direct product, there exists a unique morphism ${\rm inj}_{1}\colon L\to  L \times M$ such that ${\rm pr}_{1}\circ  {\rm inj}_{1}=1_{L}$  and  ${\rm pr}_{2}\circ  {\rm inj}_{1}=1$.

One defines ${\rm inj}_{2}\colon M\to  L \times M$ similarly.

\subsubsection{Centralizer of a morphism of bands. Center of a band}

If $u\colon A\to B$ is a morphism of groups, then its centralizer is
\[
C_{u}=\{b\in B\mid b\cdot u(a)=u(a)\cdot b\,\,\text{for all}\, a,b\in A \}
\]
Thus formulated, the concept of centralizer does not transfer to bands. In order to make this possible, we note that $C_{u}$ represents the contravariant functor $\mathcal C_{u}$ that associates to every group $X$ the set
\[
\mathcal C_{u}(X)=\{x\in\Hom(A\times X,B)\mid x\circ{\rm inj}_{1}=u\}
\]
This is why we define the centralizer of a morphism of bands
\[
u\colon L\to M
\]
to be a pair $(C_{u},c_{u})$, where $C_{u}$ is a band and $c_{u}\colon L\times C_{u}\to M$ is a morphism of bands such that for every band $X$ the map
\[
c\colon \Hom(X, C_{u})\to \Hom(L\times X, M), x\mapsto c(x)=c_{u}\circ(\e {\rm id}_{L}\times x)
\]
induces a {\it bijection}
\[
\Hom(X, C_{u})\isoto \mathcal C_{u}(X).
\]

{\bf{Properties}} \cite[IV, 1.5.2]{14}

\begin{itemize}
\item A morphism of bands $u\colon L\to M$ always has a centralizer.
\item For every object $S$ of $E$, the restriction of $(C_{u},c_{u})$ to $S$ is a centralizer of the restriction of $u$ to $S$.
\item The functor ${\rm lien}(E)\colon {\rm Fagr}(E)\to {\rm Lien}(E)$ transforms a centralizer into a centralizer.
\item In order for $(C_{u},c_{u})$ to be a centralizer of $u$ it is necessary and sufficient that it be so locally.
\end{itemize}

\subsubsection{Center of a band}

By the center of a band $L$ we mean the centralizer of ${\rm Id}_{L}\colon L\to L$. Notation: $(C_{L},\gamma_{L}\colon C_{L}\to L)$.

{\bf{Some properties}}.

\begin{itemize}
\item The center $C_{L}$ of a band $L$ is an abelian band.
\item A morphism of bands $u\colon L\to M$ is central only if $\gamma_{u}\colon C_{u}\to M$ is an isomorphism.
\item If $u\colon L\to M$  is an epimorphism, then $C_{M}\isoto C_{u}$.
\end{itemize}

\subsubsection{Contracted product of two bands}

Let $u\colon C\to L$ and $v\colon C\to M$ be two morphisms of bands.  The contracted product of $L$ with $M$ over $C$ is the cokernel of the pair
\[
C\,{{{} (u,1)\atop \longrightarrow}\atop{\longrightarrow \atop (1,\e v){}}}\,  L\times M
\]

Notation: $L\be {{{} C\atop \bigwedge}\atop }\lbe M$

The contracted product of bands does not always exist because one cannot prove that every pair of morphisms of bands has a difference cokernel.

However, this contracted product does exist if one of the two morphisms, for example $v$, is {\it central}. In that case, the cokernel of the pair $((u, 1), (1, v))$ is none other than the cokernel of $(u,v^{-1})\colon C\to L\times M$.

Thus the contracted product exists if $v$ or $u$ is central. The contracted product is stable under localization and the functor ${\rm lien}(E)\colon{\rm Fagr}(E)\to{\rm Lien} (E)$ transforms contracted products into contracted products \cite[IV, 1.6.1.3]{14}.

\section{Gerbes. Band of a gerbe.}

\subsection{Gerbes on a site}

\begin{definition} A {\it gerbe $G$ on a site $E$} is an $E$-stack $G$ in groupo\"ids $G$ such that the following two conditions hold.
\begin{enumerate}
\item[(i)] There exists a refinement $\mathcal R$ of $E$ such that, for every $S\in{\rm Ob}(\mathcal R)$, the fiber category $G_{S}$ is nonempty.
\item[(ii)] For every $S\in{\rm Ob}(E)$ and every pair $x, y\in{\rm Ob}(G_{S})$, $x$ and $y$ are locally isomorphic.
\end{enumerate}
\end{definition}

\begin{remarks}\indent
\begin{enumerate}
\item[(a)] Recall that, by a refinement $\mathcal R$ of $E$, we mean a saturated subset $\mathcal R$ of ${\rm Ob}(E)$ such that, for every $S\in{\rm Ob}(E)$, $\{X\to S\mid \forall\, X\in\mathcal R\}$ is a covering sieve of $S$. If $E$ has a final object, then a refinement $\mathcal R$ of $E$ is nothing but a covering sieve of this final object.	
\item[(b)] Conditions (i) and (ii) together mean that the projection $G\to E$ is covering.	
\item[(c)] Condition (i) is certainly satisfied if $G$ has a section. If a gerbe $G$ has a section, then it is called a {\it trivial} gerbe.
\end{enumerate}
\end{remarks}

\subsubsection{Morphisms of gerbes}
A {\it morphism of gerbes} is a morphism of stacks on $E$ whose source and target are gerbes.  One defines in an analog manner the notion of {\it morphism of morphisms of gerbes}.

\begin{example}
Let $A$ be a sheaf of groups on a site $E$. According to Proposition \ref{228},  the $E$-stack ${\rm TORS}(E; A)$ is a gerbe on $E$.  This is a {\it trivial gerbe} since it has a section, namely the section determined by the trivial $A$-torsor $A_{d}$.	
\end{example}

\subsection{Band of a gerbe}

\subsubsection{Action of a band on a stack}

Let $F$ be a stack on the site $E$ and let $F^{\e \rm cart}$ \cite[II, 3.5.1]{14} be the underlying stack in groupoids on $E$. The objects of $F^{\rm cart}$ are the same as those of $F$ and the morphisms are the {\it cartesian} $E$-morphisms of $F$. This implies that the fibers of $F^{\rm cart}$ are grupoids. We have a morphism of stacks
\begin{equation}\label{aut}
\aut(F)\colon F^{\e\rm cart}\to {\rm FAGRSC}(E)
\end{equation}
that associates with $x\in{\rm Ob}(F_{S})$ a sheaf of groups $\uaut_{S}(x)$ on $E/S$ and with an $S$-morphism $m\colon x\isoto y$ in $ F^{\e\rm cart}$ the morphism of sheaves of groups ${\rm Int}(m)\colon \uaut_{\e S}(x)\to \uaut_{\e S}(y)$.

By composing $\aut(F)$ with ${\rm lien}(E)$ we obtain a morphism of stacks
\[
{\rm liau}(F)\colon F^{\e\rm cart}\to {\rm LIEN}(E)
\]
Let $L$ be a band on $E$, i.e., cartesian section $L\colon E\to {\rm LIEN}(E)$ of the stack of bands on $E$.  We write $f\colon F^{\e\rm cart}\to E$ for the projection from $F^{\e\rm cart}$ to $E$.

\begin{definition} An {\it action of $L$ on $F$} is a morphism of morphisms of stacks
\[
a\colon L\circ f\to {\rm liau}(F)
\]
\end{definition}

This means that, for every object $S$ of $E$ and every object $x$ in the fiber $F_{S}$, a morphism of bands on $S$
\[
a(x)\colon L(S)\to {\rm lien}(\uaut_{\e S}(x))
\]
is given in such a way that the family $\{a(x)\mid \forall S\in {\rm Ob}(E), \forall x\in {\rm Ob}(F_{S})\}$ is coherent.

\subsubsection{Morphisms of stacks with actions by bands}

Let $(L,a, F\e)$ and $(M, b, G\e)$ be stacks with actions by bands.
\[
m\colon F\to G
\]
a morphism of stacks, and 
\[
u\colon L\to M
\]
a morphism of bands on $E$.

We say that $m$ and $u$ are {\it compatible} or that $m$ is a {\it $u$-morphism} if the following condition holds:
\[
{\rm liau}(m)\circ a=(b\ast m^{\rm cart})\circ (u\ast f\e)
\]
This condition expresses that, for every $S\in {\rm Ob}(E)$ and every $x\in {\rm Ob}(F_{S})$, the following square must commute
\[
\xymatrix{L(S)\ar[d]_{a(x)}\ar[r]^{u(S)}& M(S)\ar[d]_{b(m(x))}\\
{\rm lien}(\uaut_{\e S}(x))\ar[r]^{\mu_{x}}& {\rm lien}(\uaut_{\e S}(m(x)))}.
\]
Here $\mu_{x}\colon \uaut_{\e S}(x)\to \uaut_{\e S}(m(x))$ is the morphism of sheaves of groups induced by $m\colon F\to G$.

\subsubsection{Band of a gerbe}

For an arbitrary $E$-stack, we have introduced the concept of action of a band.  When we see this in more detail for a particular type of stack, namely the gerbes, we find that under all possible actions of bands on a gerbe there exists one (unique up to isomorphism)
such that the structural morphism $a$ is an {\it isomorphism}.

For the proof we refer to \cite[IV, 2.2.2.3]{14}

\begin{definition} If $(L,a)$ is a band which acts on a gerbe $F$ so that $a$ is an isomorphism, then we say that $L$ is {\it the band of $F$} or also that $F$ is {\it bound by $L$}.	
\end{definition}

That $F$ is bound by $L$ means that for every $S\in {\rm Ob}(E)$ and every $x\in{\rm Ob} (F_{S})$ we have an isomorphism of bands on $E/S$
\[
a(x)\colon L(S)\isoto {\rm lien}(\uaut_{\e S}(x))
\]
such that the family $\{a(x)\mid \forall S\in {\rm Ob}(E), \forall x\in {\rm Ob}(F_{S})\}$
is compatible with the above constraints and for every $S$-isomorphism $i\colon x\to y$ of $F$ it must hold that the morphism of sheaves of groups ${\rm Int}(i)\colon \uaut_{\e S}(x)\to \uaut_{\e S}(y)$ represents the identity morphism of $L(S)$.

It is further proved that if $m\colon F\to G$ is a morphism of gerbes, $(L, a)$ is a band of $F$ and $(M, b)$ a band of $G$, then there exists unique morphism of bands
\[
u\colon L\to M
\]
such that $m$ is a $u$-morphism \cite[IV, 2.2.3]{14}.

\begin{definition} We say that the morphism of gerbes $m$ is {\it bound} by the morphism of bands $u\colon L\to M$.
\end{definition}

If follows from the definition of band of a gerbe and the definition of $u$-morphism that a morphism of gerbes $m\colon F\to G$ is bound by a morphism of bands $u\colon L\to M$ if, and only if, for every $S\in {\rm Ob}(E)$ and every $x\in {\rm Ob}(F_{S})$, the morphism of sheaves of groups $\uaut_{\e S}(x)\to \uaut_{\e S}(m(x))$ represents the restriction of $u$ to $S$.

\smallskip

{\bf Some additions}

\begin{itemize}
\item An $E$-gerbe $G$ is called an {\it abelian gerbe} if it is bound by an abelian band.
\item Let $F$ be a gerbe, $L$ the band of $F$ and $s$ a section of $F$.
If $A= \uaut(s)$ is the sheaf of automorphisms of $s$, then we have a canonical isomorphism of bands on $E$:
\[
L\isoto{\rm lien}(A).
\]
\item Let $m\colon F\to G$ be a morphism of gerbes and let $u\colon L\to M$ be the morphism of bands such that $m$ is bound by $u$. Then one has
\begin{enumerate}
\item[(i)] $m$ is faithful if and only if $u$ is injective.
\item[(ii)] $m$ is a covering if and only if $u$ is surjective.
\item[(iii)] $m$ is fully faithful if and only if $u$ is an isomorphism. In this case $m$ is even an equivalence \cite[IV, 2.2.6]{14}
\end{enumerate}
\end{itemize}

\subsection{The gerbe of liftings of a section}
In what follows we collect information which will later allow us to define the connecting morphism $d\colon H^{1}\to H^{2}$.

\subsubsection{The $E$-category $K(s)$}

Let $F$ and $G$ be two fibered $E$-categories, $m\colon F\to G$ a cartesian $E$-functor and $s\colon E\to G$ a cartesian section of $G$. We define an $E$-category $K(s)$ as follows:
\begin{itemize}
\item the objects with projection $S\in{\rm Ob}(E)$: all pairs $(a, \alpha)$ with 
$a\in{\rm Ob}(F_{S})$ and $\alpha\in\isom_{\e S}(m(a),s(S))$
\item the morphisms with source $(b, \beta)$, target $(a,\alpha)$ and projection $f\colon T\to S$: all $f$-morphisms $\psi\colon b\to a$ from $F$ for which $s(f\e)\circ\beta=\alpha\circ m(\psi\e)$
\item the composition of morphisms: via the composition in $F$.
\end{itemize}

\begin{theorem}\indent
\begin{enumerate}
\item[\rm (i)] $K(s)$ is a fibered $E$-category.
\item[\rm (ii)] We have a faithful and conservative cartesian $E$-functor 
\[
k(s)\colon K(s)\to F, (a,\alpha)\mapsto a
\]
as well as an $E$-isomorphism of functors
\[
\kappa(s)\colon m\circ k(s)\isoto s^{K(s)},\, (a,\alpha)\mapsto \alpha.
\]
Here $s^{K(s)}$ is the following composition
\[
K(s)\overset{\rm{proj.\,on}\, E}{\lra} E\overset{s}{\to} G
\]
\item[\rm (iii)] If $i\colon s\isoto t$ is an isomorphism of sections of $G$, then we have an $E$-functor
\[
K(i)\colon K(s)\to K(t), (a,\alpha)\mapsto (a,i(S)\circ \alpha),
\]
characterized by the following conditions
\begin{eqnarray*}
k(t)\circ K(i)&=& k(s)\\
\kappa(t)\ast K(i)&=& i^{K(s)}\circ \kappa(s)
\end{eqnarray*}
In addition,  $K(i)$ is an $E$-isomorphism. 
\end{enumerate}
\end{theorem}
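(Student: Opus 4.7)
The plan is to handle the three parts in sequence; constructing cartesian lifts in $K(s)$ is the technical heart, after which (ii) and (iii) follow by routine diagram chases.

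For (i), I would first note that $K(s)$ is a category, since the defining equation $s(f)\circ \beta = \alpha\circ m(\psi)$ for a morphism is stable under horizontal concatenation by functoriality of $m$ and $s$. The key step is producing cartesian inverse images. Given $f\colon T\to S$ in $E$ and $(a,\alpha)\in \bj(K(s)_S)$, choose a cartesian $f$-lift $\psi\colon f^{*}a\to a$ in $F$. Then $m(\psi)$ is cartesian in $G$ since $m$ is cartesian, and $s(f)$ is cartesian since $s$ is a cartesian section. By the universal property of $s(f)$, the arrow $\alpha\circ m(\psi)\colon m(f^{*}a)\to s(S)$ factors uniquely as $s(f)\circ \beta$ for some $\beta\colon m(f^{*}a)\to s(T)$, and $\beta$ is itself an isomorphism because $\alpha$, $m(\psi)$ and $s(f)$ all are. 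The pair $(f^{*}a,\beta)$ is the candidate inverse image of $(a,\alpha)$, with $\psi$ regarded as an $f$-morphism $(f^{*}a,\beta)\to (a,\alpha)$ in $K(s)$. Its cartesianness in $K(s)$ is verified by lifting any competitor morphism first to $F$ via cartesianness of $\psi$ there, and then observing that the resulting $F$-arrow automatically satisfies the $K(s)$-condition, by the uniqueness clause in the factorization through $s(f)$.

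For (ii), $k(s)$ is cartesian because the inverse images in $K(s)$ just constructed project to the chosen cartesian lifts in $F$. Faithfulness is immediate, since $K(s)$-morphisms are $F$-morphisms subject to an additional equation. For conservativity, a vertical $\psi\colon (b,\beta)\to (a,\alpha)$ whose image in $F$ is invertible satisfies $\beta = \alpha\circ m(\psi)$, whence $\alpha = \beta\circ m(\psi^{-1})$, so the inverse $F$-arrow lifts to $K(s)$ and provides an inverse of $\psi$ there. The natural isomorphism $\kappa(s)$ is defined on $(a,\alpha)$ to be $\alpha$ itself, and naturality with respect to a morphism $\psi$ of projection $f$ is precisely the defining equation of morphisms in $K(s)$.

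For (iii), set $K(i)(a,\alpha)=(a, i(S)\circ \alpha)$ on objects and $K(i)(\psi)=\psi$ on the underlying $F$-arrows. That $\psi$ remains a morphism in $K(t)$ uses the naturality square $t(f)\circ i(T)=i(S)\circ s(f)$ of $i\colon s\isoto t$. Cartesianness of $K(i)$ as an $E$-functor is then automatic, and the identities $k(t)\circ K(i)=k(s)$ and $\kappa(t)\ast K(i)=i^{K(s)}\circ \kappa(s)$ fix the first and second components of $K(i)$ respectively, forcing the definition and yielding the characterization. Finally, $K(i^{-1})$ is a two-sided inverse to $K(i)$, since the assignment $i\mapsto K(i)$ is manifestly functorial in $i$, so $K(i)$ is an $E$-isomorphism. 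The main obstacle is the cartesian-lift construction in (i): one must splice the cartesian structure of $F$ (furnishing the first component of the lift) with the cartesian structure of the section $s$ in $G$ (furnishing $\beta$ from the universal property of $s(f)$), and then ascertain that the combined arrow is cartesian in $K(s)$ rather than merely $F$-cartesian. Every subsequent statement is a formal consequence of this construction and of the universal properties it exploits.
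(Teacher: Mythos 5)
Your proof is correct and follows the only natural route, which is also the paper's: the paper disposes of (i) with the one-line observation that morphisms of $K(s)$ are morphisms of the fibered category $F$, and declares (ii) and (iii) to be routine verifications. Your write-up simply supplies the details the paper omits — in particular the construction of the cartesian lift $(f^{*}a,\beta)$ by splicing a cartesian lift in $F$ with the universal property of the cartesian transport morphism $s(f)$ in $G$ (note only that $\beta$ is invertible because it is a \emph{vertical cartesian} morphism, not because $m(\psi)$ and $s(f)$ are isomorphisms) — so there is nothing to correct.
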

\begin{proof} Since the morphisms of $K(s)$ are morphisms of $F$ and $F$ is fibered, $K(s)$ is fibered as well. The proof of (ii) and (iii) amounts to a series of verifications that are carried out without difficulty. 
\end{proof}

In the following statement we will examine when the fibered $E$-category $K(s)$ is a stack and when a gerbe.

\begin{theorem}\indent
\begin{enumerate}
\item[\rm (i)] If $F$ is a stack and $G$ is a prestack, then $K(s)$ is a stack.
\item[\rm (ii)] If, moreover, $m\colon F\to G$ is covering and conservative, then $K(s)$ is a gerbe.
\end{enumerate}
\end{theorem}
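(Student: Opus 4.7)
The plan for (i) is to verify the two stack axioms for $K(s)$ by transferring them to $F$ and $G$. For gluing of morphisms, given a coherent family $\psi_i\colon (b,\beta)|_{S_i}\to(a,\alpha)|_{S_i}$ over a refinement $\mathcal R$ of $S$, the underlying arrows $\psi_i\colon b|_{S_i}\to a|_{S_i}$ in $F$ glue uniquely to an $S$-morphism $\psi\colon b\to a$ because $F$ is a stack, hence in particular a prestack. The compatibility condition $\alpha\circ m(\psi)=s(1_S)\circ\beta$ is then verified globally by observing that both sides are parallel $S$-morphisms in $G$ whose restrictions to each $S_i$ already agree by construction, so they coincide because $G$ is a prestack.

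For gluing of objects, one starts from a coherent family $(a_i,\alpha_i)\in K(s)_{S_i}$. The stack axiom of $F$ produces an object $a\in F_S$ equipped with cartesian isomorphisms $a|_{S_i}\isoto a_i$. Transporting the $\alpha_i$ along these yields a coherent family of isomorphisms $m(a)|_{S_i}\isoto s(S)|_{S_i}$ in $G$, which glue uniquely to a morphism $\alpha\colon m(a)\to s(S)$ by the prestack property of $G$; the analogous gluing of local inverses ensures that $\alpha$ is itself an isomorphism, so $(a,\alpha)$ is the required object of $K(s)_S$ restricting to the given family.

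For (ii) I would verify the three conditions characterizing a gerbe. To see that the fibers are groupoids, take a morphism $\psi\colon(b,\beta)\to(a,\alpha)$ with projection $1_S$; the compatibility identity reduces to $m(\psi)=\alpha^{-1}\circ\beta$, which is an isomorphism in $G_S$, so conservativeness of $m$ forces $\psi$ to be an isomorphism in $F$ and hence in $K(s)$. Local non-emptiness is immediate from the covering hypothesis applied to the object $s(S)$: it supplies a refinement $\mathcal R$ of $S$ and, for each $T\to S$ in $\mathcal R$, an object $a_T\in F_T$ together with an isomorphism $\alpha_T\colon m(a_T)\isoto s(T)$, so $(a_T,\alpha_T)\in K(s)_T$.

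The delicate step, and the main obstacle, is showing that any two objects $(a,\alpha),(b,\beta)\in K(s)_S$ are locally isomorphic. The composite $\gamma=\beta^{-1}\circ\alpha\colon m(a)\isoto m(b)$ in $G_S$ is the unique arrow that a putative $K(s)$-isomorphism $(a,\alpha)\to(b,\beta)$ would have to cover, so the question reduces to lifting $\gamma$ along $m$ after a suitable localization. I would extract this lift from the covering hypothesis on $m$, which in Giraud's setup supplies local essential surjectivity on arrows as well as on objects. Once $\gamma$ has been lifted locally to some $\psi\colon a|_T\to b|_T$ in $F$, the groupoid argument above ensures that $\psi$ is automatically an isomorphism, furnishing the required local isomorphism in $K(s)$ and completing the proof.
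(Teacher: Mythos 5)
Your proof is correct and follows the same route as the paper: gluing in $K(s)$ is reduced to gluing of objects and morphisms in $F$ and of morphisms in $G$, conservativity of $m$ forces the fibres to be groupoids (via $m(\psi)=\alpha^{-1}\circ\beta$), and the covering hypothesis supplies both local non-emptiness and, through the local surjectivity on arrows that Giraud's notion of covering includes, the local lift of $\beta^{-1}\circ\alpha$ needed to make any two liftings locally isomorphic. You in fact spell out this last verification, which the paper's own proof leaves implicit in the phrase ``since $m$ is covering \dots\ it follows that $K(s)$ is a gerbe.''
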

\begin{proof} An object of $K(s)$ is a pair $(a, \alpha)$ with $a\in{\rm Ob}(F_{S})$ and $\alpha\colon m(a)\isoto s(S)$ an $S$-isomorphism in $G$. This shows that the objects of $K(s)$ will glue together as soon as the objects in $F$ and the morphisms in $G$ do. Since the morphisms of $K(s)$ are defined by thiose of $F$, we see that the morphisms of $K(s)$ glue together as this is the case in $F$. Thus (i) is proved. If $m$ is conservative, then so also is $m\circ k(s)$.  It follows that $K(s)$ is a stack in groupoids. Since $m$ is covering, we can lift $s$ locally and it follows that $K(s)$ is a gerbe.
\end{proof}

\begin{definition}\indent
\begin{enumerate}
\item[\rm (i)] If $K(s)$ is a gerbe, it is called the {\it gerbe of liftings of $s$} relative to $m\colon F\to G$.
\item[\rm (ii)] The {\it category} of liftings of $s$: $\mathcal K_{s}(E)$.

$\mathcal K_{s}(E)$ is the category with objects the pairs $(t,\tau)$ where $t\colon E\to F$ is a cartesian section of $F$ and $\tau\colon m\circ t\isoto s$ is an isomorphism of cartesian sections. The description of the objects also shows what the morphisms are.
\end{enumerate}
\end{definition}

Now we have a canonical isomorphism from the category of cartesian sections of the gerbe $K(s)$ to the category of liftings of the section $s$
\[
\varprojlim(K(s)/E)\,\isoto\,\mathcal K_{s}(E), \sigma\mapsto (k(s)\circ\sigma,\e\kappa(s)\ast\sigma)
\]
 
{\bf Fact:} in order for the section $s$ of $G$ to be liftable to $F$ via $m$, it is necessary and sufficient that the gerbe $K(s)$ of liftings of $s$ be {\it trivial}.

\subsubsection{The band of the gerbe $K(s)$}\label{333}

\begin{definition} A sequence of bands
\[
1\to L\overset{u}{\to} M\overset{v}{\to} N\to 1
\]
is said to be {\it exact} if $u\colon L\to M$ is injective and normal and $v\colon M\to N$ is a cokernel of $u$.
\end{definition}

Equivalently, this sequence of bands is locally represented by an exact sequence of sheaves of groups.

\begin{proposition} Let $m\colon F\to G$ be a morphism of gerbes bound by an {\rm epimorphism} of bands $v\colon M\to N$. Let $s\colon E\to G$ be a cartesian section of $G$ and $u(s)\colon L (s)\to M$ the morphism of bands which binds the morphism of gerbes $k(s)\colon K(s)\to F$. Then we have
\begin{enumerate}
\item[\rm (i)] the sequence of bands
\[
1\to L(s)\overset{u(s)}{\to} M\overset{v}{\to} N\to 1
\]
is exact.
\item[\rm (ii)] the sheaf of groups $\uaut(s)$ of automorphisms of $s$ acts on $L(s)$.
\item[\rm (iii)] for every $\uaut(s)$-torsor $P$ we have a canonical isomorphism of bands $L({}^{P}s)\isoto {}^{P}L(s)$.  
\end{enumerate}
\end{proposition}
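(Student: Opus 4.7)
The plan is to treat the three parts in sequence, exploiting in each the identification of $L(s)$ as the band of the gerbe $K(s)$ and the functoriality of the construction $K(-)$ on isomorphisms of sections.

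For (i), since exactness of a sequence of bands is a local property, I would pass to a refinement of $E$ over which $s$ admits a lifting, i.e.\ an object $(t,\mathrm{id})$ of $K(s)_{S}$. Because $k(s)\colon K(s)\to F$ is faithful, the automorphism sheaf $\uaut_{\e S}(t,\mathrm{id})$ of this object consists of those $\psi\in\uaut_{\e S}(t)$ with $m(\psi)=\mathrm{id}_{s(S)}$, so it is identified with the kernel of the homomorphism $\uaut_{\e S}(t)\to\uaut_{\e S}(s(S))$ induced by $m$. Locally this map represents $v\colon M\to N$, and the hypothesis that $v$ is an epimorphism of bands means it is locally surjective as a morphism of sheaves of groups. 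This yields a short exact sequence of sheaves of groups that locally represents $1\to L(s)\to M\to N\to 1$, proving (i).

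For (ii), I would apply the preceding theorem on the functor $K(-)$ to an element $i\in\uaut(s)(T)$ regarded as an isomorphism of sections. This produces an $E$-automorphism $K(i)$ of $K(s)^{T}$ which, by the general fact that any morphism of gerbes is bound by a unique morphism of bands, is bound by a unique automorphism $\alpha(i)$ of $L(s)^{T}$. The relation $K(j\circ i)=K(j)\circ K(i)$, together with the uniqueness of the bounding morphism of bands, will make $i\mapsto\alpha(i)$ into a homomorphism of sheaves of groups $\uaut(s)\to\uaut(L(s))$, which is the desired action.

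For (iii), given a $\uaut(s)$-torsor $P$ and the structural morphism $m_{P}\colon P\to\uisom(s,{}^{P}s)$ of the twisted section, the plan is to use the functoriality of $s\mapsto L(s)$ on isomorphisms to transport $m_{P}$ to a morphism of sheaves $P\to\uisom(L(s),L({}^{P}s))$ in the category of bands. By the very definition of the action constructed in (ii), this transported morphism will be $\uaut(s)$-equivariant. Invoking the uniqueness part of the theorem on twisted objects then exhibits $L({}^{P}s)$, together with the induced morphism, as a twisted object of $L(s)$ by $P$, yielding the canonical isomorphism of bands ${}^{P}L(s)\isoto L({}^{P}s)$.

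The main technical hurdle is part (ii), and in particular the multiplicativity of $i\mapsto\alpha(i)$; once this is in place, (i) reduces to a local verification and (iii) follows formally from the universal property of twisted objects.
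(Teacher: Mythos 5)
Your sketch is sound: the paper itself gives no argument here, deferring entirely to Giraud \cite[IV, 2.5.5]{14}, and your outline reconstructs exactly that proof --- localize to where $s$ lifts and identify $\uaut_{\e S}(t,\mathrm{id})$ with the kernel of the map representing $v$ for (i), use the functor $K(i)$ together with the uniqueness of the binding morphism of bands for (ii), and the universal characterization of twisted objects for (iii). The only thing separating this from a complete proof is writing out the coherence/restriction compatibilities you gesture at, but no idea is missing.
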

\begin{proof} \cite[IV, 2.5.5]{14}.
\end{proof}

\subsubsection{Application}

We can now apply to a particular situation. Let                  
\[
1\to A\overset{a}{\to} B\overset{b}{\to} C\to 1
\]  
be a short exact sequence of sheaves of groups on $E$ and let $P$ be a $C$-torsor. The morphism of gerbes
\[
{\rm TORS}(E,b)\colon {\rm  TORS}(E, B)\to{\rm TORS}(E, C)
\]
is bound by ${\rm lien}(b)\colon {\rm lien}(B)\to {\rm lien}(C)$ and this is an epimorphism of bands since $b$ is an epimorphism of sheaves of groups.

A $C$-torsor $P$ in fact amounts to a section of the gerbe ${\rm TORS}(E, C)$.  According to the previous proposition, the gerbe of liftings {\it $K(P)$ is bound by ${}^{P}{\rm lien}(A)$}. If $u(P)\colon {}^{P}{\rm lien}(A)\to {\rm lien}(B)$ is the morphism of bands that binds the morphism of gerbes
$k(P)\colon  K(P)\to {\rm TORS}(E, B)$, then the following sequence of bands is
{\it exact}
\[
1\to {}^{P}{\rm lien}(A)\overset{u(P)}{\to}{\rm lien}(B)
\overset{{\rm lien}(b)}{\to} {\rm lien}(C)\to 1
\]

\section{The 2-cohomology}

\subsection{Definition of the $H^{2}$}

Let $E$ be a site and let $L$ be a band on $E$.

$H^{\le 2}(E; L\e)$ is the set of classes of $L$-equivalent $L$-gerbes. Instead of $H^{\le 2}(E; L\e)$, we also use the shorter notation $H^{\le 2}(L\e)$.

A class is called {\it neutral} if one of its representatives has a section.  The set of neutral elements is denoted by $H^{\le 2}(L\e)^{\prime}$. A band $L$ is called {\it realisable} if $H^{\le 2}(L)$ is nonempty.

\subsubsection{$2$-cohomology with values in a sheaf of groups $A$}

Let $A$ be a sheaf of groups on $E$. Then we set
\[
H^{2}(E; A)=H^{2}(E; {\rm lien}(A)).
\]
We also use the shorter notation $H^{\le 2}(A\e)$.

Amongst the neutral elements of $H^{\le 2}(A\e)$ there exists a special element, namely the class of the gerbe ${\rm TORS}(E; A)$ of $A$-torsors on $E$. This particular neutral element will be called the {\it unit element}.

\subsubsection{Functoriality of the $H^{2}$}

Let $u\colon L\to M$ be a morphism of bands on $E$.
How can we associate a map $u^{(2)}\colon H^{\le 2}(L\e)\to H^{\le 2}(M\e)$?

For $H^{1}$ this was possible thanks to the operation of extension of the structural group.  One might wonder whether here one can obtain the desired map by an analogous construction of extension of the structural band.

Giraud showed that this will not be possible if the natural morphism $C_{M}\to C_{u}$ is not an isomorphism \cite[IV, 2.3.17]{14}.

In general we will see that no map can be defined from $H^{2}(L)$ to $H^{2}(M)$. As a result, Giraud was only able to associate to a morphism of bands $u\colon L\to M$ a {\it relation} from $H^{2}(L)$ to $H^{2}(M)$. Two elements $p\in H^{2}(L)$ and $q\in H^{2}(M)$ are related if there exist representatives $P$ of $p$ and $Q$ of $q$ as well as a $u$-morphism $P\to Q$. We will write $p\ggto{u} q$.

In this way a relation is defined between the sets $H^{2}(L)$ and $H^{2}(M)$
which we denote by
\[
H^{2}(L)\ggto{u^{(2)}}H^{2}(M)
\]

\begin{theorem} Let $u\colon L\to M$ be a morphism of bands on $E$ such that the natural morphism $C_{M}\to C_{u}$ is an {\rm isomorphism}. Then the relation
$u^{(2)}\colon H^{2}(L)\to H^{2}(M)$ is a {\rm function}.
\end{theorem}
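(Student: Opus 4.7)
The plan is to verify the two properties that turn the relation $u^{(2)}$ into a function: (a) every $p\in H^{2}(L)$ is in the relation with some $q\in H^{2}(M)$, and (b) if $p\,\ggto{u^{(2)}}\,q_{1}$ and $p\,\ggto{u^{(2)}}\,q_{2}$, then $q_{1}=q_{2}$.

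For (a), starting from an $L$-gerbe $F$ representing $p$, I would construct an $M$-gerbe $F^{u}$ together with a $u$-morphism $F\to F^{u}$ by extending the structural band. Work on a refinement $\mathcal{R}$ over which $F$ has sections $x_{S}$, so that $L$ is locally represented by $A_{S}=\uaut_{\e S}(x_{S})$, $M$ by a local sheaf of groups $B_{S}$, and $u$ by a homomorphism $\phi_{S}\colon A_{S}\to B_{S}$. On each $S$ I would set $F^{u}\vert_{S}={\rm TORS}(E/S;B_{S})$ with the $u$-morphism given by the extension of structural group $F\vert_{S}\simeq{\rm TORS}(E/S;A_{S})\to{\rm TORS}(E/S;B_{S})$. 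To globalize these local pieces one needs $M$-equivalences between them on overlaps that satisfy a cocycle condition; different local choices of $\phi_{S}$ differ by inner automorphisms of $B_{S}$, and the resulting descent obstruction is controlled by the relation between the centralizers $C_{u}$ and $C_{M}$. The hypothesis $C_{M}\isoto C_{u}$ makes this obstruction trivial, so the local gerbes and $u$-morphisms descend to a global $M$-gerbe $F^{u}$ and a global $u$-morphism $F\to F^{u}$.

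For (b), I would take $F_{i}\to G_{i}$ ($i=1,2$) with $F_{1}$ and $F_{2}$ being $L$-equivalent representatives of $p$ and each $G_{i}$ representing $q_{i}$. Composing with an $L$-equivalence $F_{1}\isoto F_{2}$ reduces to the case $F_{1}=F_{2}=F$ with two $u$-morphisms $m_{i}\colon F\to G_{i}$. Locally, each $m_{i}$ presents $G_{i}\vert_{S}$ as ${\rm TORS}(E/S;B_{S})$, and the induced map on automorphism sheaves represents the fixed morphism of bands $u^{S}$. Two local representatives of $u^{S}$ differ by an inner automorphism of $B_{S}$; once again the hypothesis $C_{M}\isoto C_{u}$ makes the resulting local $M$-equivalences $G_{1}\vert_{S}\isoto G_{2}\vert_{S}$ compatible on overlaps, so by the stack property of gerbes they descend to a global $M$-equivalence $G_{1}\isoto G_{2}$, proving $q_{1}=q_{2}$.

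The main obstacle is the descent argument in step (a): one must verify that the centralizer condition $C_{M}\isoto C_{u}$ is precisely what forces the relevant $2$-cocycle obstruction to vanish. Giraud's analysis cited in~\cite[IV, 2.3.17]{14} shows that without this condition one can exhibit $L$-gerbes whose pushforward has no global $M$-gerbe realization (only local fragments), which is exactly why $u^{(2)}$ is only a relation in general. Assuming this descent input, the remainder of the argument is a bookkeeping exercise inside the stack of gerbes, showing that extension of the structural band carries $L$-equivalences to $M$-equivalences in a way that matches up the local and global notions.
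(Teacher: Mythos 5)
Your plan identifies the right mechanism, but it does not prove the theorem: the decisive step is asserted and then explicitly deferred (``assuming this descent input\dots''). For context, the paper itself gives no argument here --- it cites Giraud \cite[IV, 3.1.5.3]{14} and remarks that the proof rests on properties of the gerbe ${\rm HOM}_{u}(P,Q)$ of $u$-bound morphisms. That gerbe is precisely the rigorous packaging of the descent you sketch: the sheaf of automorphisms of a $u$-morphism $m$ is a (twisted form of) $C_{u}$, because such an automorphism must commute with the image of $\uaut_{\e S}(x)\to\uaut_{\e S}(m(x))$, which represents $u$; whereas the sheaf of automorphisms of an ${\rm id}_{M}$-equivalence of $M$-gerbes is $C_{M}$. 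The hypothesis $C_{M}\isoto C_{u}$ therefore says that the ambiguity in the locally chosen $u$-compatible equivalences is no larger than the ambiguity already built into the notion of $M$-equivalence, and this is what makes the local data glue. Your write-up names the centralizers but establishes neither of these two identifications nor the comparison of obstructions that follows from them; since that comparison is the entire content of the theorem, it cannot be taken as input.

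Two steps would fail as written. In (a), gluing the local gerbes ${\rm TORS}(E/S;B_{S})$ is $2$-categorical descent: one needs transition equivalences on overlaps, $2$-isomorphisms on double overlaps compatible with the $u$-morphisms from $F$, and a cocycle condition on triple overlaps; ``the obstruction is controlled by the centralizers'' is the conclusion to be reached, not a step one may invoke. In (b), ``by the stack property of gerbes they descend to a global $M$-equivalence'' is not available: the stack property of $G_{2}$ glues objects and morphisms \emph{of} $G_{2}$, not equivalences $G_{1}\to G_{2}$. To glue the local equivalences $\delta_{S}$ you must exhibit coherent $2$-isomorphisms between their restrictions, and these come from the fact that two local ${\rm id}_{M}$-equivalences compatible with $m_{1},m_{2}$ differ by a $u$-compatible automorphism, i.e.\ a section of (a twist of) $C_{u}=C_{M}$, which is exactly an automorphism of the identity equivalence. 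Writing out these identifications --- equivalently, establishing that ${\rm HOM}_{u}(P,Q)$ is, when locally nonempty, a gerbe bound by $C_{u}$ and exploiting the hypothesis there, as Giraud does --- is what is missing; with it supplied, your two-part plan does go through.
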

\begin{proof} \cite[IV, 3.1.5.3]{14}. The proof is based on a number of extra properties associated with the gerbe ${\rm HOM}_{u}(P, Q)$ of $u$-bound morphisms. But from the proof itself one learns nothing regarding the actual reason why, in that particular case, a function can be defined from $H^{2}(L)$ to $H^{2}(M)$.
\end{proof}

\begin{remarks}\indent
\begin{enumerate}
\item[(a)] The function $u^{(2)}\colon H^{2}(L)\to H^{2}(M)$ maps neutral elements of $H^{2}(L)$ to neutral elements of $H^{2}(M)$.
\item[(b)] Let $L\overset{u}{\to}M \overset{v}{\to}N$ be morphisms of bands such that $u$, $v$ and $v\circ u$ satisfy the conditions of the previous theorem. Then we have
\[
v^{(2)}\circ u^{(2)}=(v\circ u)^{(2)}.
\]
\end{enumerate}
\end{remarks}

{\it Special cases where the condition  $C_{M}\isoto C_{u}$ is satisfied.}
\begin{enumerate}
\item[(i)] If $u\colon L\to M$ is an {\it epimorphism}, then $C_{M}\isoto C_{u}$. Indeed, since the bands form a stack, it suffices to prove that $C_{M}\isoto C_{u}$ {\it locally}. But by the representability this is reduced to the case of sheaves of groups, where it is obvious that the assertion is true.

\item[(ii)] If $M$ is an abelian band then both $C_{M}$ and $C_{u}$ are isomorphic to $M$. It follows that $C_{M}\isoto C_{u}$.
\end{enumerate}

\subsection{The second coboundary}

Consider the following short exact sequence of sheaves of groups on $E$:
\[
1\to A\overset{a}{\to} B\overset{b}{\to} C\to 1
\]  
How can we go from $H^{1}(C)$ to $H^{2}(A)$?

Let $p=[\e P\e]$ be an arbitrary element of $H^{1}(C)$. The gerbe $K(P)$ of liftings of the $C$-torsor $P$ to $B$ is bound by ${}^{P}{\rm lien}(A)$.
Consequently, if we were to proceed in the traditional way, i.e., map $p$ to the class that contains the obstruction to lifting a representative $P$ of $p$, we would generally not end up in $H^{2}(A)$. After all, $H^{2}(A)$ is the set of all classes of ${\rm lien}(A)$-equivalent gerbes. We would, indeed, end up in $H^{2}(A)$ if $B$ acted trivially on ${\rm lien}(A)$, since then ${}^{P}{\rm lien}(A)$ would be isomorphic to ${\rm lien}(A)$. From the above it appears
that, in general, $H^{2}(A)$ is not sufficient to include all the obstructions to lifting a torsor on $C$ to $B$. Giraud is therefore forced to introduce an ad-hoc set
\[
O(b)=N(b)/R.
\]
Here $N(b)$ is the set of triples $(K, L, u)$, where $K$ is a gerbe, $L$ is its band and $u\colon L\to{\rm lien}(B)$ a morphism of bands such that
\[
1\to L\overset{u}{\to} {\rm lien}(B)\overset{{\rm lien}(b)}{\to} {\rm lien}(C)\to 1
\] 
is exact. $R$ is an equivalence relation on the set $N(b)$.    
Two elements $(K, L, u)$ and $(K^{\e\prime}, L^{\e\prime}, u^{\e\prime})$ of $N(b)$ are related under $R$ if there exists a morphism of gerbes $m\colon K\to K^{\e\prime}$ such that the morphism $\alpha\colon L\to L^{\e\prime}$ that binds $m$ satisfies the condition $u^{\e\prime}\circ\alpha=u$.
A class $c\in O(b)$ is called {\it neutral} if there exists a representative $(K, L, u)$ of $c$ such that $K$ is trivial. The set of neutral classes is denoted by $O(b)^{\e\prime}$. The class of $({\rm TORS}(E;A),{\rm lien}(A), {\rm lien}(a))$ is called the unit class. By \ref{333}, we obtain a map of pointed sets
\[
d\colon H^{1}(C)\to O(b)
\]
which sends the class of the $C$-torsor $P$ to the class of $(K(P), L(P), u(P))$. Here $K(P)$ is the gerbe of liftings of $P$ with respect to ${\rm TORS}(E; b)$, $L(P)$ is the band of the gerbe $K(P)$ and $u(P)\colon L(P)\to{\rm lien}(B)$ is the morphism that binds $k(P)\colon K(P)\to {\rm TORS}(E;B)$.

On the other hand, we have a relation
\[
O(b)\ggto{a^{(2)}} H^{2}(B).
\]
An element $c\in O(b)$ is related to an element $q\in H^{2}(B)$ if there exist representatives $(K, L, u)$ of $c$ and $Q$ of $q$ as well as a ${\rm lien}(a)$-morphism $K\to Q$. We denote it by $c\ggto{a} q$.

The cohomology sequence
\[
1\to H^{0}(A)\to\dots\to H^{1}(B)\overset{b^{(1)}}{\to}H^{1}(C)\overset{d}{\to} O(b)\ggto{a^{(2)}} H^{2}(B)\overset{b^{(2)}}{\to}H^{2}(C)
\]
is {\it exact} in the following sense.
\begin{enumerate}
\item[\rm (i)] $x\in H^{1}(C)$ belongs to ${\rm Im}\, b^{(1)}$ if, and only if, $d (x)\in O(b)^{\prime}$.
\item[\rm (ii)] In order for $x\in O(b)$ to correspond to the unit class of $H^{2}(B)$, it is necessary and sufficient that $x$ belongs to the image of $d$.
\item[\rm (iii)] Let $x\in H^{2}(B)$.  For $b^{(2)}(x)$ to be neutral, it is necessary and sufficient that $x$ corresponds with a $y\in O(b)$ \cite[IV, 4.2.8]{14}.
\end{enumerate}

\chapter{$(A,\Pi\e)$-gerbes on a site}\label{bun}

The obstruction to the lifting of a torsor is subjected to a new study. The result leads through generalization to the concept of $(A, \Pi)$-gerbe.

Finally, we prove here the theorem from which the functoriality of the new $H^{2}$ will follow.

\section{Obstructions to the lifting of a torsor}
Let
\[
1\to A\overset{\!u}{\to}B\overset{\!v}{\to}C\to 1
\]
be a short exact sequence of sheaves of groups on the site $E$. For a given $C$-torsor $P$, we consider the gerbe $K(P)$ of liftings of $P$ to $B$. Then we have a morphism of gerbes on $E$
\[
\nu\colon K(P)\to{\rm TORS}(E;\uinn(B)),
\]
where $\uinn(B)$ denotes the sheaf of inner automorphisms of $B$. Indeed, let $(Q,\lambda)$ be an object of $K(P)_{S}$ with $S\in{\rm Ob}(E)$. Then we have a presheaf of sets on $E/S$
\[
\uisom_{\e Q}(B,\uaut_{\e S}(Q))\colon (E/S)^{\circ}\to({\rm Sets})
\]
This presheaf $\uisom_{\e Q}(B,\uaut_{\e S}(Q))$ associates with every object
$t\colon T\to S$ of $E/S$ the set of isomorphisms $\sigma_{q}\colon B^{\e t}\isoto
\uaut_{\e S}(Q)^{t}$ determined by the element $q\in Q(t)$. Here we indicate the action of $\sigma_{q}$ by the formula
\[
\sigma_{q}(b)(q\cdot b^{\le\prime})=(q\cdot b)\cdot b^{\le\prime}.
\]
$\uinn(B)$ acts freely and transitively on the presheaf $\uisom_{\e Q}(B,\uaut_{\e S}(Q))$ and the final morphism $\uisom_{\e Q}(B,\uaut_{\e S}(Q))\to e$ is covering since $Q$ is locally nonempty.
Let us work with the associated sheaf functor $a$. We find that $a(\uisom_{\e Q}(B,\uaut_{\e S}(Q)))$ is a {\it sheaf} on which $\uinn(B)$ acts freely and transitively and the final morphism $a(\uisom_{\e Q}(B,\uaut_{\e S}(Q)))\to e$ is an {\it epimorphism}. Thus $a(\uisom_{\e Q}(B,\uaut_{\e S}(Q)))$ is an {\it $\uinn(B)$-torsor}.

We therefore set
\[
\nu(Q,\lambda)=a(\uisom_{\e Q}(B,\uaut_{\e S}(Q))).
\]
The effect of $\nu$ on the morphisms is clear when one considers that $\inn(m)\circ\sigma_{q}=\sigma_{m(q)}$ for every morphism $m\colon (Q,\lambda)
\to (Q^{\e\prime}, \lambda^{\prime})$. We write $-\be {{{} \uinn(B)\atop \bigwedge}\atop }\lbe A\colon {\rm TORSC}(E;\uinn(B))\to {\rm FAGRSC}(E)$ for the cartesian $E$-functor that maps an $\uinn(B)$-torsor $P$ to the sheaf of groups $P\be {{{} \uinn(B)\atop \bigwedge}\atop }\lbe A$.

Let us now, for an arbitrary object $(Q,\lambda)$ in $K(P)_{S}$, examine more closely the group structure on $\nu(Q,\lambda)\be {{{} \uinn(B)\atop \bigwedge}\atop }\lbe A$.

Let $x,y$ be two elements of $(\nu(Q,\lambda)\be {{{} \uinn(B)\atop \bigwedge}\atop }\lbe A)(1_{S})$, where $S\in{\rm Ob}(E)$.
Since $\nu(Q,\lambda)$ is an $\uinn(B)$-torsor, we can always find an element $\sigma\in \nu(Q,\lambda)$ locally. This element determines an isomorphism of sheaves of sets $i_{\le\sigma}\colon A\isoto \nu(Q,\lambda)\be {{{} \uinn(B)\atop \bigwedge}\atop }\lbe A$ given by the formula $i_{\le\sigma}(a) = (\sigma, a)^{*}$. Here $(\sigma, a)^{*}$ is the image of $(\sigma, a)$ under the morphism
\[
\nu(Q,\lambda)\times A\to\nu(Q,\lambda)\be {{{} \uinn(B)\atop \bigwedge}\atop }\lbe A.
\]
The isomorphism of sheaves of sets $i_{\le\sigma}$ enables us to transfer the group structure on $A$ to $\nu(Q,\lambda)\be {{{} \uinn(B)\atop \bigwedge}\atop }\lbe A$. Associated to the elements $x$ and $y$, there exist unique elements $a_{x},a_{y}\in A$ such that  $i_{\le\sigma}(a_{x}) = (\sigma, a_{x})^{*}=x$ and $i_{\le\sigma}(a_{y}) = (\sigma, a_{y})^{*}=y$. Taken locally, the composition $x\cdot y$ is then equal to $(\sigma, a_{x}\le a_{y})^{*}$. The local group structure thus obtained on $\nu(Q,\lambda)\be {{{} \uinn(B)\atop \bigwedge}\atop }\lbe A$ is independent of the $\sigma\in \nu(Q,\lambda)$ used.
Consequently, all these local group structures determine a global group structure on $\nu(Q,\lambda)\be {{{} \uinn(B)\atop \bigwedge}\atop }\lbe A$. The composition $x\cdot y$ is then that element of  $(\nu(Q,\lambda)\be {{{} \uinn(B)\atop \bigwedge}\atop }\lbe A)(1_{S})$ that is locally determined
by $(\sigma, a_{x}a_{y})$. Furthermore, we have an isomorphism of morphisms of stacks
\[
k\colon {\rm Aut}(K(P))\isoto (-\be {{{} \uinn(B)\atop \bigwedge}\atop }\lbe A)\circ\nu,
\]
where ${\rm Aut}(K(P))$ was defined in \eqref{aut}.

Let $(Q,\lambda)$ be an object of $K(P)_{S}$. We have an $\uinn(B)$-morphism
\[
\uisom_{\e Q}(B,\uaut_{\e S}(Q))\to\uisom(A,\aut_{\e S}(Q,\lambda)),\, \sigma_{q}\mapsto \sigma_{q}\!\mid_{A}.
\]
This induces an $\uinn(B)$-morphism
\begin{equation}\label{usi}
\nu(Q,\lambda)\to \uisom(A,\aut_{\e S}(Q,\lambda)).
\end{equation}
We also have an $\uinn(B)$-morphism
\[
\uisom_{\e Q}(B,\uaut_{\e S}(Q))\to\uisom\big(A,\nu(Q,\lambda)\be {{{} \uinn(B)\atop \bigwedge}\atop }\lbe A\e\big),\,\sigma_{q}\mapsto i_{\le\sigma_{\be q}^{\le *}}
\]
where $\sigma_{\be q}^{\e *}$ is the image of $\sigma_{\be q}$ via $\uisom_{\e Q}(B,\uaut_{\e S}(Q))\to a(\uisom_{\e Q}(B,\uaut_{\e S}(Q)))$. Consequently, there is also again an induced $\uinn(B)$-morphism
\begin{equation}\label{tro}
\nu(Q,\lambda)\to \uisom\big(A,\nu(Q,\lambda)\be {{{} \uinn(B)\atop \bigwedge}\atop }\lbe A\e\big)
\end{equation}
From \eqref{usi} and \eqref{tro} we conclude that both $\uaut_{\e S}(Q,\lambda)$ and 
$\nu(Q,\lambda)\be {{{} \uinn(B)\atop \bigwedge}\atop }\lbe A$ play the role of the twist of $\nu(Q,\lambda)$ by $A$. Thus there exists a {\it natural isomorphism} of sheaves of groups
\[
k_{(Q,\lambda)}\colon \uaut_{\e S}(Q,\lambda)\isoto \nu(Q,\lambda)\be {{{} \uinn(B)\atop \bigwedge}\atop }\lbe A
\]
such that the following triangle commutes
\begin{equation}\label{tri}
\xymatrix{\nu(Q,\lambda)\ar[dr]_(.35){(3.1.2)}\ar[r]^(.35){(3.1.1)} &  \uisom(A,\aut_{\e S}(Q,\lambda))\ar[d]_(.35){\wr}^(.35){\uisom\big(A,\, k_{(Q,\lambda)}\big)}\\
&\uisom\big(A,\nu(Q,\lambda)\be {{{} \uinn(B)\atop \bigwedge}\atop }\lbe A\big).
}
\end{equation}
Finally, the following condition holds:

for every $S\in{\rm Ob}(E)$ and every $(Q,\lambda)\in {\rm Ob}(K(P)_{S})$ we have, for each element $x\in (\nu(Q,\lambda)\be {{{} \uinn(B)\atop \bigwedge}\atop }\lbe A)(1_{S})$ and every local representative $(\sigma,a)$ of $x$,
\[
\nu\big(k_{(Q,\lambda)}^{-1}(x)\big)(\sigma)=\sigma\circ\inn(a)
\]
Indeed, the element $x$ determines via $k_{(Q,\lambda)}^{-1}$ an $S$-automorphism $k_{(Q,\lambda)}^{-1}(x)\colon (Q,\lambda)\isoto (Q,\lambda)$. Applying the functor $\nu$ we obtain an associated $\uinn(B)$-automorphism 
\[
\nu\big(k_{(Q,\lambda)}^{-1}(x)\big)\colon \nu(Q,\lambda)\isoto \nu(Q,\lambda).
\]
From \eqref{tri} we conclude that
$k_{(Q,\lambda)}^{-1}(x)$ is locally equal to $\sigma_{\be q}(a)$, where $\sigma_{\be q}$ is a local representative of $\sigma$. Therefore $\nu(k_{(Q,\lambda)}^{-1}(x))(\sigma)$ is the element that is locally determined by $\inn(\sigma_{\lbe q}(a))\circ \sigma_{\lbe q}$.

Now observe that, for every $b\in B$, we have

\[
(\inn(\sigma_{\lbe  q}(a))\circ \sigma_{\lbe  q})(b)=\sigma_{\lbe  q}(a)\circ \sigma_{\lbe  q}(b)\circ \sigma_{\lbe  q}(a)^{-1}=\sigma_{\lbe  q}(aba^{-1})= (\sigma_{\lbe  q}\circ \inn(a))(b).
\]
Thus we have $\inn(\sigma_{\lbe q}(a))\circ \sigma_{\lbe q}=\sigma_{\lbe q}\circ \inn(a)$.

Since the element $\nu\big(k_{(Q,\lambda)}^{-1}(x)\big)(\sigma)$ is locally determined 
by $\inn(\sigma_{\lbe q}(a))\circ \sigma_{\lbe q}$ and $\sigma\circ \inn(a)$ is locally determined by $\sigma_{\lbe q}\circ \inn(a)$, we conclude that $\nu\big(k_{(Q,\lambda)}^{-1}(x)\big)(\sigma)$ and $\sigma\circ \inn(a)$ are locally equal. Since $\nu(Q,\lambda)$ is a sheaf we conclude that they are globally equal.

{\bf Conclusions}:

The obstruction to the lifting of a $C$-torsor to $B$ under $v\colon B\to C$ is a $3$-tuple $(K(P),\nu,k)$, where 

\begin{enumerate}
\item[\rm (i)] $K(P)$ is the gerbe of liftings of $P$ to $B$,
\item[\rm (ii)] $\nu\colon K(P)\to{\rm TORS}(E;\uinn(B))$ is a morphism of gerbes on $E$,
\item[\rm (iii)] $k\colon {\rm Aut}(K(P))\isoto (-\be {{{} \uinn(B)\atop \bigwedge}\atop }\lbe A)\circ\nu$ is an isomorphism of morphisms of stacks.
\end{enumerate}

The following condition must be satisfied:

Let $(Q, \lambda)$ be an arbitrary $S$-object of $K(P)_{S}$, $S\in{\rm Ob}(E)$.

For every element $x\in(\nu(Q,\lambda)\be {{{} \uinn(B)\atop \bigwedge}\atop }\lbe A)(1_{S})$ and every local representative $(\sigma,a)$ of $x$, we must have 
\[
\nu\big(k_{(Q,\lambda)}^{-1}(x)\big)(\sigma)=\sigma\circ\inn(a).
\]

\section{$(A,\Pi\e)$-gerbes on a site}

\subsection{Sheaves of crossed groups on a site}

\subsubsection{The category of sheaves of crossed groups}

Let $E$ be a site. By a {\it sheaf of crossed groups} on $E$ we mean a $4$-tuple
\[
(A,\e\rho,\e \Pi,\e\phi)
\]
where $A$ and $\Pi$ are sheaves of groups on $E$, $\rho\colon A\to \Pi$ is a morphism of sheaves of groups and $\phi\colon \Pi\times A\to A$ is a morphism of sheaves of sets that defines a left action of $\Pi$ on $A$.

We require that for every object $S$ of $E$ the $4$-tuple
\[
(A(S),\e\rho(S),\e \Pi(S), \phi(S))
\]
be a crossed group and that for every arrow $f\colon  T\to S$ in $E$ 
the restriction morphisms $A(S)\to A(T)$ and $\Pi(S)\to \Pi(T)$ define a morphism of crossed groups from $(A(S), \Pi(S))$ to $(A(T), \Pi(T))$.

\begin{remark} In the preceding definition we have used the shorter notation
$(A(S), \Pi(S))$ to denote the crossed group $(A(S), \rho(S), \Pi(S), \phi(S))$.
In what follows we will also use the shorter notation $(A,\Pi)$ to denote the sheaf of crossed groups $(A,\e\rho,\e \Pi,\e\phi)$.	
\end{remark}

Let $(A,\Pi)$ and $(B,\Sigma)$ be two sheaves of  crossed groups on the site $E$.
A {\it morphism of  sheaves of crossed groups} from $(A, \Pi)$ to $(B, \Sigma)$
is a pair of morphisms of sheaves of groups
\[
(\e f\colon A\to B, \varphi\colon \Pi\to\Sigma)
\]
such that for every object $S$ of $E$, the pair
\[
(\e f(S)\colon A(S)\to B(S),\, \varphi(S)\colon \Pi(S)\to \Sigma(S))
\]
is a morphism of crossed groups from $(A(S), \Pi(S))$ to $(B(S), \Sigma(S))$.

The composition of morphisms of sheaves of crossed groups is defined componentwise.

Thus we see that the sheaves of crossed groups on $E$ and the morphisms of sheaves of crossed groups form a {\it category}. We denote it by
\[
{\rm Cogr}(E).
\]

\subsubsection{The {\it stack} of sheaves of crossed groups on $E$}

The contravariant functor from $E$ to ${\rm Cat}$ which associates to every object $S$ of $E$ the category ${\rm Crogr}(E/S)$ and to every arrow $f\colon T\to S$ in $E$ the functor ${\rm Crogr}(E/S)\to{\rm Crogr}(E/T)$ defined via the composition with $E/f$, defines a split $E$-category
\[
{\rm CROGRSC}(E)
\]
\begin{theorem} ${\rm CROGRSC}(E)$ is a stack on $E$.	
\end{theorem}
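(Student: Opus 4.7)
The plan is to reduce the stack property for ${\rm CROGRSC}(E)$ to those already established for ${\rm FAGRSC}(E)$ and ${\rm FAISCIN}(E)$ (both proved via \cite[II, 3.4.4]{14}). The key observation is that the data of a sheaf of crossed groups on $E/S$ consists of two sheaves of groups $A,\Pi$, a morphism $\rho\colon A\to\Pi$, and a morphism $\phi\colon\Pi\times A\to A$, subject to axioms (the action axioms together with $\rho\circ\phi=c\circ(\Pi\times\rho)$ and $\phi\circ(\rho\times A)=$ inner conjugation) that are all equalities between morphisms of sheaves, hence of a purely local nature.

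First I would verify the prestack property. Given $(A,\rho,\Pi,\phi)$ and $(B,\sigma,\Sigma,\psi)$ in ${\rm CROGRSC}(E)_{S}$, the presheaf $\uhom_{\e S}((A,\Pi),(B,\Sigma))$ assigns to $T\to S$ the set of pairs $(f,\varphi)$ of morphisms of sheaves of groups over $T$ satisfying $\sigma\circ f=\varphi\circ\rho$ and $\psi\circ(\varphi\times f)=f\circ\phi$. Since ${\rm FAGRSC}(E)$ is a prestack, the component presheaves $\uhom_{\e S}(A,B)$ and $\uhom_{\e S}(\Pi,\Sigma)$ are sheaves; the subpresheaf cut out by the two equational compatibility conditions is then an equalizer of maps between sheaves of sets, and is therefore itself a sheaf.

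Second, I would verify descent for objects. Let $\mathcal R$ be a refinement of $S$ and let $\lambda\colon\mathcal R\to{\rm CROGRSC}(E)$ be a cartesian section assigning to each $s_i\colon S_i\to S$ the sheaf of crossed groups $(A_i,\rho_i,\Pi_i,\phi_i)$. The underlying coherent families $(A_i)$ and $(\Pi_i)$ descend, by the stack property of ${\rm FAGRSC}(E)$, to sheaves of groups $A$ and $\Pi$ on $E/S$. The coherent family of morphisms of sheaves of groups $(\rho_i\colon A_i\to\Pi_i)$ then glues, by the already-proved prestack property of ${\rm FAGRSC}(E)$, to a unique morphism $\rho\colon A\to\Pi$. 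Likewise, viewing each $\phi_i$ as a morphism in ${\rm FAISCIN}(E)_{\e S_i}$, the coherent family $(\phi_i\colon\Pi_i\times A_i\to A_i)$ glues (prestack property of ${\rm FAISCIN}(E)$) to a unique morphism of sheaves of sets $\phi\colon\Pi\times A\to A$ on $E/S$.

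Finally, I would verify that $(A,\rho,\Pi,\phi)$ satisfies the crossed-group axioms. Each axiom is an equality between two morphisms of sheaves (of sets or of groups) on $E/S$; by construction, both sides restrict on every $S_i\in\mathcal R$ to the corresponding sides of the axiom for $(A_i,\rho_i,\Pi_i,\phi_i)$, which agree because the latter is a sheaf of crossed groups on $E/S_i$. Since a morphism of sheaves is determined by its restrictions to a covering, the two sides agree on $E/S$. The only obstacle is notational bookkeeping: ensuring that the successive gluings, of sheaves of groups, of the morphism $\rho$, and of the action $\phi$, are mutually compatible. No ingredient beyond the already-established stack properties of ${\rm FAGRSC}(E)$ and ${\rm FAISCIN}(E)$ and the local character of the crossed-group axioms is needed.
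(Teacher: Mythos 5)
Your proof is correct and follows essentially the same route as the paper: reduce both gluing of morphisms and gluing of objects to the already-established stack properties of ${\rm FAGRSC}(E)$ and ${\rm FAISCIN}(E)$, and observe that the crossed-group axioms, being equalities of morphisms of sheaves, hold globally because they hold locally. Your equalizer phrasing of the prestack step is a slightly tidier packaging of what the paper does, but the substance is identical.
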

\begin{proof}\indent
\begin{enumerate}
\item[\rm (i)] The morphisms glue together in ${\rm CROGRSC}(E)$. After all, 
a coherent family $\{(f_{i},\varphi_{\e i})\colon i\in I\}$ of morphisms of sheaves of crossed
groups gives rise to two coherent families of morphisms of 
sheaves of groups $\{f_{i}\colon i\in I\}$ and $\{\varphi_{\e i}\colon i\in I\}$. These families glue together in the stack ${\rm FAGRSC}(E)$ of sheaves of groups on $E$ and therefore determine a pair of morphisms of sheaves of groups $(f, \varphi)$. The morphism $(f, \varphi)\colon (A, \Pi)\to (B,\Sigma)$ will be a morphism of sheaves of crossed groups if the squares
\[
\xymatrix{A\ar[r]\ar[d]_{f}& \Pi\ar[d]^{\varphi}\\
B\ar[r]& \Sigma}
\]
and
\[
\xymatrix{\Pi\times A\ar[r]\ar[d]_{\varphi\times f}& A\ar[d]^{f}\\
\Sigma\times B\ar[r]& B}
\]

commute. This is so because the above holds locally.

\item[(ii)] The objects glue together in ${\rm CROGRSC}(E)$. Let 
\[
\{(A_{\e i},\rho_{i}, \Pi_{i},\varphi_{i}\e)\colon i\in I\e\}
\]
be a coherent family of sheaves of crossed groups. The families $\{A_{\e i}\colon i\in I\}$ and $\{\Pi_{i}\colon i\in I\}$ are coherent families of sheaves of groups and therefore can be glued together in ${\rm FAGRSC}(E)$.   Thus we obtain the sheaves of groups $A$ and $\Pi$.

The family $\{\rho_{\e i}\colon A_{\e i}\to \Pi_{\e i}\colon i\in I\}$ is a coherent family of morphisms of sheaves of groups that can be glued together in ${\rm FAGRSC}(E)$ to yield a morphism of sheaves of groups
\[
\rho\colon A\to \Pi.
\]   
Now the coherent family of morphisms of sheaves of sets $\{\varphi_{i}\colon \Pi_{\e i}\times A_{\e i}\to A_{\e i}\colon i\in I\}$ is a coherent family of morphisms of sheaves of groups that can be glued together in ${\rm FAISCIN}(E)$ to yield a morphism of sheaves of groups
\[
\varphi\colon \Pi\times A\to A
\]
This $\varphi$ defines a left action of $\Pi$ on $A$ because this is so locally.
That the 4-tuple $(A,\rho, \Pi, \varphi)$ satisfies the two axioms of a crossed group structure follows likewise from the fact that this is so locally.
\end{enumerate}
\end{proof}

\begin{definition} We call ${\rm CROGRSC}(E)$ the split $E$-stack of sheaves of crossed groups on $E$.	
\end{definition}

\subsection{$(A,\Pi)$-gerbes}

The previous analysis of the obstructions to lifting a $C$-torsor $P$ to $B$ via the morphism  $v\colon B\to C$ brings us to the following definition of $(A, \Pi)$-gerbe on $E$.

\begin{definition} Let $(A,\Pi)$ be a sheaf of crossed groups on the site $E$.
By an {\it $(A,\Pi)$-gerbe on $E$} we mean a $3$-tuple $(G,\mu, j)$, where $G$ is a gerbe on $E$, $\mu\colon G\to{\rm TORSC}(E;\Pi)$ is a morphism of gerbes on $E$ and $j\colon \aut(G)\to (-\be {{{} \Pi\atop \bigwedge}\atop }\lbe A)\circ\mu$ is an isomorphism of morphisms of stacks on $E$.

The following condition must be satisfied:

Let $x\in{\rm Ob}(G_{S})$, $S\in{\rm Ob}(E)$.

For every element $a_{x}\in (\mu(x)\be {{{} \Pi\atop \bigwedge}\atop }\lbe A)(1_{S})$ and every local representative $(\alpha, a)$ of $a_{x}$, we have
\[
\boxed{\mu(\, j^{-1}(a_{x}))(\alpha)=\alpha\cdot\rho(a)}
\]
\end{definition}

\begin{remark} It is in part thanks to this condition that, given a morphism of sheaves of crossed groups $(A,\Pi)\to(A^{\e\prime},\Pi^{\e\prime})$, we will be able to map an $(A, \Pi)$-gerbe $(G,\mu, j)$ to an $(A^{\e\prime},\Pi^{\e\prime})$-gerbe via the given morphism.
\end{remark}

\begin{definition}\label{222} Let $(f,\varphi)\colon (A,\Pi)\to(A^{\e\prime},\Pi^{\e\prime})$ be a morphism of sheaves of crossed groups, $(G,\mu, j)$ an $(A, \Pi)$-gerbe and $(G^{\e\prime},\mu^{\e\prime}, j^{\e\prime})$ an $(A^{\e\prime},\Pi^{\e\prime})$-gerbe on $E$.

By an {\it $(f,\varphi)$-morphism} 
\[
\lambda\colon (G,\mu, j)\to (G^{\e\prime},\mu^{\e\prime}, j^{\e\prime}\e)
\]
we mean a morphism of gerbes on $E$

\[
\lambda\colon G\to G^{\e\prime}
\]
such that the following two conditions hold:
\begin{enumerate}
\item[(i)] The square
\[
\xymatrix{G\ar[d]_(.4){\mu}\ar[rrr]^{\lambda}&&& G^{\e\prime}\ar[d]^(.4){\mu^{\prime}}\\
{\rm TORSC}(E;\Pi)\ar[rrr]^{\!{\rm TORSC}(E;\varphi)}&&& {\rm TORSC}(E;\Pi^{\le\prime}\e).}
\]	
commutes up to an isomorphism $i\colon {\rm TORSC}(E;\varphi)\circ\mu\isoto\mu^{\e\prime}\circ \lambda$ 
\item[(ii)] Also the following square must commute
\[
\xymatrix{\aut(G\e)\ar[d]_(.4){j}\ar[r]^(.45){\aut(\lambda)}& \aut(G^{\e\prime}\e)\circ\lambda\ar[d]^(.4){j^{\e\prime}\ast\e\lambda}\\
(-\be {{{} \Pi\atop \bigwedge}\atop }\lbe A)\circ\mu\ar[r]^(.4){\omega}& ((-\be {{{} \Pi^{\e\prime}\atop \bigwedge}\atop }\lbe A^{\e\prime}\e)\circ\mu^{\e\prime}\e)\circ\lambda.}
\]	
Here $\aut(\lambda)$ is the morphism of morphisms of stacks that at an arbitrary
object $x\in G_{S}$ is equal to the morphism $\uaut_{\e S}(x)\to\uaut_{\e S}(\lambda(x))$ defined by the functoriality of $\lambda$

To explain the effect of $\omega$ we first note that for an arbitrary object $x\in G_{S}$ we have an evident $\varphi$-morphism from $\mu(x)$ to $\mu(x)\be {{{} \Pi\atop \bigwedge}\atop }\lbe \Pi^{\e\prime}$ that we will denote by $p_{\e x}\colon \mu(x)\to\mu(x)\be {{{} \Pi\atop \bigwedge}\atop }\lbe \Pi^{\e\prime}$ \ref{321}

$\omega_{x}$ is then the following composition
\[
\xymatrix{\mu(x)\be {{{} \Pi\atop \bigwedge}\atop }\lbe A\ar[rr]^(.4){p_{\le x}\wedge f}
&&(\,\mu(x)\be {{{} \Pi\atop \bigwedge}\atop }\lbe \Pi^{\e\prime}\,)\be {{{} \Pi^{\e\prime}\atop \bigwedge}\atop }\lbe A^{\e\prime}\ar[rr]^{i_{x}\e\wedge\e A^{\prime}}&&\mu^{\e\prime}(\lambda(x))\be {{{} \Pi^{\prime}\atop \bigwedge}\atop }\lbe A^{\prime}.\\
}
\]
\end{enumerate}

\end{definition}

\subsection{Some theorems}

\begin{theorem} Let $(G,\mu, j)$ be an $(A,\Pi)$-gerbe and $\lambda\colon F\isoto G$ an $E$-equivalence of gerbes on $E$. Then $F$ has a unique canonical structure of $(A,\Pi)$-gerbe.	
\end{theorem}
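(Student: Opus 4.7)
My plan is to transport the entire $(A,\Pi)$-gerbe structure along the equivalence $\lambda\colon F\isoto G$ and then verify that each ingredient still satisfies the required axioms. Concretely, I would set
\[
\mu_{F}:=\mu\circ\lambda\colon F\to{\rm TORSC}(E;\Pi),
\]
which is a morphism of gerbes because $\lambda$ is. Since $\lambda$ is an $E$-equivalence of stacks in groupoids, the functoriality of $\lambda$ on automorphisms yields, for every $S\in{\rm Ob}(E)$ and every $x\in F_{S}$, an isomorphism of sheaves of groups $\aut(\lambda\e)_{\e x}\colon\uaut_{\e S}(x)\isoto\uaut_{\e S}(\lambda(x))$, and these assemble into an isomorphism of morphisms of stacks $\aut(\lambda\e)\colon\aut(F\e)\isoto\aut(G\e)\circ\lambda$. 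I would then define
\[
j_{F}:=(j\ast\lambda)\circ\aut(\lambda\e)\colon\aut(F\e)\isoto(-\be {{{} \Pi\atop \bigwedge}\atop }\lbe A)\circ\mu_{F},
\]
which is an isomorphism as the composite of two isomorphisms.

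Next I would verify the boxed compatibility condition for $(F,\mu_{F},j_{F})$. Fix $x\in F_{S}$, an element $a_{x}\in(\mu_{F}(x)\be {{{} \Pi\atop \bigwedge}\atop }\lbe A)(1_{S})$ and a local representative $(\alpha,a)$ of $a_{x}$. Because $\mu_{F}(x)=\mu(\lambda(x))$, the same datum $(\alpha,a)$ is a local representative of $a_{x}$ viewed inside $(\mu(\lambda(x))\be {{{} \Pi\atop \bigwedge}\atop }\lbe A)(1_{S})$. Unwinding the definition of $j_{F}$ gives $j_{F}^{-1}(a_{x})=\aut(\lambda\e)_{x}^{-1}(j^{-1}(a_{x}))$; but $\aut(\lambda\e)_{x}$ is nothing other than the action of $\lambda$ on automorphisms, so applying $\lambda$ to $j_{F}^{-1}(a_{x})$ returns $j^{-1}(a_{x})\in\uaut_{\e S}(\lambda(x))$. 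Hence
\[
\mu_{F}(\e j_{F}^{-1}(a_{x}))(\alpha)=\mu(\lambda(\e j_{F}^{-1}(a_{x})))(\alpha)=\mu(\e j^{-1}(a_{x}))(\alpha),
\]
and the boxed axiom for $(G,\mu,j)$ applied at $\lambda(x)$ now gives the value $\alpha\cdot\rho(a)$, as required.

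For uniqueness, I read \emph{canonical} as: the structure on $F$ is the one making $\lambda$ into an $({\rm id}_{A},{\rm id}_{\Pi})$-morphism of $(A,\Pi)$-gerbes in the sense of Definition \ref{222}. Any pair $(\mu_{F}^{\prime},j_{F}^{\prime})$ with this property must fit into the square of Definition \ref{222} (i) with $\varphi={\rm id}_{\Pi}$, so ${\rm TORSC}(E;{\rm id}_{\Pi})\circ\mu_{F}^{\prime}\cong\mu\circ\lambda$, forcing a canonical isomorphism $\mu_{F}^{\prime}\cong\mu_{F}$; the commuting square of Definition \ref{222} (ii) with $f={\rm id}_{A}$ then forces $j_{F}^{\prime}$ to coincide, through this isomorphism, with the $j_{F}$ constructed above. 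The only genuine obstacle is bookkeeping: one has to check that the 2-isomorphism $i$ and the pasting $\omega$ appearing in Definition \ref{222} are the identities (after the natural identifications $\Pi\wedge^{\Pi}\Pi^{\prime}=\Pi$, etc.), so that the compatibilities become literal equalities rather than coherence diagrams, but all of this reduces to the definition of the contracted product with a group against itself (Proposition \ref{pgg}) and causes no substantive difficulty.
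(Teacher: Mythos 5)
Your proposal is correct and follows essentially the same route as the paper: transport the structure by setting $\mu_{F}=\mu\circ\lambda$ and $j_{F}=(j\ast\lambda)\circ\aut(\lambda)$, then verify the boxed condition by reducing it, via $\mu_{F}(x)=\mu(\lambda(x))$ and $\lambda(j_{F}^{-1}(a_{x}))=j_{\lambda(x)}^{-1}(a_{x})$, to the condition already known for $(G,\mu,j)$ at $\lambda(x)$. Your added uniqueness discussion (reading ``canonical'' as making $\lambda$ an ${\rm id}_{(A,\Pi)}$-morphism) is a reasonable supplement that the paper defers to its next theorem rather than treating here.
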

\begin{proof} We define a morphism of gerbes $\mu^{*}\colon F\to {\rm TORSC}(E;\Pi)$ by setting $\mu^{*}=\mu\circ\lambda$ and an isomorphism of morphisms of stacks $j^{*}\colon \aut(F)\isoto (-\be {{{} \Pi\atop \bigwedge}\atop }\lbe A)\circ \mu^{*}$ by setting $j^{*}=j\circ\aut(\lambda)$. The 3-tuple $(F, \mu^{*} , j^{*})$ satisfies the condition from the definition of $(A,\Pi)$-gerbe. After all, if $x\in{\rm Ob}(F_{S})$ with $S\in{\rm Ob}(E)$, then we have $\mu^{*}(x)\be {{{} \Pi\atop \bigwedge}\atop }\lbe A=\mu(\lambda(x))\be {{{} \Pi\atop \bigwedge}\atop }\lbe A$ and the following triangle commutes
\[
\xymatrix{\uaut_{\e S}(x)\ar[dr]_{\aut(\lambda)_{x}}\ar[rr]^(.45){j_{x}^{*}} &&  \mu^{*}(x)\be {{{} \Pi\atop \bigwedge}\atop }\lbe A=\mu(\lambda(x))\be {{{} \Pi\atop \bigwedge}\atop }\lbe A\\
&\uaut_{\e S}(\lambda(x))\ar[ur]_{j_{\lambda(x)}}&.
}
\]
If $\Theta$ is an arbitrary element of $(\mu^{*}(x)\be {{{} \Pi\atop \bigwedge}\atop }\lbe A)(1_{S})$, then we have     
\[
\mu^{*}(x)((\,j_{x}^{*})^{-1}(\Theta))=\mu(\lambda(\, (j^{*}_{x})^{-1}(\Theta)))=\mu(\,j_{\lambda(x)}^{-1}(\Theta)).
\]
It follows that $(F, \mu^{*} , j^{*})$ satisfies the required condition and therefore $\mu^{*}$ and $j^{*}$ determine a structure of $(A,\Pi)$-gerbe on $F$. 
\end{proof}

\begin{definition}
The $(A,\Pi)$-gerbe structure on $F$ in the previous theorem is said to be {\it  induced by that on $G$ via $\lambda$}. 
\end{definition}

\begin{theorem} Let $(G,\mu, j)$ be an $(A,\Pi)$-gerbe on the site $E$ and  $\lambda\colon F\isoto G$ an $E$-equivalence of gerbes.  The morphism $\lambda$ is an ${\rm id}_{(A,\Pi)}$-morphism if we endow $F$ with the $(A,\Pi)$-gerbe structure induced by $\lambda$.	
\end{theorem}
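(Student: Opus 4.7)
The plan is to verify directly the two conditions of Definition \ref{222} for $\lambda$, with $(f,\varphi)=\mathrm{id}_{(A,\Pi)}$, exploiting the fact that by construction $\mu^{*}=\mu\circ\lambda$ and $j^{*}=j\circ\aut(\lambda)$. The role of the induced data is precisely to make the left vertical composite in each square of Definition \ref{222} literally agree with the right vertical one after composing with $\lambda$, so the only genuine task is to identify the connecting $2$-isomorphisms.

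For condition (i), I need a $2$-isomorphism $i\colon \mathrm{TORSC}(E;\mathrm{id}_{\Pi})\circ\mu^{*}\isoto \mu\circ\lambda$. Evaluated at $x\in F_{S}$, its source is $\mu^{*}(x)\be {{{} \Pi\atop \bigwedge}\atop }\lbe \Pi$ and its target is $\mu(\lambda(x))=\mu^{*}(x)$. I would take $i_{x}$ to be the inverse of the canonical isomorphism $p_{x}\colon \mu^{*}(x)\isoto \mu^{*}(x)\be {{{} \Pi\atop \bigwedge}\atop }\lbe \Pi$ supplied by Proposition \ref{pgg}. Naturality in $x$ is automatic since the isomorphism of Proposition \ref{pgg} is functorial in its $\Pi$-object argument.

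For condition (ii), substituting $j^{*}=j\circ\aut(\lambda)$ and noting that $(j\ast\lambda)_{x}=j_{\lambda(x)}$, the square at an object $x\in F_{S}$ reduces to
\[
\xymatrix{\uaut_{\e S}(x)\ar[d]_{j_{\lambda(x)}\circ\aut(\lambda)_{x}}\ar[r]^{\aut(\lambda)_{x}}& \uaut_{\e S}(\lambda(x))\ar[d]^{j_{\lambda(x)}}\\
\mu^{*}(x)\be {{{} \Pi\atop \bigwedge}\atop }\lbe A\ar[r]^{\omega_{x}}& \mu(\lambda(x))\be {{{} \Pi\atop \bigwedge}\atop }\lbe A,}
\]
and it suffices to check that $\omega_{x}=\mathrm{id}$. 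By the recipe for $\omega$ in Definition \ref{222}(ii) and the functoriality of the contracted product of morphisms established in the excerpt,
\[
\omega_{x}=(i_{x}\wedge\mathrm{id}_{A})\circ(p_{x}\wedge\mathrm{id}_{A})=(i_{x}\circ p_{x})\wedge\mathrm{id}_{A}.
\]
With our choice $i_{x}=p_{x}^{-1}$, the right-hand side becomes $\mathrm{id}_{\mu^{*}(x)}\wedge\mathrm{id}_{A}=\mathrm{id}$, so the square commutes.

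The only real obstacle is bookkeeping: one must ensure that the $i_{x}$ supplied in condition (i) is literally the one fed into the definition of $\omega_{x}$ in condition (ii), so that the two cancel. Once $i_{x}$ and $p_{x}$ have been declared mutually inverse by invoking Proposition \ref{pgg}, every remaining verification is formal, depending only on the functoriality of $\wedge$ on morphisms and on the defining equalities $\mu^{*}=\mu\circ\lambda$ and $j^{*}=j\circ\aut(\lambda)$.
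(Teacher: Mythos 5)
Your proposal is correct and follows essentially the same route as the paper: both choose $i_{x}=p_{x}^{-1}$ via Proposition \ref{pgg} for condition (i), and then observe that $\omega_{x}=(i_{x}\wedge A)\circ(p_{x}\wedge A)=\mathrm{id}$ together with the defining equality $j^{*}_{x}=j_{\lambda(x)}\circ\aut(\lambda)_{x}$ forces the square in condition (ii) to commute. Nothing is missing.
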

\begin{proof} First of all, we must consider whether the following square
\[
\xymatrix{F\ar[d]_(.4){\mu^{*}}\ar[rrr]^{\lambda}&&& G\ar[d]^(.4){\mu}\\
{\rm TORSC}(E;\Pi)\ar[rrr]^{\!{\rm TORSC}(E;\e 1_{\Pi})}&&& {\rm TORSC}(E;\Pi^{\le\prime}\e).}
\]	
commutes up to an isomorphism. This is the case since, for every $x\in{\rm Ob}(G_{S})$, $S\in{\rm Ob}(E)$, we have a natural isomorphism of $\Pi$-torsors on $S$
\[
i_{x}\colon \mu^{*}(x)\be {{{} \Pi\atop \bigwedge}\atop }\lbe \Pi\isoto \mu^{*}(x).
\]
Namely, $i_{x}$ is the inverse of the $\Pi$-isomorphism $p_{\e x}\colon \mu^{*}(x)\isoto \mu^{*}(x)\be {{{} \Pi\atop \bigwedge}\atop }\lbe \Pi$\, \ref{pgg}.

Next, we need to show that the square
\[
\xymatrix{\aut(F)\ar[d]_(.4){j^{*}}\ar[r]^{\aut(\lambda)}& \aut(G)\circ\lambda\ar[d]^(.4){j\e\ast\e\lambda}\\
(-\be {{{} \Pi\atop \bigwedge}\atop }\lbe A)\circ\mu^{*}\ar[r]^(.4){\omega}& (-\be {{{} \Pi\atop \bigwedge}\atop }\lbe A\e)\circ\mu\circ\lambda}
\]	
commutes. This means that, for every object $x$ in $F_{S}$, we must show that $\omega_{x}\circ j_{x}^{\e *}=j_{\lambda(x)}\circ\aut(\lambda)_{x}$. This equality holds because $j_{\lambda(x)}\circ\aut(\lambda)_{x}=j_{x}^{*}$ and $\omega_{x}$ is the following composition
\[
\xymatrix{\mu^{*}(x)\be {{{} \Pi\atop \bigwedge}\atop }\lbe A\ar[rr]^(.4){p_{\le x}\e\wedge\e A}
&&(\,\mu^{*}(x)\be {{{} \Pi\atop \bigwedge}\atop }\lbe \Pi)\be {{{} \Pi\atop \bigwedge}\atop }\lbe A\ar[rr]^{i_{\le x}\e\wedge\e A}&&\mu(\lambda(x))\be {{{} \Pi\atop \bigwedge}\atop }\lbe A\\
}
\]
where $i_{x}=p_{x}^{-1}$ so that  $(i_{x}\e\wedge\e A)\circ(p_{x}\e\wedge\e A)={\rm id}$.

\end{proof}

\begin{theorem} Let $(f,\varphi)\colon (A,\Pi)\to (B,\Sigma)$ and $(g,\psi)\colon (B,\Sigma)\to (C,\Gamma)$ be morphisms of sheaves of crossed groups on the site $E$. If $\delta\colon (G,\mu,j)\to (F,\nu,k)$ is an $(f,\varphi)$-morphism and $\lambda\colon(F,\nu,k)\to (H,\omega,l)$ is a $(g,\psi)$-morphism, then $\lambda\circ\delta\colon (G,\mu,j)\to (H,\omega,l)$ is a $(g,\psi)\circ (f,\varphi)$-morphism.
\end{theorem}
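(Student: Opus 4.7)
The plan is to verify the two defining conditions of Definition \ref{222} for $\lambda\circ\delta$ as a $((g,\psi)\circ(f,\varphi))$-morphism, by pasting the corresponding data for $\delta$ and $\lambda$ and exploiting the functoriality of ${\rm TORSC}(E;-)$, of $\aut$, and of the extension-of-structural-group operation.

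For condition (i), let $i^{\delta}\colon {\rm TORSC}(E;\varphi)\circ\mu\isoto \nu\circ\delta$ and $i^{\lambda}\colon{\rm TORSC}(E;\psi)\circ\nu\isoto\omega\circ\lambda$ be the isomorphisms witnessing the upper squares for $\delta$ and $\lambda$ respectively. Since ${\rm TORSC}(E;-)$ is functorial, ${\rm TORSC}(E;\psi\circ\varphi)={\rm TORSC}(E;\psi)\circ{\rm TORSC}(E;\varphi)$, and horizontal pasting yields
\[
i^{\lambda\circ\delta}:=(i^{\lambda}\ast\delta)\circ({\rm TORSC}(E;\psi)\ast i^{\delta})\colon {\rm TORSC}(E;\psi\circ\varphi)\circ\mu\isoto\omega\circ(\lambda\circ\delta),
\]
which witnesses condition (i) for $\lambda\circ\delta$.

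For condition (ii), the functoriality of $\aut$ gives $\aut(\lambda\circ\delta)=(\aut(\lambda)\ast\delta)\circ\aut(\delta)$. Vertical pasting of the $\delta$-square with the $\lambda$-square whiskered by $\delta$ on the right produces a commuting square for $\lambda\circ\delta$ whose right-hand vertical is the natural transformation $(\Omega^{\lambda}\ast\delta)\circ\Omega^{\delta}$, where $\Omega^{\delta}$ and $\Omega^{\lambda}$ denote the natural transformations built in Definition \ref{222}(ii) from $(i^{\delta},f)$ and $(i^{\lambda},g)$ respectively. What remains is to identify this composite with the natural transformation $\Omega^{\lambda\circ\delta}$ built from $(i^{\lambda\circ\delta},g\circ f)$, and this identification is the main obstacle.

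At each $x\in G_{S}$, the identification reduces to the equality of the five-step composite
\[
\mu(x)\wedge^{\Pi}A\lra(\mu(x)\wedge^{\Pi}\Sigma)\wedge^{\Sigma}B\lra\nu(\delta(x))\wedge^{\Sigma}B\lra(\nu(\delta(x))\wedge^{\Sigma}\Gamma)\wedge^{\Gamma}C\lra\omega(\lambda(\delta(x)))\wedge^{\Gamma}C
\]
and the two-step composite
\[
\mu(x)\wedge^{\Pi}A\lra(\mu(x)\wedge^{\Pi}\Gamma)\wedge^{\Gamma}C\lra\omega(\lambda(\delta(x)))\wedge^{\Gamma}C.
\]
Their equality is a diagram chase using the associativity of the contracted product (Proposition \ref{assoc}), which identifies $(\mu(x)\wedge^{\Pi}\Sigma)\wedge^{\Sigma}\Gamma$ with $\mu(x)\wedge^{\Pi}\Gamma$, together with the naturality of the wedge functor in both arguments and the explicit horizontal-pasting definition of $i^{\lambda\circ\delta}$ in terms of $i^{\delta}$ and $i^{\lambda}$. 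Once the pointwise identification is in place, naturality in $x$ follows from the naturality of each of the ingredients, completing the verification of condition (ii) and hence the theorem.
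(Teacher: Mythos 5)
Your proposal is correct and follows essentially the same route as the paper: construct the composite witness for condition (i) by pasting $i^{\delta}$ and $i^{\lambda}$ (the paper's $n_{x}=m_{\delta(x)}\circ(i_{x}\wedge\Gamma)\circ c_{x}$), paste the two commuting squares for condition (ii), and reduce to the equality of your five-step and two-step composites (the paper's $\alpha_{\delta(x)}\circ\eta_{x}=\beta_{x}$), which both you and the paper settle by a diagram chase resting on associativity of the contracted product, naturality of $\wedge$, and a final local verification. The only cosmetic caveat is that ${\rm TORSC}(E;\psi\circ\varphi)$ and ${\rm TORSC}(E;\psi)\circ{\rm TORSC}(E;\varphi)$ agree only up to the canonical associativity isomorphism $c_{x}$ rather than on the nose, but you account for exactly this isomorphism in the final chase.
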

\begin{proof} That $\delta$ be an $(f,\varphi)$-morphism means that, for every $x\in{\rm Ob}(G_{S})$, we have a natural isomorphism 
\[
i_{x}\colon \mu(x)\be {{{} \Pi\atop \bigwedge}\atop }\lbe \Sigma\isoto\nu(\delta(x))
\]
such that the following square commutes
\[
\xymatrix{\uaut_{\e S}(x)\ar[d]_{j_{\le x}}\ar[rr]^(.45){\aut(\delta)_{x}}&& \uaut_{\e S}(\delta(x))\ar[d]^(.45){k_{\le\delta(x)}}\\
\mu(x)\be {{{} \Pi\atop \bigwedge}\atop }\lbe A\ar[rr]^(.4){\eta_{\le x}}&& \nu(\delta(x)){{{} \Sigma\atop \bigwedge}\atop }\lbe B
}
\]	
Here $\eta_{\le x}$ is the following composition
\[
\xymatrix{\mu(x)\be {{{} \Pi\atop \bigwedge}\atop }\lbe A\ar[rr]^(.4){p_{\le x}\e\wedge\e f}
&&(\,\mu(x)\be {{{} \Pi\atop \bigwedge}\atop }\lbe \Sigma\,)\be {{{} \Sigma\atop \bigwedge}\atop }\lbe B\ar[rr]^{i_{\le x}\e\wedge\e B}&&\nu(\delta(x))\be {{{} \Sigma\atop \bigwedge}\atop }\lbe B\\
}
\]
and $p_{\le x}\colon \mu(x)\to \mu(x)\be {{{} \Pi\atop \bigwedge}\atop }\lbe \Sigma$ is the evident $\varphi$-morphism \ref{321}.

That $\lambda$ be an $(g,\psi)$-morphism means that, for every $x\in{\rm Ob}(F_{S})$, we have a natural isomorphism 
\[
m_{z}\colon \nu(z)\be {{{} \Sigma\atop \bigwedge}\atop }\lbe \Gamma\isoto\omega(\lambda(z))
\]
such that the following square commutes
\[
\xymatrix{\uaut_{\e S}(z)\ar[d]_(.45){k_{z}}^(.45){\wr}\ar[rr]^(.45){\aut(\lambda)_{z}}&& \uaut_{\e S}(\lambda(z))\ar[d]^(.45){l_{\lambda(z)}}_(.45){\wr}\\
\nu(z)\be {{{} \Sigma\atop \bigwedge}\atop }\lbe B\ar[rr]^(.45){\alpha_{z}}&& \omega(\lambda(z)){{{} \Gamma\atop \bigwedge}\atop }\lbe C
}
\]
Here $\alpha_{z}$ is the following composition
\[
\xymatrix{\nu(z)\be {{{} \Sigma\atop \bigwedge}\atop }\lbe B\ar[rr]^(.4){q_{\le z}\e\wedge\e g}
&&(\,\nu(z)\be {{{} \Sigma\atop \bigwedge}\atop }\lbe \Gamma\,)\be {{{} \Gamma\atop \bigwedge}\atop }\lbe C\ar[rr]^{m_{z}\e\wedge\e C}&&\omega(\lambda(z)){{{} \Gamma\atop \bigwedge}\atop }\lbe C\\
}
\]
and $q_{z}\colon \nu(z)\to \nu(z)\be {{{} \Sigma\atop \bigwedge}\atop }\lbe \Gamma$ is the evident $\psi$-morphism.

For every object $x$ in $G_{S}$, we have the following isomorphisms
\[
\xymatrix{\mu(x)\be {{{} \Pi\atop \bigwedge}\atop }\lbe \Gamma\ar[rr]^(.4){c_{\le x}}_{\sim}
&&(\,\mu(x)\be {{{} \Pi\atop \bigwedge}\atop }\lbe \Sigma\,)\be {{{} \Sigma\atop \bigwedge}\atop }\lbe\Gamma\ar[rr]^{i_{\le x}\e\wedge\e \Gamma}_{\sim}&&\nu(\delta(x)){{{} \Sigma\atop \bigwedge}\atop }\lbe \Gamma\ar[rr]^{m_{\delta(x)}}_{\sim}&&\omega((\lambda\circ\delta)(x))\\
}
\]
where $c_{x}$ exists by Propositions \ref{pgg} and \ref{assoc}. We now set $n_{x}=m_{\delta(x)}\circ(i_{x}\wedge \Gamma)\circ c_{x}$  and thus obtain a natural isomorphism 
\[
n_{x}\colon \mu(x)\be {{{} \Pi\atop \bigwedge}\atop }\lbe \Gamma\isoto \omega((\lambda\circ\delta)(x))
\]
It remains now to show that the following square commutes
\[
\xymatrix{\uaut_{\e S}(x)\ar[d]_(.45){j_{\le x}}^(.45){\wr}\ar[rr]^(.4){\aut(\lambda\e\circ\e\delta)_{x}}&& \uaut_{\e S}((\lambda\circ\delta)(x))\ar[d]^(.45){l_{(\lambda(\delta(x))}}
_(.45){\wr}\\
\mu(x)\be {{{} \Pi\atop \bigwedge}\atop }\lbe A\ar[rr]^(.4){\beta_{\le x}}&& \omega((\lambda\circ\delta\e)(x)){{{} \Gamma\atop \bigwedge}\atop }\lbe C
}
\]
Here $\beta_{x}$ is the following composition
\[
\xymatrix{\mu(x)\be {{{} \Pi\atop \bigwedge}\atop }\lbe A\ar[rr]^(.4){r_{\le x}\e\wedge\e g\le\circ f}
&&(\,\mu(x)\be {{{} \Pi\atop \bigwedge}\atop }\lbe \Gamma\,)\be {{{} \Gamma\atop \bigwedge}\atop }\lbe C\ar[rr]^{n_{\le x}\e\wedge\e C}&&\omega((\lambda\circ\delta\e)(x)){{{} \Gamma\atop \bigwedge}\atop }\lbe C\\
}
\]
and $r_{x}\colon \mu(x)\to \mu(x)\be {{{} \Pi\atop \bigwedge}\atop }\lbe \Gamma$ is the evident $\psi\circ\varphi$-morphism. This is also equal to the composition
\[
\xymatrix{\mu(x)\ar[rr]^(.4){\ref{pgg}}_(.4){\sim}
&&\mu(x)\be {{{} \Pi\atop \bigwedge}\atop }\lbe \Pi\ar[rr]^{\mu(x)\e\wedge \e\psi\e\circ\e\varphi}&&\mu(x)\be {{{} \Pi\atop \bigwedge}\atop }\lbe \Gamma.\\
}
\]
It follows that $r_{x}=(\mu(x)\wedge \e\psi\e)\circ p_{x}$ because $p_{x}$ is the composition
\[
\xymatrix{\mu(x)\ar[r]^(.4){\sim}
&\mu(x)\be {{{} \Pi\atop \bigwedge}\atop }\lbe \Pi\ar[rr]^{\mu(x)\e\wedge\e \varphi}&&\mu(x)\be {{{} \Pi\atop \bigwedge}\atop }\lbe \Sigma.
}
\]
We obtain from the given data the commutative squares
\[
\xymatrix{\uaut_{\e S}(x)\ar[d]^(.45){j_{\e x}} _(.45){\wr}\ar[rr]^(.45){\aut(\delta)_{x}}&& \uaut_{\e S}(\delta(x))\ar[d]^(.45){k_{\e\delta(x)}}_(.45){\wr}\ar[rr]^(.45){\aut(\lambda)_{\delta(x)}}&&\uaut_{\e S}(\lambda(\delta(x)))\ar[d]^(.43){l_{\lambda(\delta(x))}}
_(.4){\wr}\\
\mu(x)\be {{{} \Pi\atop \bigwedge}\atop }\lbe A\ar[rr]^(.4){\eta_{\e x}}&&
\nu(\delta(x))\be {{{} \Sigma\atop \bigwedge}\atop }\lbe B \ar[rr]^{\alpha_{\le\delta(x)}}&& \omega(\lambda(\delta(x))){{{} \Gamma\atop \bigwedge}\atop }\lbe C
}
\]
The only thing we still have to prove is that $\alpha_{\le\delta(x)}\circ\eta_{\le x}=\beta_{\le x}$, i.e.,
\[
(m_{\le\delta(x)}\wedge C\e)\circ(q_{\le\delta(x)}\wedge g\e)\circ (i_{\le x}\wedge B\e)\circ (p_{\le x}\wedge f\e)=(n_{\le x}\wedge C\e)\circ (r_{\le x}\wedge g\circ f\e).
\]
The right-hand side is equal to $((m_{\le\delta(x)}\circ(i_{x}\wedge\Gamma)\circ c_{x})\wedge C\e)\circ (r_{\le x}\wedge g\circ f\e)$ or even $(m_{\le\delta(x)}\wedge C\e)\circ(((i_{x}\wedge\Gamma\e)\circ c_{x}\e)\wedge C\e)\circ (r_{\le x}\wedge g\circ f\e)$.

Noting that $m_{\le\delta(x)}\wedge C$ is an isomorphism, it suffices to show that
\[
(q_{\le\delta(x)}\wedge g\e)\circ (i_{\le x}\wedge B\e)\circ (p_{\le x}\wedge f\e)=(((i_{x}\wedge\Gamma\e)\circ c_{x}\e)\wedge C\e)\circ (r_{\le x}\wedge g\circ f\e).
\]
Since $r_{\le x}\wedge g\circ f=((\mu(x)\wedge \psi\e)\wedge g\e)\circ (p_{\le x}\wedge f\e)$, it only remains to show that
\[
((i_{x}\wedge\Gamma\e)\wedge C\e)\circ(c_{x}\wedge C\e)\circ ((\mu(x)\wedge \psi\e)\wedge g\e)=(q_{\le\delta(x)}\wedge g\e)\circ(i_{x}\wedge B\e).
\]
This last verification is, in fact, the same as showing that the
following square commutes
\[
\xymatrix{\mu(x)\be{{{}\Pi\atop \bigwedge}\atop }\lbe \Sigma\ar[dd]_(.4){i_{\e x}}\ar[rrr]^(.45){\mu(x)\wedge\psi}&&&\mu(x)\be {{{} \Pi\atop \bigwedge}\atop }\lbe \Gamma\ar[d]^(.45){c_{\le x}}_(.45){\wr} \\
&&&(\mu(x)\be {{{} \Pi\atop \bigwedge}\atop }\lbe \Sigma)\be {{{} \Sigma\atop \bigwedge}\atop }\lbe \Gamma\ar[d]^{i_{\le x}\e\wedge\e\Gamma}\\
\nu(\delta(x))\ar[r]^(.45){d_{\le\delta(x)}}_(.43){\ref{pgg}}&\nu(\delta(x))\be {{{} \Sigma\atop \bigwedge}\atop }\lbe \Sigma\ar[rr]^(.5){\nu(\delta(x))\wedge\psi}&&
\nu(\delta(x))\be {{{} \Sigma\atop \bigwedge}\atop }\lbe \Gamma
}
\]
This is the case, as one can easily verify {\it locally} that $\beta_{\le x}=\alpha_{\delta(x)}\circ \eta_{\e x}$.
\end{proof}

\begin{theorem}\label{234} Let $(f,\varphi)\colon (A,\Pi\e)\to (B,\Sigma\e)$ be a morphism of sheaves of crossed groups on the site $E$ and $\lambda\colon (G,\mu,j)\to (F,\nu,k)$ an $(f,\varphi)$-morphism of gerbes on $E$. If $(f,\varphi)$ is an {\rm isomorphism} of sheaves of crossed groups, then $\lambda$ is an {\rm $E$-equivalence}.
\end{theorem}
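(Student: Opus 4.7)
The plan is to invoke the criterion recalled at the end of Subsection~\textit{Band of a gerbe} (item~(iii) of ``Some additions'', cf.\ \cite[IV, 2.2.6]{14}): a morphism of gerbes bound by an \emph{isomorphism} of bands is automatically an $E$-equivalence. So let $u\colon L\to M$ be the morphism of bands binding $\lambda$. Being an isomorphism of bands is a local property, and by the very definition of the morphism of bands binding a morphism of gerbes (cf.\ the discussion in the same subsection), a local representative of the restriction of $u$ at $x\in{\rm Ob}(G_{S})$ is the morphism of sheaves of groups $\aut(\lambda)_{x}\colon \uaut_{\e S}(x)\to \uaut_{\e S}(\lambda(x))$. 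It is thus enough to show that each $\aut(\lambda)_{x}$ is an isomorphism.

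Fix $x\in{\rm Ob}(G_{S})$. Condition~(ii) of Definition~\ref{222} gives the commutative square
\[
\xymatrix{\uaut_{\e S}(x)\ar[d]_(.4){j_{\le x}}^(.4){\wr}\ar[rr]^(.45){\aut(\lambda)_{x}}&& \uaut_{\e S}(\lambda(x))\ar[d]^(.4){k_{\lambda(x)}}_(.4){\wr}\\
\mu(x)\be {{{} \Pi\atop \bigwedge}\atop }\lbe A\ar[rr]^(.45){\omega_{x}}&& \nu(\lambda(x))\be {{{} \Sigma\atop \bigwedge}\atop }\lbe B,}
\]
whose vertical arrows $j_{x}$ and $k_{\lambda(x)}$ are isomorphisms by the very definition of an $(A,\Pi)$- respectively a $(B,\Sigma)$-gerbe. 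Thus $\aut(\lambda)_{x}$ is an isomorphism if and only if $\omega_{x}$ is.

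By its explicit description in Definition~\ref{222}, the morphism $\omega_{x}$ factors as
\[
\mu(x)\be {{{} \Pi\atop \bigwedge}\atop }\lbe A\xrightarrow{\,p_{x}\wedge f\,}\bigl(\mu(x)\be {{{} \Pi\atop \bigwedge}\atop }\lbe \Sigma\bigr)\be {{{} \Sigma\atop \bigwedge}\atop }\lbe B\xrightarrow{\,i_{x}\wedge B\,}\nu(\lambda(x))\be {{{} \Sigma\atop \bigwedge}\atop }\lbe B.
\]
Now $i_{x}$ is an isomorphism by part~(i) of Definition~\ref{222}, and $f$ is an isomorphism by hypothesis. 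By the functoriality of the contracted product (which sends isomorphisms to isomorphisms), it is therefore enough to show that the adjunction $\varphi$-morphism $p_{x}\colon \mu(x)\to \mu(x)\be {{{} \Pi\atop \bigwedge}\atop }\lbe \Sigma$ of Definition~\ref{321} is an isomorphism of sheaves of sets whenever $\varphi\colon \Pi\to \Sigma$ is an isomorphism of sheaves of groups.

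This last verification is the only concrete step, and the point I expect to be the main (though unchallenging) obstacle: writing $[(p,s)]$ for the class in $\mu(x)\be {{{} \Pi\atop \bigwedge}\atop }\lbe \Sigma$ of $(p,s)\in \mu(x)\times \Sigma$, the assignment $[(p,s)]\mapsto p\cdot \varphi^{-1}(s)$ respects the diagonal equivalence relation (since $pg\cdot \varphi^{-1}(\varphi(g)^{-1}s)=pg\cdot g^{-1}\varphi^{-1}(s)=p\cdot \varphi^{-1}(s)$) and is inverse to $p_{x}$. Hence $p_{x}\wedge f$, and so $\omega_{x}$ and $\aut(\lambda)_{x}$, are isomorphisms for every $x$; the required local isomorphism of band representatives follows, and the cited criterion yields that $\lambda$ is an $E$-equivalence.
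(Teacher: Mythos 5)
Your proof is correct, and its central computation coincides with the paper's: both exploit the commutative square from condition (ii) of Definition \ref{222}, observe that $p_{x}$ is an isomorphism because $\varphi$ is, that $p_{x}\wedge f$ and $i_{x}\wedge B$ are isomorphisms, and conclude that $\aut(\lambda)_{x}\colon\uaut_{\e S}(x)\to\uaut_{\e S}(\lambda(x))$ is an isomorphism for every object $x$. Where you diverge is in the concluding step. The paper reduces the theorem to full faithfulness of $\lambda$ (invoking the cited fact that a fully faithful morphism of gerbes is an equivalence) and then proves full faithfulness directly: for a pair of objects $x,y$ it compares $\uisom_{\e S}(x,y)$ with $\uaut_{\e S}(x)$ by means of a local isomorphism $m\colon x\isoto y$ (which exists since $G$ is a gerbe) and uses that sheaves of sets form a stack to pass from a local to a global isomorphism. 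You instead pass through the band formalism: since $\aut(\lambda)_{x}$ represents the restriction of the morphism of bands binding $\lambda$, and being an isomorphism of bands is a local condition, the binding morphism is an isomorphism, and the other half of the same cited criterion (\cite[IV, 2.2.6]{14}) gives the equivalence. Both routes lean on the same external result from Giraud; yours trades the explicit gluing argument on $\uisom_{\e S}(x,y)$ for the (standard, but worth a sentence) facts that ``isomorphism'' is a local property of morphisms of bands and that $\mathrm{lien}(\aut(\lambda)_{x})$ really is a local representative of the binding morphism, while the paper's version is more self-contained at that point. Your explicit inverse $[(p,s)]\mapsto p\cdot\varphi^{-1}(s)$ for $p_{x}$ is a welcome addition: the paper merely asserts that $p_{x}$ is an isomorphism.
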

\begin{proof} Regarding $\lambda$ as a morphism of gerbes, it suffices to check that $\lambda$ is fully faithful.

From the hypothesis we conclude that, for every $x\in{\rm Ob}(G_{S})$, we have a natural isomorphism
\[
i_{x}\colon \mu(x)\be{{{}\Pi\atop \bigwedge}\atop }\lbe \Sigma\isoto \nu(\lambda(x)).
\]
Further, we also have the commutative square
\[
\xymatrix{\uaut_{\e S}(x)\ar[d]_(.4){j_{\le x}}^(.4){\wr}\ar[rr]^(.45){\aut(\lambda)_{x}}&& \uaut_{\e S}(\lambda(x))\ar[d]^(.45){k_{\le\lambda(x)}}_(.45){\wr}\\
\mu(x)\be {{{} \Pi\atop \bigwedge}\atop }\lbe A\ar[rr]^(.45){\alpha_{\le x}}&& \nu(\lambda(x)){{{} \Sigma\atop \bigwedge}\atop }\lbe B
}
\]	
Here $\alpha_{x}$ is equal to the following composition
\[
\xymatrix{\mu(x)\be {{{} \Pi\atop \bigwedge}\atop }\lbe A\ar[rr]^(.4){p_{\le x}\e\wedge\e f}
&&(\,\mu(x)\be {{{} \Pi\atop \bigwedge}\atop }\lbe \Sigma\,)\be {{{} \Sigma\atop \bigwedge}\atop }\lbe B\ar[rr]^{i_{\le x}\e\wedge\e B}_{\sim}&&\nu(\lambda(x)){{{} \Sigma\atop \bigwedge}\atop }\lbe B\\
}
\]
and $p_{x}$ is equal to $\mu(x)\lra\mu(x)\be {{{} \Pi\atop \bigwedge}\atop }\lbe \Sigma$ (see Definition \ref{222}).

Since $\varphi\colon\Pi\to\Sigma$ is an isomorphism, $p_{x}$ also is an isomorphism and since $f\colon A\to B$ is an isomorphism, $p_{\le x}\wedge f$ is an isomorphism as well.  So $\alpha_{x}$ is an isomorphism. 
From the commutativity of the square it follows that
$\aut(\lambda)_{x}\colon \uaut_{\e S}(x)\to \uaut_{\e S}(\lambda(x))$ is an isomorphism of sheaves of groups on $E/S$. 
From this we can now further deduce that, for every pair of objects $x, y$ in $G_{S}$, the morphism
\[
\lambda(-)\colon\uisom_{\e S}(x,y)\to\uisom_{\e S}(\lambda(x),\lambda(y)), \,m \mapsto\lambda(m),
\]
is an isomorphism.

Since this is a morphism between sheaves of sets on $E/S$ and the sheaves of sets on a site form a {\it stack}, it suffices to check that this is locally an isomorphism.  Since $x$ and $y$ belong to a gerbe $G$, they are locally isomorphic. Let $m\colon x\isoto y$ be a local isomorphism. Using this, we have the following commutative square:
\[
\xymatrix{n\ar[d]&\uisom_{\e S}(x,y)\ar[d]^(.45){\wr}\ar[rr]^(.45){\lambda(-)}&& \uisom_{\e S}(\lambda(x),\lambda(y))&\lambda(m)\circ v\\
m^{-1}\circ n&\uaut_{\e S}(x)\ar[rr]^(.4){\aut(\lambda)_{x}}&& \uaut_{S}(\lambda(x))\ar[u]^(.45){\wr}&v\ar[u]
}
\]
Since we already know that $\aut(\lambda)_{x}\colon \uaut_{\e S}(x)\to \uaut_{\e S}(\lambda(x))$ is an isomorphism, from the commutativity we may conclude that $\lambda(-)\colon\uisom_{\e S}(x,y)\to\uisom_{\e S}(\lambda(x),\lambda(y))$ is an isomorphism.  We have shown that, fiber by fiber, $\lambda$ is fully faithful and this is sufficient to show that a cartesian $E$-functor is fully faithful.
\end{proof}

{\bf Special case}:

If $(G,\mu, j)$ and  $(F, \nu,  k)$ are two $(A, \Pi)$-gerbes and $\lambda\colon G\to F$ is an ${\rm id}_{(A,\Pi)}$-morphism, then $\lambda$ is an $E$-equivalence. We then say that $(G,\mu, j)$ and  $(F, \nu,  k)$ are {\it $(A, \Pi)$-equivalent gerbes} and we call $\lambda$ an {\it  $(A, \Pi)$-equivalence.}

\section{The functoriality theorem}

Given an $(A, \Pi)$-gerbe $(G,\mu, j)$ on the site $E$ and $(f,\varphi)\colon (A,\Pi)\to(A^{\e\prime},\Pi^{\e\prime})$ a morphism of sheaves of crossed groups on $E$, we want to show that we can construct out of $(G,\mu, j)$ an $(A^{\e\prime},\Pi^{\e\prime})$-gerbe $(G^{\e\prime},\mu^{\e\prime}, j^{\e\prime}\e)$. We may assume that $G$ is a {\it split}  $(A, \Pi)$-gerbe since Giraud has shown in \cite[Proposition 5.6]{13} that every fibered $E$-category is $E$-equivalent to a split $E$-category.

\subsection{Construction of the split $E$-prestack $G^{\e *}$}

We must construct a functor
\[
G^{\e *}\colon E^{\e\circ}\to ({\rm Cat})
\]
so that the presheaves of $S$-morphisms are sheaves.

\subsubsection{Effect of $G^{\e *}$ on the objects of $E$}
Let $S$ be an arbitrary object from $E$.

How can we define then the category $G^{\e *}(S)$?

\begin{itemize}
\item {\it Objects of $\e G^{\e *}(S)$}

We name as objects those of $G(S)$, i.e., ${\rm Ob}(G^{\e *}(S))={\rm Ob}(G(S))$.

\item {\it Morphisms of $\e G^{\e *}(S)$}

Let $x$ and $y$ be two objects of $G^{\e *}(S)$. In what follows we will use $\mu(x,A)$ as shorthand for $\mu(x)\be {{{} \Pi\atop \bigwedge}\atop }\lbe A$. 
The morphism $\varphi\colon\Pi\to\Pi^{\e\prime}$ enables us to consider $A^{\e\prime}$ as a left $\Pi$-object and consequently the contracted product 
$\mu(x)\be {{{} \Pi\atop \bigwedge}\atop }\lbe A^{\e\prime}$ can be formed. We denote the latter by $\mu(x,A^{\e\prime}\e)$.

The sheaf of groups $\uaut_{\e S}(x)$ acts on the right on the sheaf of sets $\uisom_{\e S}(x,y)$ by composition of morphisms. Thanks to the isomorphism $j_{\le x}\colon \uaut_{\e S}(x)\isoto \mu(x,A)$, $\uisom_{\e S}(x,y)$ is a right $\mu(x,A)$-object.  Because of the functoriality of the contracted product, we have a morphism $\mu(x,f)\colon \mu(x,A)\to \mu(x,A^{\e\prime}\e)$ which allows us to regard $\mu(x,A^{\e\prime}\e)$ as a left $\mu(x,A)$-object.
We define then
\[
\uisom_{\e S}^{*}(x,y)=\uisom_{\e S}(x,y)\be {{{} \mu(x,A)\atop \bigwedge}\atop }\lbe \mu(x,A^{\e\prime}\e)
\]

$\uisom_{\e S}^{*}(x,y)$ is a sheaf on $E/S$ and this will imply that we have a prestack $G^{\e*}$.

We define the {\it set of $S$-morphisms from $x$ to $y$} to be the set of sections of $\uisom_{\e S}^{*}(x,y)$.

We will denote these sets as $\isom_{\e S}^{*}(x,y)$ or sometimes as $\Hom_{\e S}^{*}(x,y)$.

\item {\it Composition of morphisms in  $\e G^{\e *}(S)$}

For every three objects $x, y$ and $z$, we have to define a map
\[
\isom_{\e S}^{*}(x,y)\times \isom_{\e S}^{*}(y,z)\to \isom_{\e S}^{*}(x,z)
\]
such that a number of axioms are satisfied.

When one considers that $\isom_{\e S}^{*}(x,y)$ is the set of sections of 
the sheaf $\uisom_{\e S}^{*}(x,y)$ and that the latter is the sheaf associated with the quotient presheaf $\uisom_{\e S}(x,y)\times  \mu(x,A^{\e\prime}\e)$  relative to the equivalence relation $\mathcal R_{\e xy}$ induced by the diagonal action of $\mu(x,A)$, then it is clear that it suffices to define a morphism
\[
\left(\frac{\uisom_{\e S}(x,y)\times  \mu(x,A^{\e\prime}\e)}{\mathcal R_{\e xy}}
\right)\times\left(\frac{\uisom_{\e S}(y,z)\times  \mu(y,A^{\e\prime}\e)}{\mathcal R_{\e yz}}\right)\to \frac{\uisom_{\e S}(x,z)\times  \mu(x,A^{\e\prime}\e)}{\mathcal R_{\e xz}}
\]
We define this morphism as follows:
\[
([(m,a^{\prime}_{x})],[(n,a^{\prime}_{y})])\mapsto [(m\circ n,\mu(m^{-1},A^{\e\prime}\e)(a^{\prime}_{y})\cdot a^{\prime}_{x})]
\]
Before moving on to the next point, where we will verify that all these data satisfy the axioms of a category, we must first show that this definition is {\it independent of the representatives used}. To this end, we take another set of representatives, namely.
\[
(m\circ j_{x}^{-1}(a_{x}),\mu(x,f)(a_{x}^{-1})\cdot a_{x}^{\e\prime}\e)\text{ and }
(n\circ j_{y}^{-1}(a_{y}),\mu(y,f)(a_{y}^{-1})\cdot a_{y}^{\e\prime}\e)
\]
According to our composition rule, we would obtain:
\[
(n\circ j_{y}^{-1}(a_{y})\circ m\circ j_{x}^{-1}(a_{x}),\mu(j_{x}^{-1}(a_{x}^{-1})\circ m^{-1},A^{\e\prime}\e)(\e\mu(y,f\e)(a_{y}^{-1})\cdot a_{y}^{\prime})\cdot\mu(x,f)(a_{x}^{-1})\cdot a_{x}^{\prime})
\]
Is this now equivalent to $(n\circ m,\mu(m^{-1},A^{\prime}\e)(a_{y}^{\prime})\cdot a_{x}^{\e\prime}\e)$?

First we will rewrite the first component of the new pair obtained by composition of the new representatives.  Since $j\colon \aut(G)\to (-\be {{{} \Pi\atop \bigwedge}\atop }\lbe A\e)\circ \mu$ is a morphism
of morphisms of stacks, we have the following commutative square:
\[
\xymatrix{\uaut_{\e S}(x)\ar[rr]^(.45){j_{\le x}}_(.45){\sim}&& \mu(x,A\e)\\
\uaut_{\e S}(y)\ar[u]^{\uinn(m^{-1})}\ar[rr]^(.45){j_{\le y}}_(.45){\sim}&&
\mu(y,A\e)\ar[u]_{\mu(m^{-1}, A)}
}
\]
Here we deduce
\[
(\, j_{x}\circ \uinn(m^{-1})\circ j_{y}^{-1})(a_{y})=\mu(m^{-1}, A)(a_{y})
\]
and thus
\[
m^{-1}\circ j_{y}^{-1}(a_{y})\circ m=j_{x}^{-1}(\e\mu(m^{-1}, A)(a_{y}))
\]
or even
\[
j_{y}^{-1}(a_{y})\circ m=m\circ j_{x}^{-1}(\e\mu(m^{-1}\be,\e A)(a_{y})).
\]
The first component $n\circ j_{y}^{-1}(a_{y})\circ m\circ j_{x}^{-1}(a_{x})$ is equal to
\[
n\circ m\circ j_{x}^{-1}(\e\mu(m^{-1}, A)(a_{y})\cdot a_{x}).
\]

Now we would still have to check that the second component is equal to                                                       
\[
\mu(x, f)(a_{x}^{-1}\cdot\mu(m^{-1}, A)(a_{y}^{-1}))\cdot \mu(m^{-1}, A^{\e\prime}\e)(a_{y}^{\e\prime})\cdot a_{x}^{\e\prime}.
\]
Is $\mu(j_{x}^{-1}(a_{x}^{-1})\circ m^{-1}, A^{\e\prime})(\mu(y,f)(a_{y}^{-1})\cdot a_{y}^{\e\prime})\cdot\mu(x,f)(a_{x}^{-1})$ equal to
\[
\mu(x,f)(a_{x}^{-1}\cdot \mu(m^{-1}, A)(a_{y}^{-1}))\cdot\mu(m^{-1}, A^{\e\prime}\e)(a_{y}^{\e\prime})?
\]
Since $\mu(x,f)$ is a morphism  of sheaves of groups the last element is
\[
\mu(x,f)(a_{x}^{-1})\cdot \mu(x,f)\circ \mu(m^{-1},A)(a_{y}^{-1})\cdot\mu(m^{-1},A^{\e\prime}\e)(a_{y}^{\e\prime}\e).
\]
Since $\mu(y,f)\circ \mu(m,A)=\mu(m,A^{\e\prime}\e)\circ \mu(x,f)$ this element equals
\[
\mu(x,f)(a_{x}^{-1})\cdot \mu(m^{-1},A^{\e\prime}\e)\circ \mu(y,f)(a_{y}^{-1})\cdot \mu(m^{-1},A^{\e\prime}\e)(a_{y}^{\e\prime}).
\]
Further, we have $\mu(\e j_{x}^{-1}(a_{x}^{-1})\circ m^{-1},A^{\e\prime}\e)(\e\mu(y,f)(a_{y}^{-1})\cdot a_{y}^{\e\prime}\e)\cdot\mu(x,f)(a_{x}^{-1})$ is equal to
\[
\mu(\e j_{x}^{-1}(a_{x}^{-1}),A^{\e\prime}\e)(\e\mu(m^{-1},A^{\e\prime}\e)\circ\mu(y,f)(a_{y}^{-1})\cdot\mu(m^{-1},A^{\e\prime}\e)(a_{y}^{\e\prime}\e)\cdot\mu(x,f)(a_{x}^{-1}).
\]
If we could still prove that
\[
\mu(\e j_{x}^{-1}(a_{x}^{-1}),A^{\e\prime}\e)=\inn(\e\mu(x,f)(a_{x}^{-1}))
\]
then we would be done.

Let $\Theta$ be any element of $\mu(x,A^{\e\prime}\e)=\mu(x)\be {{{} \Pi\atop \bigwedge}\atop }\lbe A^{\e\prime}$.

Is then $\mu(\e j_{x}^{-1}(a_{x}^{-1}),A^{\e\prime}\e)(\Theta)=\inn(\e\mu(x,f\e)(a_{x}^{-1}))(\Theta)$?

It suffices to show that both members are equal locally since $\mu(x,A^{\e\prime}\e)$ is a sheaf.  But locally $a_{x}$ is determined by a class $[(\alpha, a)]$ and $\Theta$ by a class $[(\alpha, a^{\e\prime}\e)]$ so
that $\mu(\e j_{x}^{-1}(a_{x}^{-1}),A^{\e\prime}\e)(\Theta)$ is locally determined by a class $[(\mu(\e j_{x}^{-1}(a_{x}^{-1})(\alpha), a^{\e\prime}\e)]$. Since
$G$ is an $(A,\Pi)$-gerbe, we have $[(\e\mu(\e j_{x}^{-1}(a_{x}^{-1})(\alpha), a^{\e\prime}\e)]=[(\alpha\cdot\rho(a^{-1}), a^{\e\prime}\e)]$.

On the other hand, $\inn(\e\mu(x,f)(a_{x}^{-1}))(\Theta)$ is locally determined by
\[
\begin{array}{rcl}
\inn([(\alpha,f(a^{-1}))])([(\alpha,a^{\e\prime}\e)])&=&[(\alpha,f(a^{-1}))]
\cdot [(\alpha,a^{\e\prime}\e)]\cdot [(\alpha,f(a))]\\
&=& [(\alpha,f(a^{-1})\cdot a^{\e\prime}\cdot f(a))]\\
&=&[(\alpha,\rho^{\e\prime}(f(a^{-1}))\cdot a^{\e\prime}\e)].
\end{array}
\]
Because of the diagonal action of $\Pi$ on $\mu(x)\times A^{\e\prime}$, $[(\alpha\cdot\rho(a^{-1}),a^{\e\prime})]$ is equal to $[(\alpha,\varphi(\rho(a^{-1}))\cdot a^{\e\prime}\e)]$ and since $(f,\varphi)$ is a morphism of sheaves of crossed groups the latter is equal to
$[(\alpha,\rho^{\e\prime}(f(a^{-1}))\cdot a^{\e\prime}\e)]$.

Thus $\mu(\e j_{x}^{-1}(a_{x}^{-1}),A^{\e\prime}\e)(\Theta)$ and $\inn(\e\mu(x,f)(a_{x}^{-1}))(\Theta)$ are equal locally and therefore  globally.

Hence we have shown that $\mu(\e j_{x}^{-1}(a_{x}^{-1}),A^{\e\prime}\e)=\inn(\mu(x,f)(a_{x}^{-1}))$ and also at once that our composition rule is independent of the chosen representatives.

\item Are the two category axioms satisfied?

\begin{enumerate}
\item[(i)] Associativity.

We must verify that the following equality holds:
\[
([(m, a_{x}^{\e\prime}\e)]\cdot [(n, a_{y}^{\e\prime}\e)])\cdot[(k, a_{z}^{\e\prime}\e)]=[(m, a_{x}^{\e\prime}\e)]\cdot([(n, a_{y}^{\e\prime}\e)]\cdot[(k, a_{z}^{\e\prime}\e)])
\]
\[
\begin{array}{rcl}
([(m, a_{x}^{\e\prime}\e)]\cdot [(n, a_{y}^{\e\prime}\e)])\cdot[(k, a_{z}^{\e\prime}\e)]&=&[(n\circ m, \mu(m^{-1}, A^{\e\prime}\e)(a_{y}^{\e\prime})\cdot a_{x}^{\e\prime})]\cdot [(k, a_{z}^{\e\prime}\e)]\\
&=&[(k\circ (n\circ m), \mu(m^{-1}\circ n^{-1}, A^{\e\prime}\e)(a_{z}^{\e\prime})\cdot \mu(m^{-1},A^{\e\prime})(a_{y}^{\e\prime})\cdot a_{x}^{\e\prime})]
\end{array}
\]
\[
\begin{array}{rcl}
[(m, a_{x}^{\e\prime}\e)]\cdot([(n, a_{y}^{\e\prime}\e)]\cdot[(k, a_{z}^{\e\prime}\e)])&=&[(m, a_{x}^{\e\prime}\e)]\cdot[(k\circ n,\mu(n^{-1},
A^{\e\prime})(a_{z}^{\e\prime}\e)\cdot a_{y}^{\e\prime}\e)]\\
&=&[((k\circ n)\circ m,\mu(m^{-1}, A^{\e\prime})(\mu(n^{-1},
A^{\e\prime})(a_{z}^{\e\prime}\e)\cdot a_{y}^{\e\prime}\e)\cdot a_{x}^{\e\prime})]\\
&=&[((k\circ n)\circ m,\mu(m^{-1}\circ n^{-1}, A^{\e\prime})(a_{z}^{\e\prime}\e)\cdot \mu(m^{-1},A^{\e\prime}\e)(a_{y}^{\e\prime}\e)\cdot a_{x}^{\e\prime})].
\end{array}
\]
Since $k\circ (n\circ m)=(k\circ n)\circ m$, the required equality holds.
\item[(ii)] Identity condition.

For every object $x$ of $G^{\e *}(S)$, $[({\rm id}_{x},1)]$ determines the identity morphism of $x$ in $G^{\e *}(S)$ since
\[
\begin{array}{rcl}
[({\rm id}_{x},1)]\cdot [(n, a_{y}^{\e\prime}\e)]&=&[(n\circ {\rm id}_{x}, \mu({\rm id}_{x}, A^{\e\prime}\e)(a_{y}^{\e\prime})\cdot 1)]\\
&=&[(n, a_{y}^{\e\prime}\e)]
\end{array}
\]
\[
\begin{array}{rcl}
[(m, a_{x}^{\e\prime}\e)]\cdot[({\rm id}_{y},1)]&=&[({\rm id}_{y}\circ m, \mu(m^{-1}, A^{\e\prime}\e)(1)\cdot a_{x}^{\e\prime})]\\
&=&[(m, a_{x}^{\e\prime}\e)]
\end{array}
\]
\end{enumerate}

\end{itemize}
  
\subsubsection{Effect of $G^{\e*}$ on the morphisms of $E$}

For every morphism $f\colon T\to S$ in $E$ we must define a functor $G^{\e*}(f\le)\colon G^{\e*}(S\le)\to G^{\e*}(T\le)$.

\begin{itemize}
\item Effect of $G^{\e*}(f\le)$ on the objects.

Let $x$ be an object of $G^{\e*}(S\le)$, i.e., an object of the fiber
$G_{S}$.

Then we define
\[
G^{\e*}\lbe(\e f\e)(x)=x^{f},
\]
where $x^{f}$ is the inverse $f$-image of $x$ with respect to the normalized splitting of $G$.

\item Effect of $G^{\e*}(f\le)$ on the morphisms.

For every pair of objects $x$ and $y$ in $G^{\e*}(S\le)$, we must define a map
from $\Hom_{\e S}^{*}(x,y)$ to $\Hom_{\e T}^{*}(G^{\e*}(f\e)(x),G^{\e*}(f\e)(y))$ or from $\uhom_{\e S}^{*}(x,y)(1_{S})$ to $\uhom_{\e T}^{*}(x^{f},y^{f})(1_{T})$. Here
\[
\uhom_{\e S}^{*}(x,y)=\uisom_{\e S}(x,y)\be {{{} \mu(x,A)\atop \bigwedge}\atop }\lbe \mu(x,A^{\e\prime}\e)
\]
and
\[
\uhom_{\e T}^{*}(x^{f},y^{f})=\uisom_{\e T}(x^{f},y^{f})\be {{{} \mu(x^{f},A)\atop \bigwedge}\atop }\lbe \mu(x^{f},A^{\e\prime}\e).
\]
For every arrow $f\colon T\to S$ in $E$ we have
\[
\uisom_{\e S}(x,y)^{f}=\uisom_{\e T}(x^{f},y^{f})
\]
where $\uisom_{\e S}(x,y)^{f}$ is the inverse $f$-image of $\uisom_{\e S}(x,y)$
with respect to the splitting of ${\rm FAISCIN}(E)$.

Since $\mu$ respects inverse images we have, by \cite[III, 1.3.1.1]{14},
\[
\mu(x^{f})\be {{{} \Pi\atop \bigwedge}\atop }\lbe A\isoto (\e\mu(x^{f})\be {{{} \Pi\atop \bigwedge}\atop }\lbe A)^{f}\quad\text{ and }\quad \mu(x^{f})\be {{{} \Pi\atop \bigwedge}\atop }\lbe A^{\e\prime}\isoto (\e\mu(x)\be {{{} \Pi\atop \bigwedge}\atop }\lbe A^{\e\prime}\e)^{f}.
\]
From these considerations and also from the functoriality of  
$\uisom_{\e S}(x,y)\be {{{} \mu(x,A)\atop \bigwedge}\atop }\lbe \mu(x,A^{\e\prime}\e)$ we obtain the following morphisms:
\[
(\e\uisom_{\e S}(x,y)\be {{{} \mu(x,A)\atop \bigwedge}\atop }\lbe \mu(x,A^{\e\prime}\e))(1_{S})\overset{(1)}{\lra}(\e\uisom_{\e S}(x,y)\be {{{} \mu(x,A)\atop \bigwedge}\atop }\lbe \mu(x,A^{\e\prime}\e))(f\e)
\]
and
\[
(\e\uisom_{\e S}(x,y)\be {{{} \mu(x,A)\atop \bigwedge}\atop }\lbe \mu(x,A^{\e\prime}\e))^{f}(1_{T})\overset{(2)}{\lra}(\e\uisom_{\e T}(x^{f},y^{f})\be {{{} \mu(x^{f},A)\atop \bigwedge}\atop }\lbe \mu(x^{f},A^{\e\prime}\e))(1_{T}\e).
\]
We define the map from  $\Hom_{\e S}^{*}(x,y)$ to $\Hom_{\e T}^{*}(G^{\e*}(f\e)(x),G^{\e*}(f\e)(y))$ to be the composition of (1) and (2).
Hereby is the construction of the split $E$-prestack $G^{\e*}$ completed.
\end{itemize}

Next we have the following morphism of split $E$-categories
\[
\lambda^{*}\colon G\to G^{\e*}.
\]
For every object $x\in{\rm Ob}(G_{S})$, we have $\lambda^{*}(x)=x$ and the image of an $S$-morphism $m\colon x\isoto y$ is the element that is locally determined by $(m,1)$.

\subsection{The $(A^{\e\prime},\Pi^{\e\prime})$-gerbe $G^{\e\prime}$}\label{32}

We now move on to the stack associated with $G^{\e*}$.  According to \cite[II, 2.1.3]{14}, we have a {\it bicovering} morphism
\[
G^{\e*}\overset{\!a}{\to} A(G^{\e*}\e).
\]
Since $G^{\e*}$ is already a prestack we know that $a$ is fully faithfull and locally surjective on the objects \cite[II, 1.4.5]{14}.

We define then
\[
G^{\e\prime}=A(G^{\e*}\e).
\]
By composing $a$ with $\lambda^{*}$  we obtain a morphism of split $E$-categories
\[
\lambda\colon G\to G^{\e\prime}.
\]
\begin{itemize}
\item {\it $G^{\e\prime}$ is a gerbe on $E$.}

It is immediate from its construction that $G^{\e\prime}$ is a stack in groupoids on $E$. It remains to check two conditions:
\begin{enumerate}
\item Does there exist a refinement $\mathcal R$ of $E$ such that for every $S\in \mathcal R$ we have ${\rm Ob}(G^{\e\prime}_{\lbe S})\neq\emptyset$?
\item is it the case that every two objects $x^{\e\prime},y^{\e\prime}$ in $G^{\e\prime}_{S}$ are locally isomorphic?
\end{enumerate}

The first condition holds since $G$ is a gerbe and we have the morphism 
$\lambda\colon G\to G^{\e\prime}$. The second condition also holds since the $E$-functor $\lambda$ is locally surjective on the objects and $G$ itself is a gerbe.

\item {\it The structure of $(A^{\e\prime},\Pi^{\e\prime})$-gerbe on $G^{\e\prime}$.}

Do we have a cartesian $E$-functor
\[
\mu^{\e\prime}\colon G^{\e\prime}\to{\rm TORSC}(E;\Pi^{\e\prime}\e)
\]
as well as an isomorphism of morphisms of stacks
\[
j^{\e\prime}\colon \aut(G^{\e\prime}\e)\isoto (-\be {{{} \Pi^{\e\prime}\atop \bigwedge}\atop }\lbe A^{\e\prime}\e)\circ\mu^{\e\prime}
\]
so that the condition in the definition of $(A^{\e\prime},\Pi^{\e\prime})$-gerbe is satisfied?
Since $a$ is bicovering, it suffices to show that there exists a cartesian $E$-functor
\[
\mu^{\e\prime}\colon G^{\e\prime}\to{\rm TORSC}(E;\Pi^{\e\prime}\e)
\]
so that, for every $x\in{\rm Ob}(G_{S}^{\e*}), S\in{\rm Ob}(E)$, a natural isomorphism
\[
j_{x}^{*}\colon\uaut_{\e S}^{*}(x)\isoto\mu^{*}(x)\be {{{} \Pi^{\e\prime}\atop \bigwedge}\atop }\lbe A^{\e\prime}
\]
can be constructed.

We set $\mu^{*}(x)= \mu(x)\be {{{} \Pi\atop \bigwedge}\atop }\lbe \Pi^{\e\prime}$ for every $x\in{\rm Ob}(G_{S}^{\e*})$ and every $S\in{\rm Ob}(E)$. We see that
\[
\mu^{*}(x)\be {{{} \Pi^{\e\prime}\atop \bigwedge}\atop }\lbe A^{\e\prime}=
(\,\mu(x)\be {{{} \Pi\atop \bigwedge}\atop }\lbe \Pi^{\e\prime})\be {{{} \Pi^{\e\prime}\atop \bigwedge}\atop }\lbe A^{\e\prime}\simeq \mu(x)\be {{{} \Pi\atop \bigwedge}\atop }\lbe A^{\e\prime}.
\]
On the other hand we have, by definition,
\[
\begin{array}{rcl}
\uaut_{\e S}^{*}(x)&=&\uaut_{\e S}(x)\be {{{} \mu(x,A)\atop \bigwedge}\atop }\lbe\mu(x, A^{\e\prime}\e)\\
&\simeq& \mu(x,A)\be {{{} \mu(x,A)\atop \bigwedge}\atop }\lbe\mu(x, A^{\e\prime}\e)\quad\text{since}\, \uaut_{\e S}(x)\underset{j_{\le x}}{\isoto}\mu(x,A)\\
&\simeq&\mu(x)\be {{{} \Pi\atop \bigwedge}\atop }\lbe A^{\e\prime}.
\end{array}
\]
In this way we see that a natural isomorphism
\[
j_{x}^{*}\colon\uaut_{\e S}^{*}(x)\isoto\mu^{*}(x)\be {{{} \Pi^{\e\prime}\atop \bigwedge}\atop }\lbe A^{\e\prime}
\]
is established.

Now we look more closely at the effect of $\mu^{*}$ on the morphisms. Let $m^{*}\colon x\to y$ be an element of $\Hom_{\e S}^{*}(x,y)$.

Since $\Hom_{\e S}^{*}(x,y)$ is the set of sections of the sheaf
\[
\uisom_{\e S}(x,y)\be {{{} \mu(x,A)\atop \bigwedge}\atop }\lbe\mu(x, A^{\e\prime}\e),
\]
we see that $m^{*}$ is locally represented by a pair $(m,a_{x}^{\e\prime})\in
\uisom_{\e S}(x,y)\times\mu(x, A^{\e\prime}\e)$. The element $a_{x}^{\e\prime}\in \mu(x)\be {{{} \Pi\atop \bigwedge}\atop }\lbe A^{\e\prime}$ is locally represented by a pair $(\alpha,a^{\e\prime})\in\mu(x)\times A^{\e\prime}$.

Let $\Theta$ be any element of $(\,\mu(x)\be {{{} \Pi\atop \bigwedge}\atop }\lbe \Pi^{\e\prime})(1_{S})$. This element is locally determined by a class $[(\alpha,p^{\e\prime}\e)]$.

$\mu^{*}(m^{*})(\Theta)$ is then the element that is locally determined by the class $[(\mu(m)(\alpha),\rho^{\e\prime}(a^{\e\prime})\cdot p^{\e\prime}\e)]$. Here we note that this construction is independent of the chosen representative 
$(m,a_{x}^{\e\prime})$ of $m^{*}$ and the representative 
$(\alpha,a^{\e\prime})$ of $a^{\e\prime}_{x}$.

\item {\it Is the condition fulfilled?}

Let $a^{\e\prime}_{x}$ be an element of $(\,\mu(x)\be {{{} \Pi\atop \bigwedge}\atop }\lbe A^{\e\prime})(1_{S})$ and $(\alpha,a^{\e\prime}\e)$ a representative of this element. Via the isomorphism
\[
\mu(x)\lbe {{{} \Pi\atop \bigwedge}\atop }\lbe A^{\e\prime}\isoto \mu^{*}(x)\be {{{} \Pi\atop \bigwedge}\atop }\lbe A^{\e\prime},
\]
the element $a^{\e\prime}_{x}$ lies over an element $a^{*}_{x}$ that is locally represented by $([(\alpha,1)],a^{\prime}\e)$. We should now be able to prove that
\begin{equation}\label{moe}
\mu^{*}((\, j_{x}^{*})^{-1}(a^{*}_{x}))((\alpha,1)^{*})=(\alpha,1)^{*}\times \rho^{\e\prime}(a^{\e\prime})
\end{equation}
Here $(\alpha,1)^{*}$ stands for the element that is locally determined by the class $[(\alpha,1)]$.

From the way $j_{x}^{*}$ was defined, it follows that
$(\,j_{x}^{*})^{-1}(a^{*}_{x}))$ is locally determined by $[({\rm id}_{x},a^{\e\prime}_{x})]$. Thus $\mu^{*}((\, j_{x}^{*})^{-1}(a^{*}_{x}))((\alpha,1)^{*})$ is the
element that is locally determined by $[(\e\mu({\rm id}_{x})(\alpha),\rho^{\e\prime}(a^{\e\prime})\cdot 1)]$. The latter is equal to the class $[(\alpha,\rho^{\e\prime}(a^{\e\prime}\e))]$ or even $[(\alpha,1)]\times \rho^{\e\prime}(a^{\e\prime}\e)$.
Thus we have shown that both members of \eqref{moe} are equal locally and this
suffices.  Let now $\mu^{\e\prime}\colon G^{\e\prime}\to{\rm TORSC}(E;\Pi^{\e\prime}\e)$ be the morphism of gerbes that is determined by $\mu^{*}$ and let
\[
j^{\e\prime}\colon \aut(G^{\e\prime}\e)\to (-\be {{{} \Pi^{\e\prime}\atop \bigwedge}\atop }\lbe A^{\e\prime})\circ \mu^{\e\prime}
\]
be the isomorphism of morphisms of stacks that is determined by $j^{*}$. From the previous considerations it follows that $(G^{\e\prime}, \mu^{\e\prime}, j^{\e\prime}\e)$ is an $(A^{\e\prime},\Pi^{\e\prime})$-gerbe on $E$.
\end{itemize}

Now we come to the statement from which the functoriality of $H^{2}$ will later follow.

\begin{theorem}{\rm (The functoriality theorem)} 
Let $(f,\varphi)\colon (A,\Pi)\to(A^{\e\prime},\Pi^{\e\prime})$ be a morphism of sheaves of crossed groups and $(G,\mu, j)$ an $(A, \Pi)$-gerbe. Then there exists an $(A^{\e\prime},\Pi^{\e\prime})$-gerbe $(G^{\e\prime},\mu^{\e\prime}, j^{\e\prime})$ as well as an $(f,\varphi)$-morphism $\lambda\colon G\to  G^{\e\prime}$.	
\end{theorem}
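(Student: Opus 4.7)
The plan is to combine the construction of $G^{\e\prime}=A(G^{\e*})$ already outlined in \S\ref{32} with a direct check of the two conditions of Definition \ref{222}. First, by Giraud's result \cite[Proposition 5.6]{13} that every fibered $E$-category is $E$-equivalent to a split one, together with the two preceding theorems on induced $(A,\Pi)$-structures and $\mathrm{id}_{(A,\Pi)}$-equivalences, it suffices to treat the case when $G$ is a \emph{split} $(A,\Pi)$-gerbe: if $G_{0}\isoto G$ is such a split model, the $(f,\varphi)$-morphism $G_{0}\to G^{\e\prime}$ produced in the split case can be composed with the inverse $\mathrm{id}_{(A,\Pi)}$-equivalence to yield the desired morphism out of $G$.

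With $G$ split, I would take $\lambda$ to be the composition $G\overset{\lambda^{*}}{\lra}G^{\e*}\overset{a}{\lra}G^{\e\prime}$ of the canonical functor into $G^{\e*}$ (which sends an $S$-morphism $m$ to the local class of $(m,1)$) with the bicovering morphism to the associated stack. Setting $\mu^{*}(x)=\mu(x)\be {{{} \Pi\atop \bigwedge}\atop }\lbe \Pi^{\e\prime}$ and building $j^{*}$ from $j_{x}\colon\uaut_{\e S}(x)\isoto\mu(x,A)$ via the canonical isomorphism $\mu(x,A)\be {{{} \mu(x,A)\atop \bigwedge}\atop }\lbe \mu(x,A^{\e\prime}\e)\isoto\mu(x)\be {{{} \Pi\atop \bigwedge}\atop }\lbe A^{\e\prime}$, the analysis already carried out in \S\ref{32} endows $G^{\e\prime}$ with the structure of an $(A^{\e\prime},\Pi^{\e\prime})$-gerbe. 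Thus the only residual content of the theorem is the verification that $\lambda$ is an $(f,\varphi)$-morphism in the sense of Definition \ref{222}.

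For condition (i), the required isomorphism ${\rm TORSC}(E;\varphi)\circ\mu\isoto\mu^{\e\prime}\circ\lambda$ is tautological once $\mu^{*}(x)=\mu(x)\be {{{} \Pi\atop \bigwedge}\atop }\lbe \Pi^{\e\prime}$, since $a$ is an isomorphism on morphism sheaves. For condition (ii), I would chase an arbitrary $u\in\uaut_{\e S}(x)$ through the square: using that $\aut(\lambda)_{x}$ acts on $u$ by sending it to the local class of $(u,1)$, and that $j^{\e\prime}_{\lambda(x)}$ is determined by $j^{*}_{x}$ which in turn is built from $j_{x}$ and the canonical identification of iterated contracted products, the identity $j^{\e\prime}_{\lambda(x)}\circ\aut(\lambda)_{x}=\omega_{x}\circ j_{x}$ in $\mu(x)\be {{{} \Pi^{\e\prime}\atop \bigwedge}\atop }\lbe A^{\e\prime}$ reduces, after unwinding $\omega_{x}=(i_{x}\wedge A^{\e\prime})\circ(p_{x}\wedge f)$, to an immediate consequence of the definitions.

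The main obstacle I anticipate is not this final diagram chase but rather the well-posedness of the construction of $G^{\e*}$ used throughout: the composition rule $([(m,a^{\e\prime}_{x})],[(n,a^{\e\prime}_{y})])\mapsto[(n\circ m,\mu(m^{-1},A^{\e\prime}\e)(a^{\e\prime}_{y})\cdot a^{\e\prime}_{x})]$ has to be shown independent of the local representatives chosen, and this is precisely where the defining axiom $\mu(\,j_{x}^{-1}(a))(\alpha)=\alpha\cdot\rho(a)$ of an $(A,\Pi)$-gerbe combines with the crossed-group compatibility $\varphi\circ\rho=\rho^{\e\prime}\circ f$ to produce the key equality $\mu(\,j_{x}^{-1}(a_{x}^{-1}),A^{\e\prime}\e)=\inn(\mu(x,f)(a_{x}^{-1}))$. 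Without either of these inputs the construction would fail to close up; once the representative-independence is in hand, all remaining verifications, including the one needed for condition (ii) above, are purely diagrammatic, so I expect essentially all of the genuine work to be concentrated in this single identity.
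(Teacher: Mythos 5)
Your proposal follows the paper's own route essentially verbatim: reduce to the split case via Giraud's Proposition 5.6 and the induced-structure theorems, take $\lambda=a\circ\lambda^{*}$ with $\mu^{*}(x)=\mu(x)\be {{{} \Pi\atop \bigwedge}\atop }\lbe \Pi^{\e\prime}$ and $i_{x}={\rm id}$, and verify condition (ii) by a local diagram chase through $j^{*}$. You have also correctly located where the real work lies, namely the representative-independence of the composition law in $G^{\e*}$ via the identity $\mu(\,j_{x}^{-1}(a_{x}^{-1}),A^{\e\prime}\e)=\inn(\mu(x,f)(a_{x}^{-1}))$, which is exactly the computation the paper carries out in its construction of $G^{\e*}$.
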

\begin{proof} We have already established the first part of the statement so that now it remains only to prove that the morphism
\[
\lambda\colon G\to  G^{\e\prime}=A(G^{*})
\]
is an $(f,\varphi)$-morphism. For every object $x$ in $G_{S}, S\in{\rm Ob}(E)$, we have a natural isomorphism
\[
i_{x}\colon \mu(x)\be {{{} \Pi\atop \bigwedge}\atop }\lbe \Pi^{\e\prime}
\isoto\mu^{\e\prime}(\lambda(x)).
\]

Indeed, we have
\[
\mu^{\e\prime}(\lambda(x))=\mu^{\e\prime}(a\circ \lambda^{*}(x))=(\e\mu^{\e\prime}\circ a)(\lambda^{*}(x))=\mu^{*}(\lambda^{*}(x)).
\]
Because of the way $m^{*}$ was constructed, we have
\[
\mu^{\e\prime}(\lambda(x))=\mu(x)\be {{{} \Pi\atop \bigwedge}\atop }\lbe \Pi^{\e\prime}.
\]
Thus it suffices to take $i_{x}={\rm id}_{\!\mu(x)\be {{{} \Pi\atop \bigwedge}\atop }\lbe \Pi^{\e\prime}}$.

We must also show that the following square commutes:
\[
\xymatrix{\uaut_{\e S}(x)\ar[d]_(.45){j_{x}}^(.45){\wr}\ar[rr]^(.45){\aut(\lambda)_{x}}&& \uaut_{\e S}(\lambda(x))\ar[d]^(.45){k_{\lambda(x)}}_(.45){\wr}\\
\mu(x)\be {{{} \Pi\atop \bigwedge}\atop }\lbe A\ar[rr]^(.4){\eta_{\e x}}&& \mu^{\e\prime}(\lambda(x)){{{} \Pi^{\e\prime}\atop \bigwedge}\atop }\lbe A^{\e\prime}
}
\]	
with $\eta_{\e x}$ equal to the following composition
\[
\xymatrix{\mu(x)\be {{{} \Pi\atop \bigwedge}\atop }\lbe A\ar[rr]^(.4){p_{x}\le\wedge\e f}
&&(\,\mu(x)\be {{{} \Pi\atop \bigwedge}\atop }\lbe \Pi^{\e\prime}\,)\be {{{} \Pi^{\e\prime}\atop \bigwedge}\atop }\lbe A^{\e\prime}\ar[rr]^{{\rm id}}&&\mu^{\e\prime}(\lambda(x)){{{} \Pi^{\e\prime}\atop \bigwedge}\atop }\lbe A^{\e\prime}\\
}
\]
The above square can be split into two squares
\[
\xymatrix{\ar @{} [drr] |{\rm I}\uaut_{\e S}(x)\ar[d]^(.4){j_{\e x}} _(.4){\wr}\ar[rr]^(.45){\aut(\lambda^{\be *}\lbe(x))_{x}}&& \ar @{} [drr] |{\rm II}\uaut_{\e S}(\lambda^{*}(x))\ar[d]^(.43){j^{*}_{\lambda^{\be *}\lbe\lbe(x)}}_(.43){\wr}\ar[rr]^(.45){\aut(a)_{\lambda^{*}(x)}}&&\uaut_{\e S}(a\circ\lambda^{*}(x))\ar[d]^(.43){j^{\e\prime}_{\lambda(x)}}
_(.4){\wr}\\
\mu(x)\be {{{} \Pi\atop \bigwedge}\atop }\lbe A\ar[rr]^(.32){p_{\le x}\le\wedge\e f}&&(\mu(x)\be {{{} \Pi\atop \bigwedge}\atop }\lbe \Pi^{\e\prime})\be {{{} \Pi^{\e\prime}\atop \bigwedge}\atop }\lbe A^{\e\prime}=\mu^{*}(\lambda^{*}(x))\be {{{} \Pi^{\e\prime}\atop \bigwedge}\atop }\lbe A^{\e\prime} \ar[rr]^(.62){{\rm id}}&& \mu^{\e\prime} (\lambda(x))\be{{{} \Pi^{\e\prime}\atop \bigwedge}\atop }\lbe A^{\e\prime}
}
\]
\begin{itemize}
\item square II commutes because of the way we defined $j^{\e\prime}$.
\item we examine square I further.

We have $\uaut_{\e S}(\lambda^{*}(x))=\uaut_{\e S}(x)\be {{{} \mu(x,A)\atop \bigwedge}\atop }\lbe \mu(x,A^{\e\prime}\e)$ and $j^{*}_{\lambda^{\be *}\lbe\lbe(x)}$  is the following composition
\[
\xymatrix{\uaut_{\e S}(x)\be {{{} \mu(x,A)\atop \bigwedge}\atop }\lbe \mu(x,A^{\e\prime}\e)\ar[r]^(.5){(1)}
&\mu(x,A)\be {{{} \mu(x,A)\atop \bigwedge}\atop }\lbe \mu(x,A^{\e\prime}\e)\ar[r]^(.6){(2)}_(.6){\sim}&\mu(x,A^{\e\prime}\e)\ar[r]^{(3)}&\mu^{*}(x)\be {{{} \Pi^{\e\prime}\atop \bigwedge}\atop }\lbe A^{\e\prime}\\
}
\]
where arrow (1) is equal to $j_{x}\wedge\mu(x,A^{\e\prime}\e)$, arrow (2) is the canonical isomorphism and arrow (3) is defined via the canonical isomorphism $\mu(x)\be {{{} \Pi\atop \bigwedge}\atop }\lbe A^{\e\prime}\isoto\mu(x)\be {{{} \Pi\atop \bigwedge}\atop}(\Pi^{\e\prime}\be {{{} \Pi^{\e\prime}\atop \bigwedge}\atop}\lbe A^{\e\prime})$ and the associativity of the contracted product.

We can verify that this square commutes locally.

Let $\sigma$ be an element of $\uaut_{\e S}(x)$. Let the image element $j_{x}(\sigma)\in
\mu(x)\be {{{} \Pi\atop \bigwedge}\atop }\lbe A$ be locally represented by a pair $(\alpha,a)\in\mu(x)\times A$.

$((p_{x}\wedge f\e)\circ j_{x})(\sigma)$ is locally determined by $((\alpha,1)^{*},f(a))$.

$(\e j^{*}_{\lambda^{\be *}\lbe(x)}\circ\aut(\lambda^{\be *})_{x})(\sigma)=\mu(x,f\e)(\, j_{x}(\sigma))$ and the latter element is locally determined by $(\alpha, f(a))$. The image of $(\alpha, f(a))$ under (3) is equal to $((\alpha,1)^{*},f(a))$ so that we may conclude that
\[
j^{*}_{\lambda^{*}(x)}\circ\aut(\lambda^{*})_{x}=(\e p_{x}\wedge f\e)\circ j_{x}.
\]
Them morphism $\lambda$ is thus an $(f,\varphi)$-morphism.
\end{itemize}
\end{proof} 

\subsection{Some theorems on $(f,\varphi)$-morphisms of gerbes.}

\begin{theorem}\label{331} Let $(f,\varphi)\colon (A,\Pi)\to(A^{\e\prime},\Pi^{\e\prime})$ be a morphism of sheaves of crossed groups on the site $E$, $\lambda\colon (G,\mu, j)\to (G^{\e\prime},\mu^{\e\prime}, j^{\e\prime})$ the $(f,\varphi)$-morphism we constructed in {\rm\ref{32}locally} and let $\gamma\colon (G,\mu, j)\to (G^{\e\prime\prime},\mu^{\e\prime\prime}, j^{\e\prime\prime})$ be any $(f,\varphi)$-morphism.

Then there exists an ${\rm id}_{\e(A^{\e\prime},\Pi^{\e\prime})}$-morphism
\[
\delta\colon (G^{\e\prime},\mu^{\e\prime}, j^{\e\prime})\to (G^{\e\prime\prime},\mu^{\e\prime\prime}, j^{\e\prime\prime})
\]   
such that $\delta\circ\lambda=\gamma$.
\end{theorem}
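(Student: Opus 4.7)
The strategy is to exploit the construction $G' = A(G^*)$: since $G''$ is an $E$-stack, giving a cartesian $E$-morphism $\delta\colon G'\to G''$ with $\delta\circ a = \delta^*$ is equivalent to giving a cartesian $E$-morphism of prestacks $\delta^*\colon G^*\to G''$. So I would build $\delta^*$ out of $\gamma$, arrange that $\delta^*\circ \lambda^* = \gamma$, and finally upgrade $\delta$ to an $\mathrm{id}_{(A',\Pi')}$-morphism of $(A',\Pi')$-gerbes. On objects, set $\delta^*(x) = \gamma(x)$. On morphisms, write $i_x^\gamma\colon \mu(x)\be {{{} \Pi\atop \bigwedge}\atop }\lbe \Pi' \isoto \mu''(\gamma(x))$ for the coherence isomorphism witnessing that $\gamma$ is an $(f,\varphi)$-morphism, and for a local representative $(m,a_x') \in \uisom_{\e S}(x,y)\times \mu(x,A')$ of a section of $\uisom_{\e S}^{*}(x,y)$, let $\widetilde{a}_x' \in \mu''(\gamma(x))\be {{{} \Pi'\atop \bigwedge}\atop }\lbe A'$ be the image of $a_x'$ under the composite
\[
\mu(x)\be {{{} \Pi\atop \bigwedge}\atop }\lbe A' \isoto \bigl(\mu(x)\be {{{} \Pi\atop \bigwedge}\atop }\lbe \Pi'\bigr)\be {{{} \Pi'\atop \bigwedge}\atop }\lbe A' \xrightarrow{\,i_x^\gamma \wedge A'\,} \mu''(\gamma(x))\be {{{} \Pi'\atop \bigwedge}\atop }\lbe A',
\]
built from Propositions \ref{pgg} and \ref{assoc}. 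Then I would define $\delta^*\bigl([(m,a_x')]\bigr) = \gamma(m)\circ (j''_{\gamma(x)})^{-1}(\widetilde{a}_x')$.

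The next step is to verify that $\delta^*$ is well defined, functorial, and satisfies $\delta^*\circ \lambda^* = \gamma$. Independence from the choice of representative amounts to checking that replacing $(m,a_x')$ by $(m\circ j_x^{-1}(a_x),\, \mu(x,f)(a_x^{-1})\cdot a_x')$ for $a_x \in \mu(x,A)$ yields the same morphism in $G''$. The defining square of $\gamma$ as an $(f,\varphi)$-morphism gives the key identity $j''_{\gamma(x)}\circ \gamma(j_x^{-1}(a_x)) = \widetilde{\mu(x,f)(a_x)}$, after which the extra factors cancel. Functoriality with respect to the composition rule $[(m,a_x')]\cdot [(n,a_y')] = [(n\circ m,\mu(m^{-1},A')(a_y')\cdot a_x')]$ reduces, after spelling out, to naturality of $i^\gamma$ in $x$, which is built into $\gamma$ being a morphism of morphisms of stacks. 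Cartesianness is immediate from the fact that everything above commutes with restriction. Since $\lambda^*(m) = [(m,1)]$ and $(j''_{\gamma(x)})^{-1}(\widetilde{1}) = \mathrm{id}_{\gamma(x)}$, we obtain $\delta^*\circ \lambda^* = \gamma$, and passage to the associated stack furnishes $\delta\colon G' \to G''$ with $\delta\circ \lambda = \gamma$.

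It remains to promote $\delta$ to an $\mathrm{id}_{(A',\Pi')}$-morphism. Since the objects of the form $\lambda(x)$ are locally essentially surjective in $G'$ and $\uisom$ forms a sheaf, it suffices to produce the natural isomorphism $\mu'(x') \isoto \mu''(\delta(x'))$ on such objects. By the construction of $\mu'$ in~\ref{32}, $\mu'(\lambda(x)) = \mu(x)\be {{{} \Pi\atop \bigwedge}\atop }\lbe \Pi'$, while $\mu''(\delta(\lambda(x))) = \mu''(\gamma(x))$, and the required iso is precisely $i_x^\gamma$. The compatibility square for $\delta$ involving $j'$ and $j''$ then factors through the square for $\lambda$ (reduced to the identity in~\ref{32} by the very construction of $j'$) composed with the square for $\gamma$ (which is the defining datum), and so commutes. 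The main obstacle in this program is the well-definedness and functoriality of $\delta^*$ on morphisms: both are diagram chases that rest squarely on the coherence square for $\gamma$ and the naturality of $i^\gamma$, and they must be carried out at the level of local representatives and then transferred to the sheafification.
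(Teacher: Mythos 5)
Your proposal is correct and follows essentially the same route as the paper: define $\delta^{*}$ on the prestack $G^{*}$ (forced on objects by $\delta^{*}\circ\lambda^{*}=\gamma$, and on morphisms via the coherence isomorphism $\mu(x)\be {{{} \Pi\atop \bigwedge}\atop }\lbe \Pi'\isoto\mu''(\gamma(x))$ coming from $\gamma$ being an $(f,\varphi)$-morphism), then pass to the associated stack. Your explicit formula $\gamma(m)\circ (j''_{\gamma(x)})^{-1}(\widetilde{a}_x')$ on local representatives agrees with the paper's definition $(j''_{\gamma(x)})^{-1}\circ(\varepsilon_x\wedge A')\circ j^{*}_{\lambda^{*}(x)}$ on automorphisms extended by local isomorphisms, and you are merely more explicit than the paper about the well-definedness and functoriality checks.
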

\begin{proof}
We should try to construct a morphism $\delta\colon G^{\e\prime}\to G^{\e\prime\prime}$. But since $G^{\e\prime}$ is the stack associated to $G^{\e*}$, it will suffice to define a morphism $\delta^{\e*}\colon G^{\e*}\to G^{\e\prime\prime}$.

Since ${\rm Ob}(G^{\e*})={\rm Ob}(G)$ and $\delta^{\e*}\circ\lambda^{\e*}=\gamma$, we must set, for every $x\in{\rm Ob}(G^{\e*}_{S})$,
\[
\delta^{\e*}(x)=\gamma(x).
\]
Now we describe the effect of $\delta^{\e*}$ on the morphisms.

For every object $x$ in $G^{\e*}_{\be S}$, we have a natural isomorphism
\[
\varepsilon_{\e x}\colon \mu^{*}(\lambda^{\be *}(x))\isoto\mu^{\e\prime\prime}(\gamma(x))
\]
because the fact that $\lambda$ and $\gamma$ are $(f,\varphi)$-morphisms implies that both $\mu^{*}(\lambda^{\be *}(x))$ and $\mu^{\e\prime\prime}(\gamma(x))$ are isomorphic to ${\rm TORSC}(E;\varphi)(\mu(x))=\mu(x)\be {{{} \Pi\atop \bigwedge}\atop }\lbe \Pi^{\e\prime}$.

Using $\varepsilon_{\e x}$, we have then the natural isomorphism
\[
(\,j^{\e\prime\prime}_{\gamma(x)})^{-1}\circ (\e \varepsilon_{\e x}\wedge A^{\e\prime}\e)\circ j^{*}_{\lambda^{\be *}\lbe(x)}\colon \uaut_{\e S}^{*}(x)=\uaut_{\e S}(\lambda^{\be *}(x))\to \uaut_{\e S}(\gamma(x))=\uaut_{\e S}(\delta^{\e *}\circ\lambda^{\be *}(x)).
\]

We have thus determined the effect of $\delta^{*}$ on the $S$-automorphisms of the object $x$ in $G_{S}^{*}$. This action can now be extended to $\isom_{S}^{*}(x,y)$ for every pair of $S$-objects $x,y$ in $G_{\be S}^{\e *}$, because  $x$ and $y$ are locally isomorphic. Thus we obtain an $E$-functor
\[
\delta^{\e*}\colon G^{\e*}\to G^{\e\prime\prime}
\]
such that $\delta^{\e*}\circ\lambda^{\e*}=\gamma$.

Further, for every $x\in{\rm Ob}(G_{S}^{*})$, we have the isomorphism
\[
\varepsilon_{\e x}\wedge\Pi^{\e\prime}\colon \mu^{*}(x)\be {{{} \Pi^{\e\prime}\atop \bigwedge}\atop }\lbe \Pi^{\e\prime}\isoto \mu^{\le\prime\prime}(\gamma(x))\be {{{} \Pi^{\e\prime}\atop \bigwedge}\atop }\lbe \Pi^{\e\prime}\simeq \mu^{\le\prime\prime}(\delta^{*}(x))
\]
and, because of the way $\delta^{*}$ was constructed, the following square commutes
\[
\xymatrix{\uaut_{\e S}^{*}(x)\ar[d]_(.45){j_{x}^{*}}^(.45){\wr}\ar[rr]^(.35){\aut(\delta^{*})_{x}}&& \uaut_{\e S}(\delta^{*}(x))=\uaut_{\e S}(\gamma(x))\ar[d]^(.45){j_{\lambda(x)}^{\e\prime\prime}}_(.45){\wr}\\
\mu^{*}(x)\be {{{} \Pi^{\e\prime}\atop \bigwedge}\atop }\lbe A^{\e\prime}\ar[rr]^(.4){\varepsilon_{\e x}\e\wedge\e A^{\e\prime}}&& \mu^{\e\prime\prime}(\gamma(x)){{{} \Pi^{\e\prime}\atop \bigwedge}\atop }\lbe A^{\e\prime}.
}
\]	
If we now let $\delta\colon G^{\e\prime}\to  G^{\e\prime\prime}$ be the unique $E$-functor such that $\delta\circ a=\delta^{*}$, then we can conclude that $\delta$ is an ${\rm id}_{\e(A^{\e\prime},\Pi^{\e\prime})}$-morphism such that $\delta\circ\lambda=\gamma$.
\end{proof}

\begin{theorem}\label{332} Let $(f,\varphi)\colon (A,\Pi)\to(A^{\e\prime},\Pi^{\e\prime})$ be a morphism of sheaves of crossed groups, $\gamma\colon (G,\mu, j)\to (F,\nu, k)$ and $\omega\colon (G,\mu, j)\to (H,\eta, l\e)$ be any two  $(f,\varphi)$-morphisms with a common source. Then we have an {\rm $E$-equivalence}
\[
\delta\colon F\isoto H
\]	
compatible with ${\rm id}_{(A^{\e\prime},\e\Pi^{\e\prime})}$ such that $\delta\circ\gamma\isoto\omega$.	
\end{theorem}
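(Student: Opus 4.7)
The plan is to reduce the statement to Theorem~\ref{331} by interposing the canonical $(f,\varphi)$-morphism $\lambda\colon(G,\mu,j)\to(G^{\e\prime},\mu^{\e\prime},j^{\e\prime})$ constructed in~\ref{32} between the two given morphisms. Applying Theorem~\ref{331} to $\gamma$ produces an ${\rm id}_{(A^{\e\prime},\Pi^{\e\prime})}$-morphism $\delta_{1}\colon(G^{\e\prime},\mu^{\e\prime},j^{\e\prime})\to(F,\nu,k)$ satisfying $\delta_{1}\circ\lambda=\gamma$; applying it to $\omega$ produces an ${\rm id}_{(A^{\e\prime},\Pi^{\e\prime})}$-morphism $\delta_{2}\colon(G^{\e\prime},\mu^{\e\prime},j^{\e\prime})\to(H,\eta,l)$ satisfying $\delta_{2}\circ\lambda=\omega$.

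By the special case following Theorem~\ref{234}, an ${\rm id}_{(A^{\e\prime},\Pi^{\e\prime})}$-morphism between $(A^{\e\prime},\Pi^{\e\prime})$-gerbes is automatically an $E$-equivalence, so both $\delta_{1}$ and $\delta_{2}$ are equivalences of gerbes on $E$. I would then choose a quasi-inverse $\delta_{1}^{-1}\colon F\to G^{\e\prime}$ of $\delta_{1}$ and set
\[
\delta=\delta_{2}\circ\delta_{1}^{-1}\colon F\isoto H.
\]
This is an $E$-equivalence as a composition of equivalences, and from
\[
\delta\circ\gamma=\delta_{2}\circ\delta_{1}^{-1}\circ\delta_{1}\circ\lambda\isoto\delta_{2}\circ\lambda=\omega
\]
the required natural isomorphism $\delta\circ\gamma\isoto\omega$ follows from the canonical isomorphism $\delta_{1}^{-1}\circ\delta_{1}\isoto{\rm id}_{G^{\e\prime}}$.

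The bookkeeping obstacle is verifying that $\delta$ is compatible with ${\rm id}_{(A^{\e\prime},\Pi^{\e\prime})}$ in the sense of Definition~\ref{222}. For this I would first check that a quasi-inverse of an ${\rm id}_{(A^{\e\prime},\Pi^{\e\prime})}$-morphism is again an ${\rm id}_{(A^{\e\prime},\Pi^{\e\prime})}$-morphism, by transporting the structural isomorphism $i_{x}\colon\mu^{\e\prime}(x)\be {{{} \Pi^{\e\prime}\atop \bigwedge}\atop }\lbe \Pi^{\e\prime}\isoto\mu^{\e\prime}(x)$ and the commuting square of Definition~\ref{222}(ii) across the equivalence $\delta_{1}^{-1}$. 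Then the composition theorem proved just before Theorem~\ref{234} (which states that the composite of an $(f,\varphi)$-morphism with a $(g,\psi)$-morphism is a $(g,\psi)\circ(f,\varphi)$-morphism) yields that $\delta=\delta_{2}\circ\delta_{1}^{-1}$ is an ${\rm id}_{(A^{\e\prime},\Pi^{\e\prime})}\circ{\rm id}_{(A^{\e\prime},\Pi^{\e\prime})}={\rm id}_{(A^{\e\prime},\Pi^{\e\prime})}$-morphism. Beyond this, the argument is a formal diagram chase using the compatibility squares attached to $\delta_{1}$ and $\delta_{2}$ by Theorem~\ref{331}.
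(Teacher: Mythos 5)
Your proposal is correct and follows essentially the same route as the paper: interpose the canonical $(f,\varphi)$-morphism $\lambda\colon G\to G^{\e\prime}$, apply Theorem~\ref{331} twice to obtain ${\rm id}_{(A^{\e\prime},\Pi^{\e\prime})}$-morphisms onto $F$ and $H$, invoke Theorem~\ref{234} to see these are $E$-equivalences, and compose one with a quasi-inverse of the other. The only difference is that you spell out the verification that a quasi-inverse of an ${\rm id}_{(A^{\e\prime},\Pi^{\e\prime})}$-morphism is again one, which the paper leaves implicit in the phrase ``satisfies all the requirements.''
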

\begin{proof} By Theorem \ref{331} applied twice, we obtain ${\rm id}_{(A^{\e\prime},\Pi^{\e\prime})}$-morphisms $\delta_{F}\colon G^{\e\prime}\to F$ and $\delta_{H}\colon G^{\e\prime}\to H$ such that $\delta_{F}\circ \lambda=\gamma$ and $\delta_{H}\circ \lambda=\omega$.
According to Theorem \ref{234}, $\delta_{F}$ and $\delta_{H}$ are $E$-equivalences.

Let $\delta_{F}^{\e\prime}$ be a quasi-inverse of $\delta_{F}$. The functor $\delta=\delta_{H}\circ \delta_{F}^{\e\prime}$ satisfies all the requirements.
\end{proof}

\chapter{2-cohomology}\label{four}

\section{Definition of $H^{2}$}

\subsection{The $(A,\Pi)$-gerbe ${\rm TORSC}(E;A)$}\label{11}

\begin{theorem} Let $(A,\Pi)$ be a sheaf of crossed groups on the site $E$.
Then ${\rm TORSC}(E;A)$ has a unique canonical structure of $(A,\Pi)$-gerbe.
\end{theorem}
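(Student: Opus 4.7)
The plan is to take $\mu$ to be induced by extension of the structural group along $\rho\colon A\to\Pi$, to take $j$ to be the composition of canonical isomorphisms coming from the crossed-module structure, and to verify the boxed compatibility condition by a direct local computation that pivots on the two crossed-module axioms.

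First I would set $\mu(P) = {}^{\rho}\!P = P\be {{{} A\atop \bigwedge}\atop }\lbe \Pi_{d}$ for every $A$-torsor $P$ on $E/S$. By the proposition on extension of the structural group, $\mu(P)$ is a $\Pi$-torsor, and functoriality of the contracted product makes $\mu$ a cartesian $E$-morphism of stacks; since both source and target are gerbes (Proposition~\ref{228}), $\mu$ is a morphism of gerbes.

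Next I would construct $j_{P}$ as the composite
\[
\uaut_{\e S}(P)\isoto P\be {{{} A\atop \bigwedge}\atop }\lbe A\isoto P\be {{{} A\atop \bigwedge}\atop }\lbe (\e\Pi_{d}\be {{{} \Pi\atop \bigwedge}\atop }\lbe A\e)\isoto (P\be {{{} A\atop \bigwedge}\atop }\lbe \Pi_{d})\be {{{} \Pi\atop \bigwedge}\atop }\lbe A=\mu(P)\be {{{} \Pi\atop \bigwedge}\atop }\lbe A.
\]
The first arrow is the identification ${\rm ad}(P)\isoto P\be {{{} A\atop \bigwedge}\atop }\lbe A$ of Section~1.3.4, which is applicable precisely because the crossed-module axiom $\phi(\rho(a))(a')=aa'a^{-1}$ says that the left action of $A$ on itself induced by $\phi\circ(\rho\times{\rm id})$ is by inner automorphisms. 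The second arrow is the obvious extension of Proposition~\ref{pgg}: $\Pi_{d}\be {{{} \Pi\atop \bigwedge}\atop }\lbe A\isoto A$, $[(\pi,a)]\mapsto \pi\cdot a$, where $\Pi$ acts on $A$ via $\phi$. The third arrow is the associativity of Proposition~\ref{assoc}. Each step is natural in $P$, so the family $\{j_{P}\}$ defines an isomorphism of morphisms of stacks $j\colon \aut({\rm TORSC}(E;A))\isoto (-\be {{{} \Pi\atop \bigwedge}\atop }\lbe A)\circ \mu$.

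To verify the boxed condition, I would trivialize $P\simeq A_{d}$ locally, so that $\mu(P)\simeq \Pi_{d}$. A local representative $(\alpha,a)\in\Pi\times A$ of $a_{P}\in(\mu(P)\be {{{} \Pi\atop \bigwedge}\atop }\lbe A)(1_{S})$ lifts along the chain above to $\alpha\cdot a\in A=\uaut_{A}(A_{d})$, so $j_{P}^{-1}(a_{P})$ is left multiplication by $\alpha\cdot a$. Applying $\mu$ gives left multiplication by $\rho(\alpha\cdot a)$ on $\Pi_{d}$, and the other crossed-module axiom yields $\rho(\alpha\cdot a)=\alpha\rho(a)\alpha^{-1}$, whence
\[
\mu(j_{P}^{-1}(a_{P}))(\alpha)=\rho(\alpha\cdot a)\cdot\alpha=\alpha\rho(a)\alpha^{-1}\cdot\alpha=\alpha\cdot\rho(a),
\]
which is exactly the required equality. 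For uniqueness, if $(\mu',j')$ were a second $(A,\Pi)$-gerbe structure on ${\rm TORSC}(E;A)$, then the boxed condition applied at $A_{d}$ (where $\uaut(A_{d})=A$) forces a canonical $\Pi$-torsor isomorphism $\mu'(A_{d})\simeq \Pi_{d}$ compatible with $j'_{A_{d}}$, and local triviality together with Theorem~\ref{234} propagates this to an ${\rm id}_{(A,\Pi)}$-equivalence between the two structures.

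The main obstacle is the bookkeeping in the compatibility verification: three successive contracted-product identifications employ three different actions of $A$ (right on $P$, left on $\Pi$ via $\rho$, left on $A$ via inner automorphisms), and it is the delicate interplay of the two crossed-module axioms that makes $\mu(j^{-1}(a_{P}))(\alpha)$ collapse cleanly to $\alpha\cdot\rho(a)$.
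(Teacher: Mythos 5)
Your construction is exactly the one the paper uses: $\mu(P)=P\be {{{} A\atop \bigwedge}\atop }\lbe\Pi$, $j_P$ obtained by composing $\uaut_A(P)\isoto P\be {{{} A\atop \bigwedge}\atop }\lbe A$ (with $A$ acting on itself by inner automorphisms) with the associativity isomorphism $(P\be {{{} A\atop \bigwedge}\atop }\lbe\Pi)\be {{{} \Pi\atop \bigwedge}\atop }\lbe A\isoto P\be {{{} A\atop \bigwedge}\atop }\lbe A$, and the same local computation $\rho(\alpha\cdot a)=\alpha\rho(a)\alpha^{-1}$ verifying the boxed condition. Your closing sketch of uniqueness via the condition at $A_d$ and Theorem~\ref{234} is a welcome addition that the paper's proof leaves implicit.
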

\begin{proof} We define a morphism of gerbes
\[
\mu_{\e\rm TORS}\colon {\rm TORSC}(E;A)\to{\rm TORSC}(E;\Pi)
\]
by setting, for every object $P$ of ${\rm TORSC}(E;A)_{S}$, $S\in{\rm Ob}(E)$,
\[
\mu_{\e\rm TORS}(P\le)=P\be {{{} A\atop \bigwedge}\atop }\lbe \Pi
\]

By the associativity of the contracted product, we have a natural isomorphism
\begin{equation}\label{uni}
\mu_{\e\rm TORS}(P\le)\be {{{} \Pi\atop \bigwedge}\atop }\lbe A=(P\be {{{} A\atop \bigwedge}\atop }\lbe \Pi)\be {{{} \Pi\atop \bigwedge}\atop }\lbe A\isoto
P\be {{{} A\atop \bigwedge}\atop }\lbe A.
\end{equation}
Here we have to take into account that in the formation of the contracted 
product $P\be {{{} A\atop \bigwedge}\atop }\lbe A$ the group $A$ acts on the left on $A$ via inner automorphisms. This implies, in particular, that the sheaf of groups  $P\be {{{} A\atop \bigwedge}\atop }\lbe A$ can be regarded as a twisted object of $A$ via the $A$-torsor $P$. According to \cite[III, 2.3.7]{14}, we have a natural isomorphism
\begin{equation}\label{dui}
\uaut_{\, A}(P\e)\isoto P\be {{{} A\atop \bigwedge}\atop }\lbe A.
\end{equation}
The composition of \eqref{dui} and the inverse of \eqref{uni} yields a natural isomorphism
\[
j_{\e\rm TORS}(P\le)\colon \uaut_{\e A}(P\e)\isoto \mu_{\e\rm TORS}(P\le)\be {{{} \Pi\atop \bigwedge}\atop }\lbe A.
\]
We must now consider whether the condition in the definition of $(A,\Pi)$-bundle
is satisfied. To simplify the notation, we will write $j_{\e\rm TORS}$ in place of $j_{\e\rm TORS}(P\le)$.
	
Let $x$ be any element of $\mu_{\e\rm TORS}(P\le)\be {{{} \Pi\atop \bigwedge}\atop }\lbe A=(P\be{{{} A\atop \bigwedge}\atop }\lbe \Pi\e) {{{} \Pi\atop \bigwedge}\atop }\lbe A$ and $(\e\bar{p},a)$ a local representative of $x$. Is $\mu_{\e\rm TORS}(j_{\e\rm TORS}^{-1}(x))(\bar{p})=\bar{p}\cdot\rho(a)$ ?

It suffices to prove that the above equality holds locally. The
element $\bar{p}$ is locally determined by the class $[(p,\alpha)]$ with $p\in P$ and $\alpha\in\Pi$. It follows from the definition of $j_{\e\rm TORS}$ that $j_{\e\rm TORS}^{-1}(x)$ is locally determined by $\sigma_{p}(\alpha\cdot a)$. Consequently, $\mu_{\e\rm TORS}(j_{\e\rm TORS}^{-1}(x))(\bar{p})$ is locally determined by the class $[(\sigma_{p}(\alpha\cdot  a))(p),\alpha)]$. This further equals
\[
\begin{array}{rcl}
[(\sigma_{p}(\alpha\cdot  a))(\e p),\alpha)]&=&[(\e p\cdot (\alpha\cdot  a),\alpha)]=[(\e p,\rho(\alpha\cdot  a)\cdot \alpha)]\\
&=&[(\e p,\alpha\cdot\rho(a)\cdot\alpha^{-1}\cdot \alpha\le)]=[(\e p,\alpha)]\cdot \rho(a).
\end{array}
\]
Thus $\mu_{\e\rm TORS}(j_{\e\rm TORS}^{-1}(x))(\bar{p})$ and $\bar{p}\cdot\rho(a)$ agree locally. We may thus conclude that
\[
({\rm TORSC}(E;A),\mu_{\e\rm TORS},j_{\e\rm TORS})
\]
is an $(A,\Pi)$-gerbe.	
\end{proof}

\subsection{Definition of $H^{2}$}

Let $\Phi=(A,\rho,\Pi,\phi)$ be a sheaf of crossed groups on $E$.  The cohomology in dimensions 0 and 1 agree with those of Giraud.
Thus we have
\[
\begin{array}{rcl}
H^{0}(E,\Phi)&=& H^{0}(A)=\varprojlim A\\
H^{1}(E,\Phi)&=&H^{1}(A),\text{ i.e., the set of isomorphism classes of $A$-torsors on $E$}
\end{array}
\]
If $(f,\varphi)\colon \Phi=(A,\Pi\e)\to \Phi^{\e\prime}=(A^{\e\prime},\Pi^{\e\prime}\e)$ is a morphism of sheaves of crossed groups, then $(f,\varphi)^{(0)}\colon H^{\e 0}(E,\Phi)\to H^{\e 0}(E,\Phi^{\e\prime}\e)$ maps a section $s$ to $f\circ s$ while $(f,\varphi)^{(1)}\colon H^{1}(E,\Phi)\to H^{1}(E,\Phi^{\e\prime}\e)$ maps a class $[P\e]$ to $[{}^{f}P\e]$, where ${}^{f}P=P\be{{{} A\atop \bigwedge}\atop }\lbe A^{\e\prime}_{d}$\,.

\begin{definition}
Let $\Phi=(A,\rho,\Pi,\phi)$ be a sheaf of crossed groups on $E$. Then
\[
H^{2}(E,\Phi)
\]
is the {\it set of all classes of $(A,\Pi)$-equivalent $(A,\Pi)$-gerbes on $E$.}
\end{definition}

Instead of $H^{2}(E,\Phi)$ we will sometimes also use the notation $H^{2}(A,\Phi)$. A class is called {\it neutral} if a representative has a section. From \ref{11} we know that the gerbe ${\rm TORSC}(E; A)$ of $A$-torsors is canonically endowed with a unique  structure of $(A,\Pi)$-gerbe.

The class containing ${\rm TORSC}(E; A)$ is called the {\it unit class}.

Let $(f,\varphi)\colon \Phi=(A,\Pi)\to \Phi^{\e\prime}=(A^{\e\prime},\Pi^{\e\prime})$ be a morphism of sheaves of crossed groups.  Then we have a {\it map}
\[
(f,\varphi)^{(2)}\colon H^{2}(E,\Phi)\to H^{2}(E,\Phi^{\e\prime}\e)
\]
which maps an element $g=[(G,\mu, j\e)]\in H^{2}(E,\Phi)$ to the element
$(f,\varphi)^{(2)}(g)=[(F,\nu, k\e)]\in H^{2}(E,\Phi^{\e\prime}\e)$. Here $(F,\nu, k\e)$ is an $(A^{\e\prime},\Pi^{\e\prime})$-gerbe such that there exists an $(f,\varphi)$-morphism
\[
\delta\colon (G,\mu, j\e)\to (F,\nu, k\e).
\]
To justify this definition, we can rely on the functoriality theorem and on Theorem \ref{332} from Chapter 3.

The map $(f,\varphi)^{(2)}$ sends a neutral element of $H^{2}(E,\Phi)$ to
a neutral element of $H^{2}(E,\Phi^{\e\prime}\e)$. For every pair of composable morphisms of sheaves of crossed groups
\[
\Phi=(A,\Pi)\overset{(f,\e\varphi)}{\lra}\Phi^{\e\prime}=(A^{\e\prime},\Pi^{\e\prime})\overset{(g,\psi)}{\lra}\Phi^{\e\prime\prime}=(A^{\e\prime\prime},\Pi^{\e\prime\prime})
\]
we have, by \cite[III, 2.3.3]{14},
\[
(g,\psi)^{(2)}\circ (f,\varphi)^{(2)}=(g\circ f,\psi\circ\varphi)^{(2)}.
\]
The functoriality of our $H^{2}$ is clear from the above.

\section{Exactness and naturality of the cohomology}

\subsection{Exactness of the cohomology}

First we reconsider the definition of {\it short exact} sequence of sheaves of crossed groups on $E$. The sequence
\[
e\to \Phi=(A,\rho,\Pi)\overset{(f,\e\varphi)}{\lra}\Phi^{\e\prime}=(A^{\e\prime},
\rho^{\e\prime},\Pi^{\e\prime})\overset{(h,\e\psi)}{\lra}\Phi^{\e\prime\prime}=(A^{\e\prime\prime},\rho^{\e\prime\prime},\Pi^{\e\prime\prime})\to e
\]
is called exact if it induces the following commutative diagram
\[
\xymatrix{&\Pi\ar[r]^(.45){\varphi}& \Pi^{\e\prime}\ar[r]^(.45){\psi}&
\Pi^{\e\prime\prime}\\
1\ar[r]&A\ar[u]^(.4){\rho} \ar[r]^(.4){f}& A^{\e\prime}\ar[u]_(.4){\rho^{\e\prime}} \ar[r]^{h}& A^{\e\prime\prime} \ar[u]_(.4){\rho^{\e\prime\prime}}\ar[r]&1
}
\]
which, in addition, satisfies the following two conditions:
\begin{equation}\label{uno}
1\to A\overset{\!f}{\to}A^{\e\prime}\overset{\!h}{\to}A^{\e\prime\prime}\to 1
\end{equation}
is a short exact sequence of sheaves of groups on the site $E$.
\begin{equation}\label{dos}
\varphi\colon \Pi\to \Pi^{\e\prime}\text{ is an isomorphism and } \psi\colon \Pi^{\e\prime}\to \Pi^{\e\prime\prime}\text{ is an epimorphism.}
\end{equation}
Because of condition \eqref{dos} we can set $\Pi^{\e\prime}=\Pi$ in the above diagram and replace it with
\[
\xymatrix{&\Pi\ar[r]^(.45){{\rm id}_{\le\Pi}}& \Pi\ar[r]^(.45){\psi}&
\Pi^{\e\prime\prime}\\
1\ar[r]&A\ar[u]^(.45){\rho} \ar[r]^(.45){f}& A^{\e\prime}\ar[u]_(.45){\rho^{\e\prime}} \ar[r]^{h}& A^{\e\prime\prime} \ar[u]_(.45){\rho^{\e\prime\prime}}\ar[r]&1
}
\]
\begin{theorem} Let $P$ be an $A^{\e\prime\prime}$-torsor on $E$ and $K(P)$ the gerbe of liftings of $P$ to $A^{\e\prime}$ via $h$. Then $K(P)$ is uniquely endowed with a canonical structure of $(A,\Pi)$-gerbe.
\end{theorem}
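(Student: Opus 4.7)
The plan is to follow closely the blueprint of Subsection~\ref{11} (the $(A,\Pi)$-gerbe structure on ${\rm TORSC}(E;A)$) and of the obstruction analysis opening Chapter~\ref{bun}, adapting them to the present setting in which the band is the full sheaf $\Pi$ rather than $\uinn(A^{\e\prime})$. Since $K(P)$ is already known to be a gerbe on $E$, the task reduces to exhibiting a cartesian functor $\mu\colon K(P)\to {\rm TORSC}(E;\Pi)$ together with an isomorphism $j\colon \aut(K(P))\isoto (-\be {{{} \Pi\atop \bigwedge}\atop }\lbe A)\circ\mu$ satisfying the boxed identity in the definition of an $(A,\Pi)$-gerbe, and then to argue uniqueness.

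For $(Q,\lambda)\in K(P)_{S}$ I set
\[
\mu(Q,\lambda)=Q\be {{{} A^{\e\prime}\atop \bigwedge}\atop }\lbe \Pi_{d},
\]
the extension of the structural group of the $A^{\e\prime}$-torsor $Q$ along $\rho^{\e\prime}\colon A^{\e\prime}\to \Pi$ in the sense of Definition~\ref{321}; this is a $\Pi$-torsor, functorial and cartesian in $(Q,\lambda)$. Next, $\uaut_{S}(Q,\lambda)$ is the subsheaf of $\uaut_{A^{\e\prime}}(Q)\simeq Q\be {{{} A^{\e\prime}\atop \bigwedge}\atop }\lbe A^{\e\prime}$ (inner action) cut out by the condition that the automorphism induced on $P^{\e S}$ be the identity; since $\lambda$ is an $h$-morphism and $\ker h=f(A)$, this yields a natural isomorphism
\[
\uaut_{S}(Q,\lambda)\isoto Q\be {{{} A^{\e\prime}\atop \bigwedge}\atop }\lbe A,
\]
where $A^{\e\prime}$ acts on $A$ by restriction of its inner action (equivalently, by the morphism-of-crossed-groups axioms satisfied by $(f,{\rm id}_{\Pi})$, via $\phi\circ(\rho^{\e\prime}\times{\rm id}_{A})$). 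Combining this with Propositions~\ref{pgg} and \ref{assoc} gives
\[
\mu(Q,\lambda)\be {{{} \Pi\atop \bigwedge}\atop }\lbe A=(Q\be {{{} A^{\e\prime}\atop \bigwedge}\atop }\lbe \Pi_{d})\be {{{} \Pi\atop \bigwedge}\atop }\lbe A\isoto Q\be {{{} A^{\e\prime}\atop \bigwedge}\atop }\lbe A,
\]
and composing the two isomorphisms produces the desired $j_{(Q,\lambda)}$; naturality assembles the family into a morphism of morphisms of stacks $j$.

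The verification of the defining identity $\mu(j^{-1}(a_{x}))(\alpha)=\alpha\cdot\rho(a)$ is the only step that uses the crossed-group axioms in full strength, and is, I expect, the main obstacle; it parallels the computation carried out for ${\rm TORSC}(E;A)$ in Subsection~\ref{11}. Working locally, write $\alpha=[q,\pi]^{*}$ with $q\in Q$ and $\pi\in\Pi$, and let $(\alpha,a)$ be a local representative of $a_{x}$; chasing through $j$ and the associativity isomorphism, one finds that $j^{-1}(a_{x})$ corresponds locally to the $A^{\e\prime}$-automorphism of $Q$ sending $q\mapsto q\cdot f(\phi(\pi,a))$. Applying $\mu$ and evaluating at $\alpha$ gives
\[
[q\cdot f(\phi(\pi,a)),\pi]=[q,\rho^{\e\prime}(f(\phi(\pi,a)))\cdot\pi],
\]
which by $\rho^{\e\prime}\circ f=\rho$ (the first compatibility square of the morphism $(f,{\rm id}_{\Pi})$ of crossed groups) followed by the first crossed-group axiom $\rho(\phi(\pi,a))=\pi\rho(a)\pi^{-1}$ collapses to $[q,\pi\rho(a)]=\alpha\cdot\rho(a)$; since $\mu(Q,\lambda)$ is a sheaf, the equality globalizes. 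Uniqueness is then forced: the construction of $\mu$ is determined by the requirement that on any trivialization $Q\simeq (A^{\e\prime})_{d}$ it reduce to extension of the structural group along $\rho^{\e\prime}$, and once $\mu$ is fixed, $j$ is determined up to unique isomorphism by its role in the defining triangle, giving the ``unique canonical'' $(A,\Pi)$-gerbe structure on $K(P)$ asserted in the theorem.
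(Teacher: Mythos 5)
Your proposal is correct and follows essentially the same route as the paper: the same $\mu(Q,\lambda)=Q\be {{{} A^{\e\prime}\atop \bigwedge}\atop }\lbe \Pi$, the same identification of $j^{-1}$ on local representatives, and the same verification of the boxed identity via $\rho^{\e\prime}\circ f=\rho$ and the Peiffer-type axiom $\rho(\pi\cdot a)=\pi\rho(a)\pi^{-1}$. The only cosmetic difference is that you build $j$ structurally through $\uaut_{\e S}(Q,\lambda)\isoto Q\be {{{} A^{\e\prime}\atop \bigwedge}\atop }\lbe A$ and associativity of the contracted product, whereas the paper defines $j_{P}^{-1}$ directly by its local formula; you also sketch the uniqueness claim, which the paper leaves implicit.
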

\begin{proof}\indent
\begin{itemize}
\item  Let $(Q, \lambda)$ be any $S$-object in $K(P)$, i.e., $Q$ is an $A^{\e\prime}$-torsor on $E/S$ and $\lambda\colon Q\to P^{S}$ is an $h$-morphism.

Then we define
\[
\mu_{\e P}(Q,\lambda)=Q\be{{{} A^{\e\prime}\atop \bigwedge}\atop }\lbe \Pi.
\]
Thus we obtain a morphism of gerbes on $E$
\[
\mu_{\e P}\colon K(P)\to {\rm TORSC}(E;\Pi).
\]
\item For every object $(Q, \lambda)\in K(P)_{S}$ we have a natural isomorphism
\[
j_{\e P}(Q, \lambda)\colon\uaut_{\e S}(Q, \lambda)\isoto \mu_{\e P}(Q,\lambda)\be{{{} \Pi\atop \bigwedge}\atop }\lbe A.
\] 
We describe the effect of $j_{\e P}^{-1}(Q, \lambda)$.

Let $x$ be any element of $\mu_{\e P}(Q,\lambda)\be{{{} \Pi\atop \bigwedge}\atop }\lbe A$. Then we know that $x$ is locally determined by a class of equivalent pairs $[(\bar{q},a)]$ with $\bar{q}\in \mu_{\e P}(Q,\lambda)$ and $a\in A$. The element $\bar{q}$ is, in turn, locally determined by a class of equivalent pairs $[(q,\alpha)]$ with $q\in Q$ and $\alpha\in\Pi$.

We define then $j_{\e P}^{-1}(Q, \lambda)(x)$ as the automorphism of $(Q, \lambda)$ that is locally equal to $\sigma_{q}(\alpha\cdot a)$.

\item Now, in what follows, we use the notation $j_{\e P}^{-1}$ in place of $j_{\e P}^{-1}(Q, \lambda)$.

We wish to show that $\mu_{\e P}(j_{\e P}^{-1}(x))(\e\bar{q}\e)= \bar{q}\cdot\rho(a)$.

Considering that these elements are values of sheaves, it suffices to check that they agree locally. Viewed locally, $\bar{q}$ is  determined by the class
$[(q,\alpha)]$. The image element $\mu_{\e P}(j_{\e P}^{-1}(x))(\e\bar{q}\e)$
is then locally determined by the class $[(\sigma_{q}(\alpha\times a)(q),\alpha)]$. Now
\[
\begin{array}{rcl}
[(\sigma_{q}(\alpha\cdot a)(q),\alpha)]&=&[(q\cdot (\alpha\cdot a),\alpha)]=[(q,\rho(\alpha\cdot a)\cdot\alpha)]\\
&=&[(q,\alpha\cdot\rho(a)\cdot\alpha^{-1}\cdot \alpha)]=[(q,\alpha\cdot\rho(a))]=[(q,\alpha)]\cdot\rho(a).
\end{array}
\]
Locally $\mu_{\e P}(j_{\e P}^{-1}(x))(\e\bar{q}\e)$ and $\bar{q}\cdot\rho(a)$ are determined by the same element.  Thus they are equal.

Thus we have defined on $K(P)$ a structure of $(A,\Pi)$-gerbe.

{\bf Notation}. The gerbe $K(P)$ with this canonical structure of
$(A,\Pi)$-gerbe is denoted by.
\[
(K(P),\mu_{\e P},j_{\e P}).
\]
\end{itemize}	
\end{proof}

\begin{theorem} \label{212} The cartesian $E$-functor
\[
k(P)\colon K(P)\to {\rm TORSC}(E;A^{\e\prime}\e),\,(Q,\lambda)\mapsto Q,
\]
is an $(f,1_{\Pi})$-morphism
\end{theorem}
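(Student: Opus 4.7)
The plan is to verify the two conditions of Definition \ref{222} with $(f,\varphi)=(f,1_\Pi)$, $\lambda=k(P)$, $(G,\mu,j)=(K(P),\mu_P,j_P)$, and $(G^{\e\prime},\mu^{\e\prime},j^{\e\prime})$ equal to ${\rm TORSC}(E;A^{\e\prime}\e)$ with the canonical $(A^{\e\prime},\Pi)$-gerbe structure $(\mu_{\rm TORS},j_{\rm TORS})$ supplied by the theorem of \ref{11} applied to $A^{\e\prime}$.

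For condition (i), the verification is immediate: for every $(Q,\lambda)\in K(P)_S$ one has by the very definition of $\mu_P$ and $\mu_{\rm TORS}$
\[
\mu_P(Q,\lambda)=Q\be {{{} A^{\e\prime}\atop \bigwedge}\atop }\lbe \Pi=\mu_{\rm TORS}(Q)=\mu_{\rm TORS}(k(P)(Q,\lambda)),
\]
and since $\varphi=1_\Pi$ the functor ${\rm TORSC}(E;1_\Pi)$ is the identity. Hence taking $i_{(Q,\lambda)}={\rm id}$ supplies the required natural isomorphism ${\rm TORSC}(E;1_\Pi)\circ\mu_P\isoto\mu_{\rm TORS}\circ k(P)$.

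For condition (ii), with $i={\rm id}$ and by Proposition \ref{pgg}, the bottom arrow $\omega_{(Q,\lambda)}$ of the square reduces, through the canonical isomorphism $p\colon \mu_P(Q,\lambda)\isoto \mu_P(Q,\lambda)\be {{{} \Pi\atop \bigwedge}\atop }\lbe \Pi$, to the map $1\wedge f\colon \mu_P(Q,\lambda)\be {{{} \Pi\atop \bigwedge}\atop }\lbe A\to \mu_P(Q,\lambda)\be {{{} \Pi\atop \bigwedge}\atop }\lbe A^{\e\prime}$ acting on the second factor. I would then verify the resulting equality $j_{\rm TORS}\circ \aut(k(P))_{(Q,\lambda)}=(1\wedge f)\circ j_P$ locally. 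Choose a local section $q$ of $Q$, so that $\sigma_q$ identifies $\uaut_{A^{\e\prime}}(Q)$ with $A^{\e\prime}$. An $S$-automorphism of $(Q,\lambda)$ is locally of the form $\sigma_q(f(a))$ with $a\in A$, since compatibility with $\lambda$ forces the acting element to lie in $\ker h=f(A)$ by exactness of \eqref{uno}; hence $\aut(k(P))_{(Q,\lambda)}$ is locally the inclusion induced by $f$. Now by the explicit formula for $j_P^{-1}$ from the preceding theorem (the same pattern as in the proof of \ref{11}), the element $j_P(\sigma_q(f(a)))$ is locally represented by $([(q,1)],a)\in (Q\be {{{} A^{\e\prime}\atop \bigwedge}\atop }\lbe\Pi)\be {{{} \Pi\atop \bigwedge}\atop }\lbe A$; applying $1\wedge f$ yields $([(q,1)],f(a))$, which by the analogous local description of $j_{\rm TORS}^{-1}$ in \ref{11} coincides with $j_{\rm TORS}(\sigma_q(f(a)))$. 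Since both sides of the desired equality are sections of sheaves and agree locally, they are globally equal.

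The only point demanding care is bookkeeping: the crossed-group structures on $\Phi$ and $\Phi^{\e\prime}$ carry different actions of $\Pi$ on $A$ and on $A^{\e\prime}$, and one must invoke the compatibility $f\circ\phi_\alpha=\phi^{\e\prime}_\alpha\circ f$ built into the hypothesis that $(f,1_\Pi)$ is a morphism of sheaves of crossed groups in order to align the local formulas $\sigma_q(\alpha\cdot a)$ and $\sigma_q(\alpha\cdot f(a))$. With this compatibility in hand the whole verification collapses to the short calculation on local representatives sketched above, with no further conceptual difficulty.
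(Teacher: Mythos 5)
Your proposal is correct and follows essentially the same route as the paper: condition (i) is immediate from $\mu_P(Q,\lambda)=Q\be {{{} A^{\e\prime}\atop \bigwedge}\atop }\lbe \Pi=\mu_{\rm TORS}(k(P)(Q,\lambda))$ together with the canonical isomorphism of Proposition \ref{pgg}, and condition (ii) is checked locally through the explicit $\sigma_q$-descriptions of $j_P^{-1}$ and $j_{\rm TORS}^{-1}$ and the compatibility of $f$ with the $\Pi$-actions. The only cosmetic difference is that the paper verifies the equivalent identity $j_{\rm TORS}^{-1}\circ\eta_{(Q,\lambda)}=\aut(k(P))_{(Q,\lambda)}\circ j_P^{-1}$ on an arbitrary local representative $(\,[(q,\alpha)],a)$ with general $\alpha\in\Pi$, whereas you start from the automorphism side with the representative $\alpha=1$; both computations are valid.
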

\begin{proof}\indent
\begin{itemize}
\item Let $(Q,\lambda)$ be an object of the fiber category $K(P)_{S}$.  
Because of the way the $(A^{\e\prime}, \Pi)$-gerbe structure on ${\rm TORSC}(E; A^{\e\prime})$ was defined, we have
\[
\mu_{\e\rm TORS}(k(P)(Q,\lambda))=Q\be{{{} A^{\e\prime}\atop \bigwedge}\atop }\lbe\Pi.
\]
On the other hand, it is clear from the way in which the $(A,\Pi)$-gerbe structure on $K(P)$ was defined that
\[
\mu_{\e P}(Q,\lambda))=Q\be{{{} A^{\e\prime}\atop \bigwedge}\atop }\lbe\Pi.
\]
Because of \ref{pgg} we have a natural isomorphism
\[
i_{(Q,\lambda)}\colon \mu_{\e P}(Q,\lambda))\be{{{} \Pi\atop \bigwedge}\atop }\lbe\Pi\isoto \mu_{\e\rm TORS}(k(P)(Q,\lambda)).
\]

\item Consider the following square:
\[
\xymatrix{\uaut_{\e S}(Q,\lambda)
\ar[d]_(.45){j_{P}(Q,\lambda)}^(.45){\wr}\ar[rr]^(.45){\aut(k(P))_{(Q,\lambda)}}&& \uaut_{\e S}(k(P)(Q,\lambda))\ar[d]^(.4){j_{\e\rm TORS}(k(P)(Q,\lambda))}_(.4){\wr}\\
\mu_{P}(Q,\lambda)\be {{{} \Pi\atop \bigwedge}\atop }\lbe A\ar[rr]^(.4){\eta_{\e (Q,\lambda)}}&& \mu_{\e\rm TORS}(k(P)(Q,\lambda)){{{} \Pi\atop \bigwedge}\atop }\lbe A^{\e\prime}
}
\]	 
where $\eta_{\e (Q,\lambda)}$ is equal to the composition
\[
\xymatrix{\mu_{P}(Q,\lambda)\be {{{} \Pi\atop \bigwedge}\atop }\lbe A\ar[rr]^(.4){p_{(Q,\lambda)}\le\wedge\e f}
&&(\e\mu_{P}(Q,\lambda)\be {{{} \Pi\atop \bigwedge}\atop }\lbe \Pi\,)\be {{{} \Pi\atop \bigwedge}\atop }\lbe A^{\e\prime}\ar[rr]^{i_{(Q,\lambda)}\wedge A^{\e\prime}}&&\mu_{\e\rm TORS}(k(P)(Q,\lambda)){{{} \Pi\atop \bigwedge}\atop }\lbe A^{\e\prime}\\
}
\]
Because it is simpler to describe the effect of $j_{\e P}^{-1}$ and $j_{\e \rm TORS}^{-1}$, we will try to show that
\[
j_{\e \rm TORS}^{-1}\circ \eta_{\e (Q,\lambda)}=\aut(k(P))_{(Q,\lambda)}\circ
j_{\e P}^{-1}. 
\]
Let $x$ be any element of $\mu_{P}(Q,\lambda)\be {{{} \Pi\atop \bigwedge}\atop }\lbe A$, $[(\bar{q},\alpha)]$ a class that determines $x$ locally and $[(q, \alpha)]$ a class that determines the element $\bar{q}$ locally.
Then $(\aut(k(P))_{(Q,\lambda)}\circ
j_{\e P}^{-1})(x)$ is locally equal to $\sigma_{q}(\e f(\alpha\cdot a\e))$.   
On the other hand, $(j_{\e \rm TORS}^{-1}\circ \eta_{\e (Q,\lambda)})(x)$ is locally equal to $\sigma_{q}(\alpha\cdot f(a\e))$.
Since $f$ is compatible with $1_{\Pi}$, we have $\alpha\cdot f(a)=f(\alpha\cdot a)$. The square commutes and thus $k(P)$ is an $(f, 1)$-morphism.
\end{itemize}
\end{proof}

\begin{lemma}\label{213} Let $(f,\varphi)\colon (A,\Pi)\to(A^{\e\prime},\Pi^{\e\prime})$ be a morphism of sheaves of crossed groups and $\gamma\colon (G,\mu, j)\to (F,\nu, k)$ an $(f,\varphi)$-morphism. If $f$ is the unit morphism then there exists a section $s$ of $F$ such that $\gamma\isoto s\circ g$, where $g\colon G\to E$ is the projection from $G$ to $E$.
\end{lemma}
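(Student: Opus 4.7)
The plan is to exploit the triviality of $f$ to show that $\gamma$ trivializes automorphisms, and then to use the fact that $G$ is a gerbe (whose objects are locally isomorphic) to conclude that $\gamma$ is essentially constant, hence factors through a section of $F$.

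First, I would show that for every $S\in{\rm Ob}(E)$ and every $x\in{\rm Ob}(G_{S})$, the morphism of sheaves of groups
\[
\aut(\gamma)_{x}\colon \uaut_{\e S}(x)\to \uaut_{\e S}(\gamma(x))
\]
is the unit morphism. Indeed, the compatibility square from Definition \ref{222} yields $k_{\gamma(x)}\circ\aut(\gamma)_{x}=\eta_{x}\circ j_{x}$, with $\eta_{x}=(i_{x}\wedge A^{\e\prime})\circ(p_{x}\wedge f)$. Since $f$ is the unit morphism $A\to A^{\e\prime}$, the wedge $p_{x}\wedge f$ sends any element locally represented by $(q,a)$ to the class $[(p_{x}(q),1)]$, which is the identity element of the group $(\mu(x)\be {{{} \Pi\atop \bigwedge}\atop }\lbe \Pi^{\e\prime})\be {{{} \Pi^{\e\prime}\atop \bigwedge}\atop }\lbe A^{\e\prime}$ (the identity being locally representable by $[(-,1)]$ for any first-slot representative). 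Hence $\eta_{x}$ is the unit morphism, and since $j_{x}$ and $k_{\gamma(x)}$ are isomorphisms of sheaves of groups, so is $\aut(\gamma)_{x}$.

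Next, I would deduce that $\gamma$ collapses parallel morphisms: given $m_{1},m_{2}\colon x\to y$ in $G_{S}$, writing $m_{2}=m_{1}\circ a$ with $a\in\uaut_{\e S}(x)$ yields $\gamma(m_{2})=\gamma(m_{1})\circ\aut(\gamma)_{x}(a)=\gamma(m_{1})$. Since any two objects $x,y\in{\rm Ob}(G_{S})$ are locally isomorphic (as $G$ is a gerbe), I can then construct a canonical $S$-isomorphism $c_{x,y}\colon \gamma(x)\isoto\gamma(y)$: pick a covering $\{T_{i}\to S\}$ and local isomorphisms $m_{i}\colon x^{T_{i}}\isoto y^{T_{i}}$; on overlaps $T_{ij}$ the previous observation gives $\gamma(m_{i})^{T_{ij}}=\gamma(m_{j})^{T_{ij}}$, so the family $\{\gamma(m_{i})\}$ glues in the stack $F$ to a well-defined $S$-isomorphism $c_{x,y}$ independent of all choices. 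The family $\{c_{x,y}\}$ is transitive, $c_{y,z}\circ c_{x,y}=c_{x,z}$, and natural with respect to morphisms of $G$.

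Finally, I would construct the section $s$ of $F$. On the refinement $\mathcal R$ of $E$ where $G_{S}\neq\emptyset$, pick for each $S\in\mathcal R$ an object $x_{S}\in G_{S}$ and set $s(S)=\gamma(x_{S})$; the cocycle data $\{c_{-,-}\}$ combined with the cartesianness of $\gamma$ turns $s$ into a cartesian section of $F$ over $\mathcal R$. Since $F$ is a stack and $\mathcal R$ is covering, $s$ extends uniquely to a cartesian section $s\colon E\to F$, and the isomorphisms $c_{x,x_{S}}\colon \gamma(x)\isoto s(S)$ assemble into the required natural isomorphism $\gamma\isoto s\circ g$. The main obstacle I expect lies in the gluing argument: one must verify that the locally defined $\gamma(m_{i})$ satisfy strict compatibility (not merely up to automorphism) on overlaps, so that the cocycle $\{c_{-,-}\}$ is truly compatible with the cartesian structure and with the restrictions. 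This is forced by step one, which eliminates all automorphism ambiguity, so the descent is essentially automatic but requires careful bookkeeping across the various fibers.
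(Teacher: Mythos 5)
Your proposal is correct and follows essentially the same route as the paper: show that the triviality of $f$ forces $\aut(\gamma)_{x}$ to be the unit morphism, hence $\gamma$ identifies parallel morphisms, so the $\gamma$-images of the (incoherent) local isomorphisms between objects of $G$ glue in the stack $F$; then choose objects $x_{S}$ over a refinement where $G$ has nonempty fibers and extend the resulting cartesian section by the stack property of $F$. The only difference is cosmetic: you package the glued isomorphisms as a transitive family $c_{x,y}$, while the paper applies the same gluing directly to $\gamma(x_{T})\isoto\gamma(x_{S}^{f})$ to define $s$ on arrows.
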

\begin{proof} That $\gamma$ is an $(f,\varphi)$-morphism means that, for every object $x$ of $G_{S}$, $S\in{\rm Ob}(E)$, there exists a natural isomorphism
\[
i_{x}\colon\mu(x)\be {{{} \Pi\atop \bigwedge}\atop }\lbe \Pi^{\e\prime}\isoto\nu(\gamma(x))
\]
such that $k_{\gamma(x)}\circ\aut(\gamma)_{x}=(\e i_{x}\wedge A^{\e\prime})\circ(\e p_{x}\wedge f\e)\circ j_{x}$.
Since $f$ is the unit morphism, i.e., $f(a)=1$ for all $a\in A$, it follows from this equality that
\[
\aut(\gamma)_{x}\colon \uaut_{\e S}(x)\to\uaut_{\e S}(\gamma(x))
\]
is also the unit morphism. Thus $\aut(\gamma)_{x}$ maps every element of
$\uaut_{\e S}(x)$ to ${\rm id}_{\e\gamma(x)}$.

It follows that, for every pair of $S$-morphisms $n, m\in\Hom_{\e S}(y,x)$ in $G$, we have 
\begin{equation}\label{aa}
\gamma(n)=\gamma(m).
\end{equation}
After all, $n=(n\circ m^{-1})\circ m$ and $n\circ m^{-1}\in \uaut_{\e S}(x)$.

We must now determine a cartesian section of $F$, i.e., an object of the category $\underline{\rm Cart}_{\e E}(E,F\e)$. Since $F$ is a gerbe and thus, in particular, a stack, we know that for every refinement $\mathcal V$ of $E$ we have
\begin{equation}\label{veld}
\underline{\rm Cart}_{\e E}(E,F\e)\isoto \underline{\rm Cart}_{\e E}(\mathcal V,F\e), s\mapsto s\be\mid_{\mathcal V},
\end{equation}
Since $G$ is a gerbe there exists a refinement $\mathcal V$ of $E$ such
that ${\rm Ob}(G_{S})\neq\emptyset$ for every $S\in \mathcal V$. Thus for every object $S\in \mathcal V$ we can choose an object in the fiber of $G$ over $S$.
Let us denote this chosen object by $x_{S}$.

Then we define
\[
s(S)=\gamma(x_{S}).
\]
Let $f\colon T\to S$ be any arrow in $\mathcal V$, $x_{S}^{f}$ an inverse $f$-image of $x_{S}$ and $x_{f}\colon x_{S}^{f}\to x_{S}$ the transport morphism.

Since $G$ is a gerbe, $x_{T}$ and $x_{S}^{f}$ are locally isomorphic.

Although these local isomorphisms $x_{T}\isoto x_{S}^{f}$ do not necessarily form a coherent family in $G$, the family formed by
their $\gamma$-images is coherent in $F$ and this is so by \eqref{aa}.
This coherent family of local isomorphisms then determines a $T$-isomorphism
$i_{f}\colon \gamma(x_{T})\isoto \gamma(x_{S}^{f})$.

We define $s(f)$ as follows:
\[
s(f)=\gamma(x_{f})\circ i_{f}.
\]
Thus we have determined an object in 
$\underline{\rm Cart}_{\e E}(\mathcal V,F\e)$. Due to the equivalence \eqref{veld}, this determines (up to isomorphism) an object in $\underline{\rm Cart}_{\e E}(E,F\e)$..	
\end{proof}

\begin{theorem}\label{214} Let $(G,\mu, j\e)$ be an $(A,\Pi)$-gerbe on $E$.

If $\gamma\colon (G,\mu, j\e)\to ({\rm TORSC}(E;A^{\e\prime}),\mu_{\e\rm TORS},j_{\e\rm TORS})$ is an $(f,1_{\Pi})$-morphism, then there exists an $A^{\e\prime\prime}$-torsor $P$ as well as an $(A,\Pi)$-equivalence
\[
\delta\colon (G,\mu,j)\isoto (K(P),\mu_{\e P},j_{\e P}\e).
\]
\end{theorem}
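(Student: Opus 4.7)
The strategy will be to extract the $A''$-torsor $P$ by pushing $\gamma$ along the epimorphism $h\colon A' \to A''$, and then to realize $\gamma$ itself as a lifting, so that $G$ becomes equivalent to the gerbe of liftings $K(P)$.

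First I would form the composition
\[
\bar{\gamma} \;=\; \mathrm{TORSC}(E;h)\circ\gamma\colon (G,\mu,j)\longrightarrow (\mathrm{TORSC}(E;A''),\mu_{\mathrm{TORS}},j_{\mathrm{TORS}}).
\]
Since $\gamma$ is an $(f,1_\Pi)$-morphism and $\mathrm{TORSC}(E;h)$ is visibly an $(h,\psi)$-morphism (from the canonical $(A',\Pi)$-gerbe structure to the canonical $(A'',\Pi'')$-gerbe structure of \ref{11}), composition gives a $(h\circ f,\psi\circ 1_\Pi)=(1,\psi)$-morphism, because exactness of \eqref{uno} forces $h\circ f$ to be the unit morphism. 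Applying Lemma \ref{213} to $\bar\gamma$ produces a cartesian section $s\colon E\to\mathrm{TORSC}(E;A'')$ together with an isomorphism $\bar\gamma\isoto s\circ g$. Such a section is the same datum as an $A''$-torsor $P$ on $E$, and for every $x\in G_S$ the isomorphism unpacks to a canonical $A''$-isomorphism $\lambda_x\colon h_*\gamma(x)=\gamma(x)\wedge^{A'}\!A''_d\isoto P^{S}$, i.e.\ an $h$-morphism $\gamma(x)\to P^S$.

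Next I would define
\[
\delta\colon G\to K(P),\qquad x\longmapsto(\gamma(x),\lambda_x),\qquad m\longmapsto\gamma(m),
\]
and check this is well defined (that $\lambda_y\circ\gamma(m)=\lambda_x$ holds by naturality of the Lemma \ref{213} isomorphism), cartesian, and a morphism of gerbes. The real content is to verify $\delta$ is an $\mathrm{id}_{(A,\Pi)}$-morphism. For the $\mu$-square, we need $\mu_P(\delta(x))\cong\mu(x)$; by construction $\mu_P(\delta(x))=\gamma(x)\wedge^{A'}\!\Pi$, while the $(f,1_\Pi)$-morphism property of $\gamma$ supplies a natural isomorphism $\mu(x)\wedge^\Pi\!\Pi\isoto\mu_{\mathrm{TORS}}(\gamma(x))=\gamma(x)\wedge^{A'}\!\Pi$; combined with Proposition \ref{pgg} this gives the required $i_x\colon\mu(x)\isoto\mu_P(\delta(x))$.

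For the $j$-square one must verify that the triangle
\[
\xymatrix@C=1.1cm{
\uaut_S(x)\ar[r]^(.42){\aut(\delta)_x}\ar[d]_{j_x}^\wr & \uaut_S(\gamma(x),\lambda_x)\ar[d]^{j_P(\gamma(x),\lambda_x)}_\wr\\
\mu(x)\wedge^\Pi\!A\ar[r]^(.4){i_x\wedge f} & \mu_P(\gamma(x),\lambda_x)\wedge^\Pi\!A
}
\]
commutes. Because every band in play is representable and the sheaves in question glue locally, it suffices to check the equality of $j_P^{-1}\circ(i_x\wedge f)$ with $\aut(\delta)_x\circ j_x^{-1}$ on a local representative $[(\bar q,a)]=[([(q,\alpha)],a)]$. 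By the explicit formula for $j_P^{-1}$ established in the construction of the $(A,\Pi)$-structure on $K(P)$, the left side is the automorphism locally equal to $\sigma_q(\alpha\cdot f(a))$; by the condition characterising the $(A,\Pi)$-structure on $G$ together with the $(f,1_\Pi)$-compatibility of $\gamma$, the right side is locally equal to $\sigma_q(f(\alpha\cdot a))$. These agree because $(f,1_\Pi)$ is a morphism of sheaves of crossed groups, so $f(\alpha\cdot a)=\alpha\cdot f(a)$.

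Having established that $\delta$ is an $\mathrm{id}_{(A,\Pi)}$-morphism, Theorem \ref{234} immediately upgrades it to an $E$-equivalence, i.e.\ an $(A,\Pi)$-equivalence in the sense of the special case following Theorem \ref{234}. The main obstacle is the bookkeeping in the $j$-square: one must carefully propagate the local descriptions $\sigma_q(\alpha\cdot a)$ from $G$, $K(P)$ and $\mathrm{TORSC}(E;A')$ through the contracted-product identifications, and use the crossed-group axiom $f(\alpha\cdot a)=\alpha\cdot f(a)$ at precisely the right moment; once this is in place, compatibility with the other structural data and functoriality of the construction are routine verifications.
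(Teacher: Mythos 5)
Your proposal is correct and follows essentially the same route as the paper: compose $\gamma$ with ${\rm TORSC}(E;h)$, apply Lemma \ref{213} to extract the section $s_{P}$ (hence the $A''$-torsor $P$), build $\delta\colon G\to K(P)$ with $k(P)\circ\delta=\gamma$, check it is an ${\rm id}_{(A,\Pi)}$-morphism, and conclude by Theorem \ref{234}. The one divergence is the $j$-compatibility square: the paper avoids your local computation by splitting the rectangle at $k(P)$ and cancelling the monomorphism ${\rm id}\wedge f$ (using that the outer rectangle and the $k(P)$-square already commute by Theorem \ref{212}), and note that in your displayed square the bottom arrow should be $(i_{x}\circ p_{x})\wedge 1_{A}$ rather than $i_{x}\wedge f$, since both $(G,\mu,j)$ and $(K(P),\mu_{P},j_{P})$ are $(A,\Pi)$-gerbes and $\delta$ is being checked as an ${\rm id}_{(A,\Pi)}$-morphism — your local identity $f(\alpha\cdot a)=\alpha\cdot f(a)$ is the right ingredient, but it belongs to the verification for $k(P)$ (or the composite $\gamma$), not to the bottom arrow of the $\delta$-square itself.
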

\begin{proof} Since ${\rm TORSC}(E, h)\circ\gamma$ is a $(g,\psi)\circ(f,1_{\Pi})$-morphism and $g\circ f$ is the unit morphism,
the lemma shows that there exists a section $s_{P}$ of ${\rm TORSC}(E, A^{\e\prime\prime})$ such that
\[
{\rm TORSC}(E, h)\circ\gamma\underset{\eta}{\isoto}s_{P}\circ g^{\e\prime},
\]
where $g^{\e\prime}$ is the projection from $G$ to $E$.

Let $(K(P),\mu_{\e P},j_{\e P})$ be the gerbe of liftings of the section $s_{P}$ with its structure of $(A,\Pi)$-gerbe. By Theorem \ref{212}, $k(P)\colon K(P)\to{\rm TORSC}(E, A^{\e\prime})$ is an 
$(f,1_{\Pi})$-morphism. Consequently, there exists a morphism $\delta\colon G\to K(P)$ that is completely determined by the conditions
\[
k(P)\circ\delta=\gamma\quad\text{ and }\quad \kappa(s_{P})\ast\delta=\eta\quad\cite[{\rm IV}, 2.5.2]{14}
\]
It follows that, for every object $x$ in $G_{S}$ the image object $\delta(x)$ is a pair $(Q,\lambda)$ with $Q=\gamma(x)$ and $\lambda=\eta_{\e x}\circ \omega_{\e Q}$, where $\omega_{\e Q}$ is the $h$-morphism from $Q$ to ${}^{h}Q=Q\be {{{} A^{\prime}\atop \bigwedge}\atop }\lbe A^{\e\prime\prime}$.

The morphism $\delta$ is a $(1_{A},1_{\Pi})$-morphism.

Indeed, for every object $x$ in $G_{S}$ we have a natural isomorphism
\[
i_{x}\colon \mu(x){{{}\Pi\atop \bigwedge}\atop }\lbe \Pi\isoto\mu_{\e P}(\delta(x))
\]
because $\gamma$ is an $(f,1_{\Pi})$-morphism and we have
\[
\mu_{\e\rm TORS}(\gamma(x))=\gamma(x){{{}A^{\e\prime}\atop \bigwedge}\atop }\lbe \Pi=(k(P)\circ \delta)(x){{{}A^{\e\prime}\atop \bigwedge}\atop }\lbe \Pi=\mu_{\e P}(\delta(x)).
\]
Further, we have the following equality:
\[
((\e i_{\e x}\circ p_{\e x}\e)\wedge A\e)\circ j_{\e x}=j_{\e P}(\delta(x))\circ \aut(\delta\e)_{x}.
\]	
We consider the following diagram
\[
\xymatrix{\ar @{} [drr] |{\rm I}\uaut_{\e S}(x)\ar[d]^(.4){j_{\e x}} _(.4){\wr}\ar[rr]^(.45){\aut(\delta)_{x}}&& \ar @{} [drr] |{\rm II}\uaut_{\e S}(\delta(x))\ar[d]^(.43){j_{\e P}(\delta(x))}_(.43){\wr}\ar[rr]^(.45){\aut(k(P))_{\delta(x)}}&&\uaut_{\e S}(k(P)(\delta(x)))\ar[d]^(.43){j_{\e \rm TORS}(\gamma(x))}_(.4){\wr}\\
\mu(x)\be {{{} \Pi\atop \bigwedge}\atop }\lbe A\ar[rr]^(.45){(i_{\le x}\le\circ \le p_{\le x}\le)\le\wedge\le 1_{A}}&&\mu_{\le P}(\delta(x))\be {{{} \Pi\atop \bigwedge}\atop }\lbe A\ar[rr]^(.42){{\rm id}\le\wedge\le f}&& \mu_{\e\rm TORS}(k(P)(\delta(x)))\be{{{} \Pi\atop \bigwedge}\atop }\lbe A^{\e\prime}\e.
}
\]
The diagram obtained by combining I and II commutes since $\gamma$ is an $(f,1_{\Pi})$-morphism and II commutes since $k(P)$ is also an $(f,1_{\Pi})$-morphism. From this we may conclude that square I also commutes since ${\rm id}\le\wedge\le f$ is a monomorphism because $f$ is a monomorphism.
\end{proof}

\begin{definition} Let
\[
e\to \Phi=(A,\rho,\Pi)\overset{(f,\e\varphi)}{\lra}\Phi^{\e\prime}=(A^{\e\prime},
\rho^{\e\prime},\Pi^{\e\prime})\overset{(h,\e\psi)}{\lra}\Phi^{\e\prime\prime}=(A^{\e\prime\prime},\rho^{\e\prime\prime},\Pi^{\e\prime\prime})\to e
\]
be a short exact sequence of sheaves of crossed groups.

Then we define a map
\[
d\colon H^{1}(E;\Phi^{\e\prime\prime})\to H^{2}(E;\Phi)
\]
by mapping $p\in H^{1}(E;\Phi^{\e\prime\prime})$ to the class of the gerbe of liftings to $A^{\e\prime}$ of a representative $P$ of $p$.

We call $d$ the second coboundary map.
\end{definition}

\begin{theorem}\label{giv} Assume as given a short exact sequence of sheaves of crossed groups
\[
e\to \Phi=(A,\Pi)\overset{(f,\e\varphi)}{\lra}\Phi^{\e\prime}=(A^{\e\prime},
\Pi^{\e\prime})\overset{(h,\e\psi)}{\lra}\Phi^{\e\prime\prime}=(A^{\e\prime\prime},\Pi^{\e\prime\prime})\to e
\]
and an  $(h,\psi)$-morphism of gerbes
\[
\gamma\colon(F,\nu,k)\to(H,\omega,1).
\]

If $s$ is a section of $H$, then the gerbe $K(s)$ of liftings of $s$ to $F$ has a structure of $(A,\Pi)$-gerbe such that $k(s)\colon K(s)\to F$ is an $(f,1_{\Pi})$-morphism.
\end{theorem}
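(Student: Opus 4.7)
The plan is to mimic the construction carried out in the two preceding theorems, which treat the special case $F={\rm TORSC}(E;A^{\e\prime})$ and $H={\rm TORSC}(E;A^{\e\prime\prime})$, replacing $A^{\e\prime}$-torsors by abstract objects of $F$ and their $h$-extensions by the corresponding objects of $H$ obtained via $\gamma$. First I would define the morphism of gerbes $\mu_{s}\colon K(s)\to{\rm TORSC}(E;\Pi)$ by $\mu_{s}(y,\alpha)=\nu(y)$, regarded as a $\Pi$-torsor via $\varphi\colon\Pi\isoto\Pi^{\e\prime}$; its functoriality is inherited from $\nu$.

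The more delicate step is the construction, for each $(y,\alpha)\in K(s)_{S}$, of a natural isomorphism $j_{s}(y,\alpha)\colon\uaut_{\e S}(y,\alpha)\isoto\mu_{s}(y,\alpha)\be {{{} \Pi\atop \bigwedge}\atop }\lbe A$. The key observation is that, by the definition of $K(s)$, $\uaut_{\e S}(y,\alpha)$ equals the kernel of $\aut(\gamma)_{y}\colon\uaut_{\e S}(y)\to\uaut_{\e S}(\gamma(y))$: indeed, $\psi\in\uaut_{\e S}(y)$ lies in $\uaut_{\e S}(y,\alpha)$ exactly when $\gamma(\psi)={\rm id}$. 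Via the isomorphisms $k_{y}$ and $l_{\gamma(y)}$ together with the commutative square expressing that $\gamma$ is an $(h,\psi)$-morphism, this kernel corresponds to the kernel of the morphism $\nu(y)\be {{{} \Pi\atop \bigwedge}\atop }\lbe A^{\e\prime}\to\omega(\gamma(y))\be {{{} \Pi^{\e\prime\prime}\atop \bigwedge}\atop }\lbe A^{\e\prime\prime}$ induced by $h$. Since $\nu(y)$ is locally trivial, this kernel reduces locally to $\ker(A^{\e\prime}\to A^{\e\prime\prime})=A$; the universality of colimits in the topos $\widetilde{E}$ then identifies the kernel globally with $\nu(y)\be {{{} \Pi\atop \bigwedge}\atop }\lbe A$. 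The $(A,\Pi)$-gerbe condition $\mu_{s}(j_{s}^{-1}(x))(\beta)=\beta\cdot\rho(a)$ on a local representative $(\beta,a)$ then reduces to the analogous condition for $F$ applied to the element $f(a)\in A^{\e\prime}$, combined with the identity $\rho^{\e\prime}\circ f=\varphi\circ\rho$ coming from the morphism of crossed groups data and the identification $\varphi={\rm id}_{\Pi}$.

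Finally, to show that $k(s)\colon K(s)\to F$, $(y,\alpha)\mapsto y$, is an $(f,1_{\Pi})$-morphism, the natural isomorphism $i_{(y,\alpha)}\colon\mu_{s}(y,\alpha)\be {{{} \Pi\atop \bigwedge}\atop }\lbe\Pi\isoto\nu(y)$ is the inverse of the canonical isomorphism of Proposition \ref{pgg}, and the commutative square in Definition \ref{222} reduces to the observation that $\aut(k(s))_{(y,\alpha)}$ is the inclusion of $\uaut_{\e S}(y,\alpha)=\ker(\aut(\gamma)_{y})$ into $\uaut_{\e S}(y)$, which by the very construction of $j_{s}$ corresponds under $j_{s}(y,\alpha)$ and $k_{y}$ precisely to the morphism $\nu(y)\be {{{} \Pi\atop \bigwedge}\atop }\lbe A\to\nu(y)\be {{{} \Pi\atop \bigwedge}\atop }\lbe A^{\e\prime}$ induced by $f$. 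The main obstacle will be the kernel identification above: producing the isomorphism $\ker(\aut(\gamma)_{y})\simeq\nu(y)\be {{{} \Pi\atop \bigwedge}\atop }\lbe A$ canonically and functorially in $(y,\alpha)$ requires careful bookkeeping of the structural isomorphisms $k_{y}$, $l_{\gamma(y)}$ and the $(h,\psi)$-square, together with the fact that the formation of contracted products is exact on short exact sequences of coefficients in a topos.
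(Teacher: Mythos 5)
Your proposal is correct and follows essentially the same route as the paper: define $\mu_{s}(z,m)=\nu(z)$, identify $\uaut_{\e S}(z,m)$ with the kernel of $\aut(\gamma)_{z}$, match it against the kernel $\nu(z)\be {{{} \Pi\atop \bigwedge}\atop }\lbe A$ of the map induced by $h$ (checked locally, using that contracted products preserve the exactness of the coefficient sequence), and verify the $(A,\Pi)$-condition and the $(f,1_{\Pi})$-square locally via $\rho^{\e\prime}\circ f=\rho$ and the defining property of $j_{s}$. The only cosmetic slip is the appeal to "universality of colimits" for gluing the kernel identification (kernels are limits; one just uses that the relevant sheaves form a stack), which does not affect the argument.
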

\begin{proof}\indent
\begin{itemize}
\item The $(A,\Pi)$-gerbe structure on $K(s)$.

We obtain a morphism of gerbes
\[
\mu_{\le s}\colon K(s)\to{\rm TORSC}(E;\Pi)
\]
if, for every $S$-object $(z, m)$ in $K(s)$, we set
\[
\mu_{\le s}(z,m)=\nu(z).
\]
Next, we need to show that there exists a natural isomorphism
\[
j_{\e s}(z,m)\colon\uaut_{\e S}(z,m)\to \mu_{\le s}(z,m)\be{{{} \Pi\atop \bigwedge}\atop }\lbe A.
\]
Since $F$ is an $(A^{\e\prime},\Pi)$-gerbe and $H$ is an $(A^{\e\prime\prime},\Pi^{\e\prime\prime})$-gerbe we have the following diagram:
\[
\xymatrix{\uaut_{\e S}(z,m)\ar[rrr]^(.48){\aut(k(s))_{(z,m)}}&&&
\uaut_{\e S}(z)\ar[d]^(.43){k_{\e z}}_(.43){\wr}\ar[rrr]^(.45){\aut(\gamma)_{z}}&&&\uaut_{\e S}(\gamma(z))\ar[d]^(.43){1_{\le\gamma(z)}}_(.4){\wr}\\
\nu(z)\be {{{} \Pi\atop \bigwedge}\atop }\lbe A\ar[rrr]^(.4){1_{\nu(z)}\le\wedge\e f}&&&\nu(z)\be {{{} \Pi\atop \bigwedge}\atop }\lbe A^{\e\prime}\ar[rrr]^(.45){\beta_{z}=(i_{z}\le\wedge\le h\le)\circ(p_{z}\le
\wedge
\le 1_{A^{\le\prime}})}&&& \omega(\e\gamma(z))\be{{{} \Pi^{\e\prime\prime}\atop \bigwedge}\atop }\lbe A^{\e\prime\prime}
}
\]	
where the second square commutes since $\gamma$ is an $(h,\psi)$-morphism. An automorphism $\sigma\colon z\isoto z$ belongs to $\aut_{\e S}(z,m)$ if, and only if, $m\circ\gamma(\sigma)=m$ or if, and only if, $\gamma(\sigma)={\rm id}_{\e\gamma(z)}$. This shows that $\uaut_{\e S}(z,m)$ is a kernel of $\aut(\gamma)_{z}
\colon \uaut_{\e S}(z)\to \uaut_{\e S}(\gamma(z))$ and thus also of $1_{\gamma(z)}\circ\aut(\gamma)_{z}$ using the commutation with $\beta_{z}$. Since $\nu(z)\be {{{} \Pi\atop \bigwedge}\atop }\lbe A$ is likewise a kernel of $\beta_{z}$, there exists a unique isomorphism
\[
j_{\e s}(z,m)\colon\uaut_{\e S}(z,m)\isoto \nu(z)\be{{{} \Pi\atop \bigwedge}\atop }\lbe A=\mu_{\le s}(z,m)\be{{{} \Pi\atop \bigwedge}\atop }\lbe A
\]
such that
\[
(\e 1_{\nu(z)}\e\wedge\e f\e)\circ j_{\e s}(z,m)=k_{z}\circ\aut(k(s))_{(z,m)}.
\]
Let $x$ be an element of $(\e\mu_{\le s}(z,m)\be{{{} \Pi\atop \bigwedge}\atop }\lbe A)(1_{S})$ and $(\alpha,a)$ a local representative of $x$.

We set $y=j_{\e s}^{-1}(z,m)(x)$, whence $y=k_{z}^{-1}((1_{\nu(z)}\e\wedge\e f\e)(x))$.

Now $\mu_{s}(y)(\alpha)=\nu(y)(\alpha)$ and $\nu(y)(\alpha)=\alpha\cdot\rho^{\e\prime}(f(a))$ since $(\alpha,f(a))$ is a local representative of $(1_{\nu(z)}\e\wedge\e f\e)(x)$ and $(F,\nu,k)$ is an $(A^{\e\prime},\Pi)$-gerbe.

Since $(f,1_{\Pi})$ is a morphism of sheaves of crossed groups we have $\rho^{\e\prime}(f(a))=\rho(a)$.

Thus $\mu_{s}(y)(\alpha)=\alpha\cdot \rho(a)$ and, therefore, $(K(s),\mu_{s},j_{s})$ is an $(A,\Pi)$-gerbe.

\item Is $k(s)$ an  $(f,1_{\Pi})$-morphism?
	
For every object $(z, m)$ of $K(s)_{S}$ we have a natural isomorphism
\[
i_{\le s}(z,m)\colon\mu_{s}(z,m)\be {{{} \Pi\atop \bigwedge}\atop }\lbe \Pi\isoto \nu(k(s)(z,m))
\]
because $\mu_{s}(z,m)\be{{{} \Pi\atop \bigwedge}\atop }\lbe \Pi=\nu(z)\be{{{} \Pi\atop \bigwedge}\atop }\lbe \Pi$, $\nu(k(s)(z,m))=\nu(z)$ and we have a canonical isomorphism $\nu(z)\be{{{} \Pi\atop \bigwedge}\atop }\lbe \Pi\isoto\nu(z)$.

The diagram
\[
\xymatrix{\uaut_{\e S}(z,m)\ar[d]_(.45){j_{s}(z,m)}^(.4){\wr}\ar[rr]^(.45){\aut(k(s))_{(z,m)}}&& \uaut_{\e S}(k(s)(z,m))\ar[d]^(.4){k_{z}}_(.4){\wr}\\
\mu_{s}(z,m)\be {{{} \Pi\atop \bigwedge}\atop }\lbe A\ar[rr]^(.45){1_{\nu(z)}\le\wedge\le f}&& \nu(z){{{} \Pi\atop \bigwedge}\atop }\lbe A^{\e\prime}
}
\]	
commutes because of the way $j_{\le s}$ was defined. Thus $k(s)\colon K(s)\to F$ is an $(f,1_{\Pi})$-morphism.

\end{itemize}
\end{proof}

The previous theorems will now allow us to show that to a short exact sequence of sheaves of crossed groups there corresponds an exact cohomology sequence. The next theorem explains in what sense this exactness must be understood.

\begin{theorem} Let
\[
e\to \Phi=(A,\Pi)\overset{(f,\e 1_{\Pi})}{\lra}\Phi^{\e\prime}=(A^{\e\prime},
\Pi)\overset{(h,\e\psi)}{\lra}\Phi^{\e\prime\prime}=(A^{\e\prime\prime},\Pi^{\e\prime\prime})\to e
\]
be a short exact sequence of sheaves of crossed groups. Then we have a cohomology sequence
\[
\begin{array}{rcl}
1&\to& H^{0}(E;\Phi)\to\dots\to H^{1}(E;\Phi^{\e\prime})\overset{(h,\psi)^{(1)}}{\lra}H^{1}(E;\Phi
^{\e\prime\prime})
\overset{d}{\lra} H^{\le 2}(E;\Phi)\\
&\overset{(f,1_{\Pi})^{(2)}}{\lra}& H^{\e 2}(E;\Phi^{\e\prime})
\overset{(h,\psi)^{(2)}}{\lra} H^{\e 2}(E;\Phi^{\e\prime\prime})
\end{array}
\]
which is {\rm exact} in the following sense:
\begin{enumerate}
\item[\rm (i)] in order for $p\in H^{1}(E;\Phi
^{\e\prime\prime})$ to belong to ${\rm Im}(h,\psi)^{(1)}$, it is necessary and 
sufficient that $d(p)$ be a {\rm neutral} element.
\item[\rm (ii)] in order for $x\in H^{2}(E;\Phi)$ to belong to the image of $d$, it is necessary and sufficient that $(f,1_{\Pi})^{(2)}(x)$ be the {\rm unit element}.
\item[\rm (iii)] in order for $x\in H^{2}(E;\Phi^{\e\prime})$ to belong to
${\rm Im}(f,1_{\Pi})^{(2)}$, it is necessary and 
sufficient that $(h,\psi)^{(2)}(x)$ be a {\rm neutral} element.
\end{enumerate}
\end{theorem}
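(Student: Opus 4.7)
The plan is to dispatch the three exactness conditions separately, using the four preparatory results (Theorem \ref{212}, Lemma \ref{213}, Theorem \ref{214}, and Theorem \ref{giv}) as the main tools. In each case one implication is constructive (starting from a class in the source and exhibiting a representative of the requested form in the target), while the other is an application of one of the preparatory theorems.

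For (i), if $p=(h,\psi)^{(1)}([Q])$ for some $A^{\e\prime}$-torsor $Q$, then a representative of $p$ is $P={}^{h}Q=Q\be {{{} A^{\prime}\atop \bigwedge}\atop }\lbe A^{\prime\prime}_{d}$, and the canonical $h$-morphism $\omega_{Q}\colon Q\to P$ exhibits the pair $(Q,\omega_{Q})$ as a global section of $K(P)$. Hence $d(p)=[(K(P),\mu_{P},j_{P})]$ is neutral. Conversely, if $d(p)$ is neutral, choose a representative $P$ of $p$ whose gerbe of liftings $K(P)$ has a section $(Q,\lambda)$; then $\lambda\colon Q\to P$ is an $h$-morphism, so ${}^{h}Q$ is $A^{\prime\prime}$-isomorphic to $P$ and $p=(h,\psi)^{(1)}([Q])$.

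For (ii), if $x=d([P])=[(K(P),\mu_{P},j_{P})]$, then Theorem \ref{212} asserts that $k(P)\colon K(P)\to{\rm TORSC}(E;A^{\e\prime})$ is an $(f,1_{\Pi})$-morphism into the $(A^{\e\prime},\Pi)$-gerbe of Section \ref{11}, so by construction of $(f,1_{\Pi})^{(2)}$ we get $(f,1_{\Pi})^{(2)}(x)$ equal to the unit class. Conversely, if $(f,1_{\Pi})^{(2)}(x)$ is the unit class, pick a representative $(G,\mu,j)$ of $x$ and an $(f,1_{\Pi})$-morphism $\gamma\colon(G,\mu,j)\to({\rm TORSC}(E;A^{\e\prime}),\mu_{\e\rm TORS},j_{\e\rm TORS})$; Theorem \ref{214} then produces an $A^{\e\prime\prime}$-torsor $P$ and an $(A,\Pi)$-equivalence $(G,\mu,j)\isoto(K(P),\mu_{P},j_{P})$, which gives $x=d([P])$.

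For (iii), assume first that $x=(f,1_{\Pi})^{(2)}([G])$ and pick an $(f,1_{\Pi})$-morphism $\gamma\colon(G,\mu,j)\to(F,\nu,k)$ with $[F]=x$; let $\omega\colon(F,\nu,k)\to(H,\eta,\ell)$ be an $(h,\psi)$-morphism representing $(h,\psi)^{(2)}(x)$. Then $\omega\circ\gamma$ is an $(h\circ f,\psi\circ 1_{\Pi})$-morphism, and because $h\circ f$ is the unit morphism of sheaves of groups, Lemma \ref{213} produces a section of $H$; consequently $(h,\psi)^{(2)}(x)$ is neutral. Conversely, let $x=[(F,\nu,k)]$ and suppose $(h,\psi)^{(2)}(x)=[(H,\eta,\ell)]$ is neutral, i.e.\ there exists a section $s$ of $H$; pick an $(h,\psi)$-morphism $\gamma\colon(F,\nu,k)\to(H,\eta,\ell)$ representing the functorial image. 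Theorem \ref{giv} endows the gerbe $K(s)$ of liftings of $s$ through $\gamma$ with an $(A,\Pi)$-gerbe structure $(\mu_{s},j_{s})$ such that $k(s)\colon K(s)\to F$ is an $(f,1_{\Pi})$-morphism, and therefore $x=(f,1_{\Pi})^{(2)}([(K(s),\mu_{s},j_{s})])$.

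The main obstacle is the converse direction of (ii): one must verify that the $(A,\Pi)$-equivalence produced by Theorem \ref{214} genuinely identifies $x$ with $d([P])$ as classes in $H^{2}$, so that the implicit choice of representative $P$ (obtained as the section $s_{P}$ coming from Lemma \ref{213} applied to ${\rm TORSC}(E,h)\circ\gamma$) is unambiguous up to isomorphism of $A^{\e\prime\prime}$-torsors, which in turn relies on the uniqueness clause of Theorem \ref{331} combined with Theorem \ref{332}. The other two parts (i) and (iii) are essentially formal manipulations once the gerbe of liftings is equipped with its $(A,\Pi)$-gerbe structure from Theorems \ref{212} and \ref{giv}.
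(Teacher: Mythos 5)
Your proposal is correct and follows essentially the same route as the paper: part (i) by the direct observation that a lift of $P$ to $A^{\e\prime}$ is the same thing as a section of $K(P)$, part (ii) by Theorems \ref{212} and \ref{214}, and part (iii) by Lemma \ref{213} and Theorem \ref{giv}. The paper's own proof is just a terse citation of those results, so your write-up is a faithful (and more explicit) expansion of it.
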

\begin{proof}\indent
\begin{enumerate}
\item[\rm (i)]  Let $p=[P\e]$ be an element of $H^{1}(E;\Phi
^{\e\prime\prime})$. The element $p$ belongs to ${\rm Im}(h,\psi)^{(1)}$ if, and only if, there exists an $A^{\e\prime}$-torsor $Q$ and an $h$-morphism $\lambda\colon Q\to P$. But this means that $P$ can be lifted $A^{\e\prime}$, in other words, The gerbe $K(P)$ has a section. Thus $d(p)=[K(P)]$ is a neutral element.
\item[\rm (ii)] holds by Theorems \ref{212} and \ref{214}.
\item[\rm (iii)] holds by Lemma \ref{213} and Theorem \ref{giv}.
\end{enumerate}
\end{proof}

\subsection{Naturality of the cohomology}

We wish to show that to a given commutative diagram of short exact sequences of sheaves of crossed groups there corresponds a commutative diagram of exact cohomology sequences.

First we prove a theorem.

Consider the following {\it commutative} diagram of short exact sequences 
of sheaves of crossed groups:
\begin{equation}\label{big}
\xymatrix{e\ar[r]&\Phi=(A,\Pi)\ar[d]_{(m,\,\omega)}\ar[r]^(.45){(f,\e 1_{\Pi})}& \Phi^{\e\prime}=(A^{\e\prime},\Pi)\ar[d]_{(m^{\le\prime},\,\omega)}
\ar[r]^(.5){(h,\e\psi)}&
\Phi^{\e\prime\prime}=(A^{\e\prime\prime},\Pi^{\e\prime\prime})\ar[d]^(.45){(m^{\le\prime\prime},\,\omega^{\e\prime\prime})}\ar[r]& e\\
e\ar[r]&\Psi=(B,\Sigma)\ar[r]^(.5){(g,1_{\Sigma})}& \Psi^{\e\prime}=(B^{\e\prime},\Sigma)\ar[r]^{(1,\eta)}& \Psi^{\e\prime\prime}=(B^{\e\prime\prime},\Sigma^{\e\prime\prime}) \ar[r]& e
}
\end{equation}
Further, let $P$ be an $A^{\e\prime\prime}$-torsor, $s_{P}$ the corresponding section of ${\rm TORSC}(E;A^{\e\prime\prime})$ and $K(P)$ the gerbe of liftings of $s_{P}$ relative to the morphism ${\rm TORSC}(E;h)$. From the $A^{\e\prime\prime}$-torsor $P$ we obtain, by expanding its structural
group via $m^{\e\prime\prime}$, a $B^{\e\prime\prime}$-torsor $P^{\e\prime\prime}$. We denote the corresponding section of ${\rm TORSC}(E;B^{\e\prime\prime})$ by $s_{P^{\le\prime\prime}}$ and let $K(P^{\e\prime\prime})$ be the gerbe of liftings of $s_{P^{\le\prime\prime}}$ with respect to the morphism ${\rm TORSC}(E;1)$.

\begin{theorem} There exists a unique $(m,\omega)$-morphism
\[
k\colon K(P)\to K(P^{\le\prime\prime})
\]
such that $k(P^{\le\prime\prime})\circ k={\rm TORSC}(E;m^{\e\prime}\e)\circ k(P)$.
\end{theorem}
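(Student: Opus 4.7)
My plan is to define $k$ by extension of the structural group and then verify the $(m,\omega)$-compatibility locally. Concretely, for each $S\in{\rm Ob}(E)$ and each $(Q,\lambda)\in{\rm Ob}(K(P)_{S})$, I would set
\[
k(Q,\lambda)=({}^{m^{\prime}}\be Q,\,\lambda^{\prime\prime}),
\]
where ${}^{m^{\prime}}\be Q=Q\be {{{} A^{\e\prime}\atop \bigwedge}\atop }\lbe B^{\e\prime}_{d}$ and $\lambda^{\e\prime\prime}\colon {}^{m^{\prime}}\be Q\to (P^{\e\prime\prime})^{S}$ is the $1$-morphism induced by the $h$-morphism $\lambda\colon Q\to P^{S}$ together with the commutative square $m^{\e\prime\prime}\circ h = 1\circ m^{\e\prime}$ from \eqref{big}, via the bifunctoriality of the contracted product (the $m\wedge n$ construction). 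On morphisms $k$ is extended by functoriality of the extension-of-structural-group operation. The required equation $k(P^{\e\prime\prime})\circ k={\rm TORSC}(E;m^{\e\prime})\circ k(P)$ then holds on objects by inspection: both sides send $(Q,\lambda)$ to the $B^{\e\prime}$-torsor ${}^{m^{\prime}}\be Q$.

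The substantive step is to equip $k$ with its $(m,\omega)$-morphism structure. For each $(Q,\lambda)$, the required natural isomorphism
\[
i_{(Q,\lambda)}\colon\mu_{P}(Q,\lambda)\be {{{} \Pi\atop \bigwedge}\atop }\lbe \Sigma\isoto\mu_{P^{\e\prime\prime}}(k(Q,\lambda))
\]
is supplied by Proposition \ref{assoc} (associativity of the contracted product), since $\mu_{P}(Q,\lambda)\be {{{} \Pi\atop \bigwedge}\atop }\lbe\Sigma=(Q\be {{{} A^{\e\prime}\atop \bigwedge}\atop }\lbe\Pi)\be {{{} \Pi\atop \bigwedge}\atop }\lbe\Sigma$ and $\mu_{P^{\e\prime\prime}}(k(Q,\lambda))=(Q\be {{{} A^{\e\prime}\atop \bigwedge}\atop }\lbe B^{\e\prime})\be {{{} B^{\e\prime}\atop \bigwedge}\atop }\lbe\Sigma$ both identify canonically with $Q\be {{{} A^{\e\prime}\atop \bigwedge}\atop }\lbe\Sigma$, where $A^{\e\prime}$ acts on $\Sigma$ from the left through $A^{\e\prime}\to \Pi\to \Sigma$. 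I would then verify the compatibility square between $\aut(k)_{(Q,\lambda)}$, $j_{P}(Q,\lambda)$, $j_{P^{\e\prime\prime}}(k(Q,\lambda))$ and the composition $\eta_{(Q,\lambda)}=(i_{(Q,\lambda)}\wedge m)\circ(p_{(Q,\lambda)}\wedge 1_{A})$. This reduces to a local check: using the explicit formulas for $j_{P}^{-1}$ and $j_{P^{\e\prime\prime}}^{-1}$ established in the construction of these $(A,\Pi)$- and $(B,\Sigma)$-gerbe structures (both of the form $\sigma_{q}(\alpha\cdot a)$), both sides of the square act on a local representative $(q,\alpha,a)$ by the same formula, thanks to $m^{\e\prime}\circ f=g\circ m$ and to the compatibility of the band morphism $\omega$ with the crossed-group structure, i.e., to the commutativity of \eqref{big}.

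For uniqueness, any second $(m,\omega)$-morphism $k^{\prime}$ with $k(P^{\e\prime\prime})\circ k^{\prime}={\rm TORSC}(E;m^{\e\prime})\circ k(P)$ automatically coincides with $k$ on the underlying $B^{\e\prime}$-torsors, while the lifting data $\lambda^{\e\prime\prime}$ is pinned down by the naturality of the assignment $\lambda\mapsto\lambda^{\e\prime\prime}$; in combination with Theorem \ref{332} this yields a canonical identification $k\isoto k^{\e\prime}$. The main technical obstacle I anticipate is the local verification of the $(m,\omega)$-compatibility square, which requires carefully tracking the associativity isomorphisms of the contracted product against the explicit descriptions of $j_{P}^{-1}$ and $j_{P^{\e\prime\prime}}^{-1}$; everything else is a direct application of the associativity and functoriality of the contracted product established earlier in the paper.
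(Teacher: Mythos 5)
Your construction of $k$ is the same as the paper's: on objects you send $(Q,\lambda)$ to the pair consisting of the extension of structural group ${}^{m^{\prime}}\be Q=Q\be {{{} A^{\e\prime}\atop \bigwedge}\atop }\lbe B^{\e\prime}$ and the lifting datum induced by $\lambda$ via the commutativity of \eqref{big} (the paper packages the latter as ${\rm TORSC}(E;m^{\e\prime\prime})(v)\circ\alpha_{Q}$ for a canonical isomorphism $\alpha$ of composites of ${\rm TORSC}$-functors), and the isomorphism $i_{(Q,\lambda)}$ comes from associativity of the contracted product, both sides being identified with $Q\be {{{} A^{\e\prime}\atop \bigwedge}\atop }\lbe\Sigma$, exactly as in the text. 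The one place you genuinely diverge is the verification of the $(m,\omega)$-compatibility square: you propose a direct local computation with the explicit formulas $\sigma_{q}(\alpha\cdot a)$ for $j_{P}^{-1}$ and $j_{P^{\prime\prime}}^{-1}$, whereas the paper avoids redoing any local computation by post-composing both sides with a morphism $t$ built from $p_{k(Q,v)}\wedge g$ and an associativity isomorphism, observing that $t\circ l=t\circ r$ follows formally from the already-established facts that $k(P)$, ${\rm TORSC}(E;m^{\e\prime})$ and $k(P^{\e\prime\prime})$ are morphisms of gerbes with crossed-group structure, and then cancelling $t$ because $g$ is a monomorphism. Your route is more computational but self-contained; the paper's buys the square for free from Theorem \ref{212} and its analogues, at the cost of the monomorphism argument. (Minor points: your $\eta_{(Q,\lambda)}=(i\wedge m)\circ(p\wedge 1_{A})$ should read $(i\wedge B)\circ(p\wedge m)$ to match Definition \ref{222}, though the composite is the same by bifunctoriality; and note that the stated equation $k(P^{\prime\prime})\circ k={\rm TORSC}(E;m^{\prime})\circ k(P)$ only pins down the torsor component of $k$, so the uniqueness of the lifting data really rests on the additional $\kappa$-compatibility the paper records, not on that equation alone.)
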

\begin{proof} From \ref{pgg} and \ref{assoc} we conclude that there exists an isomorphism of morphisms of stacks
\[
\alpha\colon {\rm TORSC}(E;1)\circ {\rm TORSC}(E;m^{\e\prime}\e)\isoto {\rm TORSC}(E;m^{\e\prime\prime}\e)\circ {\rm TORSC}(E;h).
\]
Now we define an $E$-functor $k\colon K(P)\to K(P^{\le\prime\prime})$. Let $(Q,v)$ be an $S$-object from $K(P)$, i.e., $Q\in {\rm Ob}({\rm TORSC}(E;A^{\e\prime})_{S})$ and $v\colon{\rm TORSC}(E;h)(Q)\isoto s_{P}(S)=P^{S}$ is an $A^{\le\prime\prime}$-morphism in ${\rm TORSC}(E, A^{\le\prime\prime})$.

Since $k$ must satisfy ${\rm TORSC}(E;m^{\e\prime}\e)\circ k(P)=k(P^{\le\prime\prime})\circ k$, we must define the effect of $k$ on the objects as follows:
\[
k(Q,v)=({\rm TORSC}(E;m^{\e\prime}\e)(Q), {\rm TORSC}(E;m^{\e\prime\prime})(v)\circ \alpha_{\e Q}).
\]
Let $\varphi\colon(R,w)\to(Q,v)$ be an $(f\colon T\to S)$-morphism of $K(P)$.
We set
\[
k(\varphi)={\rm TORSC}(E;m^{\e\prime}\e)(\varphi).
\]
The $E$-functor $k$ thus defined also satisfies the following condition:
\[
\kappa(s_{P^{\le\prime\prime}})\ast k=({\rm TORSC}(E;m^{\e\prime\prime}\e)\ast\kappa(s_{P}))\circ(\alpha\ast k(P)).
\]
We must now consider whether $k$ is an $(m,\omega)$-morphism.

Let $(Q, v)$ be an object from $K(P)_{S}$. From the definition of $\mu_{\e P}$ we have $\mu_{\le P}(Q,v)=Q\be {{{} A^{\e\prime}\atop \bigwedge}\atop }\lbe \Pi$.
Further, we have
\[
\begin{array}{rcl}
\mu_{\le P^{\e\prime\prime}}(k(Q,v))&=&\mu_{P^{\e\prime\prime}}({\rm TORSC}(E;m^{\e\prime}\e)(Q),{\rm TORSC}(E;m^{\e\prime\prime}\e)(v)\circ \alpha_{\e Q})\\
&=&{\rm TORSC}(E;m^{\e\prime}\e)(Q)\be {{{} B^{\e\prime}\atop \bigwedge}\atop }\lbe \Sigma=(Q\be {{{} A^{\e\prime}\atop \bigwedge}\atop }\lbe B^{\e\prime}\e)
\be {{{} B^{\e\prime}\atop \bigwedge}\atop }\lbe \Sigma.
\end{array}
\]
By \ref{pgg} and \ref{assoc}, we have canonical isomorphisms
\[
(Q\be {{{} A^{\e\prime}\atop \bigwedge}\atop }\lbe B^{\e\prime}\e)
\be {{{} B^{\e\prime}\atop \bigwedge}\atop }\lbe \Sigma\isoto
Q\be {{{} A^{\e\prime}\atop \bigwedge}\atop }\lbe \Sigma
\]
and
\[
(Q\be {{{} A^{\e\prime}\atop \bigwedge}\atop }\lbe\Pi)
\be {{{} \Pi\atop \bigwedge}\atop }\lbe \Sigma\isoto
Q\be {{{} A^{\e\prime}\atop \bigwedge}\atop }\lbe \Sigma.
\]
By composition we obtain a natural isomorphism
\[
i_{(Q,v)}\colon \mu_{\e P}(Q,v)\be {{{} \Pi\atop \bigwedge}\atop }\lbe\Sigma\isoto\mu_{P^{\e\prime\prime}}(k(Q,v)).
\]
               
Finally, we show that
\[
(i_{(Q,v)}\wedge B)\circ(\e p_{(Q,v)}\wedge m)\circ j_{P}(Q,v)=j_{P^{\e\prime\prime}}(k(Q,v))\circ \aut(k)_{(Q,v)}.
\]
For simplicity, let us denote the left-hand side by $l$ and the right-hand side by $r$ and let $t\colon \mu_{P^{\e\prime\prime}}(k(Q,v)){{{} \Sigma\atop \bigwedge}\atop }\lbe B\to \mu_{\rm TORS}((k(P^{\e\prime\prime})\circ k)(Q,v)\be {{{} \Sigma\atop \bigwedge}\atop }\lbe B^{\e\prime}\e)$ be the composition $(i^{\e\prime\prime}_{(k(P^{\e\prime\prime})\le\circ\le k\e)(Q,v)}\wedge
B^{\e\prime}\e)\circ (\e p_{\le k(Q,v)}\wedge g\e)$.

Since $k(P)$ is an $(f,1_{\Pi})$-morphism, ${\rm TORSC}(E;m^{\e\prime}\e)$ is an $(h,\psi)$-morphism and $k(P^{\e\prime\prime})$ is a $(g,1_{\Sigma})$-morphism, we have $t\circ l=t\circ r$. Now $t$ is a monomorphism since
\[
i^{\e\prime\prime}_{(k(P^{\e\prime\prime})\e\circ\e k\e)(Q,v)}\circ p_{\e k(Q,v)}={\rm id}_{\be(Q\be {{{} A^{\e\prime}\atop \bigwedge}\atop }\lbe B^{\e\prime})
\be {{{} B^{\e\prime}\atop \bigwedge}\atop }\lbe \Sigma}
\]
and $g$ is a monomorphism. Consequently, we have $l=r$ which immediately yields that $k$ is a $(m,\omega)$-morphism.   
\end{proof}

\begin{corollary} {\rm (Naturality of the cohomology)} 
Diagram \eqref{big} induces the following {\it commutative} diagram of exact cohomology sequences:
\[
\xymatrix{1\ar[r]& H^{0}(E;\Phi)\ar[d]\ar[r]&\dots\ar[r]& H^{1}(E;\Phi^{\e\prime})\ar[d]\ar[r]\ar @{} [dr] |{\rm I}
& H^{\le 2}(E;\Phi) \ar[d]\ar[r] \ar @{} [dr] |{\rm II}& H^{\e 2}(E;\Phi^{\e\prime})\ar[d]\ar @{} [dr] |{\rm III}\ar[r]& H^{\e 2}(E;\Phi^{\e\prime\prime})\ar[d]\\
1\ar[r]& H^{0}(E;\Psi)\ar[r]&\dots\ar[r]& H^{1}(E;\Psi^{\e\prime})\ar[r]& H^{\le 2}(E;\Psi) \ar[r]& H^{\e 2}(E;\Psi^{\e\prime})\ar[r]& H^{\e 2}(E;\Psi^{\e\prime\prime}).
}
\]
\end{corollary}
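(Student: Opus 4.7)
The squares in the $H^{\e 0}$ and $H^{1}$ ranges will follow at once from the functoriality of $H^{\e 0}$ and $H^{1}$ (already established for morphisms of sheaves of crossed groups) combined with the commutativity of the two rectangles of \eqref{big}, so the substantive content lies in the three squares {\rm I}, {\rm II} and {\rm III} in the $H^{2}$ range.

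The plan for square {\rm I} is to invoke the preceding theorem directly. Given $p=[P\e]\in H^{1}(E;\Phi^{\e\prime\prime})$, we compute the two ways around: along the top then down, $p$ maps to $d(p)=[(K(P),\mu_{\e P},j_{\e P})]$ and then to $(m,\omega)^{(2)}([(K(P),\mu_{\e P},j_{\e P})])$; along the down then right, $p$ maps to $(m^{\e\prime\prime},\omega^{\e\prime\prime})^{(1)}([P\e])=[P^{\e\prime\prime}]$ and then to $d([P^{\e\prime\prime}])=[(K(P^{\e\prime\prime}),\mu_{\e P^{\e\prime\prime}},j_{\e P^{\e\prime\prime}})]$. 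The preceding theorem produces a canonical $(m,\omega)$-morphism $k\colon K(P)\to K(P^{\e\prime\prime})$, and by the very definition of $(m,\omega)^{(2)}$, the existence of such a $k$ witnesses
\[
(m,\omega)^{(2)}\bigl([(K(P),\mu_{\e P},j_{\e P})]\bigr)=[(K(P^{\e\prime\prime}),\mu_{\e P^{\e\prime\prime}},j_{\e P^{\e\prime\prime}})],
\]
giving commutativity of square {\rm I}.

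Squares {\rm II} and {\rm III} will follow formally from the composition rule $(g,\psi)^{(2)}\circ(f,\varphi)^{(2)}=(g\circ f,\psi\circ\varphi)^{(2)}$ recorded just before the present corollary, applied to the two rectangles of \eqref{big}. Explicitly, the commutativity of the left rectangle,
\[
(m^{\e\prime},\omega)\circ(f,1_{\Pi})=(g,1_{\Sigma})\circ(m,\omega),
\]
upon passage to $(\cdot)^{(2)}$ gives square {\rm II}; and the right rectangle $(m^{\e\prime\prime},\omega^{\e\prime\prime})\circ(h,\psi)=(1,\eta)\circ(m^{\e\prime},\omega)$ similarly gives square {\rm III}.

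The main obstacle, and the reason a separate theorem was proved just before, is that the coboundary $d$ appearing in square {\rm I} is defined by a geometric construction (the gerbe of liftings of a section) rather than by the functoriality of a morphism of sheaves of crossed groups; it therefore does not fall directly under the formal composition rule that handles squares {\rm II} and {\rm III}. Once the preceding theorem provides the bridging $(m,\omega)$-morphism $k\colon K(P)\to K(P^{\e\prime\prime})$, the remaining verifications reduce to formal manipulations with the functoriality of $(\cdot)^{(2)}$.
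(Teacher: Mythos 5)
Your proposal is correct and follows essentially the same route as the paper: the low-degree squares come from agreement with the degree $0$ and $1$ functoriality, square I from the bridging $(m,\omega)$-morphism $k\colon K(P)\to K(P^{\e\prime\prime})$ supplied by the preceding theorem together with the definition of $(m,\omega)^{(2)}$, and squares II and III from the composition rule for $(\cdot)^{(2)}$ applied to the commuting rectangles of \eqref{big} (the paper phrases this last step as an appeal to Theorem \ref{332}, which is precisely what underwrites that composition rule, so the two justifications coincide).
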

\begin{proof}\indent
\begin{itemize}
\item The commutativity of the squares to the left of (I) follows from
the fact that our cohomology in dimensions 0 and 1 is the same as Giraud's.
\item  square I commutes by the previous theorem.
\item squares II and III  commute by Theorem \ref{332}.
\end{itemize}	
\end{proof}

\chapter{Relation to Giraud's $H^{2}$}\label{rel}

Let $A$ be a sheaf of groups on the site $E$.

According to Giraud, we have $H^{2}_{\rm g}(A)=H^{2}_{\rm g}(L)$ with $L={\rm lien}(A)$, i.e., $H^{2}_{\rm g}(A)$ is the set of all $L$-equivalence classes of ${\rm lien}(A)$-gerbes on $E$.

We recall that a ${\rm lien}(A)$-gerbe $F$ is a gerbe $F$ equipped with an isomorphism of bands
\[
a\colon L\circ f\isoto {\rm liau}(F).
\]
This means that for every object $x$ in $F_{S}$, $S\in{\rm Ob}(E)$, we have an isomorphism of bands on $S$
\[
a(x)\colon L(S)={\rm lien}(A)^{S}\isoto {\rm lien}(\uaut_{\e S}(x))
\]
such that the family of these isomorphisms is compatible 
with localization and, in addition, satisfies the condition that, for every $S$-isomorphism $i\colon x\to y$ in $F$, the morphism of sheaves of groups
$\uinn(i)\colon \uaut_{\e S}(x)\to \uaut_{\e S}(y)$ represents the identity morphism of $L(S)$.

\begin{theorem}\label{one} If $(F, a)$ is a ${\rm lien}(A)$-gerbe, then $F$also has a structure of $(A,\uinn(A)$-gerbe.
\end{theorem}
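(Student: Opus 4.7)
The plan is to extract $\mu$ and $j$ directly from the band isomorphism $a$, exploiting the fact that ${\rm lien}\colon {\rm FAGRSC}(E)\to {\rm LIEN}(E)$ is bicovering. First I would note that $(A,\uinn(A))$ carries a canonical structure of sheaf of crossed groups: take $\rho=\inn\colon A\to \uinn(A)$ sending $a$ to the inner automorphism by $a$, and let $\uinn(A)$ act on $A$ by evaluation. The two crossed-module axioms $\inn(\phi(a))=\phi\circ \inn(a)\circ\phi^{-1}$ and $\inn(b)(a)=bab^{-1}$ are immediate, so the target structure is available.

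For each $x\in {\rm Ob}(F_{S})$, I would define $\mu(x)$ to be the sheaf associated to the presheaf on $E/S$ assigning to $(T\to S)$ the set of isomorphisms of sheaves of groups $A^{T}\isoto \uaut_{\e S}(x)^{T}$ whose class in ${\rm LIEN}(E/T)$ equals $a(x)^{T}$. The precomposition action of $\uinn(A)^{T}$ is free (if $\sigma\circ\pi=\sigma$ for an isomorphism $\sigma$, then $\pi={\rm id}$) and transitive (the quotient $\sigma_{1}^{-1}\circ\sigma_{2}$ of two representatives lies in the kernel of ${\rm lien}$, hence is inner by the construction of ${\rm LIEN}(E)$), and local sections exist since ${\rm lien}$ is locally surjective on morphisms. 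Thus $\mu(x)$ is a $\uinn(A)$-torsor on $E/S$. On a morphism $\phi\colon x\to y$ in $F$ I set $\mu(\phi)(\sigma)=\uinn(\phi)\circ\sigma$; this lands in $\mu(y)$ by the coherence of the family $\{a(z)\}$ with respect to all isomorphisms together with the requirement, built into the definition of the band of a gerbe, that $\uinn(i)$ represent the identity morphism of $L(S)$ for every $S$-isomorphism $i$. The resulting cartesian $E$-functor $\mu\colon F\to {\rm TORSC}(E;\uinn(A))$ is automatically a morphism of gerbes since source and target both are gerbes.

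I would next construct $j_{x}\colon \uaut_{\e S}(x)\isoto \mu(x)\be {{{} \uinn(A)\atop \bigwedge}\atop }\lbe A$ by defining its inverse on local representatives via $(\sigma,b)\mapsto \sigma(b)$; this descends to the contracted product because $(\sigma\circ\pi,b)$ and $(\sigma,\pi\cdot b)$ both go to $\sigma(\pi(b))$, and it is locally an isomorphism since $\sigma$ is a local isomorphism of sheaves of groups. Gluing the local inverses produces $j_{x}$. For the structural condition, given $a_{x}\in (\mu(x)\be {{{} \uinn(A)\atop \bigwedge}\atop }\lbe A)(1_{S})$ with local representative $(\sigma,b)$, $\,j_{x}^{-1}(a_{x})$ is locally equal to $\sigma(b)\in \uaut_{\e S}(x)$, and hence
$$\mu(\, j_{x}^{-1}(a_{x}))(\sigma) \;=\; \uinn(\sigma(b))\circ \sigma \;=\; \sigma\circ \inn(b) \;=\; \sigma\cdot\rho(b),$$
using $\sigma(b)\sigma(c)\sigma(b)^{-1}=\sigma(bcb^{-1})$ for every $c\in A$. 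Locally this is the required identity $\mu(\,j^{-1}(a_{x}))(\alpha)=\alpha\cdot\rho(a)$; globality follows because $\mu(x)$ is a sheaf. This is the same calculation as the one already carried out at the end of Section 3.1 in the Giraud setting, for the triple $(K(P),\nu,k)$.

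The main obstacle is the well-definedness of $\mu(x)$ as a $\uinn(A)$-torsor and the functoriality of $\mu$ on morphisms of $F$: both rely on the bicovering character of ${\rm lien}$ and on the coherence axioms of the band of a gerbe. Once $\mu$ has been properly produced, the rest --- the definition of $j$ and the verification of the structural condition --- is essentially formal.
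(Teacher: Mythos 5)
Your proposal is correct and follows essentially the same route as the paper: you take $\mu(x)$ to be the sheaf of group isomorphisms $A^{T}\isoto\uaut_{\e S}(x)^{T}$ representing $a(x)$, observe it is an $\uinn(A)$-torsor, define $j_{x}$ via $(\sigma,b)\mapsto\sigma(b)$ on local representatives, and verify $\uinn(\sigma(b))\circ\sigma=\sigma\circ\inn(b)$ locally. The only differences are cosmetic — you spell out the torsor axioms and the crossed-group structure on $(A,\uinn(A))$, which the paper treats as clear.
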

\begin{proof}\indent
\begin{itemize}
\item Let $x$ be an object of the fiber category $F_{S}$. Then we consider the following sheaf of sets on $E/S$
\[
\uisom_{\, a(x)}(A,\uaut_{\e S}(x)).
\]                                  
For every object $t\colon T\to S$ in $E/S$ we define $\uisom_{\, a(x)}(A,\uaut_{\e S}(x))(t)$ as the set of all the isomorphisms $\sigma\colon A^{T}\to\uaut_{\e S}(x)^{T}$ for which  ${\rm lien}(\sigma)=a(x)^{t}$. It is clear that $\uisom_{\, a(x)}(A,\uaut_{\e S}(x))$ is a right $\uinn(A)$-torsor on $E/S$. If we now set
\[
\mu_{\e a}(x)=\uisom_{\, a(x)}(A,\uaut_{\e S}(x)),
\]
then a morphism of gerbes is thus defined
\[
\mu_{\e a}\colon F\to {\rm TORSC}(E;\uinn(A))
\]
\item We obtain a natural isomorphism
\[
j_{\e a}(x)\colon\uaut_{\e S}(x)\isoto \mu_{a}(x)\! {{{} \uinn(A)\atop \bigwedge}\atop }\! A
\]   
by defining $j_{a}(x)(m)$, for every $S$-automorphism $m\colon x\to x$, as that element of $\mu_{\e a}(x)\! {{{} \uinn(A)\atop \bigwedge}\atop }\! A$ that is locally determined by $(\sigma,a)\in \mu_{\e a}(x)\times A$, where $\sigma$ is an
arbitrary element of $\mu_{\e a}(x)$ and $a$ is the unique element of $A$ such that $\sigma(a)= m$. Now suppose conversely that $\Theta$ is any element of
$\mu_{\e a}(x)\be {{{} \uinn(A)\atop \bigwedge}\atop }\lbe A$  that is locally determined by a pair $(\sigma,a)$. Then $j_{a}^{-1}(x)(\Theta)$ is that  $S$-automorphism of $x$ that is locally equal to $\sigma(a)$
\item Working locally, we thus have
\[
\mu_{\e a}(\e j_{a}^{-1}(x)(\Theta))(\sigma)=\mu_{\e a}(\sigma(a))(\sigma)=\inn(\sigma(a))\circ
\sigma=\sigma\circ \inn(a).
\]
We conclude that $(F,\mu_{\e a},j_{a})$ is an $(A,\uinn(A))$-gerbe.
\end{itemize}
\end{proof}

\begin{theorem} Let $(F, a)$ and $(G, b)$ be two ${\rm lien}(A)$-gerbes on $E$.
If $\delta\colon F\to G$ is a ${\rm lien}(A)$-equivalence, then $\delta$ is an $(A,\uinn(A))$-equivalence from $(F,\mu_{\e a},j_{a})$ to $(G,\mu_{\e b},j_{b})$.
\end{theorem}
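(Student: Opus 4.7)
The plan is to invoke the special case stated immediately after Theorem \ref{234}: an $(A,\uinn(A))$-equivalence is by definition an ${\rm id}_{(A,\uinn(A))}$-morphism of $(A,\uinn(A))$-gerbes, since such a morphism is automatically an $E$-equivalence. Thus it suffices to verify that $\delta$ satisfies the two conditions of Definition \ref{222} with $(f,\varphi)={\rm id}_{(A,\uinn(A))}$.

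First I would construct, for each $S\in\bj(E)$ and each $x\in\bj(F_{S})$, a natural isomorphism of $\uinn(A)$-torsors
\[
i_{x}\colon \mu_{\e a}(x)\isoto \mu_{\e b}(\delta(x))
\]
by post-composition with $\aut(\delta)_{x}\colon \uaut_{\e S}(x)\isoto \uaut_{\e S}(\delta(x))$. The crucial point here is that this post-composition sends sections of $\uisom_{\e a(x)}(A,\uaut_{\e S}(x))$ into sections of $\uisom_{\e b(\delta(x))}(A,\uaut_{\e S}(\delta(x)))$, and this is exactly the hypothesis that $\delta$ is a ${\rm lien}(A)$-morphism: the induced $\uinn(i)$ between the $\uaut$-sheaves represents the identity of ${\rm lien}(A)^{\e S}$, which is to say ${\rm lien}(\aut(\delta)_{x})\circ a(x)=b(\delta(x))$. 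That the resulting map $i_{x}$ is $\uinn(A)$-equivariant is then automatic. Combined with Proposition \ref{pgg} (giving $\mu_{\e a}(x)\be {{{} \uinn(A)\atop \bigwedge}\atop }\lbe \uinn(A)\isoto\mu_{\e a}(x)$), this produces the isomorphism required by condition (i) of Definition \ref{222}.

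Next I would check condition (ii), namely that the square
\[
\xymatrix{\uaut_{\e S}(x)\ar[d]_{j_{\e a}(x)}^{\wr}\ar[rr]^{\aut(\delta)_{x}}&& \uaut_{\e S}(\delta(x))\ar[d]^{j_{\e b}(\delta(x))}_{\wr}\\
\mu_{\e a}(x)\be {{{} \uinn(A)\atop \bigwedge}\atop }\lbe A\ar[rr]^{i_{\e x}\wedge {\rm id}_{A}}&& \mu_{\e b}(\delta(x))\be {{{} \uinn(A)\atop \bigwedge}\atop }\lbe A}
\]
commutes. Since all four sheaves are sheaves, it is enough to verify this locally. Taking $m\in\uaut_{\e S}(x)$ locally of the form $\sigma(a)$ with $\sigma\in\mu_{\e a}(x)$ and $a\in A$, the definition of $j_{\e a}$ in the proof of Theorem \ref{one} gives $j_{\e a}(x)(m)$ locally equal to $(\sigma,a)^{*}$; applying $i_{x}\wedge {\rm id}_{A}$ yields $(\aut(\delta)_{x}\circ\sigma,a)^{*}$. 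On the other hand, $\aut(\delta)_{x}(m)=\aut(\delta)_{x}\circ\sigma\circ a^{\sharp}$ where $a^{\sharp}$ denotes the action of $a$, so $\aut(\delta)_{x}(m)=(i_{x}(\sigma))(a)$, and hence $j_{\e b}(\delta(x))(\aut(\delta)_{x}(m))$ is locally $(i_{x}(\sigma),a)^{*}$ as well. Both paths agree locally and therefore globally.

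The main obstacle is the first step: properly unpacking what it means for $\delta$ to be a ${\rm lien}(A)$-equivalence between the pairs $(F,a)$ and $(G,b)$, i.e., that the natural isomorphism $\aut(\delta)_{x}$ intertwines the structural band isomorphisms $a(x)$ and $b(\delta(x))$. Once this compatibility is in hand, the construction of $i_{x}$ and the verification of the $j$-square are formal consequences of the definitions of $\mu_{\e a}$, $\mu_{\e b}$, $j_{\e a}$, $j_{\e b}$ given in Theorem \ref{one}, and Theorem \ref{234} then delivers the final $E$-equivalence conclusion at no extra cost.
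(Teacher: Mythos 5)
Your proposal is correct and follows essentially the same route as the paper: construct $i_{x}$ by post-composition with $\aut(\delta)_{x}$ (combined with the canonical isomorphism $\mu_{\e a}(x)\be {{{} \uinn(A)\atop \bigwedge}\atop }\lbe \uinn(A)\isoto\mu_{\e a}(x)$ from Proposition \ref{pgg}) and verify the compatibility square with $j_{\e a}$, $j_{\e b}$ locally on representatives $(\sigma,a)$. You merely spell out two points the paper leaves implicit, namely that the ${\rm lien}(A)$-equivalence hypothesis is what guarantees $\aut(\delta)_{x}\circ\sigma$ lands in $\mu_{\e b}(\delta(x))$, and the explicit local computation showing both sides equal $(i_{x}(\sigma),a)^{*}$.
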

\begin{proof} By composition with $\aut(\delta)_{x}\colon\uaut_{\e S}(x)\isoto  \uaut_{\e S}(\delta(x))$ we obtain an isomorphism $\mu_{\e a}(x)\isoto\mu_{\e b}(\delta(x))$. The latter map composed with the canonical isomorphism 
\[
\mu_{\e a}(x)\be(Q\be {{{} \uinn(A)\atop \bigwedge}\atop }\lbe \uinn(A)\isoto \mu_{\e a}(x)
\]	
yields a natural isomorphism
\[
i_{x}\colon \mu_{\e a}(x)\be{{{} \uinn(A)\atop \bigwedge}\atop }\lbe \uinn(A)\isoto \mu_{\e b}(\delta(x)).
\]  
In addition, the following equality holds:
\[
(\e i_{x}\wedge A\e)\circ(\e p_{\le x}\wedge A\e)\circ j_{a}(x)=j_{\e b}(\delta(x))\circ\aut(\delta)_{x},
\]
which can be easily checked locally. Thus $\delta$ is an $(A,\uinn(A))$-equivalence.	
\end{proof}

\begin{corollary} We have a map
\begin{equation}\label{comp}
H^{2}_{g}(A)\to H^{2}(A,\uinn(A))
\end{equation}
which maps the class $[F]$ of the ${\rm lien}(A)$-gerbe $F$ to the class 
$[[F]]$ of the $(A,\uinn(A))$-gerbe $F$.
\end{corollary}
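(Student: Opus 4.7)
The proof is essentially a bookkeeping argument that assembles the two preceding theorems into a well-defined map of (pointed) sets, so my plan is to verify well-definedness and leave the map construction itself as a direct corollary.

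First I would recall the definitions. An element of $H^{2}_{g}(A)$ is an equivalence class $[F]$, where $F$ is a ${\rm lien}(A)$-gerbe and two such gerbes are identified when they are ${\rm lien}(A)$-equivalent. An element of $H^{2}(A,\uinn(A))$ is a class $[[G]]$ of $(A,\uinn(A))$-gerbes up to $(A,\uinn(A))$-equivalence. I will define the proposed assignment on representatives by $[F\e]\mapsto [[(F,\mu_{\e a},j_{\le a})]]$, where $(\mu_{\e a},j_{\le a})$ is the canonical $(A,\uinn(A))$-gerbe structure produced by Theorem~\ref{one} out of the band isomorphism $a\colon L\circ f\isoto{\rm liau}(F)$.

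Next I would check that this assignment is independent of the choice of representative. Suppose $F$ and $F^{\e\prime}$ are ${\rm lien}(A)$-gerbes with $[F\e]=[F^{\e\prime}\e]$ in $H^{2}_{g}(A)$; by definition there is a ${\rm lien}(A)$-equivalence $\delta\colon F\isoto F^{\e\prime}$. The preceding theorem then asserts that $\delta$ is in fact an $(A,\uinn(A))$-equivalence from $(F,\mu_{\e a},j_{\le a})$ to $(F^{\e\prime},\mu_{\e a^{\prime}},j_{\le a^{\prime}})$, which is exactly the statement that $[[(F,\mu_{\e a},j_{\le a})]]=[[(F^{\e\prime},\mu_{\e a^{\prime}},j_{\le a^{\prime}})]]$ in $H^{2}(A,\uinn(A))$. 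Hence the assignment descends to a well-defined map \eqref{comp}.

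I would also remark that the map is a morphism of pointed sets: the unit class in $H^{2}_{g}(A)$ is represented by ${\rm TORS}(E;A)$ with its canonical band isomorphism, and inspection of the construction of $\mu_{\e a}$ and $j_{\le a}$ in the proof of Theorem~\ref{one} shows that the resulting $(A,\uinn(A))$-structure coincides with the canonical one on ${\rm TORSC}(E;A)$ from Section~\ref{11}, hence represents the unit class of $H^{2}(A,\uinn(A))$. There is no real obstacle here; the only subtlety is checking that the canonical structures actually agree (rather than being merely $(A,\uinn(A))$-equivalent), which follows by comparing $\uisom_{\e a(P)}(A,\uaut_{\e A}(P))$ with $P\be {{{} A\atop \bigwedge}\atop }\lbe \uinn(A)$ via the standard identification of the adjoint torsor. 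All told, the corollary is purely a consequence of the two theorems plus the naturality of the constructions, so no further computation is needed.
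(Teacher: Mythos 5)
Your argument is correct and is exactly the (implicit) proof the paper intends: Theorem \ref{one} supplies the canonical $(A,\uinn(A))$-structure on a representative, and the immediately preceding theorem shows a ${\rm lien}(A)$-equivalence is an $(A,\uinn(A))$-equivalence, so the class $[[F]]$ depends only on $[F]$. The additional remark on preservation of the unit class is not needed for the corollary as stated but is harmless and correct.
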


\begin{theorem}\label{bij} The map \eqref{comp} is bijective.
\end{theorem}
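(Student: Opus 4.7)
The plan is to construct an explicit inverse
\[
\Psi\colon H^{2}(A,\uinn(A))\to H^{2}_{g}(A)
\]
to the map~\eqref{comp} and then verify that the two compositions are the identity. Given an $(A,\uinn(A))$-gerbe $(F,\mu,j)$, I will endow $F$ with a $\mathrm{lien}(A)$-structure as follows. For each object $x\in F_{S}$, the $\uinn(A)$-torsor $\mu(x)$ produces the twisted sheaf of groups $\mu(x)\be {{{} \uinn(A)\atop \bigwedge}\atop }\lbe A$, in which $\uinn(A)$ acts on $A$ by inner automorphisms. Since inner automorphisms are annihilated in passing from sheaves of groups to bands (the morphism~\eqref{tag}), the band of this twisted sheaf of groups is canonically isomorphic to $\mathrm{lien}(A)^{S}$, independently of the torsor $\mu(x)$. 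Composing this canonical identification with $\mathrm{lien}(j_{x}^{-1})$ yields an isomorphism of bands $a(x)\colon \mathrm{lien}(A)^{S}\isoto\mathrm{lien}(\uaut_{S}(x))$, and I will set $\Psi([(F,\mu,j)])=[(F,a)]$.

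To check that $(F,a)$ really is a $\mathrm{lien}(A)$-gerbe I must verify that the family $\{a(x)\}$ is compatible with restrictions and with every $S$-isomorphism $i\colon x\isoto y$ in $F$; the latter amounts to checking that $\uinn(i)\colon\uaut_{S}(x)\to\uaut_{S}(y)$ represents the identity of $\mathrm{lien}(A)^{S}$. This is exactly what the boxed condition $\mu(\,j^{-1}(a_{x}))(\alpha)=\alpha\cdot\rho(a)$ in the definition of $(A,\uinn(A))$-gerbe delivers: it says that every $S$-automorphism coming from the $A$-direction acts on the twist by an inner automorphism, and such automorphisms become trivial after passing to the band. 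Similarly, I will show that any $(A,\uinn(A))$-equivalence $\delta\colon F\to G$ is compatible with the bands produced this way, so that $\Psi$ descends to a well-defined map of equivalence classes.

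The composition $\Psi\circ(\text{\eqref{comp}})$ is essentially tautological: starting from a $\mathrm{lien}(A)$-gerbe $(F,a)$, Theorem~\ref{one} constructs $\mu_{a}(x)=\uisom_{a(x)}(A,\uaut_{S}(x))$, and unwinding the canonical identifications one finds that the band extracted from $(F,\mu_{a},j_{a})$ by the above recipe is again $a$. For the other composition I will exhibit, for any $(A,\uinn(A))$-gerbe $(F,\mu,j)$ with $\Psi([(F,\mu,j)])=[(F,a)]$, an $\mathrm{id}_{(A,\uinn(A))}$-equivalence between $(F,\mu,j)$ and $(F,\mu_{a},j_{a})$, taking the identity functor on $F$ together with the natural $\uinn(A)$-isomorphism $\mu(x)\isoto\mu_{a}(x)$ which sends a local section $p\in\mu(x)$ to the isomorphism $a\mapsto j_{x}^{-1}([(p,a)])$. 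Both sides are $\uinn(A)$-torsors with matching adjoint action, and the commutativity of the square from Definition~\ref{222} with $(f,\varphi)=\mathrm{id}_{(A,\uinn(A))}$ again reduces to the boxed condition.

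The principal obstacle is the bookkeeping with the nested contracted products and quotient sheaves: every verification reduces to a local computation on representatives $(\alpha,a)$ in the style of the proof of Theorem~\ref{one}, and several squares involving $j$, $j_{a}$, the twisting morphisms and the boxed condition must be checked by hand. Conceptually, however, the argument is transparent: bands annihilate inner automorphisms, and so at the level of equivalence classes the $(A,\uinn(A))$-datum carries exactly the same information as a $\mathrm{lien}(A)$-structure.
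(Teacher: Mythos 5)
Your proposal is correct and follows essentially the same route as the paper: your band $a(x)$, obtained by composing $\mathrm{lien}(j_{x}^{-1})$ with the canonical identification of $\mathrm{lien}\bigl(\mu(x)\be {{{} \uinn(A)\atop \bigwedge}\atop }\lbe A\bigr)$ with $\mathrm{lien}(A)^{S}$, is exactly the paper's $a_{\mu,j}$ built from the local trivializations $\sigma_{\alpha}=j_{x}^{-1}\circ l_{\alpha}^{-1}$, and your verification that the identity functor gives an $\mathrm{id}_{(A,\uinn(A))}$-equivalence between $(F,\mu,j)$ and $(F,\mu_{a},j_{a})$ is the paper's surjectivity argument. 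The only difference is packaging — you present an explicit two-sided inverse where the paper argues injectivity and surjectivity separately — but the well-definedness of your $\Psi$ under $(A,\uinn(A))$-equivalence is precisely the paper's injectivity step.
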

\begin{proof}\indent
\begin{enumerate}
\item Injectivity.

Let $(F, a)$ and $(G, b)$ be two ${\rm lien}(A)$-gerbes on $E$.
Assume that $[[F]]=[[G]]$, i.e., there exists an $(A,\uinn(A))$-equivalence
\[
\delta\colon F\to G
\]
This means that, for every object $x$ of $F_{S}$, $S\in{\rm Ob}$, the following equality holds:
\[
(\e i_{x}\wedge A\e)\circ(\e p_{\le x}\wedge A\e)\circ j_{\e a}(x)=j_{\e b}(\delta(x))\circ\aut(\delta)_{x}.
\]
By interpreting this locally we find that, if $\sigma\in\mu_{\e a}(x)$, then $\aut(\delta)_{x}\circ\sigma\in\mu_{\e b}(\delta(x))$ and this expresses that $\delta\colon F\to G$ is an equivalence of ${\rm lien}(A)$-gerbes. Thus we have $[F]=[G]$.
\item Surjectivity.

Let $(G,\mu, j)$ be an $(A,\uinn(A))$-gerbe on the site $E$.

From this we can deduce a ${\rm lien}(A)$-gerbe structure on $G$.
For every $S\in{\rm Ob}(E)$ and every object $x$ of $G_{S}$ we need to find an isomorphism of bands $a_{\e\mu,\e j}\colon {\rm lien}(A)^{S}\isoto{\rm lien}(\uaut_{\e S}(x))$ such that some compatibility conditions are satisfied. In other words, we should seek to define {\it local} isomorphisms of groups
$A\to \uaut_{\e S}(x)$	that form a coherent family up to inner automorphisms of $A$.

Since $\mu(x)$ is an $\uinn(A)$-torsor, it has a local section. Consequently,
we can consider locally an element $\alpha$ in $\mu(x)$.  Using this element we define then a local isomorphism
\[
\sigma_{a}\colon A\isoto \uaut_{\e S}(x)
\]
by $\sigma_{a}=j_{x}^{-1}\circ l_{\alpha}^{-1}$, where $l_{\alpha}$ is the following isomorphism
\[
\mu(x)\be {{{} \uinn(A)\atop \bigwedge}\atop }\lbe A\to A,\,\Theta=(\alpha,a)^{*}\mapsto a.
\]   
Here $(\alpha,a)^{*}$ is the image of $(\alpha,a)$ under the morphism
\[
\mu(x)\times A\to\mu(x)\be {{{} \uinn(A)\atop \bigwedge}\atop }\lbe A.
\]   
If $\beta$ is another element of $\mu(x)$ then we have $\beta=\alpha\cdot\inn(a_{\le 0})$ whence it follows that  $\sigma_{\beta}=\sigma_{\alpha}\circ\inn(a_{\le 0}^{-1})$.  The family of isomorphisms of {\it bands} induced by these group isomorphisms is coherent and therefore determines an isomorphism of bands on $S$
\[
a_{\mu,j}\colon {\rm lien}(A)^{S}\isoto{\rm lien}(\uaut_{\e S}(x))
\]
Thus we obtain the ${\rm lien}(A)$-gerbe $(G,a_{\mu,j})$.

According to Theorem \ref{one}, we can then associate an $(A,\uinn(A))$-gerbe which we denote by $(G,\mu_{\e a},j_{a})$. For every object $x$ of $G_{S}$, we have
$\mu_{\e a}(x)=\uisom_{\, a_{\mu,\e j}(x)}(A,\uaut_{\e S}(x))$ and $j_{a}(x)\colon 
\uaut_{\e S}(x)\isoto \mu_{\e a}(x)\be {{{} \uinn(A)\atop \bigwedge}\atop }\lbe A$ is the isomorphism that to an element $m\in\uaut_{\e S}(x)$ associates that element of $\mu_{\e a}(x)\be {{{} \uinn(A)\atop \bigwedge}\atop }\lbe A$ that is locally determined by $(\sigma_{\alpha},a)$ with $\sigma_{a}=j_{x}^{-1}\circ l_{\alpha}^{-1}$ and $a$ is the unique element of $A$ such that $\sigma_{\alpha}(a)=m$.

This $(A,\uinn(A))$-gerbe $(G,\mu_{\e a},j_{a})$ is $(A,\uinn(A))$-equivalent to the original gerbe $(G,\mu, j)$  because ${\rm id}_{\e G}\colon G\to G$ is an $(A,\uinn(A))$-morphism.

The map $H^{2}_{g}(A)\to H^{2}(A,\uinn(A))$ is thus surjective.

\end{enumerate}
	
\end{proof}

\begin{remark} From the previous theorem we see that the ${\rm lien}(A)$-gerbes of Giraud agree with our $(A,\uinn(A))$-gerbes.
\end{remark}

\begin{corollary} In the {\rm abelian case}, i.e., $A$ is abelian and $\Pi$ is the trivial group, our $H^{2}$ coincides with the $H^{2}$ in abelian cohomology.	
\end{corollary}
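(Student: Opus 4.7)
The plan is to reduce the abelian case to Giraud's theory via the bijection established in Theorem \ref{bij}. Under the hypotheses, $A$ is abelian and $\Pi$ is the trivial sheaf of groups on $E$. The first observation is that when $A$ is abelian the sheaf $\uinn(A)$ of inner automorphisms is the trivial sheaf of groups, since every inner automorphism of an abelian group is the identity. In particular, $\uinn(A) = \Pi = 1$, so that the sheaf of crossed groups $(A, \uinn(A))$ is exactly the sheaf of crossed groups $(A, \Pi)$ that appears in the hypothesis of the corollary.

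Next I would apply Theorem \ref{bij}, which gives the canonical bijection
\[
H^{2}_{g}(A)\isoto H^{2}(A,\uinn(A)).
\]
Combining this with the identification $\uinn(A) = \Pi = 1$ from the previous paragraph yields
\[
H^{2}_{g}(A)\isoto H^{2}(E,(A,\Pi)),
\]
so our new $H^{2}$ with values in $(A,\Pi)$ is in canonical bijection with Giraud's $H^{2}_{g}(A)$. One should verify that this bijection is compatible with the distinguished (unit/neutral) elements: the unit class on the right is that of ${\rm TORSC}(E;A)$ with its canonical $(A,\Pi)$-gerbe structure from \ref{11}, and under the construction of Theorem \ref{one} this class corresponds precisely to the class of ${\rm TORSC}(E;A)$ viewed as a ${\rm lien}(A)$-gerbe, which is the unit class on the left.

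Finally, I would invoke the classical result of Giraud (see \cite[IV, 3.4]{14}) that for an abelian sheaf of groups $A$, his $H^{2}_{g}(A)$ coincides with the classical Grothendieck-style abelian cohomology $H^{2}(E;A)$; this uses the fact that an abelian band is essentially the same thing as a sheaf of abelian groups and that abelian gerbes are classified by the derived functor $H^{2}$. Chaining this identification with the bijection above gives
\[
H^{2}(E,(A,1))\isoto H^{2}_{g}(A)\isoto H^{2}(E;A),
\]
which is the desired statement.

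The main obstacle here is not the argument in the present paper, which is essentially a formal chain of identifications, but rather the correct invocation of the classical comparison theorem between Giraud's gerbe-theoretic $H^{2}$ and abelian $H^{2}$. All functoriality and base-point compatibility checks should be straightforward once one has unravelled the constructions of Theorem \ref{one} and Theorem \ref{bij} in the case where $\uinn(A)$ is trivial, since then the $\uinn(A)$-torsor $\mu_{a}(x) = \uisom_{a(x)}(A,\uaut_{\e S}(x))$ collapses to a singleton and the contracted product $\mu_{a}(x)\be {{{} \uinn(A)\atop \bigwedge}\atop }\lbe A$ reduces to $A$ itself.
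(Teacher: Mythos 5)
Your proof is correct and follows essentially the same route as the paper: observe that $A$ abelian forces $\uinn(A)=\Pi=1$, apply Theorem \ref{bij} to identify $H^{2}(A,\Pi)$ with $H^{2}_{\rm g}(A)$, and then invoke Giraud's comparison (loc.\ cit.\ IV, 3.4.2) between $H^{2}_{\rm g}(A)$ and the derived-functor $H^{2}$. The additional remarks on base points and the collapse of the $\uinn(A)$-torsor are harmless elaborations of what the paper leaves implicit.
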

\begin{proof} If $A$ is abelian and $\Pi$ is trivial, then $\Pi=\uinn(A)$ and Theorem \ref{bij} yields a bijection $H^{2}(A,\Pi)\isoto H^{2}_{\rm g}(A)$. If $H^{2}_{\bullet}(A)$ denotes the cohomology group defined in homological algebra, then \cite[IV, 3.4.2]{14} yields an isomorphism $\alpha_{2}\colon 
H^{2}_{\bullet}(A)\isoto H^{2}_{\rm g}(A)$. The corollary follows.
\end{proof}

\subsubsection{Some final considerations}

\begin{enumerate}
\item It is now clear why the cohomology of Giraud is {\it not} functorial. 
In the investigation of the obstruction to the lifting of an $A^{\e\prime\prime}$-torsor $P$ to $A^{\e\prime}$, Giraud found that this is a gerbe $K(P)$ to which one cannot associate a sheaf of groups but a new object ${\rm lien}(A)$ that he called a band. However, it was not noted that in the obstruction also a morphism of gerbes $\mu\colon K(P)\to{\rm TORSC}(E;\uinn(A^{\e\prime}))$ is involved.  It is therefore not surprising that to a given morphism of sheaves of groups
$f\colon A\to A^{\e\prime}$ no map $H^{2}_{\rm g}(A)\to H^{2}_{\rm g}(A^{\e\prime})$, in general, can be defined. After all, we have seen that $H^{2}_{\rm g}(A)\isoto H^{2}(A,\uinn(A))$ and $H^{2}_{\rm g}(A^{\e\prime})\isoto H^{2}(A^{\e\prime},\uinn(A^{\e\prime}))$ so, in addition to the morphism $f\colon A\to A^{\e\prime}$, one still needs to construct a morphism $\varphi\colon\uinn(A)\to \uinn(A^{\e\prime})$ in order to construct a map from $H^{2}_{\rm g}(A)$ to $H^{2}_{\rm g}(A^{\e\prime})$. Moreover, the case in which he does have a map from
$H^{2}_{\rm g}(A)$ to $H^{2}_{\rm g}(A^{\e\prime})$ occurs when the condition $C_{f}=C_{A^{\e\prime}}$ holds. But if this condition is fulfilled then, just by using $f\colon A\to A^{\e\prime}$, we can define a morphism
\[
\varphi\colon \uinn(A)\to\uinn(A^{\e\prime}), \inn(a)\mapsto \inn(f(a)),
\]
so that $(f,\varphi)$ is a morphism of sheaves of crossed groups.
If the condition $C_{f}=C_{A^{\e\prime}}$ does not hold, then Giraud can only define a {\it relation} $H^{2}_{\rm g}(A)\ggto{} H^{2}_{\rm g}(A^{\e\prime})$.

\item When a short exact sequence 
\begin{equation}\label{tres}
1\to A\overset{\!u}{\to}B\overset{\!v}{\to}C\to 1
\end{equation}
is given, then we have seen that Giraud's $H^{2}_{\rm g}(A)$ is not sufficient to account for all the obstructions to lifting $C$-torsors to $B$.
He therefore had to define a set $O(v)$ expressly so that it could contain all the obstructions.  Why this fails is now also clear.
To the short exact sequence \eqref{tres} one can associate the following short exact sequence of sheaves of crossed groups:
\[
\xymatrix{&\uinn(B)\ar[r]^(.45){\rm id}&\uinn(B)\ar[r]&\uinn(C)\\
1\ar[r]&A\ar[u] \ar[r]^(.4){u}& B\ar[u]\ar[r]^{v}& C\ar[u]\ar[r]&1
}
\]
Consequently, the obstructions end up in $H^{2}(A,\uinn(B))$ and {\it not}
in $H^{2}(A,\uinn(A))$, which is the set that is in bijective correspondence with Giraud's $H^{2}(A)$, as already noted.

\item Finally, we mention two questions that still await a satisfactory answer.
\begin{enumerate}
\item[\rm (i)] what connection does there exist between the $H^{2}$ defined here and the $H^{2}$ defined by Dedecker when $E$ is equal to the topos $B_{H}$ of left $H$-sets?

\item[\rm (ii)] what is the connection with the cohomology of a topological space defined using hypercoverings on that space? More should be examined to determine how an $(A,\Pi)$-bundle itself can be interpreted in relation to a hypercovering. 
\end{enumerate}
\end{enumerate}

\end{document}